\documentclass[11pt,reqno]{amsart}
\usepackage{mathrsfs}
\usepackage[american]{babel}
\usepackage{todonotes}
\usepackage{comment}
\usepackage{amsmath}
\usepackage{dsfont,mathtools,amssymb}
\usepackage{appendix}
\usepackage{color}
\usepackage{nicematrix}
\usepackage{tikz}
\usetikzlibrary{decorations.pathreplacing,fit,backgrounds,positioning}
\usepackage[hidelinks]{hyperref}
\mathtoolsset{showonlyrefs,showmanualtags}
\usepackage[shortlabels]{enumitem}
\allowdisplaybreaks

\usepackage[margin=1.2in]{geometry}
\usepackage{array,longtable}

\theoremstyle{plain}
\newtheorem{theorem}{Theorem}[section]
\newtheorem{proposition}[theorem]{Proposition}
\newtheorem{lemma}[theorem]{Lemma}
\newtheorem{corollary}[theorem]{Corollary}
\theoremstyle{remark}
\newtheorem{remark}[theorem]{Remark}
\theoremstyle{definition}

\numberwithin{equation}{section}
\numberwithin{figure}{section}

\renewcommand{\complement}{{\mathrm c}}

\renewcommand{\P}{\mathds{P}}

\newcommand{\R}{\mathbb{R}}

\newcommand{\C}{\mathbb{C}}

\newcommand{\dd}{{\rm d}}
\newcommand{\fstop}{\, \text{.}}
\newcommand{\comma}{\; \text{,}\;\;}

\newcommand{\tonde}[1]{\left(#1\right)}
\newcommand{\ttonde}[1]{\big(#1\big)}
\newcommand{\tttonde}[1]{(#1)}

\newcommand{\abs}[1]{\left\lvert#1\right\rvert}

\newcommand{\emparg}{{\,\cdot\,}}
\newcommand{\emp}{\varnothing}
\newcommand{\eqdef}{\coloneqq}

\newcommand{\car}{\mathds{1}}
\newcommand{\scalar}[2]{\left\langle #1 , #2 \right\rangle}

\newcommand{\ttscalar}[2]{\langle #1 , #2 \rangle}
\newcommand{\norm}[1]{\left\lVert#1\right\rVert}

\newcommand{\ttnorm}[1]{\|#1\|}
\newcommand{\set}[1]{\left\{#1\right\}}
\newcommand{\tset}[1]{\big\{#1\big\}}
\newcommand{\ttset}[1]{\{#1\}}
 
\newcommand{\eps}{\varepsilon} 

\renewcommand{\ker}{{\rm ker}}
\newcommand{\im}{{\rm ran}}
\newcommand{\spec}{{\rm spec}} 
\newcommand{\spanop}{{\rm span}}
\newcommand{\interior}{{\rm int}}
\newcommand{\supp}{{\rm supp}}
\newcommand{\var}{{\rm Var}}
\newcommand{\gap}{{\rm gap}}
\newcommand{\sym}{{{\rm \scriptscriptstyle sym}}}
\newcommand{\per}{{\scriptscriptstyle\otimes}}
\newcommand{\tp}{{\scriptscriptstyle+}}
\newcommand{\tn}{{\scriptscriptstyle[n]}}
\newcommand{\tm}{{\scriptscriptstyle[m]}}
\newcommand{\tups}{{\scriptscriptstyle\Upsilon}}
\newcommand{\tneq}{{\scriptscriptstyle \neq}}
\newcommand{\tvarpi}{{\scriptscriptstyle\varpi}}
\newcommand{\tKMP}{{\rm\scriptscriptstyle KMP}}
\newcommand{\tiem}{{\rm\scriptscriptstyle IEM}}
\newcommand{\thp}{{\rm\scriptscriptstyle HP}}
\newcommand{\tavg}{{\rm\scriptscriptstyle AVG}}
\newcommand{\tbep}{{\rm\scriptscriptstyle BEP}}

\renewcommand{\l}{\lambda}
\newcommand{\cC}{\ensuremath{\mathcal C}} 
\newcommand{\cD}{\ensuremath{\mathcal D}} 
\newcommand{\cE}{\ensuremath{\mathcal E}} 
 
\newcommand{\cG}{\ensuremath{\mathcal G}}

\newcommand{\cL}{\ensuremath{\mathcal L}} 
\newcommand{\cM}{\ensuremath{\mathcal M}}

\newcommand{\cS}{\ensuremath{\mathcal S}}

\title[Aldous' phenomena in stochastic exchange models]{Aldous' spectral gap phenomena\\ in stochastic exchange models}
	\author{Pietro Caputo, Matteo Quattropani,  and Federico Sau}
\address{Pietro Caputo\\ Università degli Studi Roma Tre}
\email{pietro.caputo@uniroma3.it}
\address{Matteo Quattropani\\  Università degli Studi Roma Tre}
\email{matteo.quattropani@uniroma3.it}
\address{Federico Sau\\ Università degli Studi di Milano}
\email{federico.sau@unimi.it}

\subjclass[2020]{Primary 60K35; Secondary 60J25, 60J27, 82C22, 05C65.}
\keywords{Kipnis--Marchioro--Presutti model; stochastic exchange models;
	spectral gap; Aldous-type spectral gap identities; 
	Kac's walk; particle systems on hypergraphs.}
\begin{document}
	\maketitle
	\thispagestyle{empty}

\begin{abstract}
	We consider a broad class of exchange dynamics with arbitrary weighted graph or hypergraph update structures, which includes the Kipnis–Marchioro–Presutti (KMP) model and the energies of Kac's walk on the sphere. These are conservative continuous-spin systems whose reversible measures are Dirichlet distributions. We prove that their spectral gap is always attained by a polynomial of degree at most two in the energy variables. Equivalently, through an intertwining with the associated discrete particle systems, the dominant mode is always represented by either a one-particle or a two-particle observable. We interpret this as a manifestation of an Aldous-type spectral gap phenomenon. In particular, the resulting two-particle spectral gap identity settles a recent conjecture of Alon and Puder.
	
	We also characterize sharply when one particle suffices and when a genuinely two-particle mode dominates. Under a common rescaling of the Dirichlet parameters, segment-like geometries are precisely those for which the gap is always of one-particle type, mean-field geometries are precisely those for which it is always of two-particle type, and all other geometries exhibit a nontrivial transition.
	
	The proof relies on the analysis of the so-called hidden model, a dual representation of the original process. The mechanism underlying the degree-two reduction is remarkably robust and extends to a much broader class of stochastic exchange models and their associated particle systems, including the harmonic process, the immediate exchange model, averaging-type processes, and nonreversible variants.
	
	Finally, we analyze boundary-driven versions of the KMP model, in which the bulk exchange dynamics interacts with reservoirs, generally resulting in nonreversible processes. In contrast to the conservative setting, we prove that the spectral gap of a boundary-driven model is always of one-particle type.
\end{abstract}
\tableofcontents	
\section{Introduction, models and main results}\label{sec:intro}
Stochastic exchange models are a broad family of Markov processes in which a continuous quantity, such as energy, wealth, or mass, is randomly redistributed among agents interacting through the edges of a graph or hypergraph. These models play a prominent role across a variety of fields, ranging from statistical physics to economics and the social sciences. Important examples include the Kipnis--Marchioro--Presutti (KMP) model \cite{kipnis_heat_1982}, Kac's walk on the sphere \cite{kac_foundations_1956,diaconis_saloff-coste_bounds_2000,carlen_carvalho_loss_determination_2003,caputo_mixing_2019}, the averaging process \cite{aldous_lecture_2012}, the harmonic process \cite{frassek2021exact}, and the immediate exchange model \cite{heinsalu_patriarca_kinetic_2014}.

A fundamental feature of these dynamics is their intertwining relation with discrete particle systems, which allows one to describe the action of the generator on polynomial observables in terms of interacting random walks on the underlying (hyper)graph. In this work we prove that the spectral gap of these models is always attained by a polynomial observable of degree at most two in the energy variables. In terms of the associated particle systems, this means that the dominant relaxation mode is always described by either a one-particle or a two-particle observable, a property that we refer to as an Aldous-type phenomenon.

The name originates from a conjecture of David Aldous concerning a different dynamics, namely the interchange process on graphs, asserting that the spectral gap is always realized by a one-particle eigenfunction. This conjecture was later proved for arbitrary weighted graphs in \cite{caputo_proof_2010}. An analogous statement was then conjectured for interchange processes on arbitrary weighted hypergraphs; see, for instance, \cite[Conjecture 1.7]{bristiel_caputo_entropy_2021}. In that generality, the conjecture remains open; see \cite{alon_kozma_puder_aldous_2025,bristiel_caputo_entropy_2021} for some partial progress. See also \cite{cesi_remarks_2016,alon2026aldous,levhari2026aldous} and references therein  for some further related results.

In the setting of stochastic exchange models, however, it is known that one-particle eigenfunctions do not suffice in general, and that genuinely two-particle observables are sometimes required; see, e.g., \cite[Remark 4]{caputo_mixing_2019}. It remained, however, to understand 
whether two particles are always sufficient to characterize the spectral gap. Related phenomena and questions have recently been investigated for a broad class of stochastic exchange models in \cite{kim_quattropani_sau_spectral_2025}, for the symmetric inclusion process and the related diffusion \cite{kim_sau_spectral_2023,kim_sau_one_2024}, and for random walks on the unitary group generated by weighted hypergraphs in \cite{alon2026aldous}. These works provide substantial progress and proved the validity of Aldous-type phenomena in several special cases, but the question of whether two particles always suffice remains unresolved.

 The present work settles this issue in the affirmative for stochastic exchange models, answering several questions raised in \cite{kim_quattropani_sau_spectral_2025}. It also proves a conjecture of Alon and Puder stated in \cite[Conjecture 1.15]{alon2026aldous}, yet, not addressing the more general statement  concerning random walks on the unitary group formulated in \cite[Conjecture 1.14]{alon2026aldous}.

\subsection{KMP models: Aldous' spectral gap phenomenon}\label{sec:KMP-intro}
Let  $n\ge 2$, $[n]\eqdef \{1,\ldots, n\}$, and
consider the simplex	
\begin{equation}\textstyle
		\Omega\eqdef \tset{\eta \in [0,1]^n:\sum_{x\in [n]}\eta_x=1}\fstop
\end{equation}
In the original KMP model,  $(\eta_x)_{x\in [n]}$ represent the energies in a  one-dimensional chain of oscillators, and at rate one independently each pair $(\eta_x,\eta_{x+1})$, $x=1,\dots,n-1$, undergoes exchange updates 
\begin{equation}\label{eq:2_up}
(\eta_x,\eta_{x+1}) \longmapsto (U(\eta_x+\eta_{x+1}), (1-U)(\eta_x+\eta_{x+1}))\comma
\end{equation}
where $U$ is a uniform random variable in $[0,1]$. In the absence of external reservoirs, the total energy $\sum_{x=1}^n \eta_x=1$ is conserved---hence, we sometimes speak of the \textit{conservative} ${\rm KMP}$ model---and the equilibrium measure is simply the uniform distribution over $\Omega$. Convergence to stationarity for this model was extensively analyzed in \cite{caputo_mixing_2019}, which, among other things, proved that the spectral gap is given by   
$\lambda=1 - \cos(\pi/n)$ with the linear eigenfunction
\begin{equation}\label{eq:gap1_seg}
\eta\longmapsto \sum_{y=1}^{n-1}\sin(\pi y/n) \sum_{x=1}^y\left(\eta_x -\tfrac1n\right)\,.
\end{equation}
If one replaces the graph segment $\{1,\dots,n\}$ by the complete graph over $n$ vertices, computations as in \cite{carlen_carvalho_loss_determination_2003,caputo_kac2008} show that the spectral gap is now given by $\lambda=(n+1)/3$  with the quadratic eigenfunction
\begin{equation}\label{eq:gap3_cg}
\eta\longmapsto \sum_{x=1}^{n}\left(\eta_x^2-  \tfrac2{n(n+1)}\right) \fstop
\end{equation}

In this work, we consider a generalized version of the 
KMP model where the underlying graph is a completely arbitrary weighted hypergraph, and the redistribution variable $U$ in \eqref{eq:2_up} is replaced by a general inhomogeneous Dirichlet distribution.
While there are no closed formulas for the spectral gap and the associated eigenfunction in this generality, we shall prove that, for the conservative KMP model, the dominant mode is always a polynomial of degree at most two. In contrast, we show that polynomials of degree one always suffice for its non-conservative variant; see Section \ref{sec:reservoirs-intro}.

\subsubsection{Setting and first main result}
Fix positive site weights $\alpha=(\alpha_x)_{x\in [n]}$, and nonnegative block weights $w=(w_B)_{B\subseteq [n]}$. The  \textit{$(\alpha,w)$-${\rm KMP}$ model} is the process where each block $B\subseteq[n]$ is updated independently with rate $w_B$, and when an update occurs at $B$, the variables $\eta^B=(\eta_x)_{x\in B}$ undergo the transition  
\begin{equation}\label{eq:B_up}
	\textstyle
\eta^B \longmapsto  \eta(B) U\comma\qquad
		U=(U_x)_{x\in B}\sim {\rm Dir}(\alpha^B)\comma \eta(B)\eqdef\sum_{x\in B}\eta_x\fstop
	\end{equation}
Here, ${\rm Dir}(\alpha^B)$, with $\alpha^B=(\alpha_x)_{x\in B}$, denotes the Dirichlet distribution on the probability simplex over $B$, with density proportional to $\prod_{x\in B}u_x^{\alpha_x-1}$, or equivalently the law of $(V_x/V(B))_{x\in B}$, where the $V_x$ are independent ${\rm Gamma}(\alpha_x,1)$ random variables, with density proportional to $v^{\alpha_x-1}e^{-v}$ on $(0,\infty)$, and $V(B)\eqdef\sum_{x\in B}V_x$. The Dirichlet distribution is the multivariate analogue of the Beta distribution; in particular, if $B=\{x,y\}$, then $U_x\sim{\rm Beta}(\alpha_x,\alpha_y)$ and $U_y=1-U_x\sim {\rm Beta}(\alpha_y,\alpha_x)$.

	Note that when $B=\{x,x+1\}$ and $\alpha_x=\alpha_{x+1}= 1$, \eqref{eq:B_up} coincides with the exchange update in \eqref{eq:2_up}. 
 Note also that if $\mu= {\rm Dir}(\alpha)$ denotes the Dirichlet distribution on $\Omega$ with parameters $\alpha$, then the update at $B$ replaces the energy variables $\eta^B=(\eta_x)_{x\in B}$ by a sample from their conditional distribution under $\mu$ given the variables $\eta^{B^\complement}=(\eta_{y})_{y\notin B}$.  
 In other words, 
$(\alpha,w)$-${\rm KMP}$ is a block heat-bath dynamics with respect to $\mu$ and, more precisely, the Markov process with state space $\Omega\subseteq \R^n$ with infinitesimal generator
	\begin{equation}\label{eq:gen-KMP}
		\cL f(\eta)=\sum_{B\subseteq [n]} w_B\left(\mu_B f(\eta)-f(\eta)\right)\comma\qquad \eta\in \Omega\comma
	\end{equation}
	where $ f$ is any bounded measurable function on $\Omega$, and 
	$\mu_B f(\eta)\eqdef \mu(f\mid \eta^{B^\complement})$
is the conditional expectation with respect to $\mu$ given $\eta^{B^\complement}$. Being a finite linear combination of orthogonal projections, $\cL$ is a bounded self-adjoint operator on the (real) Hilbert space $L^2(\mu)=L^2(\Omega,\mu)$. The spectral gap is the second largest eigenvalue of $-\cL$, which is given by 
\begin{equation}\label{eq:gapac}
\gap(\cL)=\inf_{\substack{f\in L^2(\mu)\\
		f\neq 0, \,\mu(f)=0}}\frac{\ttscalar{f}{-\cL f}_\mu}{\ttscalar{f}{f}_\mu}\fstop
	\end{equation}
	where $\ttscalar{f}{g}_\mu=\mu(fg)$ denotes the scalar product in  $L^2(\mu)$. Equivalently, $\gap(\cL)$ is the largest constant $\l\ge 0$ such that, for every $f\in L^2(\mu)$,
	\begin{equation}\label{eq:gap_decay}
\|e^{t\cL}f - \mu(f)\|_\mu\le e^{-\l t}\|f - \mu(f)\|_\mu\,,\qquad t\ge 0\fstop
	\end{equation}
	where $(e^{t\cL})_{t\ge 0}$ denotes the semigroup generated by $\cL$, and $\|\emparg \|_\mu$ is the norm in $L^2(\mu)$. 
	It is possible to verify---and also follows from Theorem \ref{th:gap-1-2-quantitative} or Remark \ref{rem:positivity-gap} below---that
	\begin{equation}\label{eq:equiv-gap>0}
		\gap(\mathcal L)>0
		\qquad\Longleftrightarrow\qquad
		w\ \text{induces a connected hypergraph}\fstop
	\end{equation}
Here, we refer to the standard notion of hypergraph connectedness; see \eqref{eq:connected-hypergraph}.

Let $\mathscr P_k$  denote the set of all polynomials of degree at most $k\ge1$  in the variables $\eta\in {\Omega}$, that is, the linear span of all monomials $\eta_{x_1}\cdots\eta_{x_j}$, with $x_i\in[n]$ and $j\le k$, while $\mathscr P_0$ denotes the space of constant functions. A simple computation shows that
\begin{equation}\label{eq:poly-invariance}
\cL \mathscr P_k\subseteq \mathscr P_k\comma\qquad k\ge 0\comma
\end{equation} that is, $\mathscr P_k$ is left invariant by $\cL$. As each $\mathscr P_k$ is a finite dimensional vector space, the invariance in \eqref{eq:poly-invariance},  the density of polynomials in $L^2(\mu)$, and the self-adjointness of $\cL$ guarantee that
the spectrum of $\cL$  is pure point, and that all eigenfunctions are polynomials. Moreover, 
 \begin{equation}\label{eq:gap-inf}
		\gap(\cL) = \inf_{k\ge 1}\gap(\cL|_{\mathscr P_k})\comma
	\end{equation}
where $ \gap(\cL|_{\mathscr P_k})$ is obtained as in \eqref{eq:gapac}
with the restriction that $f\in\mathscr P_k$. 
Our first main result states that one may actually restrict to $k=2$. 
\begin{theorem}[KMP]\label{th:main-KMP}
	For all positive site weights $\alpha$ and all nonnegative block weights $w$, the spectral gap of the $(\alpha,w)$-${\rm KMP}$ model satisfies
	\begin{equation}
		\gap(\cL)=\gap(\cL|_{\mathscr P_2})\fstop
	\end{equation}
\end{theorem}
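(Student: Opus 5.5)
We will show that for every $k\ge 3$, every eigenvalue of $-\cL$ on $\mathscr P_k\ominus\mathscr P_{k-1}$ (the orthogonal complement of $\mathscr P_{k-1}$ in $\mathscr P_k$ for $\ttscalar{\cdot}{\cdot}_\mu$, which is $\cL$-invariant by \eqref{eq:poly-invariance} and self-adjointness) is at least the smallest eigenvalue of $-\cL$ on $\mathscr P_2\ominus\mathscr P_0$. Together with \eqref{eq:gap-inf}, this gives the theorem.

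\textbf{Step 1: reduction to a duality.} Through the intertwining with the discrete particle system, the action of $\cL$ on degree-$k$ polynomials is encoded by a generator acting on configurations of $k$ particles on $[n]$. These are the monomials $\eta^\xi/\mu(\eta^\xi)$ with $|\xi|=k$, whose dynamics is a Dirichlet--multinomial heat bath on blocks. Equivalently, we pass to the hidden model: $k$ labelled particles with positions $(x_1,\dots,x_k)$ and additional internal variables (Gamma/uniform clocks), built so that the $k$-particle process is the symmetrized projection of a product-like process.

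\textbf{Step 2: an octopus-type comparison.} We aim for an operator inequality on the $k$-particle space,
\begin{equation}
-\cL^{(k)} \;\ge\; \tfrac{c_k}{\,\cdot\,}\,\sum_{i<j} \bigl(-\cL^{(2)}_{ij}\bigr)\quad\text{restricted to functions orthogonal to lower degrees,}
\end{equation}
where $\cL^{(2)}_{ij}$ is the two-particle dynamics seen by the pair $(i,j)$. The strategy is to mimic the proof of Aldous' conjecture (Caputo--Liggett--Richthammer), removing one particle at a time. Concretely, we would establish $\gap_k\ge \min(\gap_{k-1},\gap_2)$ by an induction using a network-reduction/octopus inequality. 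Each block update on $k$ particles is written, via the Dirichlet aggregation property, as a convex combination of updates acting on a single particle relative to the remaining $k-1$ ones. Taking conditional expectation over one particle's position then yields a $(k-1)$-particle process plus a correction. The correction is a nonnegative form, and it is controlled by a two-particle form in which the removed particle interacts with one of the others.

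\textbf{Step 3: base and conclusion.} The degree-one and degree-two sectors are handled explicitly. Orthogonality to $\mathscr P_{k-1}$ forces the candidate eigenfunctions to be sums of ``fully centered'' multi-particle functions, so a Rayleigh quotient lower bound by the pairwise forms gives eigenvalues at least $\gap(\cL|_{\mathscr P_2})$. Iterating from $k$ down to $2$ concludes.

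\textbf{Main obstacle.} The octopus-type inequality of Step 2 is the hard part. Unlike the interchange process, particles here are exchangeable and block updates randomize jointly with Dirichlet weights, so the energy-conservation structure must be exploited. We would first try to prove it in the hidden model: realize the Dirichlet resampling by independent Gamma variables, so that the $k$-particle generator becomes a sum over particles of commuting-ish single-particle moves, conditioned on the sum. We would then apply a variance decomposition (Efron--Stein) over particle labels. The risk is that the conditioning on the total creates cross terms that are only controlled by pairwise (not single-particle) forms. That is exactly why degree two, and not one, appears.
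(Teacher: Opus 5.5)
There is a genuine gap. The decisive step of your plan, the octopus-type inequality of Step 2, is never established. It is not even formulated: the constant is a placeholder, and it is unclear what $\cL^{(2)}_{ij}$ means when a block update moves all particles in the block jointly. Moreover, the heuristics you give for it do not work.

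For $\psi\in\mathscr H_{k,\ast}$ (the pure degree-$k$ sector) one has $\mathfrak b_{k-1,i}\psi=0$ for every $i$, i.e., the conditional expectation of $\psi$ given any $k-1$ of the particles vanishes. So averaging out one particle leaves no $(k-1)$-particle part to induct on. Likewise, every projection of $\psi$ onto functions of a pair of particles is zero, which makes a Rayleigh-quotient bound through pairwise marginals in Step 3 vacuous.

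The natural alternative is to freeze the other $k-2$ particles and apply a two-particle Poincar\'e inequality. This does bound the $k$-particle Dirichlet form from below. However, the conditional law of the remaining pair is the two-particle measure with site weights $\alpha+\mathfrak n(x_3,\ldots,x_k)$, so you only get $\gap_{k,\ast}(\alpha,w)\ge\min_{\mathfrak n}\gap_{2,\ast}(\alpha+\mathfrak n,w)$. This is strictly weaker than what you need, because two-particle gaps are not monotone under componentwise increases of the weights. On the triangle ($n=3$, $w_B=\car_{|B|=2}$), $\gap_{2,\ast}((1,1,1),w)=4/9$, while $\gap_{2,\ast}((s,s,1),w)\to1/3$ as $s\to\infty$. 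Placing $m$ particles on each of two sites realizes $s=1+m$. So a Dirichlet-form comparison of the kind you propose cannot by itself produce $\gap_{2,\ast}(\alpha,w)$.

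The missing idea is to compare \emph{decay rates via positivity} rather than Dirichlet forms. Work in the hidden model: the same random matrices acting on column vectors, $\theta\mapsto K_B^U\theta$ on $\R^n$, intertwined with $L_k$ through $\psi\mapsto\widetilde\psi$. This is a Markov dynamics on $\R^n$, not a particle construction with Gamma clocks. There the sector $\mathscr P_{k,\ast}$ corresponds to the space $\mathscr R_{k,\ast}$ of homogeneous degree-$k$ polynomials invariant under $\theta\mapsto\theta+s\mathbf1$.

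Any such $g$ satisfies $|g(\theta)|\le C\var_\pi(\theta)^{k/2}\le C(b-a)^{k-2}\var_\pi(\theta)$ on every cube $[a,b]^n$. Cubes are invariant, since each coordinate of $K_B^U\theta$ is a convex combination of those of $\theta$. The hidden semigroup is Markov, so if $-\mathscr L g=\lambda g$, then $e^{-\lambda t}|g(\theta)|\le e^{t\mathscr L}|g|(\theta)\le C(b-a)^{k-2}e^{t\mathscr L}\var_\pi(\theta)$. Since $\var_\pi$ lies in the finite-dimensional $\mathscr L$-invariant space $\mathscr R_{2,\ast}$, Gelfand's formula makes the right-hand side $O(e^{-(\gap_{2,\ast}(\alpha,w)-\varepsilon)t})$, whence $\lambda\ge\gap_{2,\ast}(\alpha,w)$.

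Degree two enters because $\var_\pi$ is a nonnegative quadratic vanishing exactly on the absorbing flat configurations. It does not come from cross terms in an Efron--Stein decomposition.
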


\subsubsection{Particle systems and second main result}\label{sec:KMP-particle-intro}
As a starting point, we observe that the invariance property \eqref{eq:poly-invariance} admits a sharper formulation: the action of $\cL$ on polynomials transfers to a particle system's generator acting on their coefficients. We now recall some basic facts, postponing all the details to Section \ref{sec:prelim-particle-systems}.

Fix $k\ge1$. Since $\sum_{x\in[n]}\eta_x=1$ on $\Omega$, every element of $\mathscr P_k$ can be represented in homogeneous degree $k$ form. Thus, any polynomial in $\mathscr P_k$ takes the form
\begin{equation}\label{eq:intro-widehat-psi}
	\widehat\psi(\eta)
	\eqdef
	\sum_{x_1,\ldots,x_k\in[n]}
	\psi(x_1,\ldots,x_k)\,
	\eta_{x_1}\cdots \eta_{x_k}
	\comma
	\qquad \eta\in\Omega\comma
\end{equation}
for some $\psi :[n]^k\to \R$.
Then, as shown in Proposition \ref{pr:particle-system-intertwining}, for every $\psi\in\R^{[n]^k}$,
\begin{equation}\label{eq:intro-intertwining}
	\cL\widehat\psi
	=
	\widehat{L_k\psi}\comma
\end{equation}
where $L_k$ denotes the generator of a labeled $k$-particle system on $[n]^k$.
The dynamics is as follows. Start with $k$ labeled particles at positions $x_1,\ldots,x_k\in[n]$. Whenever a block $B$ is selected, at rate $w_B$, sample
$U\sim{\rm Dir}(\alpha^B)$. 
Each particle currently sitting in $B$ is then resampled inside $B$, conditionally independently given $U$, according to
\begin{equation}
	x_i\in B
	\longmapsto
	y\in B
	\quad\text{with probability }U_y\comma
\end{equation}
whereas particles outside $B$ are kept fixed. 
After averaging over the block updates and over the Dirichlet vectors, one obtains the Markov generator $L_k$; see \eqref{eq:gen-particle} below for the explicit expression. We call the resulting process on $[n]^k$ the ${\rm KMP}$ labeled-particle system or, more precisely, the discrete $(k,\alpha,w)$-KMP model.

When $k=1$, the dynamics reduces to a random walk. Indeed, writing
\begin{equation}\label{eq:RW-rates}
	\alpha(B)
	\eqdef
	\sum_{z\in B}\alpha_z
	\comma B\subseteq[n]\comma
	\qquad
	r_{xy}(\alpha,w)
	=
	\alpha_y
	\sum_{B\ni x,y}
	\frac{w_B}{\alpha(B)}
	\comma
	 x\neq y\comma
\end{equation}
the identity
$
	\mathbb E_{U\sim{\rm Dir}(\alpha^B)}[U_y]
	=
	\frac{\alpha_y}{\alpha(B)}
	$, 
	$ y\in B$,
shows that the single particle jumps on $[n]$ with rates given in \eqref{eq:RW-rates}. For $k\ge2$, the particles genuinely interact: several may jump during the same block update, and their transition probabilities are sampled from the same random vector $U$.

Just as $\cL$ in \eqref{eq:gen-KMP} is a block heat-bath generator with respect to $\mu={\rm Dir}(\alpha)$, the $k$-particle generator $L_k$ admits an analogous interpretation. To identify the corresponding reference measure, let $\eta\sim\mu$ and, conditionally on $\eta$, let $X_1,\ldots,X_k$ be independent with common law $\eta$ on $[n]$. Their joint law $\mu_k$ on $[n]^k$ is given, for $x_1,\ldots,x_k\in[n]$, by
\begin{equation}\label{eq:mu-k}
	\mu_k(x_1,\ldots,x_k)
	\eqdef
	\mu(\eta_{x_1}\cdots\eta_{x_k})
	=
	\frac{\Gamma(\alpha_0)}{\Gamma(\alpha_0+k)}
	\prod_{x\in[n]}
	\frac{\Gamma(\alpha_x+\mathfrak n_x(x_1,\ldots,x_k))}
	{\Gamma(\alpha_x)}
	\comma
\end{equation}
where $\alpha_0\eqdef\alpha([n])=\sum_{x\in[n]}\alpha_x$ and
$\mathfrak n_x(x_1,\ldots,x_k)\eqdef\sum_{i=1}^k\car_{\set{x_i=x}}$. Here, $\Gamma(a)$, $a>0$, is the gamma function, satisfying $\Gamma(a+1)=a\Gamma(a)$. The last identity in \eqref{eq:mu-k} follows by integrating $\eta_{x_1}\cdots\eta_{x_k}$ against the Dirichlet density and using, for $\beta_x>0$,
\begin{equation}
	\int_\Omega
	\prod_{x\in[n]}\eta_x^{\beta_x-1}\,\dd\eta
	=
	\frac{\prod_{x\in[n]}\Gamma(\beta_x)}
	{\Gamma(\sum_{x\in[n]}\beta_x)}
	\comma
\end{equation}
where $\dd\eta$ denotes the Lebesgue measure on $\Omega$ in the coordinates $(\eta_1,\ldots,\eta_{n-1})\in[0,1]^{n-1}$, with $\eta_n=1-\sum_{x=1}^{n-1}\eta_x$. 
That $L_k$ is indeed the block heat-bath generator with respect to $\mu_k$ follows from \eqref{eq:mu-k} and the corresponding heat-bath property of $\cL$. Namely, conditionally on the particles outside a block $B$, the conditional resampling in \eqref{eq:B_up} of $\eta^B$ under $\mu$ and the identity $\mu_k(x_1,\ldots,x_k)=\mu(\eta_{x_1}\cdots\eta_{x_k})$ imply that the particles lying in $B$ are redistributed independently according to a common random probability vector $U\sim{\rm Dir}(\alpha^B)$. This is exactly the $B$-update defining $L_k$. Hence, $\mu_k$ is reversible for $L_k$, and $L_k$ is self-adjoint on $L^2(\mu_k)=L^2([n]^k,\mu_k)$. Consequently, $L_k$ has all real eigenvalues. In analogy with \eqref{eq:gap-inf}, we define $\gap(L_k)$ as the second smallest eigenvalue of $-L_k$.

As the representation in \eqref{eq:intro-widehat-psi} depends only on the symmetric part of $\psi:[n]^k\to \R$, the intertwining relation in \eqref{eq:intro-intertwining} is fully informative only for symmetric functions $\psi$ in
\begin{equation}\label{eq:coeff-space-sym}
\mathscr H_k
	\eqdef
	\tset{
	\psi
	:
	\psi(x_{\varsigma(1)},\ldots,x_{\varsigma(k)})
	=
	\psi(x_1,\ldots,x_k)\ \text{for all}\  \varsigma\in\mathfrak S_k,  x_1,\ldots, x_k\in [n]
}\comma
\end{equation}
where $\mathfrak S_k$ denotes the symmetric group on $[k]$.
Since $L_k$ clearly commutes with permutations of the labels, this subspace is $L_k$-invariant, and
\begin{equation}\label{eq:L-k-sym}
	L_k^\sym
	\eqdef
	L_k|_{\mathscr H_k}	
\end{equation}
corresponds to the generator of the unlabeled version of the same particle system. Equivalently, $L_k^\sym$ describes the $L_k$-dynamics in which labels are forgotten and only the particle-occupation numbers at each site are retained.

As we shall prove in Proposition \ref{pr:particle-system-intertwining}, the intertwining \eqref{eq:intro-intertwining} shows that $\cL|_{\mathscr P_k}$ and $L_k^\sym$ are isospectral, see \eqref{eq:isospectral-basic}; thus, 
\begin{equation}\label{eq:intro-gap-particle-identification}
	\gap(\cL|_{\mathscr P_k})
	=
	\gap(L_k^\sym)
	\comma
	\qquad k\ge1\fstop
\end{equation}
Hence, Theorem \ref{th:main-KMP} can be equivalently rewritten as the following chain of identities:
\begin{equation}\label{eq:intro-chain}
	\gap(L_k^\sym)
	=
	\gap(\cL|_{\mathscr P_k})
	=
	\gap(\cL|_{\mathscr P_2})
	=
	\gap(L_2^\sym)
	\comma
	\qquad k\ge2\fstop
\end{equation}
 In other words, for arbitrary positive site weights $\alpha$ and nonnegative block weights $w$, the unlabeled particle systems satisfy a two-particle spectral gap identity. In the special case $\alpha\equiv1$, this is precisely the particle-system identity predicted by \cite[Conjecture 1.15]{alon2026aldous}; hence, Theorem \ref{th:main-KMP} settles that conjecture.

We now turn to the labeled dynamics. The operator $L_k^\sym$ is obtained by restricting $L_k$ to the subspace of symmetric observables; this restriction is proper exactly when $k\ge2$. Consequently, the spectrum of $L_k^\sym$ is contained in that of $L_k$, and, in particular,
\begin{equation}\label{eq:intro-labeled-unlabeled-ineq}
	\gap(L_k)
	\le
	\gap(L_k^\sym)
	\comma
	\qquad k\ge1\fstop
\end{equation}
Our second main result shows that this inequality always saturates to an identity.

\begin{theorem}[Discrete KMP]\label{th:disc-ell-KMP}
	For all positive site weights $\alpha$ and all nonnegative block weights $w$, the spectral gap of the discrete $(k,\alpha,w)$-${\rm KMP}$ model satisfies
	\begin{equation}
		\gap(L_k)
		=
		\gap(L_k^\sym)
		\comma
		\qquad k\ge1\fstop
	\end{equation}
	Together with \eqref{eq:intro-chain}, this yields
	\begin{equation}\label{eq:intro-labeled-chain}
		\gap(L_k)
		=
		\gap(L_2^\sym)
		\comma
		\qquad k\ge2\fstop
	\end{equation}
\end{theorem}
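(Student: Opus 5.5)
We prove $\gap(L_k)\ge\gap(L_k^\sym)$ for every $k$; the reverse inequality is \eqref{eq:intro-labeled-unlabeled-ineq}. The approach is to decompose $L^2(\mu_k)$ under the action of $\mathfrak S_k$ on labels. Since $L_k$ commutes with label permutations, each isotypic component is invariant. Inside each component, we reduce to the symmetric sector of a system with fewer particles.

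\textbf{Step 1: the projection structure.} For $j<k$, let $\Pi_{k,j}$ be the conditional expectation from $L^2(\mu_k)$ to functions depending only on the first $j$ coordinates (and its permuted versions). By construction, the marginal of $\mu_k$ on $j$ coordinates is $\mu_j$. Moreover, particles outside a block do not move, and particles inside a block are resampled conditionally independently given $U$. Hence the first $j$ particles of the $k$-particle system form an autonomous $j$-particle system. This gives the intertwining $L_k\,\iota_{j}=\iota_{j}\,L_j$, where $\iota_j$ is the embedding $\psi\mapsto \psi(x_1,\ldots,x_j)$. By self-adjointness, $\iota_j^*L_k=L_j\iota_j^*$ as well. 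So functions of any subset of coordinates form invariant subspaces, and the orthogonal complement $\mathscr V_k$ of $\sum_{|A|<k}\{\text{functions of }x_A\}$ is also invariant. This is the standard ``new components'' decomposition: $L^2(\mu_k)=\bigoplus_{A\subseteq[k]}\mathscr V_A$, with each $\mathscr V_A$ invariant and unitarily equivalent to $\mathscr V_{|A|}$ for the $|A|$-particle system. Consequently,
\[
\spec(L_k)=\bigcup_{j\le k}\spec(L_k|_{\mathscr V_j}).
\]
The same decomposition restricted to symmetric functions gives $\spec(L_k^\sym)=\bigcup_{j\le k}\spec(L_k|_{\mathscr V_j\cap\mathscr H_j})$.

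\textbf{Step 2: comparison on $\mathscr V_j$.} It remains to show that every eigenvalue of $-L_j$ on $\mathscr V_j$, including non-symmetric eigenfunctions, is at least $\gap(L_2^\sym)$. Together with Theorem \ref{th:main-KMP} and \eqref{eq:intro-chain}, this gives $\gap(L_k)\ge\gap(L_2^\sym)=\gap(L_k^\sym)$ for $k\ge2$. The case $k=1$ is trivial.

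For this lower bound I would use the mechanism behind Theorem \ref{th:main-KMP}, namely the hidden model / dual representation. That argument presumably gives a Dirichlet form lower bound of the form $\mathcal E_k(f)\ge \gap(L_2^\sym)\,\var_{\mu_k}(f)$ on $\mathscr P_k$, obtained by a decomposition that does not use symmetry of the coefficient function. If so, it applies verbatim to labeled observables.

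Concretely, I would try to realize the labeled system as the KMP dynamics on an enlarged structure: replace each site by colored copies, or equivalently consider polynomials in the product variables $\eta^{(1)},\ldots,\eta^{(k)}$ coupled through the same $U$. In that setting a labeled function $\psi$ corresponds to a multilinear polynomial, and the labeled generator becomes a restriction of a KMP-type operator to which the degree-two reduction applies.

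\textbf{Alternative route and main obstacle.} A second route is via a comparison of Dirichlet forms. Each isotypic component of $\mathfrak S_j$ acting on $\mathscr V_j$ can be mapped into the symmetric sector of a larger system by tensoring with an auxiliary symmetric structure. One would then check that this map intertwines the generators and is injective, which would give $\spec(L_j|_{\mathscr V_j})\subseteq\spec(L_{m}^\sym)$ for some $m$ and hence the bound.

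Producing such an intertwining, or verifying that the hidden-model argument is insensitive to label symmetry, is the main obstacle. I would first try the multilinear-polynomial embedding. Under it, a non-symmetric $\psi$ corresponds to a polynomial in $k$ distinguishable energy fields updated simultaneously, and the Step 1 decomposition together with Theorem \ref{th:main-KMP} should close the argument.
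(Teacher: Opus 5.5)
Your Step 1 is sound and is essentially the paper's own reduction. The spaces $\mathscr V_A$ are invariant and span $L^2(\mu_k)$. They are not mutually orthogonal, since $\mu_k$ is not a product measure, but invariance plus spanning is all you need. This gives $\gap(L_k)=\min_{1\le j\le k}\gap_{j,\ast}^\per(\alpha,w)$, so everything hinges on the bound $\gap_{j,\ast}^\per(\alpha,w)\ge\gap_{2,\ast}(\alpha,w)$ for $j\ge2$, i.e.\ \eqref{eq:target-gap-star}. But Step 2, which carries the entire content of the theorem, is not proved, and the routes you sketch do not work as stated:
\begin{itemize}
\item The inequality $\mathcal E_k(f)\ge\gap(L_2^\sym)\var_{\mu_k}(f)$ that the hidden-model argument ``presumably'' gives is, for labeled $f$, exactly the claim to be proved. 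Saying ``if so'' therefore assumes the conclusion.
\item The argument behind Theorem \ref{th:main-KMP} is not a Dirichlet-form decomposition, and it cannot ``apply verbatim''. It works with the diagonal polynomial $\widetilde\psi(\theta)$, which only sees $\psi_\sym$; for instance, an antisymmetric eigenfunction has $\widetilde\psi\equiv0$.
\item The synchronized $k$-copy system is not a KMP model to which Theorem \ref{th:main-KMP} applies. Viewed as an exchange model on $[n]\times[k]$ with block-diagonal updates, it is disconnected (each copy's mass is conserved), so the degree-two reduction only returns gap zero.
\item The isotypic-embedding route is never constructed.
\end{itemize}

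The missing idea is to run the Lyapunov argument at the multilinear level, in the synchronized tensor hidden model $\mathscr L_k^\per$, where all copies are hit from the left by the same block $B$ and vector $U$. Under the tilde map, $\mathscr H_{k,\ast}^\per$ corresponds to multilinear forms $G$ invariant under $\theta^{(i)}\mapsto\theta^{(i)}+s\mathbf1$ in each slot separately (Proposition \ref{pr:Rk-star}). Hence $|G|\le C\prod_{i}\var_\pi^{1/2}(\theta^{(i)})$ (Lemma \ref{lem:G-Var}). Suppose $\mathscr L_k^\per G=-\lambda G$, and let $(\theta^{(1)}_t,\ldots,\theta^{(k)}_t)$ be the synchronized process started in $([a,b]^n)^k$. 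Positivity of the semigroup then gives
\[
e^{-\lambda t}|G(\theta^{(1)},\ldots,\theta^{(k)})|\le C(b-a)^{k-2}\,\mathbb E\big[\var_\pi^{1/2}(\theta^{(1)}_t)\var_\pi^{1/2}(\theta^{(2)}_t)\big]\le C(b-a)^{k-2}\prod_{i=1}^2\big(e^{t\mathscr L}\var_\pi(\theta^{(i)})\big)^{1/2}.
\]
The first inequality uses invariance of $[a,b]^n$ to discard $k-2$ factors. The second uses Cauchy--Schwarz to decouple the two remaining copies, each of which marginally follows the one-copy hidden model. Since $\var_\pi\in\mathscr R_{2,\ast}$, Gelfand's formula then gives $\lambda\ge\gap_{2,\ast}(\alpha,w)$. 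Label symmetry is never used: the asymmetry is carried by the multilinear structure, and Cauchy--Schwarz removes the coupling between copies. Without this argument, or an explicit intertwining for your alternative route, your proposal establishes only the trivial inequality $\gap(L_k)\le\gap(L_k^\sym)$.
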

\begin{remark}
 The identity in \eqref{eq:intro-labeled-chain} in Theorem \ref{th:disc-ell-KMP} is a strengthening of Theorem \ref{th:main-KMP}: the identity in \eqref{eq:intro-chain} identifies the gap over symmetric observables as being governed by the two-particle system,	 while \eqref{eq:intro-labeled-chain} further proves that non-symmetric observables contribute no smaller eigenvalues.
 \end{remark}

The spectral gap identity in \eqref{eq:intro-labeled-chain} shows that at most two particles always suffice to determine the ${\rm KMP}$ spectral gap. As we shall see, both this phenomenon and the mechanism behind its proof are remarkably robust, extending to a broad class of stochastic exchange models and their associated particle systems; see Sections \ref{sec:SEM-intro} and \ref{sec:SEM}. 

For ${\rm KMP}$ models, however, these ideas can be sharpened considerably: according to the weights $\alpha$ and the geometry encoded by $w$, they determine whether the gap is already attained by a degree-one polynomial or instead requires a genuinely degree-two polynomial, thereby revealing a sharp dichotomy between one-particle and two-particle domination. We develop this characterization in the next subsection.

\subsection{KMP models: one- \emph{vs.}\ two-particle gaps}\label{sec:comparison-intro}
 Given the weights
$\alpha=(\alpha_x)_{x\in[n]}$, 
$w=(w_B)_{B\subseteq[n]}$, and the corresponding $(\alpha,w)$-${\rm KMP}$
generator $\cL$, here we write (cf.\ \eqref{eq:intro-gap-particle-identification})
\begin{equation}\label{eq:def-gap-k-alpha-w}
	\gap_k(\alpha,w)
	\eqdef
	\gap(\cL|_{\mathscr P_k})
	\comma
	\qquad k\ge1\comma
\end{equation} 
Note that, since $\mathscr P_k\subseteq \mathscr P_{k+1}$,  one has
$\gap_k(\alpha,w)\ge \gap_{k+1}(\alpha,w)$.
To isolate the source of possible strictness in this inequality, we separate the pure degree-$k$  component in $L^2(\mu)$ from the lower-degree one, and set
\begin{equation}\label{eq:P-k-star}
	\mathscr P_{k,\ast}
	\eqdef
	\mathscr P_k\cap\mathscr P_{k-1}^{\perp_\mu}
	\comma
	\qquad k\ge1\fstop
\end{equation}
Self-adjointness of $\cL$, together with the invariance of each $\mathscr P_k$, implies $\cL\mathscr P_{k,\ast}\subseteq\mathscr P_{k,\ast}$. Consequently, \eqref{eq:gap-inf} refines to
\begin{equation}\label{eq:gap-inf-star-intro}
	\gap(\cL)
	=
	\inf_{k\ge1}\gap_{k,\ast}(\alpha,w)\comma
\end{equation}
where 	$\gap_{k,\ast}(\alpha,w)
\eqdef
\gap(\cL|_{\mathscr P_{k,\ast}})$ is defined as in \eqref{eq:gapac} with $f\in \mathscr P_{k,\ast}$.
Since $\gap_{1,\ast}(\alpha,w)=\gap_1(\alpha,w)$ and \begin{equation}\gap_k(\alpha,w)=\min\{\gap_{k-1}(\alpha,w),\gap_{k,\ast}(\alpha,w)\}\comma\qquad k\ge 2\comma
\end{equation}  Theorem
\ref{th:main-KMP} reduces the infimum in \eqref{eq:gap-inf-star-intro} to
\begin{equation}\label{eq:intro-gap-star}
	\gap(\cL)
	=
	\min\{
	\gap_1(\alpha,w),
	\gap_{2,\ast}(\alpha,w)
	\}\fstop
\end{equation}
Thus, the gap is always either of one-particle type or of genuinely
two-particle type. 

\subsubsection{Quantitative comparison}
Our first result gives a quantitative comparison between the two quantities on the right-hand side of \eqref{eq:intro-gap-star}.

\begin{theorem}\label{th:gap-1-2-quantitative}
For all positive site weights $\alpha$ and all nonnegative block weights $w$,
	\begin{equation}\label{eq:gap-1-2-quantitative}
		\frac{\alpha_{w,{\rm min}}}{1+\alpha_{w,{\rm min}}}
		\left(1+\frac1{\alpha_0}\right)
		\gap_1(\alpha,w)
		\le
		\gap_{2,\ast}(\alpha,w)
		\le
		2\gap_1(\alpha,w)
		\comma
	\end{equation}
	where $\alpha_{w,{\rm min}}\eqdef \min_{B\,:\, w_B>0}\alpha(B)$. Further, if $\gap_1(\alpha,w)>0$, the second inequality is strict.
\end{theorem}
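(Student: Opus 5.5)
The plan is to work throughout on the two-particle side, using the intertwining \eqref{eq:intro-intertwining} and the isospectrality \eqref{eq:intro-gap-particle-identification}. Under this correspondence, $\mathscr P_{2,\ast}$ becomes the subspace of symmetric $\psi\in\mathscr H_2$ whose image $\widehat\psi$ is $\mu$-orthogonal to $\mathscr P_1$. Write $\lambda\eqdef\gap_1(\alpha,w)$, and let $\varphi$ be an eigenfunction of the one-particle generator $L_1$ with $-L_1\varphi=\lambda\varphi$ and $\sum_x\alpha_x\varphi(x)=0$, so that $f\eqdef\widehat\varphi$ is a mean-zero degree-one eigenfunction of $\cL$.

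\emph{Upper bound.} I will use the test function $g\eqdef f^2-\Pi_1 f^2$, where $\Pi_1$ is the $\mu$-orthogonal projection onto $\mathscr P_1$. Then $g\in\mathscr P_{2,\ast}$, and $g=\widehat\psi-\Pi_1\widehat\psi$ with $\psi=\varphi\otimes\varphi$.
\begin{enumerate}[(i)]
\item Compute $L_2(\varphi\otimes\varphi)$ directly from the block dynamics. When only one particle lies in $B$, one recovers the independent-walk action $(L_1\otimes I+I\otimes L_1)(\varphi\otimes\varphi)=-2\lambda\,\varphi\otimes\varphi$.
\item When both particles lie in $B$, they share the same vector $U$. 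This produces an extra term $w_B\,\var_U\big(\sum_{y\in B}U_y\varphi(y)\big)\ge0$, which is a function of a single site.
\item Transferring back through \eqref{eq:intro-intertwining}, this gives $\cL f^2=-2\lambda f^2+h$, with $h$ a nonnegative element of $\mathscr P_1$.
\item Since $\cL$ commutes with $\Pi_1$, we get $\cL g=-2\lambda g+(h-\Pi_1 h)$. But $h\in\mathscr P_1$, so $h-\Pi_1h=0$. Hence $g$ is an exact eigenfunction with eigenvalue $2\lambda$, provided $g\neq0$.
\end{enumerate}
The case $g=0$ is impossible because $f^2$ is not a polynomial of degree one on $\Omega$ when $f$ is nonconstant. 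This gives $\gap_{2,\ast}\le2\lambda$.

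\emph{Strictness.} Strictness should follow by perturbing $g$ inside $\mathscr P_{2,\ast}$. I will use either products $\widehat{\varphi\otimes\varphi'}$ with a second mode $\varphi'$, or show that $g$ is not orthogonal to some lower $\cL$-eigenvector in $\mathscr P_{2,\ast}$. The mechanism is the nonvanishing of the covariance term $\var_U(\cdot)$ whenever $\lambda>0$. I expect this to be a delicate but local computation, and I would try the first-order perturbation argument first.

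\emph{Lower bound.} This is a comparison of Dirichlet forms and variances between $L_2$ (reversible for $\mu_2$) and two independent copies of $L_1$ (reversible for $\mu_1\otimes\mu_1$). The two key explicit estimates are the following.
\begin{itemize}
\item \textbf{Jump rates.} From $\mathbb E[U_yU_z]=\alpha_y(\alpha_z+\car_{\{y=z\}})/(\alpha(B)(\alpha(B)+1))$, every two-particle transition rate of $L_2$ within a block $B$ is at least $\frac{\alpha(B)}{\alpha(B)+1}\ge\frac{\alpha_{w,\min}}{1+\alpha_{w,\min}}$ times the corresponding product rate.
\item \textbf{Measures.} From \eqref{eq:mu-k}, $\mu_2(x,y)=\frac{\alpha_0}{\alpha_0+1}\,\mu_1(x)\mu_1(y)\big(1+\car_{\{x=y\}}/\alpha_x\big)$. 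The factor $\frac{\alpha_0}{\alpha_0+1}$ accounts for $(1+1/\alpha_0)$.
\end{itemize}
With these in hand, for $\psi\in\mathscr H_2$ with $\widehat\psi\perp_\mu\mathscr P_1$ the steps are:
\begin{enumerate}[(i)]
\item Bound $\ttscalar{\psi}{-L_2\psi}_{\mu_2}$ from below by the product Dirichlet form, with the first factor.
\item Bound that product form by $\lambda$ times a product-measure variance. Here I will use that the relevant component of $\psi$ is orthogonal to one-particle functions, which gives a gap at least $\lambda$ (even $2\lambda$ on the doubly-centered part).
\item Convert this variance back to $\|\widehat\psi\|^2_\mu=\ttscalar{\psi}{\psi}_{\mu_2}$ using the measure comparison.
\end{enumerate}

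\emph{Main obstacle.} The hardest step is the orthogonality bookkeeping in the lower bound. Orthogonality to $\mathscr P_1$ under $\mu$ is not the same as being doubly centered under $\mu_1\otimes\mu_1$, and the diagonal $x=y$ carries extra weight $1/\alpha_x$. The plan is to absorb these discrepancies into the slack between $2\lambda$ and the claimed $\lambda$-level constant.
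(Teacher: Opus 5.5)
\textbf{Upper bound.} Your upper-bound argument rests on a false computation. In $L_2$, two particles sitting in the same block $B$ jump \emph{simultaneously}. So $L_2(\varphi\otimes\varphi)$ is not $-2\lambda\,\varphi\otimes\varphi$ plus a one-site term. From $\Pi_{B,2}(\varphi\otimes\varphi)=(\Pi_{B,1}\varphi)^{\otimes2}+c_B\,\car_B^{\otimes2}$, with $c_B=\var_{\pi_B}(\varphi)/(1+\alpha(B))$, one gets
\[
L_2(\varphi\otimes\varphi)=-2\lambda\,\varphi\otimes\varphi+\sum_{B\subseteq[n]}w_B\Big[\big((\Pi_{B,1}-I)\varphi\big)^{\otimes2}+c_B\,\car_B^{\otimes2}\Big],
\]
that is, $\cL f^2=-2\lambda f^2+h$ with $h=\sum_Bw_B\big[(\mu_Bf-f)^2+c_B\,\eta(B)^2\big]$.

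This $h$ is nonnegative but genuinely quadratic. A positive combination of squares of linear forms is affine on $\Omega$ only if each form is constant on $\Omega$. That would force $\Pi_{B,1}\varphi=\varphi$ whenever $w_B>0$, hence $L_1\varphi=0$, which is impossible if $\lambda>0$. So $g$ is never an eigenfunction when $\lambda>0$. For instance, for $n=2$ and $w_{\{1,2\}}=1$, the one-dimensional space $\mathscr P_{2,\ast}$ carries the eigenvalue $1=\gap_1$, not $2$.

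Nor does $g$ work as a variational test function. Its Rayleigh quotient is $2\lambda-\ttscalar{g}{h}_\mu/\ttnorm{g}_\mu^2$, and $\ttscalar{g}{h}_\mu$ has no sign because $g$ is centered.

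What is missing is a positivity structure. In the hidden model, $-\mathscr L_\infty(\widetilde\varphi^{\,2})=2\lambda\,\widetilde\varphi^{\,2}-\Gamma_\infty(\widetilde\varphi)$, with both $\widetilde\varphi^{\,2}$ and $\Gamma_\infty(\widetilde\varphi)$ in the cone $\mathscr C$. Pairing with a left Perron eigenfunction $h_\infty\in\interior(\mathscr C')$ gives $\lambda_\infty<2\gap_1$. The paper then passes to $\tau=1$ by strict monotonicity in $\tau$, and removes minimality and $w_{[n]}=0$ via the quotient reduction and \eqref{eq:global-shift-gap-k-ast}. Strictness genuinely needs $h_\infty$ to be interior, i.e.\ strong positivity (Proposition~\ref{pr:full-range-infty}). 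Your perturbation sketch offers nothing in its place.

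\textbf{Lower bound.} Your lower-bound plan cannot reach the stated constant.

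First, the rate comparison is false against two independent copies of $L_1$. For $x,y\in B$ and $x'\neq y$, the block-$B$ contribution to the $L_2$-rate of $(x,y)\to(x',y)$ is $w_B\alpha_{x'}\alpha_y/(\alpha(B)(\alpha(B)+1))$. This is only $\alpha_y/(\alpha(B)+1)<\alpha(B)/(\alpha(B)+1)$ times the block-$B$ contribution $w_B\alpha_{x'}/\alpha(B)$ to the $L_1$-rate. Moreover, simultaneous moves of both particles have no counterpart for independent walks. The bound $\mathbf E_B[U_yU_z]\ge\frac{\alpha(B)}{\alpha(B)+1}\pi_{y|B}\pi_{z|B}$ only compares $L_2$ with the synchronized product $\sum_Bw_B(\Pi_{B,1}\otimes\Pi_{B,1}-I)$.

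Second, and more seriously, the prefactor $\alpha_0/(\alpha_0+1)$ of $\mu_2$ multiplies both $\ttscalar{\psi}{-L_2\psi}_{\mu_2}$ and $\ttscalar{\psi}{\psi}_{\mu_2}$. It therefore cancels in the Rayleigh quotient and cannot produce the gain $1+1/\alpha_0$, while the diagonal excess $1+1/\alpha_x$ only costs you. (Also, $\ttnorm{\widehat\psi}_\mu^2$ involves $\mu_4$ and is not $\ttscalar{\psi}{\psi}_{\mu_2}$: the hat map is a similarity, not an isometry.)

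Third, the lower bound is attained in the homogeneous mean-field case. For the complete graph with $\alpha\equiv1$, $\gap_{2,\ast}=\frac{n+1}{3}=\frac23(1+\frac1n)\frac n2$. So there is no slack into which the diagonal and orthogonality discrepancies could be absorbed.

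The paper avoids Dirichlet-form comparison altogether. It uses the exact identity
\[
\mathscr L\var_\pi=-\sum_Bw_B\,\pi(B)\,\frac{\alpha(B)+\pi(B)}{1+\alpha(B)}\,\var_{\pi_B},
\]
whose factor $1+1/\alpha_0$ comes from $\var_\pi(\mathbf 1_B)=\pi(B)(1-\pi(B))$ in the Dirichlet fluctuation term. It bounds this pointwise by $-\gamma\,\cE_1\le-\gamma\gap_1\var_\pi$, with $\gamma=\frac{\alpha_{w,{\rm min}}}{1+\alpha_{w,{\rm min}}}(1+\frac1{\alpha_0})$. It then applies a left Perron eigenfunction $h_\ast\in\mathscr C'$, which is strictly positive at $\var_\pi\in\interior(\mathscr C)$.
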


As shown in Remarks \ref{rem:gap-lower-sharp} and \ref{rem:lambda-infty-sharp} below, both bounds are sharp: suitable choices of $\alpha$ and $w$ attain either one. Moreover, both multiplicative factors in \eqref{eq:gap-1-2-quantitative} are strictly positive. Since $\gap_1(\alpha,w)>0$ if and only if $w$ induces a connected hypergraph, \eqref{eq:intro-gap-star} and \eqref{eq:gap-1-2-quantitative} give a quantitative version of \eqref{eq:equiv-gap>0}.

Since the prefactor in the lower bound is smaller than $1$, whereas that in the upper bound equals $2$, Theorem \ref{th:gap-1-2-quantitative} does not determine whether, for any given $\alpha$ and $w$,
\begin{equation}\label{eq:alternatives}
	\gap_1(\alpha,w)\le\gap_{2,\ast}(\alpha,w)
	\qquad\text{or}\qquad
	\gap_1(\alpha,w)>\gap_{2,\ast}(\alpha,w)\fstop
\end{equation}
Recall that both alternatives occur already in the segment and complete-graph examples from \eqref{eq:gap1_seg} and \eqref{eq:gap3_cg}, corresponding, respectively, to
\begin{equation}\alpha\equiv 1\comma w_B=\textstyle{\sum_{x=1}^{n-1}}\,\car_{B=\{x,x+1\}}\qquad\text{and}\qquad \alpha\equiv 1\comma w_B=\car_{|B|=2}\fstop
\end{equation}
Indeed, the segment realizes the first alternative in \eqref{eq:alternatives}, whereas the complete graph realizes the second. In what follows, we classify the possible relations between $\gap_1(\alpha,w)$ and $\gap_{2,\ast}(\alpha,w)$ for general weighted hypergraphs. Remarkably, the segment and the complete graph---and their weighted-hypergraph analogues---emerge as the only two distinct geometries for which one of these quantities uniformly dominates the other.

\subsubsection{A phase transition}
First, observe that a full-block update only produces a global shift by $w_{[n]}\ge 0$ in the spectrum of  $-\cL$ associated to nonconstant eigenfunctions; in particular, 
\begin{equation}\label{eq:global-shift-gap-k-ast}
	\gap_{k,\ast}(\alpha,w)= \gap_{k,\ast}(\alpha,w^{\scriptscriptstyle \circ})+w_{[n]}\comma\quad\text{with}\  w_B^{\scriptscriptstyle \circ}=\car_{B\neq [n]}\,w_B\comma\qquad k\ge 1\fstop
\end{equation} Therefore, for the sole purpose of comparing $\gap_1(\alpha,w)$ and $\gap_{2,\ast}(\alpha,w)$, we may assume
\begin{equation}\label{eq:w-n}
	w_{[n]}=0
\end{equation}
with no loss of generality. Next, let $w$ induce a connected hypergraph,   namely,  \begin{equation}\label{eq:connected-hypergraph}
	\forall\,  N\subseteq[n], N\neq \emp, [n]\comma\quad \exists\, B\subseteq[n]:  w_B>0\ \text{and}\ B\cap N\neq \emp\neq  B\cap ([n]\setminus N)\fstop
\end{equation}
It is immediate to verify that \eqref{eq:w-n} and \eqref{eq:connected-hypergraph} together imply $n\ge 3$.

When comparing the one- and two-particle gaps, it will be crucial to vary the overall scale of the site weights while preserving their relative proportions. Thus, for fixed $\alpha$ and $w$, we consider the one-parameter family $\tau\alpha$, $\tau>0$. The key point is that this scaling affects the two gaps in fundamentally different ways. Under \eqref{eq:w-n}--\eqref{eq:connected-hypergraph}, we shall prove that
\begin{equation}\label{eq:gap-2-monotonicity}
	(0,\infty)\ni\tau\longmapsto \gap_{2,\ast}(\tau\alpha,w)
\end{equation}
is strictly increasing for every $\alpha=(\alpha_x)_{x\in[n]}$.  By contrast, the one-particle dynamics is invariant under this rescaling: since the jump rates \eqref{eq:RW-rates} depend on $\alpha$ only through ratios, the map $\tau\longmapsto\gap_1(\tau\alpha,w)$ is constant; see Remark~\ref{rem:monotonicity-gap-1}.
The directional monotonicity of \eqref{eq:gap-2-monotonicity}, established in Proposition~\ref{pr:monotonicity}, is one of the crucial inputs of our argument, and motivates us to analyze the
limits as $\tau\to 0 $ and $\infty$, for which we show in \eqref{eq:gap-lower-infty} and \eqref{eq:first-limit} that	
\begin{equation}\label{eq:limits}
	\lim_{\tau\to 0}\gap_{2,\ast}(\tau\alpha,w)\le \gap_1(\alpha,w)\le \lim_{\tau\to \infty}\gap_{2,\ast}(\tau\alpha,w)\fstop
\end{equation}
Altogether, this guarantees the existence of a sharp phase transition separating those instances in which either the one-particle or two-particle gap dominates. 
\begin{theorem}[Phase transition]\label{th:phase-transition}
	For all positive site weights $\alpha$ and all nonnegative block weights $w$ satisfying \eqref{eq:w-n}--\eqref{eq:connected-hypergraph}, there exists a unique $\tau_c(\alpha,w)\in [0,\infty]$ such that
	\begin{align}
		\gap_1(\alpha,w)=\gap_1(\tau\alpha,w)<\gap_{2,\ast}(\tau\alpha,w)\comma \qquad &\text{if}\ \tau>\tau_c(\alpha,w)\comma
		\label{eq:regime-high}\\
		\label{eq:regime-low}
		\gap_1(\alpha,w)=\gap_1(\tau\alpha,w)>\gap_{2,\ast}(\tau\alpha,w)\comma\qquad &\text{if}\  \tau<\tau_c(\alpha,w)\fstop
	\end{align}
\end{theorem}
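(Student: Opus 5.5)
The plan is to deduce Theorem \ref{th:phase-transition} directly from two inputs already announced in the text. The first is the strict monotonicity of $\tau\longmapsto \gap_{2,\ast}(\tau\alpha,w)$ (Proposition \ref{pr:monotonicity}, i.e. \eqref{eq:gap-2-monotonicity}). The second is the invariance $\gap_1(\tau\alpha,w)=\gap_1(\alpha,w)$ for all $\tau>0$ (Remark \ref{rem:monotonicity-gap-1}). That invariance holds because the rates \eqref{eq:RW-rates} are homogeneous of degree zero in $\alpha$: the factor $\alpha_y/\alpha(B)$ is unchanged under $\alpha\mapsto\tau\alpha$. This gives the equalities $\gap_1(\alpha,w)=\gap_1(\tau\alpha,w)$ appearing in both \eqref{eq:regime-high} and \eqref{eq:regime-low}.

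Set $g(\tau)\eqdef \gap_{2,\ast}(\tau\alpha,w)-\gap_1(\alpha,w)$, which is strictly increasing on $(0,\infty)$. Define
\begin{equation}
\tau_c(\alpha,w)\eqdef \sup\{\tau>0:\ g(\tau)<0\}\in[0,\infty]\comma
\end{equation}
with the convention $\sup\emp=0$. By strict monotonicity, the set $\{g<0\}$ is an initial interval of $(0,\infty)$. Likewise $\{g>0\}$ is a terminal interval, and $\{g=0\}$ contains at most one point.

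It follows that $g<0$ for $\tau<\tau_c$ and $g>0$ for $\tau>\tau_c$. To see the second claim, take $\tau>\tau_c$ and pick $\tau'\in(\tau_c,\tau)$. Then $g(\tau')\ge0$, and strict monotonicity gives $g(\tau)>g(\tau')\ge 0$. This proves \eqref{eq:regime-high} and \eqref{eq:regime-low}.

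Uniqueness is immediate. Any other $\tau_c'$ with the same two properties must satisfy $g<0$ below it and $g>0$ above it. If $\tau_c'\neq\tau_c$, any point strictly between them would have to satisfy both $g<0$ and $g>0$, a contradiction.

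The limits \eqref{eq:limits} are not needed for existence or uniqueness. Their role is to locate the transition: they guarantee that the sign change of $g$ is meaningful, and they are what allows the degenerate values $\tau_c=0$ and $\tau_c=\infty$ to be characterized geometrically later. I would only remark on this and not use it in the proof.

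The substantive content is therefore entirely in Proposition \ref{pr:monotonicity}, which is taken as given. Given that input, the main care point is the strictness of the inequalities in the theorem. This relies on the strict, not merely weak, monotonicity of $\gap_{2,\ast}$ in $\tau$. Strictness in turn needs the hypotheses \eqref{eq:w-n}--\eqref{eq:connected-hypergraph}, since without \eqref{eq:w-n} a full-block shift could be involved. I would state explicitly that these hypotheses are exactly those under which Proposition \ref{pr:monotonicity} applies.
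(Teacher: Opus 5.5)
Your order-theoretic argument is correct and matches the paper's first step (``follows from Proposition \ref{pr:monotonicity} and Remark \ref{rem:monotonicity-gap-1}''). That argument is the definition of $\tau_c(\alpha,w)$ as a supremum, the sign analysis on either side of it, and uniqueness. The gap is in the input. Proposition \ref{pr:monotonicity} is proved only for \emph{minimal} hypergraphs, not under \eqref{eq:w-n}--\eqref{eq:connected-hypergraph}, so your claim that these are exactly its hypotheses is false. Citing \eqref{eq:gap-2-monotonicity} from the introduction does not help, since that statement is itself only obtained through the step you omit.

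Minimality is genuinely needed, not a technicality. The proof of strict monotonicity uses $g_\tau\in\interior(\mathscr C)$ and $h_\tau\in\interior(\mathscr C')$, which come from strong positivity (Proposition \ref{pr:strong-positivity-hypergraph}). Strong positivity fails for connected non-minimal hypergraphs with $w_{[n]}=0$, such as the one in Figure \ref{fig:nonminimal-hypergraph}. There, the hidden dynamics preserves configurations that are constant on the atoms of a compatible partition. Hence $g(\theta)=(\theta_1-\theta_2)^2\in\mathscr C\setminus\{0\}$ satisfies $e^{t\mathscr L}g(\mathbf 1_{N_1})=0$ at the nonconstant configuration $\mathbf 1_{N_1}$. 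So the reason strictness needs more than connectedness is this degeneracy, not the full-block shift you mention.

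The missing step is the reduction of Section \ref{sec:hypergraph-reduction}, which is the second sentence of the paper's proof. Take the coarsest $w$-compatible partition $[n]=N_1\sqcup\cdots\sqcup N_m$. It depends only on the support of $w$, so it is the same for every rescaling $\tau\alpha$, and the aggregated weights satisfy $\overline{\tau\alpha}=\tau\bar\alpha$. By Proposition \ref{pr:reduction}, the quotient $(\bar\alpha,\bar w)$ is minimal. Proposition \ref{pr:reduction-gap} with $k=1,2$ then gives, for every $\tau>0$:
\begin{itemize}
\item $\gap_1(\alpha,w)=\gap_1^{[m]}(\bar\alpha,\bar w)$;
\item $\gap_{2,\ast}(\tau\alpha,w)=\gap_{2,\ast}^{[m]}(\tau\bar\alpha,\bar w)$.
\end{itemize}
Applying Proposition \ref{pr:monotonicity} to the quotient yields continuity and strict monotonicity of $\tau\mapsto\gap_{2,\ast}(\tau\alpha,w)$ under \eqref{eq:w-n}--\eqref{eq:connected-hypergraph}. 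Your argument then goes through verbatim.
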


The statement above leaves open whether either extremal value $\tau_c(\alpha,w)\in\{0,\infty\}$ can occur; we now give a complete geometric characterization of all three possible regimes.

\subsubsection{Characterization of the extremal geometries}
The case $\tau_c(\alpha,w)=0$ is the regime in which the KMP spectral gap is always determined by $\gap_1(\tau\alpha,w)$, whereas $\tau_c(\alpha,w)=\infty$ corresponds to the opposite regime, in which it is always determined by $\gap_{2,\ast}(\tau\alpha,w)$. We shall show that these two extremal cases correspond, respectively, to segment-like and mean-field geometries, while all remaining weighted hypergraphs exhibit a genuine phase transition, with $\tau_c(\alpha,w)\in(0,\infty)$. This classification applies to general weighted hypergraphs and goes substantially beyond the previously known cases mentioned above, as well as the extension conjectured in \cite[Remark 3]{caputo_mixing_2019}.

For clarity, we first state the characterization for weighted graphs (Theorem \ref{th:dichotomy-graph}), where the relevant geometries take a particularly transparent form, and then turn to the general hypergraph setting (Theorem \ref{th:dichotomy-hypergraph}).

\subsubsection*{Graph case.} 
Here, we focus on graphs,  namely, on weights $w$	 supported on size-two blocks:
\begin{equation}\label{eq:graph-case}
	w_B=0\comma \qquad \text{whenever}\ |B|\neq2\fstop
\end{equation}
For connected graphs (i.e., $w$ satisfying \eqref{eq:connected-hypergraph}, \eqref{eq:graph-case}), \eqref{eq:w-n} is equivalent to requiring $n\ge 3$.
\begin{theorem}[Graph-version of Theorem \ref{th:dichotomy-hypergraph}]\label{th:dichotomy-graph} For all positive site weights $\alpha$ and for all nonnegative block weights $w$ satisfying \eqref{eq:w-n}--\eqref{eq:connected-hypergraph} and \eqref{eq:graph-case}, one has:
	\begin{enumerate}[(a)] 
		\item \label{it:graph-dichotomy-segment} $\tau_c(\alpha,w)=0$ $\Longleftrightarrow$ $w$ induces a segment, i.e.,  up to a relabeling of $[n]$, 
		\begin{equation}\label{eq:segment-condition-graph}
			w_B>0\qquad \text{if and only if}\qquad B=\{x,x+1\}\comma x =1,\ldots, n-1\fstop
		\end{equation}
		\item \label{it:graph-dichotomy-mean-field} $\tau_c(\alpha,w)=\infty$ $\Longleftrightarrow$ the weights $(\alpha,w)$ satisfy the mean-field condition:
		\begin{equation}\label{eq:mean-field-condition}
			\frac{w_{\{x,y\}}}{\alpha_x+\alpha_y}\qquad \text{does not depend on}\ x,y \in [n]\comma x \neq y\fstop
		\end{equation}
	\end{enumerate} 
	Hence, $\tau_c(\alpha,w)\in (0,\infty)$ in all remaining cases.
\end{theorem}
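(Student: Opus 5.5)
By Theorem \ref{th:phase-transition}, the monotonicity \eqref{eq:gap-2-monotonicity} and the limits \eqref{eq:limits}, the two extremal cases reduce to asymptotic statements. The condition $\tau_c(\alpha,w)=0$ is equivalent to
\begin{equation}
\lim_{\tau\to 0}\gap_{2,\ast}(\tau\alpha,w)\ge\gap_1(\alpha,w)\fstop
\end{equation}
Since the limit is at most $\gap_1$, this means equality at $\tau\to0$ (together with strict monotonicity in $\tau$). Similarly, $\tau_c(\alpha,w)=\infty$ is equivalent to
\begin{equation}
\lim_{\tau\to\infty}\gap_{2,\ast}(\tau\alpha,w)\le\gap_1(\alpha,w)\comma
\end{equation}
i.e.\ equality at $\tau\to\infty$. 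So the plan is to identify both limits explicitly as spectral quantities of limiting two-particle operators, and then to characterize when they equal $\gap_1$.

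\textbf{The limit $\tau\to\infty$.} Here the Dirichlet vector $U\sim{\rm Dir}(\tau\alpha^B)$ concentrates at $\alpha^B/\alpha(B)$. The two particles then move as two \emph{independent} copies of the one-particle walk with rates \eqref{eq:RW-rates}, up to an $O(1/\tau)$ correction. On the space orthogonal to degree-one functions, the limiting two-particle operator is the tensor-product walk restricted to symmetric functions orthogonal to $1\otimes f$ and $f\otimes1$. Its gap is therefore the smallest eigenvalue $\lambda_i+\lambda_j$ with $i,j\ge1$, namely $2\gap_1$.

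This is too large, so the factor must come from the correction: the inequality $\le\gap_1$ in \eqref{eq:limits} must come from a different mechanism. I expect the relevant test function to be $\sum_x \eta_x^2$-type, weighted by $\alpha$, whose Rayleigh quotient is computed exactly. The identification then reduces to showing that the quotient equals $\gap_1$ precisely when the one-particle generator's optimal eigenvector is related to this function. Concretely, I would compute $\gap_{2,\ast}(\tau\alpha,w)$ on the explicit function $\psi(x,y)=\car_{x=y}/\alpha_x$-type observable, obtain a formula in terms of $\sum_B w_B/\alpha(B)\cdot(\ldots)$, and compare it with the one-particle Rayleigh quotient. Equality forces that the eigenvector of $-L_1$ for $\gap_1$ spans all non-constant functions (a single eigenvalue of multiplicity $n-1$). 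For graphs, this holds iff the rates $r_{xy}=\alpha_y w_{xy}/(\alpha_x+\alpha_y)$ give a generator that is a multiple of the projection $I-\mathbf 1\otimes\pi$, where $\pi\propto\alpha$. That is exactly the mean-field condition \eqref{eq:mean-field-condition}. The converse follows by explicit computation in the mean-field case, where all eigenvalues are explicit.

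\textbf{The limit $\tau\to0$.} Here ${\rm Dir}(\tau\alpha^B)$ degenerates to a uniform-like point mass at a vertex $y$ chosen with probability $\alpha_y/\alpha(B)$. All particles in $B$ then jump together to the same site, so two particles in a common block coalesce. The limiting two-particle dynamics on pairs is a coalescing-type walk. On $\mathscr P_{2,\ast}$, its gap is governed by the pair process killed/absorbed on the diagonal. Equality with $\gap_1$ then becomes a statement that some two-particle antisymmetric-like observable cannot beat the one-particle one. I would test with $f(x)g(y)$ products built from the one-particle eigenvector and use an interlacing argument. For a segment, order-preservation and monotone coupling give $\lim\ge\gap_1$. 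For any non-segment (a vertex of degree $\ge3$ or a cycle), I would exhibit a test function, e.g.\ built on two disjoint branches, whose quotient is strictly below $\gap_1$.

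\textbf{Main obstacle.} The hardest step is the segment characterization: proving that the $\tau\to0$ limit strictly undercuts $\gap_1$ for every non-segment graph. This requires a universal construction of a test function beating the one-particle eigenvector, which is delicate when weights are arbitrary.
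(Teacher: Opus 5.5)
Your reduction of (a) and (b) to $\lambda_0=\gap_1(\alpha,w)$ and $\lambda_\infty=\gap_1(\alpha,w)$ is correct. Here $\lambda_0,\lambda_\infty$ are the limits of $\gap_{2,\ast}(\tau\alpha,w)$. The two easy implications are also viable. In the mean-field case, the variance observable is an exact eigenfunction at $\tau=\infty$. For a segment, the order-preserving coalescing coupling shows that the pair's survival before coalescence decays at rate at least $\gap_1$, \emph{provided} you first show that the $\gap_1$-eigenfunction $\phi$ can be chosen strictly increasing.

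The two converse implications, however, are not established. At $\tau=\infty$ you misidentify the limiting pair dynamics. The two particles share the block clocks and are resampled simultaneously, so the limit generator is $\sum_B w_B(K_B^\pi\otimes K_B^\pi-I)$. This differs from $L_1\otimes I+I\otimes L_1$ by $\sum_B w_B(K_B^\pi-I)^{\otimes 2}$, a term of order one (not $O(1/\tau)$) that pushes the spectrum below $2\gap_1$. Also, at $\tau\to\infty$ the bound \eqref{eq:limits} gives $\lambda_\infty\ge\gap_1$, not $\le$.

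More importantly, the implication ``$\lambda_\infty=\gap_1\Rightarrow$ mean-field'' cannot follow from the Rayleigh quotient of one explicit observable, because a test function only bounds $\gap_{2,\ast}$ from above. You must show $\lambda_\infty>\gap_1$ strictly whenever $\cE_1\neq\gap_1\var_\pi$, which is a statement about all of $\mathscr R_{2,\ast}$. The paper gets it from the identity $\mathscr L_\infty\var_\pi=-\cE_1$, paired with a left Perron eigenfunction $h_\infty\in\interior(\mathscr C')$:
$$(\lambda_\infty-\gap_1)\,h_\infty(\var_\pi)=h_\infty(\cE_1-\gap_1\var_\pi)>0.$$
Strict positivity of $h_\infty$ rests on strong positivity of $e^{t\mathscr L_\infty}$, i.e.\ on irreducibility of the averaging projections $K_B^\pi$ on mean-zero vectors (Proposition \ref{pr:full-range-infty}). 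A variational route via $I-K\otimes K\ge\frac12[(I-K)\otimes I+I\otimes(I-K)]$ in $L^2(\pi\otimes\pi)$ also gives $\lambda_\infty\ge\gap_1$. Its equality case, however, needs the same irreducibility input, and your sketch contains none.

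For (a), the implication non-segment $\Rightarrow\lambda_0<\gap_1$ is left open, and products $f(x)g(y)$ are not the right objects. The missing idea is that $\cD\phi(x,y)=\phi(x)-\phi(y)$ is an \emph{exact} eigenfunction of the killed pair generator $L_2^\dagger$ with eigenvalue $\gap_1$: when both particles lie in the updated block, both sides vanish. You also need that $\lambda_0$ is the bottom of the spectrum of $-L_2^\dagger$ on all functions of ordered pairs, not only symmetric ones, since $\cD\phi$ is antisymmetric.

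The argument then runs as follows.
\begin{itemize}
\item If $\lambda_0=\gap_1$, Perron--Frobenius on each communicating class forces $\cD\phi$ to vanish identically or to have a strict sign on that class. The classes are closed because $L_2^\dagger$ is self-adjoint in $L^2(\pi\otimes\pi)$.
\item For an edge $\{x,z\}$ and any $y\notin\{x,z\}$, the states $(x,y)$ and $(z,y)$ communicate. Hence $\phi(x)-\phi(y)$ and $\phi(z)-\phi(y)$ vanish together or share a strict sign.
\item With connectivity, this forces $\phi$ to be injective and every edge to join sites with consecutive $\phi$-values, i.e.\ the graph is a segment.
\end{itemize}
Contrapositively, on a non-segment graph some class carries a sign change of $\cD\phi$. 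Perron--Frobenius then gives a principal Dirichlet eigenvalue strictly below $\gap_1$ on that class. No test-function construction or case analysis on branch points or cycles is needed.
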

\begin{remark}[Segment \textit{vs.}\ non-segment]\label{rem:segment-non-segment}
	Since the segment condition \eqref{eq:segment-condition-graph} depends only on the graph weights $w$, whereas the mean-field condition \eqref{eq:mean-field-condition} imposes a specific relation between $\alpha$ and $w$, fixing $w$ as in Theorem \ref{th:dichotomy-graph} yields the following dichotomy:
	\begin{enumerate}[(i)]
		\item Either $w$ induces a segment as in \eqref{eq:segment-condition-graph}, and 
		\begin{equation}
			\gap_1(\alpha,w)<\gap_{2,\ast}(\alpha,w)\comma\quad\text{for every}\ \alpha \in (0,\infty)^n\fstop
		\end{equation}
		\item Or $w$ does not induce a segment, and there exist $\alpha^+, \alpha^-\in (0,\infty)^n$ such that
		\begin{equation}
			\gap_1(\alpha^+,w)<\gap_{2,\ast}(\alpha^+,w)\comma\qquad \gap_1(\alpha^-,w)>\gap_{2,\ast}(\alpha^-,w)\fstop
		\end{equation}
	\end{enumerate}
\end{remark}

\subsubsection*{Hypergraph case.}
For general hypergraphs satisfying only \eqref{eq:w-n}--\eqref{eq:connected-hypergraph}, an analogous picture remains valid after passing to a suitable partition, as we explain below; then, 
the extremal regimes $\tau_c(\alpha,w)\in\{0,\infty\}$ are characterized, respectively, by hypergraph analogues of the segment and mean-field graphs.

Given block weights $w=(w_B)_{B\subseteq[n]}$, we say that a partition
\begin{equation}\label{eq:partition}
	[n]=N_1\sqcup\cdots\sqcup N_m\comma
	\qquad 2\le m\le n\comma
\end{equation}
is $w$-\textit{compatible} if every block $B\subseteq[n]$ with $w_B>0$ is either contained in a single atom $N_j$ or is a union of atoms. Thus, sites belonging to the same atom are always updated together whenever the update involves sites outside that atom.

The discrete partition
$
	[n]=\{1\}\sqcup\cdots\sqcup\{n\}
$
is always $w$-compatible. Moreover, compatibility depends only on the support of the block weights, and is preserved when positive-weight blocks are removed:  if $w_B\ge w'_B$ for every $B\subseteq[n]$, then every $w$-compatible partition is also $w'$-compatible. Further, the join of two $w$-compatible partitions is again $w$-compatible, provided that it has at least two atoms. 

If $w$ induces a connected \textit{graph} \eqref{eq:graph-case}, the discrete partition is in fact the only $w$-compatible one. Indeed, any non-discrete partition contains an atom $N\subseteq[n]$ with $2\le |N|<n$, and graph-connectedness yields an edge $B=\{x,y\}$ with $w_B>0$ such that
$
	B\cap N\neq\emp
	\neq
	B\cap([n]\setminus N)$,
which is incompatible with \eqref{eq:partition}. For general hypergraphs, nontrivial compatible partitions may instead occur. For instance, if
\begin{equation}
	w_B
	=
	\car_{B=\{1,\ldots,n'\}}
	+
	\car_{B=\{n',\ldots,n\}}\comma
	\qquad
	1<n'<n\comma
\end{equation}
with $n\ge3$, then the partition
$
	[n]
	=
	\{1,\ldots,n'-1\}
	\sqcup
	\{n'\}
	\sqcup
	\{n'+1,\ldots,n\}
$
is $w$-compatible.

Given a $w$-compatible partition as in \eqref{eq:partition}, we define
$\widetilde w=(\widetilde w_B)_{B\subseteq[n]}$ by discarding the blocks contained in a single atom and retaining those which are unions of at least two atoms:
\begin{equation}\label{eq:weights-tilde}
	\widetilde w_B
	\eqdef
	\begin{dcases}
		w_B
		&\text{if }B=N_{j_1}\sqcup\cdots\sqcup N_{j_\ell}\comma\ell\ge2\comma\\
		0
		&\text{otherwise}\fstop
	\end{dcases}
\end{equation}
Under \eqref{eq:w-n}--\eqref{eq:connected-hypergraph}, the hypergraph induced by $\widetilde w$ is still connected and satisfies $\widetilde w_{[n]}=0$.

Finally, a \textit{coarsest partition} of $w$ is a $w$-compatible partition with the smallest number of atoms.  Under \eqref{eq:w-n}--\eqref{eq:connected-hypergraph}, the coarsest partition is unique and has $m\ge3$. Indeed, by connectedness, a $w$-compatible partition with $m=2$ would force any positive-weight block crossing the two atoms to be $[n]$, contradicting $w_{[n]}=0$; uniqueness then follows by taking the join of two coarsest partitions.

\begin{theorem}[Hypergraph-version of Theorem \ref{th:dichotomy-graph}]\label{th:dichotomy-hypergraph}
	For all positive site weights $\alpha$ and for all nonnegative block weights $w$ satisfying \eqref{eq:w-n}--\eqref{eq:connected-hypergraph}, one has:
	\begin{enumerate}[(a)]
		\item \label{it:hypergraph-dichotomy-segment} $\tau_c(\alpha,w)=0$ $\Longleftrightarrow$ $w$ is segment-like, i.e., by taking its coarsest partition \eqref{eq:partition}--\eqref{eq:weights-tilde} and an appropriate relabeling of~$[m]$, 
		\begin{equation}
			\widetilde w_B>0 \qquad \text{implies}\qquad B=N_i\sqcup N_{i+1}\sqcup \cdots \sqcup N_{j-1}\sqcup N_{j}\comma 1\le i< j\le m\fstop
		\end{equation}
		\item\label{it:hypergraph-dichotomy-mean-field} $\tau_c(\alpha,w)=\infty$ $\Longleftrightarrow$ $(\alpha,w)$ is mean-field, i.e., for its coarsest partition \eqref{eq:partition}--\eqref{eq:weights-tilde},	
		\begin{equation}
			c_{ij}=c_{ij}(\alpha,w)\eqdef \sum_{B\supseteq N_i\sqcup N_j} \frac{\widetilde w_B}{\alpha(B)}\qquad\text{does not depend on}\ i,j\in [m]\comma i\neq j\fstop
		\end{equation}
	\end{enumerate}
	Hence, $\tau_c(\alpha,w)\in (0,\infty)$ in all remaining cases.
\end{theorem}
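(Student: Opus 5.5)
By Proposition~\ref{pr:monotonicity}, $\tau\mapsto\gap_{2,\ast}(\tau\alpha,w)$ is strictly increasing, while $\tau\mapsto\gap_1(\tau\alpha,w)$ is constant (Remark~\ref{rem:monotonicity-gap-1}). So, as in the proof of Theorem~\ref{th:phase-transition}, both extremal regimes reduce to statements about the two limits in \eqref{eq:limits}:
\begin{equation}
\tau_c(\alpha,w)=0\iff \lim_{\tau\to0}\gap_{2,\ast}(\tau\alpha,w)=\gap_1(\alpha,w)\comma\qquad
\tau_c(\alpha,w)=\infty\iff \lim_{\tau\to\infty}\gap_{2,\ast}(\tau\alpha,w)=\gap_1(\alpha,w)\fstop
\end{equation}
The second equivalence uses strict monotonicity to pass from ``$\le$ at every finite $\tau$'' to an equality in the limit, and similarly for the first. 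The plan is therefore to identify both limiting operators on $\mathscr P_{2,\ast}$, equivalently on symmetric two-particle observables orthogonal to one-particle ones via \eqref{eq:intro-gap-particle-identification}. We then characterize when their bottom eigenvalue equals $\gap_1$.

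For $\tau\to\infty$, we will expand the two-particle generator $L_2^\sym$ in powers of $1/\tau$. Here ${\rm Dir}(\tau\alpha^B)$ concentrates at $\alpha^B/\alpha(B)$, and ${\rm Cov}(U_y,U_z)=O(1/\tau)$. At order zero the two particles are independent copies of the walk with rates \eqref{eq:RW-rates}. On the pure degree-two part this gives spectrum $\{\lambda_i+\lambda_j\}$ together with the degenerate value $\gap_1$. That value comes from functions of the form $\psi(x,y)=\car_{x=y}\,g(x)$-type corrections, which are orthogonal to $\mathscr P_1$ only up to $O(1/\tau)$, and this matches the limit $\ge\gap_1$ in \eqref{eq:limits}. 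Degenerate perturbation theory at first order then produces a finite matrix whose smallest eigenvalue decides whether the approach to $\gap_1$ is from below or from above. I expect to show that this first-order correction is nonnegative in every direction exactly when the coefficients $c_{ij}$ are constant. In the graph case these are $w_{\{x,y\}}/(\alpha_x+\alpha_y)$, i.e.\ \eqref{eq:mean-field-condition}. Under that condition the relevant two-particle mode should be explicitly a function of $\sum_x\eta_x^2/\alpha_x$-type, as in \eqref{eq:gap3_cg}.

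For $\tau\to0$, the Dirichlet vector concentrates on the vertices of the simplex over $B$. Hence all particles in $B$ jump together to a single site $y$ with probability $\alpha_y/\alpha(B)$, and separated particles in a common block merge. The limiting two-particle dynamics on the complement of the diagonal is therefore a pair walk killed, or absorbed, on coalescence. Its bottom eigenvalue on $\mathscr P_{2,\ast}$ is the principal Dirichlet eigenvalue of this ``two walkers before meeting'' process. We must compare it with $\gap_1$. On a segment-like geometry the ordering of the two particles is preserved until they meet, which will let us build a one-particle test function from a $\gap_1$ eigenfunction. The target here is the discrete analogue of the antisymmetrization/monotone-coupling argument behind \eqref{eq:gap1_seg}, and it should give equality with $\gap_1$. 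Conversely, if the coarsest-partition hypergraph $\widetilde w$ is not an interval structure, the two particles can exchange order without meeting. We then construct a test function, for instance a product $f(x)f(y)$ localized on crossing configurations, that gives a strictly smaller Dirichlet eigenvalue.

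The reduction to the coarsest partition comes first. Blocks contained in a single atom, as well as the internal structure of the atoms, must not affect whether these limiting comparisons are strict. We will show this by projecting onto functions of the atom occupations $(\eta(N_j))_j$, which form a closed ${\rm Dir}((\alpha(N_j))_j)$ KMP model with weights $\widetilde w$. We then check that the extremal modes lie in this projected sector. The main obstacle I expect is the converse direction of (a): proving strict inequality $\lim_{\tau\to0}\gap_{2,\ast}<\gap_1$ for every non-segment-like hypergraph requires a robust test-function construction. The graph case, Theorem~\ref{th:dichotomy-graph}, which contains a vertex of degree $\ge3$ or a cycle, is the one I would handle first.
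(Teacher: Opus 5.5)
Your reduction of both extremal regimes to the limits $\lambda_0$ and $\lambda_\infty$ via strict monotonicity matches the paper. So does your passage to the coarsest-partition quotient, with one addition: that step must also deliver \emph{minimality} of the quotient (Proposition~\ref{pr:reduction}), since every step below uses it. Your description of the $\tau\to0$ dynamics as a killed coalescing pair is also the paper's.

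The $\tau\to\infty$ part, however, rests on a wrong picture. At $\tau=\infty$ the two particles are not independent copies of the walk. They share the block clocks: when $B$ rings with both particles inside, both are resampled simultaneously from $\pi_B$. Writing $\Pi_B$ for the one-particle resampling, the limit generator is $L_1\otimes I+I\otimes L_1+\sum_B w_B(I-\Pi_B)\otimes(I-\Pi_B)$. Its bottom pure degree-two eigenvalue is in general neither $\gap_1$ nor a sum $\lambda_i+\lambda_j$. In the path example of Remark~\ref{rem:lambda-infty-sharp}, $\lambda_\infty=1-c_s^2$ lies strictly between $\gap_1=1-c_s$ and $2\gap_1$. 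So there is no ``degenerate value $\gap_1$'' to perturb and no first-order sign to compute. Since $\lambda_\tau$ increases strictly to $\lambda_\infty$, the only question is whether $\lambda_\infty=\gap_1$, which is a property of the $\tau=\infty$ operator itself.

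The paper answers it with two ingredients your plan lacks.
\begin{itemize}
\item The exact identity $\mathscr L_\infty\var_\pi=-\cE_1$ (total variance under $K_B^\pi$). This makes the mean-field condition equivalent to $\cE_1=\gap_1\var_\pi$, i.e.\ to $\var_\pi$ being a $\gap_1$-eigenfunction.
\item For the converse, a mechanism turning $\cE_1-\gap_1\var_\pi\in\mathscr C\setminus\{0\}$ into $\lambda_\infty>\gap_1$: a left Perron functional $h_\infty\in\interior(\mathscr C')$. Its existence requires strong positivity of $e^{t\mathscr L_\infty}$ on the quadratic cone. For the deterministic averaging updates under minimality, this is the delicate full-range result of Proposition~\ref{pr:full-range-infty}, proved via the module Lemma~\ref{lem:modules}.
\end{itemize}
Your statement ``I expect the correction to be nonnegative exactly when the $c_{ij}$ are constant'' has no argument behind it.

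In the $\tau\to0$ part the direction of the inequalities is off. By Proposition~\ref{pr:first-limit}, $\lambda_0\le\gap_1$ always holds, via the antisymmetric eigenfunction $\cD\phi(x,y)=\phi(x)-\phi(y)$ of the killed chain. So the segment-like implication needs the \emph{lower} bound $\lambda_0\ge\gap_1$, and no test function can provide that. You need a strictly signed eigenfunction on each communicating class. The paper builds one as follows.
\begin{itemize}
\item The one-particle chain is monotone on interval hypergraphs, so there is a nondecreasing $\gap_1$-eigenfunction $\phi$.
\item Minimality makes $\phi$ strictly increasing; otherwise its level sets would form a nontrivial $w$-compatible partition.
\item Order preservation confines each class to $\{x<y\}$ or $\{x>y\}$, so $\cD\phi$ is strictly signed on every class.
\item Perron--Frobenius then identifies $\gap_1$ as each class's principal Dirichlet eigenvalue.
\end{itemize}

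For the converse, which you leave open, no ad hoc ``crossing'' test function is needed. If $\lambda_0=\gap_1$, then $\cD\phi$ attains the bottom of the spectrum. Classes are closed by reversibility, so $\cD\phi$ either vanishes or is strictly signed on each class. Compare the communicating states $(x,y)$ and $(z,y)$ with $x,z\in B\not\ni y$. This shows every positive-weight block lies in one level set of $\phi$ or is a union of consecutive ones. Hence the level-set partition is $w$-compatible, and minimality forces it to be discrete, which is exactly the interval ordering.
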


	\begin{remark}[From hypergraphs to graphs] As already mentioned,		for graph weights $w$ (i.e., satisfying \eqref{eq:graph-case}) on $n\ge 3$ sites, connectedness \eqref{eq:connected-hypergraph} forces the discrete partition $[n]=\{1\}\sqcup \cdots \sqcup \{n\}$ to be the coarsest one. Hence, Theorem \ref{th:dichotomy-graph} indeed  follows from Theorem \ref{th:dichotomy-hypergraph}.
\end{remark}

\subsection{Related models and extensions} We next place the main spectral gap identity for the KMP model, established in Theorem \ref{th:main-KMP}, in a broader context.
 We first discuss analogous Aldous-type spectral gap phenomena for Kac's walk on the sphere and heat-bath dynamics for cone measures, systems whose structure is richer than that of the ${\rm KMP}$ model. We then establish a degree-one spectral gap identity for the non-conservative ${\rm KMP}$ model and, finally, show that our main theorem extends to a broad class of stochastic exchange models.

\subsubsection{Kac's walk and generalizations}
{Kac's walk on the sphere} is the continuous-time Markov process on
\begin{equation}
\mathbb S\eqdef\set{v\in\R^n:\textstyle \sum_{x\in[n]} v_i^2=1}\subseteq \R^n\comma
\end{equation}
defined as follows. Each unordered pair of coordinates $\{x,y\}$ is updated independently at rate $1$. When an update occurs at the pair $\{x,y\}$, the vector $(v_x,v_y)$ is replaced by a new pair $(v_x',v'_y)$, chosen uniformly on the circle
\begin{equation}
\{(v_x',v_y')\in\R^2:(v_x')^2+(v_y')^2=v_x^2+v_y^2\}\fstop
\end{equation}
The process is reversible with respect to the uniform distribution on $\mathbb S$. 
Introduced by Kac in \cite{kac_foundations_1956} as a mean-field toy model for energy-preserving collisions in kinetic theory, this process has since been studied extensively.
It was shown in \cite{carlen_carvalho_loss_determination_2003} that the spectral gap equals $\l=(n+2)/4$ with eigenfunction 
\begin{equation}
	\label{eq:eigKac}
	v\longmapsto \sum_{i=1}^n\left(v_i^4 - \tfrac{3}{n(n+2)}\right)\,,
\end{equation}
a polynomial of degree 4 in the velocity variables $v=(v_x)_{x\in [n]}$.
We also refer to \cite{caputo_kac2008} for an alternative computation of the spectral gap, and to \cite{maslen_eigenvalues_2003} for a characterization of all eigenvalues of this process. 

In this work, we consider a generalization of Kac's walk in two directions. First, the complete-graph geometry is replaced by a weighted hypergraph, allowing updates on arbitrary blocks and with arbitrary rate. Second, the Euclidean sphere $\mathbb S\subseteq \R^n$ is replaced by the inhomogeneous $\ell_p^n$-type sphere
\begin{equation}
	\mathbb S_p
	\eqdef
	\set{
		\zeta\in\R^n:
		\textstyle \sum_{x\in[n]}|\zeta_x|^{p_x}=1}
		\subseteq \R^n\comma
\end{equation}
where $p=(p_x)_{x\in[n]}$ is an arbitrary vector with positive entries. 

We equip $\mathbb S_p$ with the probability distribution $\kappa$ given as follows. Let $Z=(Z_x)_{x\in [n]}$ be independent real random variables with densities proportional to $z\mapsto \exp(-|z|^{p_x})$, and define
\begin{equation}
	\Phi_p(Z)\eqdef\sum_{y\in [n]}|Z_y|^{p_y}\comma\qquad \zeta_x\eqdef \frac{Z_x}{\Phi_p(Z)^{1/p_x}}\comma x \in [n]\fstop
\end{equation}
Therefore, $\zeta\in \mathbb S_p$ by construction, and its law $\kappa$ is referred to as
the \textit{cone measure} on $\mathbb S_p$; see, e.g., \cite{caputo_salez_entropy_2024} or \cite{barthe_probabilistic_2005} for a discussion  of the homogeneous cases $p\equiv {\rm const.}>0$. We note that when $p\equiv 2$ one recovers the uniform distribution over $\mathbb S$.

For nonnegative block weights $w=(w_B)_{B\subseteq[n]}$, define the $(p,w)$-Gibbs sampler, or heat-bath dynamics, for the cone measure,  as the Markov 
process on $\mathbb S_p$ with infinitesimal generator
\begin{equation}\label{eq:gen-cone}
	\cG h(\zeta)
	\eqdef
	\sum_{B\subseteq[n]} w_B
	\left(
	\kappa_{B}h(\zeta)-h(\zeta)
	\right) \comma
	\qquad
	\kappa_{B}h(\zeta)\eqdef \kappa(h\mid \zeta^{B^c})\fstop
\end{equation}
with $\zeta\in \mathbb S_p$ and $h:\mathbb S_p\to \R$ bounded and measurable.
That is, with rate $w_B$, the block variables $\zeta^B=(\zeta_x)_{x\in B}$ are resampled according to the conditional distribution $\kappa$ given the variables $\zeta^{B^c}=(\zeta_x)_{x\notin B}$. The generator $\cG$ is, thus, a bounded self-adjoint operator in $L^2(\kappa)=L^2(\mathbb S_p,\kappa)$. Observe that the case $p\equiv 2$ and $w_B=\car_{|B|=2}$ corresponds to Kac's walk on the sphere. 

For general $p=(p_x)_{x\in [n]}$ and $w=(w_B)_{B\subseteq[n]}$, this dynamics admits a natural projection, under which it reduces to a KMP dynamics. More precisely, define, for all $x\in [n]$,
\begin{equation}
	\eta_x\eqdef |\zeta_x|^{p_x}\comma\qquad 
	\sigma_x\eqdef {\rm sign}(\zeta_x)\fstop
\end{equation}
In this context, we refer to $\eta=(\eta_x)_{x\in [n]}$ and $\sigma=(\sigma_x)_{x\in [n]}$ as the energy and spin/angle variables of $\zeta$, respectively.
On the one hand, when  $\zeta\sim\kappa$, the corresponding energy variables $\eta$ belong to $\Omega$ and satisfy
\begin{equation}\label{eq:dirp}
	\eta=(\eta_x)_{x\in[n]}\sim {\rm Dir}(\alpha)\comma\qquad
	\sigma=(\sigma_x)_{x\in[n]}\ \text{i.i.d.\ uniform on }\{\pm1\}\comma\qquad
	\eta\perp\!\!\!\perp\sigma\,,
\end{equation}
where $\alpha_x=1/p_x$, $x\in [n]$, and where $\eta\perp\!\!\!\perp\sigma$ indicates that $\eta$ and $\sigma$ are independent.
Indeed, the random variables $|Z_x|^{p_x}$ are independent ${\rm Gamma}(\alpha_x,1)$; thus, their normalization by $\Phi_p(Z)$ gives the law
${\rm Dir}(\alpha)$.
On the other hand, under the map $\zeta\mapsto\eta$, the $(p,w)$-Gibbs sampler in \eqref{eq:gen-cone} projects exactly onto the ${\rm KMP}$ model in \eqref{eq:B_up} with parameters $(\alpha,w)$. Consequently, for this choice of parameters,
$\spec(\mathcal L)\subseteq \spec(\mathcal G)$.
Thus, the spectral gap  of the Gibbs sampler, referred to as $\gap(\cG)$,  is at most the spectral gap of $\cL$, namely $\gap(\cL)=\gap(\cL|_{\mathscr P_2})$ as in Theorem \ref{th:main-KMP}. We shall establish that they coincide.   
\begin{theorem}[Gibbs sampler for cone measure]\label{th:GS-p}
	For all positive site weights $p$ and all nonnegative block weights $w$, the spectral gap of the $(p,w)$-Gibbs sampler on $\mathbb S_p$ satisfies
	\begin{equation}
		\gap(\cG)=\gap(\cL|_{\mathscr P_2})\comma
	\end{equation}
	where $\cL$ is the generator of the $(\alpha,w)$-${\rm KMP}$ model with $\alpha_x=1/p_x$, $x\in [n]$. Moreover,  it is attained by a polynomial eigenfunction of degree at most $2$ in the energy variables $\eta_x=|\zeta_x|^{p_x}$.
\end{theorem}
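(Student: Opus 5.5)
The plan is to decompose $L^2(\kappa)$ according to the spin variables. Then $\cG$ acts on each spin sector as an explicit perturbation of the KMP generator, and each nontrivial sector can be compared with $\cL$ through Dirichlet forms.

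\emph{Step 1 (sector decomposition).} By \eqref{eq:dirp}, $\zeta$ is in bijection with $(\eta,\sigma)$, and $\eta\perp\!\!\!\perp\sigma$ with the $\sigma_x$ i.i.d.\ uniform signs. Hence every $h\in L^2(\kappa)$ has a unique orthogonal expansion
\[
h=\sum_{S\subseteq[n]}\sigma^S g_S(\eta)\comma\qquad \sigma^S\eqdef\prod_{x\in S}\sigma_x\comma\qquad g_S\in L^2(\mu)\comma
\]
with $\|h\|_\kappa^2=\sum_S\|g_S\|_\mu^2$.

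Next I verify, using the Gamma representation, how a block update acts. Under $\kappa$ conditioned on $\zeta^{B^\complement}$, the new block spins $(\sigma_x)_{x\in B}$ are i.i.d.\ uniform. They are independent of the new block energies, and the block energies are distributed as $\eta^B$ under $\mu_B$. Consequently,
\[
\kappa_B(\sigma^S g)=\car_{\{B\cap S=\emp\}}\,\sigma^S\mu_B g\fstop
\]
Each sector $\{\sigma^S g\}$ is therefore $\cG$-invariant, and $\cG(\sigma^S g)=\sigma^S\cL_S g$, where
\[
\cL_S g\eqdef\sum_{B\cap S=\emp}w_B(\mu_Bg-g)-W_S\, g\comma\qquad W_S\eqdef\sum_{B\cap S\neq\emp}w_B\fstop
\]
The sector $S=\emp$ is exactly the $(\alpha,w)$-KMP generator $\cL$. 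This yields $\gap(\cG)=\min\{\gap(\cL),\inf_{S\neq\emp}\lambda_{\min}(-\cL_S)\}$, since for $S\neq\emp$ every element of the sector is orthogonal to constants.

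\emph{Step 2 (nontrivial sectors).} For $S\neq\emp$ and $g\in L^2(\mu)$, the quadratic form is
\[
\ttscalar{g}{-\cL_Sg}_\mu=\sum_{B\cap S=\emp}w_B\,\mu\big((g-\mu_Bg)^2\big)+\sum_{B\cap S\neq\emp}w_B\,\mu(g^2)\fstop
\]
For each $B$ hitting $S$, write $\mu(g^2)=\mu((g-\mu_Bg)^2)+\mu((\mu_Bg)^2)$. By Jensen, $\mu((\mu_Bg)^2)\ge\mu(g)^2$. This gives
\[
\ttscalar{g}{-\cL_Sg}_\mu\ge\ttscalar{g}{-\cL g}_\mu+W_S\,\mu(g)^2\ge\gap(\cL)\,\var_\mu(g)+W_S\,\mu(g)^2\ge\min\{\gap(\cL),W_S\}\,\mu(g^2)\fstop
\]

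\emph{Step 3 (the key comparison $\gap(\cL)\le W_S$).} Pick any $x\in S$ and test \eqref{eq:gapac} with $f=\eta_x-\mu(\eta_x)$. Only blocks $B\ni x$ contribute to the Dirichlet form, which equals $\sum_{B\ni x}w_B\,\mu(\var_{\mu_B}(\eta_x))$. By the law of total variance, each term satisfies $\mu(\var_{\mu_B}(\eta_x))\le\var_\mu(\eta_x)$. Hence
\[
\gap(\cL)\le\sum_{B\ni x}w_B\le W_S\fstop
\]
This step is the only place where something beyond bookkeeping is needed: one must notice that the spin sectors carry a killing term large enough to dominate the gap. The single-site test function handles this uniformly in $S$.

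\emph{Conclusion.} Combining Steps 1–3, every nonconstant eigenvalue of $-\cG$ is at least $\gap(\cL)$, so $\gap(\cG)=\gap(\cL)$. By Theorem \ref{th:main-KMP}, $\gap(\cL)=\gap(\cL|_{\mathscr P_2})$. The minimizer can be taken in the $S=\emp$ sector, namely a KMP eigenfunction, which is a polynomial of degree at most $2$ in $\eta_x=|\zeta_x|^{p_x}$. This proves the last claim of the theorem.
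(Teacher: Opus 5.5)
Your proof is correct and follows the paper's argument essentially step for step. Both use the same spin-sector decomposition with $\kappa_B(\sigma^S g)=\car_{\{B\cap S=\emp\}}\sigma^S\mu_B g$, the same single-site test function $\eta_x-\mu(\eta_x)$ giving $\gap(\cL)\le\min_x\sum_{B\ni x}w_B$, and the same sector-by-sector comparison, followed by Theorem~\ref{th:main-KMP}. The one harmless difference is that your Jensen refinement in Step 2 is not needed: the paper simply discards the nonnegative first sum, obtaining $\ttscalar{g}{-\cL_S g}_\mu\ge W_S\,\mu(g^2)\ge\gap(\cL)\,\mu(g^2)$ directly.
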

Combined with Theorem \ref{th:dichotomy-graph}\ref{it:graph-dichotomy-segment} and Remark \ref{rem:segment-non-segment}, this shows also that, in the graph case \eqref{eq:graph-case}, the spectral gap of the $(p,w)$-Gibbs sampler on $\mathbb S_p$
is attained by a linear function in the energy variables $\eta_x=|\zeta_x|^{p_x}$  for every choice of positive weights $p$ if and only if $w$ is a weighted segment. To the best of our knowledge, this and analogous claims which may be deduced by combining Theorems \ref{th:gap-1-2-quantitative}, \ref{th:dichotomy-graph}, \ref{th:dichotomy-hypergraph} and \ref{th:GS-p} are new even in the special case of Kac's walk corresponding to $p\equiv 2$, for which previously only the homogeneous complete graph case \eqref{eq:eigKac} was understood. 

\subsubsection{Boundary-driven ${\rm KMP}$}\label{sec:reservoirs-intro}

Consider the ${\rm KMP}$ model in which, in addition to the conservative exchange dynamics \eqref{eq:B_up}--\eqref{eq:gen-KMP}, the system exchanges energy with external reservoirs. 
Fix parameters $\omega_x\ge 0$ and $\beta_x,\rho_x>0$, for $x\in[n]$. At rate $\omega_x$, the energy at $x$ is updated according to
\begin{equation}\label{eq:eta-x-post-reservoir}
	\eta_x\longmapsto U_x\tonde{\eta_x+\xi_x}\comma
\end{equation}
while all other coordinates remain unchanged; at each such update, the pair $(U_x,\xi_x)$ is sampled independently of the current configuration and of all previous updates, with
\begin{equation}\label{eq:xi-x-u-x}
	(U_x,\xi_x)
	\sim
	{\rm Beta}(\alpha_x,\beta_x)
	\otimes
	{\rm Gamma}(\beta_x,\rho_x)
	\fstop
\end{equation}
Here, ${\rm Gamma}(\beta,\rho)$ denotes the gamma distribution whose density is proportional to
$s\mapsto s^{\beta-1}e^{-s/\rho}$ on $[0,\infty)$. We write $\mathbf E_x$ for expectation with respect to the product law in \eqref{eq:xi-x-u-x}.

Letting $\eta^{x,U_x,\xi_x}$ denote the configuration obtained from $\eta$ through \eqref{eq:eta-x-post-reservoir}, the reservoir generator is
\begin{equation}\label{eq:gen-res}
	\mathcal L_\partial f(\eta)
	=
	\sum_{x\in[n]}\omega_x\,
	\tttonde{
		\mathbf E_x\big[
		f(\eta^{x,U_x,\xi_x})
		\big]
		-f(\eta)
	}
	\comma
	\qquad
	\eta\in\Omega_{\rm bd}\eqdef[0,\infty)^n
	\comma
\end{equation}
where $f$ is a bounded measurable function on $\Omega_{\rm bd}$. The boundary-driven KMP generator is
\begin{equation}\label{eq:gen-bd}
	\mathcal L_{\rm bd}
	=
	\mathcal L+\mathcal L_\partial
	\fstop
\end{equation}
Here, $\cL$ denotes the conservative ${\rm KMP}$ generator associated with some block weights $w=(w_B)_{B\subseteq[n]}$ and site weights $\alpha=(\alpha_x)_{x\in[n]}$,  introduced on $\Omega$ in \eqref{eq:gen-KMP} and naturally extended to $\Omega_{\rm bd}$ through the same update rules.

Assume that the hypergraph induced by $w$ is connected and that $\omega\not\equiv0$. In this general setting, it is possible to show that the boundary-driven process admits a unique invariant measure. However, we omit a detailed proof of this fact, as it will not be used in the proof of Theorem~\ref{th:gap-res} below. In the special case in which $\rho_x\equiv\rho_\ast$ is constant on $\supp(\omega)$, one can check that the process admits an explicit, product, reversible measure, given by 
\begin{equation}\label{eq:nu-rhoa}
	\nu
	=
	\underset{x\in[n]}\bigotimes
	{\rm Gamma}(\alpha_x,\rho_\ast)
	\fstop
\end{equation}
Thus, whenever $\rho_x\equiv \rho_\ast$, $\cL_{\rm bd}$ is self-adjoint on $L^2(\nu)=L^2(\Omega_{\rm bd},\nu)$.
If instead $\rho$ is non-constant on $\supp(\omega)$, the process is non-reversible, and its invariant measure---usually referred to as its nonequilibrium steady state---is generally neither in product form nor explicit. 
Several important questions concerning this measure remain open, even on the segment. For further details, we refer to \cite{de_masi_ferrari_gabrielli_hidden_2023,giardina_redig_tol_intertwining_2024}, where the choice $\beta_x=\alpha_x$ is considered, and to the original KMP work \cite{kipnis_heat_1982}, corresponding to the full-refresh update with $\beta_x=\infty$. We return to these cases and further possible generalizations in Remark \ref{rem:general-reservoir} below.

For every $k\ge0$, let $\mathscr Q_k$ denote the finite-dimensional space of polynomials of degree at most $k$ in the variables $\eta\in\Omega_{\rm bd}$; when restricting to $\Omega\subseteq\Omega_{\rm bd}$, $\mathscr Q_k$ returns $\mathscr P_k$ from Section \ref{sec:KMP-intro}. It follows directly from the update rules that
\begin{equation}\label{eq:Qk-invariance-bd}
	\cL_{\rm bd}\mathscr Q_k
	\subseteq
	\mathscr Q_k
	\comma\qquad k\ge0
	\fstop
\end{equation}
Indeed, the conservative updates preserve polynomial degree (cf.\ \eqref{eq:poly-invariance}), while a reservoir update \eqref{eq:eta-x-post-reservoir}--\eqref{eq:xi-x-u-x} maps a polynomial of degree at most $k$ into another polynomial of either equal or lower degree. Hence, the restriction of $\cL_{\rm bd}$ to $\mathscr Q_k$ is a finite-dimensional operator and admits polynomial eigenfunctions.

When $\mathcal L_{\rm bd}$ is self-adjoint, the invariance in \eqref{eq:Qk-invariance-bd} also yields the invariance of the orthogonal polynomial layers
\begin{equation}
\mathscr Q_{k,\ast}\eqdef	\mathscr Q_k\cap\mathscr Q_{k-1}^{\perp_\nu}
	\comma\qquad k\ge1
	\fstop
\end{equation}
Together with the density of polynomials in $L^2(\nu)$, this allows one to fully recover the $L^2(\nu)$ spectrum from polynomial eigenfunctions.
This argument breaks down in nonequilibrium. Indeed, although $\mathscr Q_k$ remains invariant, the orthogonal layer $\mathscr Q_{k,\ast}$ need not be invariant. Moreover, the restriction of $\cL_{\rm bd}$ to $\mathscr Q_k$ need not be self-adjoint or even diagonalizable, and its eigenvalues need not exhaust the spectrum of $\cL_{\rm bd}$ on $L^2(\nu)$.

Regardless of this issue, one may always consider the polynomial spectrum 
$
	\spec(-\cL_{\rm bd}|_{\mathscr Q})
$,
 $\mathscr Q\eqdef \cup_{k\ge 0}\,\mathscr Q_k$, defined as the collection of all eigenvalues of $\cL_{\rm bd}$ on $\mathscr Q$, counted with multiplicity.
We shall prove that this spectrum is real, nonnegative and independent of $\rho$.  Accordingly, we define the polynomial spectral gap by
\begin{equation}\label{eq:def-polynomial-gap-bd}
	\gap(\cL_{\rm bd})
	\eqdef
	\inf_{k\ge 1}\gap(\cL_{\rm bd}|_{\mathscr Q_k})\comma
\end{equation}
where  $\gap(\cL_{\rm bd}|_{\mathscr Q_k})$ denotes the second smallest eigenvalue of $-\cL_{\rm bd}|_{\mathscr Q_k}$.
In the reversible case, $\gap(\cL_{\rm bd})$ coincides with the usual $L^2(\nu)$-spectral gap.
Our main finding is that, in contrast with the conservative setting of Theorem \ref{th:main-KMP}, this gap is always attained by a polynomial of degree one.

\begin{theorem}[Boundary-driven KMP]\label{th:gap-res}
	For all positive weights
	$\alpha, 
	\beta,
	\rho$ 
	and all non-negative weights
	$
	w$ and $\omega$, 
one has
	\begin{equation}\label{eq:gap-res-intro}
		\gap(\cL_{\rm bd})
		=
		\gap(
		\cL_{\rm bd}|_{\mathscr Q_1})
		\fstop
	\end{equation}
	In particular, $\gap(\cL_{\rm bd})$ does not depend on $\rho$. 
\end{theorem}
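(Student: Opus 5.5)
Since $\cL_{\rm bd}\mathscr Q_k\subseteq\mathscr Q_k$ for every $k$, each restriction $\cL_{\rm bd}|_{\mathscr Q_k}$ is block upper-triangular with respect to the filtration $\mathscr Q_0\subseteq\mathscr Q_1\subseteq\cdots$. Its spectrum is therefore the union, over $j\le k$, of the spectra of the induced operators on the quotients $\mathscr Q_j/\mathscr Q_{j-1}$. The plan is to read these quotient operators off the top-degree (homogeneous) parts and show that they are simple enough to compare degree by degree.

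\textbf{Step 1: top-degree action.} Consider a homogeneous polynomial $\widehat\psi$ of degree $k$, and write it in the labeled-particle form $\widehat\psi(\eta)=\sum\psi(x_1,\dots,x_k)\eta_{x_1}\cdots\eta_{x_k}$. The conservative part $\cL$ acts exactly as $\widehat{L_k\psi}$, since the intertwining argument of Proposition~\ref{pr:particle-system-intertwining} is degree-preserving and valid on $\Omega_{\rm bd}$. The reservoir update sends $\eta_x^{j}$ to $\mathbf E[U_x^j](\eta_x+\xi_x)^j$. Its top-degree part is $\frac{\Gamma(\alpha_x+j)\Gamma(\alpha_x+\beta_x)}{\Gamma(\alpha_x)\Gamma(\alpha_x+\beta_x+j)}\eta_x^j$, and everything else involving $\xi_x$ has lower degree. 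Modulo $\mathscr Q_{k-1}$, the quotient operator is thus $L_k^\sym$ plus a killing term: a particle configuration with $j$ particles at $x$ is killed at rate
\begin{equation}
\omega_x\Bigl(1-\mathbf E[U_x^j]\Bigr),\qquad U_x\sim{\rm Beta}(\alpha_x,\beta_x).
\end{equation}
This quotient operator $A_k$ is independent of $\rho$. It is self-adjoint with respect to $\mu$-type weights extended to the gamma setting, for instance $\nu_k(x_1,\dots,x_k)\propto\prod_x\Gamma(\alpha_x+\mathfrak n_x)/\Gamma(\alpha_x)$. Indeed, $L_k$ is reversible with respect to these weights and the killing is diagonal. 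Hence the polynomial spectrum is real, nonnegative and $\rho$-independent, and $\gap(\cL_{\rm bd})=\min_{k\ge1}\lambda_{\min}(-A_k)$, since $\mathscr Q_0$ contributes the eigenvalue $0$.

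\textbf{Step 2: monotonicity in $k$.} It remains to show $\lambda_{\min}(-A_k)\ge\lambda_{\min}(-A_1)$ for all $k$. Here $-A_1$ is the random walk with rates \eqref{eq:RW-rates}, killed at rate $\omega_x\beta_x/(\alpha_x+\beta_x)$ at $x$. Since the $j$-th Beta moment decreases in $j$, we have $1-\mathbf E[U^j]\ge 1-\mathbf E[U]$, so the killing is at least the sum of one-particle killings only if it is subadditive in the right direction. I would prove the sharper claim that the killing of $j$ particles at $x$ dominates the killing of one, and then argue via the ground state. By Perron--Frobenius, $A_k$ is a sub-Markov generator and its bottom eigenvalue is the exponential decay rate of the survival probability of the $k$-particle system. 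This rate is at least the decay rate of the survival probability of any single tagged particle, once one checks that the marginal motion of one particle is the $L_1$ walk and that the system is killed no later than that particle's own killing clock.

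\textbf{Main obstacle.} The delicate step is constructing this coupling: the $k$-particle system must be killed whenever a single tagged particle would be. I would realize the reservoir update via the Beta variable $U_x$, where each particle at $x$ independently survives with probability $U_x$; survival of the system then requires all particles to survive. Under this realization the tagged particle's survival is exactly killing at rate $\omega_x(1-\mathbf E U_x)$, and its motion under block updates is the $L_1$ walk. This yields $\P(\text{system survives to }t)\le\max_x\P_x(\text{tagged survives to }t)\le Ce^{-\lambda_1 t}$. Combined with the fact that the bottom eigenvalue of a finite irreducible (or reducible) sub-Markov generator equals the survival decay rate, this gives $\lambda_{\min}(-A_k)\ge\lambda_1=\gap(\cL_{\rm bd}|_{\mathscr Q_1})$, which proves \eqref{eq:gap-res-intro}.
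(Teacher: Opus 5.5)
Your proposal is correct and follows essentially the same route as the paper. It uses the top-degree intertwining of $\cL_{\rm bd}$ with the killed particle generator $L_k^\sym+L_{\partial,k}$, with killing rate $\omega_x(1-\mathbf E_x[U_x^{\mathfrak n_x}])$, together with the block-triangular decomposition of the polynomial spectrum along $\mathscr Q_0\subseteq\mathscr Q_1\subseteq\cdots$. It then uses the coupling in which each particle at a reservoir site survives independently with probability $U_x$, so that the $k$-particle killing time is at most every tagged particle's killing time, and converts survival decay into the bottom eigenvalue via Gelfand's formula. The only differences are cosmetic: you apply the survival comparison directly to the unlabeled operator, whereas the paper passes through the labeled $L_{{\rm bd},k}$ and $\min\spec(-L_{{\rm bd},k}^\sym)\ge\min\spec(-L_{{\rm bd},k})$. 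Your aside about subadditivity of the killing rates is unnecessary, since the coupling already encodes $U_x^{j}\le U_x$.
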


As in the conservative case, this result admits a particle-system interpretation: the relevant intertwining now involves the particle dynamics from Section \ref{sec:KMP-particle-intro} with an additional killing of particles at reservoir sites. We refer to Section \ref{sec:KMP-reservoirs} for further details.

\subsubsection{Stochastic exchange models}\label{sec:SEM-intro}

The arguments used to prove Theorems \ref{th:main-KMP} and \ref{th:disc-ell-KMP} for the conservative ${\rm KMP}$ model extend to a considerably broader class of processes, which we refer to as \emph{stochastic exchange models}; see also \cite{kim_quattropani_sau_spectral_2025,caputo2025universal,casanova2025partially}. Roughly speaking, these are Markov processes on $\Omega$ whose updates redistribute the energy through multiplication with random stochastic matrices.

More specifically, let $\Upsilon$ be a Borel measure on the space of row-stochastic matrices $M=(M_{xy})_{x,y\in [n]}\in[0,1]^{n\times n}$. The dynamics on $\Omega$ is formally described, for $f:\Omega\to \R$, by
\begin{equation}\label{eq:gen-SEM-intro}
	\cL^\tups f(\eta)
	\eqdef
	\int \Upsilon(\dd M)
	\tonde{
		f(\eta M)-f(\eta)
		}
	\comma\qquad \eta\in \Omega\fstop
\end{equation}
Here, $\eta M$ denotes row-vector--matrix multiplication; in particular, $\eta M\in\Omega$ whenever $\eta\in\Omega$.
We defer the precise construction of stochastic exchange models and further properties to Section \ref{sec:SEM}. For now, let us simply note that the ${\rm KMP}$ generator in \eqref{eq:gen-KMP} corresponds to (cf.~\eqref{eq:B_up})
\begin{equation}\label{eq:Upsilon-KMP}
	\Upsilon = \Upsilon^\tKMP
	=
	\textstyle{\sum_{B\subseteq[n]}}\,
	w_B\,{\rm Law}(K_B^U)
	\comma\qquad
	U=(U_x)_{x\in B}\sim{\rm Dir}(\alpha^B)
	\comma
\end{equation}
where $K_B^U\in \R^{n\times n}$ is the random row-stochastic matrix  such that, for every $\eta \in \Omega$,
\begin{equation}\label{eq:K_B^U}
	(\eta K_B^U)_x = \eta(B)U_x\comma x \in B\comma\qquad (\eta K_B^U)_x = \eta_x\comma x \notin B\fstop
\end{equation}  Observe that the $B\times B$ block of $K_B^U\in \R^{n\times n}$ has identical rows, each equal to $U=(U_x)_{x\in B}$.

As further examples, we anticipate that suitable variants of the ${\rm KMP}$ redistribution law in \eqref{eq:Upsilon-KMP} yield:
\begin{itemize}
	\item the \textit{averaging process} considered in \cite{aldous_lecture_2012,quattropani2021mixing}, obtained by taking the redistribution vector to be deterministic, with $U_x=\frac{\alpha_x}{\alpha(B)}$ for $x\in B$;
	\item a block version of the \textit{immediate exchange model} introduced in \cite{heinsalu_patriarca_kinetic_2014}, obtained by taking the rows of the $B\times B$ update block to be {independent}---rather than identical---Dirichlet vectors with suitable parameters; see Section \ref{sec:IEM}.
\end{itemize}	
 In all these examples, $\Upsilon$ is finite, and the corresponding generator $\cL^\tups$ is bounded on $\cC(\Omega)$, the space of continuous functions on $\Omega$ endowed with the uniform norm. There are, however, relevant models described by an infinite measure $\Upsilon$, most notably the \textit{harmonic process} \cite{frassek2021exact}; see Section \ref{sec:HP}.  Each of the examples just mentioned admits a Dirichlet distribution as a reversible measure; for the averaging process, this degenerates to a Dirac mass.

 To encompass all the above examples, we always impose, as in \cite{kim_quattropani_sau_spectral_2025}, the condition
 \begin{equation}\label{eq:SEM-assumption-intro}
\textstyle 	\int \Upsilon(\dd M)
 	M_{xy}
 	<\infty\comma\qquad \text{for every}\ x,y\in [n]\comma x\neq y\fstop
 \end{equation}
This guarantees that \eqref{eq:gen-SEM-intro} generates a Feller process on $\Omega$. By compactness, an invariant probability measure always exists (Remark \ref{rem:SEM-invariant-measures}), while Theorem~\ref{th:SEM} and Proposition~\ref{pr:SEM-uniqueness} provide explicit equivalent
criteria for its uniqueness.

One key common feature of all these stochastic exchange models is that, under \eqref{eq:SEM-assumption-intro}, the linearity of the updates in \eqref{eq:gen-SEM-intro} guarantees
\begin{equation}\label{eq:SEM-invariance-intro}
	\cL^\tups\mathscr P_k
	\subseteq
	\mathscr P_k
	\comma\qquad
	k\ge0\comma
\end{equation}
where $\mathscr P_k$ is the same space introduced in Section \ref{sec:KMP-intro}, namely, the space of polynomials of degree at most $k$ in the variables $\eta\in\Omega$. 
As in the preceding discussion on the boundary-driven ${\rm KMP}$ model, the eigenvalues obtained by diagonalizing $\cL^\tups$ on the finite-dimensional spaces $\mathscr P_k$ need not be in one-to-one correspondence with the spectrum of any $L^2$ realization of the generator. Nevertheless, we may define the polynomial spectral gap by
\begin{equation}\label{eq:SEM-gap-intro}
	\gap(\cL^\tups)
	\eqdef
	\inf_{k\ge1}
	\gap(\cL^\tups|_{\mathscr P_k})
	\comma
\end{equation}
where $\gap(\cL^\tups|_{\mathscr P_k})$ is the smallest real part among the eigenvalues of $-\cL^\tups|_{\mathscr P_k}$ after removing one copy of the zero eigenvalue corresponding to constant functions, with algebraic multiplicities taken into account.
With these definitions, the degree-two reduction established for the ${\rm KMP}$ model in Theorem \ref{th:main-KMP} in fact holds throughout the whole class of stochastic exchange models. 
\begin{theorem}[Stochastic exchange models]\label{th:SEM}
	For every Borel measure $\Upsilon$ satisfying \eqref{eq:SEM-assumption-intro},
	\begin{equation}\label{eq:SEM-gap-identity}
		\gap(\cL^\tups)
		=
		\gap(
		\cL^\tups|_{\mathscr P_2}
		)
		\fstop
	\end{equation}
	Moreover, $\cL^\tups$ admits a \textit{unique} invariant measure precisely when $\gap(\cL^\tups)>0$.
\end{theorem}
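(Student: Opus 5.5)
The plan is to pass from polynomials to particle systems, using the same intertwining as for KMP. Because the updates are linear, $\eta\mapsto\eta M$, a degree-$k$ homogeneous polynomial $\widehat\psi(\eta)=\sum\psi(x_1,\dots,x_k)\eta_{x_1}\cdots\eta_{x_k}$ satisfies
\[
\widehat\psi(\eta M)=\widehat{M^{\otimes k}\psi}(\eta),
\]
with $(M^{\otimes k}\psi)(x_1,\dots,x_k)=\sum_{y}\prod_i M_{x_iy_i}\psi(y_1,\dots,y_k)$. Hence $\cL^\tups\widehat\psi=\widehat{L^\tups_k\psi}$, where
\[
L^\tups_k\psi=\int\Upsilon(\dd M)\,\bigl(M^{\otimes k}-I\bigr)\psi .
\]
This is the generator of $k$ labeled particles that, at each update $M$, jump independently according to the rows of the same random matrix $M$.

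The first thing to check is that this generator is well defined under \eqref{eq:SEM-assumption-intro}. The rate of a transition in which some particle moves is bounded by $\sum_i\int M_{x_iy_i}\,\Upsilon(\dd M)$ over off-diagonal pairs, which is finite. As in Proposition \ref{pr:particle-system-intertwining}, $\cL^\tups|_{\mathscr P_k}$ is then isospectral to the restriction of $L^\tups_k$ to symmetric $\psi$. This identification also gives the invariance \eqref{eq:SEM-invariance-intro}, and it reduces \eqref{eq:SEM-gap-identity} to the following claim: every non-trivial eigenvalue of $L_k^{\tups,\sym}$, meaning every eigenvalue other than the one copy of $0$ coming from constants, has real part at least $\gap(L_2^{\tups,\sym})$.

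The core step is the degree-two reduction for this class, which is the same hidden-model mechanism used for Theorem \ref{th:main-KMP}. I would argue by induction on $k$, with a triangular decomposition. Projecting the $k$-particle system onto $k-1$ of its particles gives an exact lumping: forgetting one particle maps $L_k$ to $L_{k-1}$, and the spectrum splits into $\spec(L_{k-1})$ together with the spectrum on a "new-degree" quotient. On that quotient I would exploit the fact that particles move via a common $M$ but are otherwise independent. The aim is a comparison bound: the real part of any new-degree eigenvalue is at least the minimum over pairs of particles of the two-particle new-degree rate. One natural way to get this is to write the dual hidden model as a process on pairs, i.e.\ to express the quotient generator as an average over pairs of two-particle operators, in the spirit of Aldous-type octopus inequalities. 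I expect this comparison to be the main obstacle, particularly in the nonreversible setting, where only real parts of eigenvalues are available and no Dirichlet form exists. For that case I would look for a triangular (filtration) structure in which the new-degree block at level $k$ is conjugate to a sum of commuting pieces whose spectra are controlled by the level-$2$ block, and then use the robustness of the mechanism as asserted in the introduction.

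For uniqueness of the invariant measure, suppose first that $\gap(\cL^\tups)>0$. The moments $\mu(\widehat\psi)$ of any invariant measure $\mu$ must solve $\mu(\cL^\tups\widehat\psi)=0$ on each $\mathscr P_k$. Since $0$ is then a simple eigenvalue on every $\mathscr P_k$, these equations determine all moments, and on the compact set $\Omega$ the moments determine the measure. Conversely, suppose $\gap=0$. By the reduction, a second zero (or a zero real part) appears already in $\mathscr P_2$. A second zero of $L_1$ means the particle chain has two closed classes, which yields two distinct invariant measures, for instance Dirac masses or mixtures concentrated on different classes. A purely imaginary eigenvalue of a finite Markov generator is impossible unless it is $0$, so the remaining case is a degenerate $\mathscr P_{2,\ast}$ direction. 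There, a conserved quadratic quantity such as $\eta M=\eta$ along a non-trivial subspace produces several invariant measures, and I would exhibit these explicitly via the two-particle closed classes.
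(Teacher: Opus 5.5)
Your setup is sound, and it matches the paper in three places: the intertwining $\cL^\tups\widehat\psi=\widehat{L_k^\tups\psi}$, the filtration $\mathscr P_{k-1}\subseteq\mathscr P_k$ with its new-degree quotient, and the direction ``$\gap(\cL^\tups)>0\Rightarrow$ uniqueness''. In that last direction the one-dimensional left kernel of $\cL^\tups|_{\mathscr P_k}$ pins down every moment of any invariant law.

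The genuine gap is the core step. You never prove that an eigenvalue on the new-degree layer at level $k\ge3$ has real part at least the degree-two gap. You invoke ``the same hidden-model mechanism'' as for KMP, but you never state it. The pairwise route you actually sketch is not available. The operator $\int\Upsilon(\dd M)\,(M^{\otimes k}-I)$ moves all particles with the same random $M$, so it does not split into commuting two-particle pieces. Octopus-type inequalities are statements about Dirichlet forms, which do not exist in the nonreversible case and do not control real parts of complex eigenvalues. Even for interchange processes on hypergraphs, where this would be the natural route, the analogous statement is still open. Closing with ``the robustness of the mechanism as asserted in the introduction'' is circular.

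The missing idea is a domination argument for the hidden dynamics $\theta\mapsto M\theta$, and it uses only positivity. Transfer the layer with the tilde map, taking any strictly positive symmetric reference law $\varpi_k$; no invariance is needed. The layer $\mathscr P_k/\mathscr P_{k-1}$ is then isospectral to $\mathscr L^\tups$ on $\mathscr R_{k,\ast}$, the homogeneous degree-$k$ polynomials invariant under $\theta\mapsto\theta+s\mathbf1$. Any such $g$ satisfies $g(\theta)=g(\theta-\pi(\theta)\mathbf1)$, hence $|g(\theta)|\le C\var_\pi(\theta)^{k/2}$. Every cube $[a,b]^n$ is invariant, because rows of $M$ are probability vectors, so on it $|g|\le C(b-a)^{k-2}\var_\pi$, and $\var_\pi\in\mathscr R_{2,\ast}$. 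For a possibly complex eigenfunction $\mathscr L^\tups g=-\lambda g$, Markovianity of the hidden semigroup gives
\[
e^{-{\rm Re}(\lambda)t}\,|g(\theta)|=\bigl|e^{t\mathscr L^\tups}g(\theta)\bigr|\le e^{t\mathscr L^\tups}|g|(\theta)\le C(b-a)^{k-2}\,e^{t\mathscr L^\tups}\var_\pi(\theta),\qquad \theta\in[a,b]^n .
\]
Gelfand's formula on the finite-dimensional invariant space $\mathscr R_{2,\ast}$ then yields ${\rm Re}(\lambda)\ge\gap(\mathscr L^\tups|_{\mathscr R_{2,\ast}})$ for every $k\ge2$. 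This needs no reversibility and no induction over pairs. It works only in the hidden picture: there, new-degree observables vanish on the absorbing flat profiles and can be dominated by a nonnegative quadratic. In the energy or particle pictures they change sign, so no such domination exists.

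Your converse for uniqueness is also only a sketch. ``Exhibit via closed classes'' is not an argument, and Dirac masses are generally not invariant. A complete argument goes as follows:
\begin{enumerate}
\item $\cL^\tups|_{\mathscr P_2}$ is similar to a finite Markov generator. Its zero eigenvalue is semisimple and its other eigenvalues have negative real part.
\item Hence $\gap=0$ yields a real nonconstant $f\in\mathscr P_2$ with $\cL^\tups f=0$.
\item The sets where $f$ attains its maximum and its minimum on $\Omega$ are disjoint, compact and invariant. Each supports an invariant measure, so there are two distinct ones.
\end{enumerate}
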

We stated the above result for restrictions to polynomial observables. If, in addition, the process is reversible with respect to a probability measure which is absolutely continuous with respect to the uniform measure on $\Omega$, then distinct polynomial functions on $\Omega$ define distinct elements of the corresponding $L^2$-space. Since polynomials are dense in this space, self-adjointness yields a complete orthonormal basis of polynomial eigenfunctions. Consequently, $\gap(\cL^\tups)$ coincides with the usual $L^2$-spectral gap. This applies, in particular, whenever the reversible measure is a nondegenerate Dirichlet distribution, thus, to the immediate exchange model and harmonic process; see Corollaries \ref{cor:IEM} and \ref{cor:HP}.

We provide further motivation for this general spectral gap reduction in the next section, where we outline the key steps of its proof.

\subsection{Hidden models,  proof strategy, and further remarks}\label{sec:proof-strategy}
As for the ${\rm KMP}$ model, every stochastic exchange model as introduced in Section \ref{sec:SEM-intro} admits a natural particle representation. Consequently, the spectral problems for polynomial eigenfunctions underlying Theorem \ref{th:main-KMP} and its extension in Theorem \ref{th:SEM} can be equivalently reformulated in terms of eigenpairs of the associated finite-particle systems; see \eqref{eq:SEM-gap-particle-interpretation}. The most effective viewpoint for proving the degree-two reductions in those theorems, however, comes from a different representation: the \textit{hidden} (\textit{temperature} or \textit{parameter}) \textit{model}, first identified for ${\rm KMP}$ in \cite{de_masi_ferrari_gabrielli_hidden_2023}; see also \cite{giardina_redig_tol_intertwining_2024,kim_quattropani_sau_spectral_2025} for subsequent developments and extensions.

\subsubsection{Hidden models}\label{sec:hidden-model-intro}
To introduce the hidden model in an intuitive way, first observe that the ${\rm KMP}$ model---as any other stochastic exchange model with generator $\cL^\tups$ as in \eqref{eq:gen-SEM-intro}---is built from elementary random matrix updates of the form
\begin{equation}\label{eq:eta-M}
	\eta\longmapsto \eta M\comma
\end{equation}
where $\eta\in\Omega\subseteq\R^n$, viewed as a row vector, is a probability distribution on $[n]$, and $M\in\R^{n\times n}$ is row-stochastic.  For instance, the ${\rm KMP}$ update in \eqref{eq:2_up}, restricted to the coordinates $x,y=x+1\in[n]$ (as the others remain unchanged), can be written as
\begin{equation}\label{eq:KMP-update-segment-matrix}
	(\eta_x,\eta_y)
	\longmapsto
	(\eta_x,\eta_y)
	\begin{pmatrix}
		U & 1-U\\
		U & 1-U
	\end{pmatrix} = \begin{pmatrix}
		U(\eta_x+\eta_y),(1-U)(\eta_x+\eta_y)
	\end{pmatrix}\fstop
\end{equation}
Hence, at any time $t>0$, after $\ell=\ell(t)$ updates \eqref{eq:eta-M} with matrices $M_1,\ldots,M_\ell$, the energy configuration is
$\eta M_1\cdots M_\ell$.
Crucially, the update times and matrices are sampled independently of the current energy configuration. The same sequence may therefore be read in reverse chronological order and applied, by left multiplication, to a column vector, still yielding a Markov dynamics.

This backward evolution is precisely the \textit{hidden model}: its configurations are column vectors $\theta\in\R^n$, and each update takes the form
\begin{equation}
	\theta\longmapsto M\theta\comma
\end{equation}
so that the same row-stochastic matrix $M$ now acts on the hidden-model configuration from the left.
In particular, the matrix representation of the ${\rm KMP}$ update in \eqref{eq:KMP-update-segment-matrix} gives the corresponding $\theta$-variable update
\begin{equation}\label{eq:hidden-update-segment}
	\begin{pmatrix}
		\theta_x\\
		\theta_y
	\end{pmatrix}
	\longmapsto
	\begin{pmatrix}
		U & 1-U\\
		U & 1-U
	\end{pmatrix}
	\begin{pmatrix}
		\theta_x\\
		\theta_y
	\end{pmatrix}
	=
	\begin{pmatrix}
		U\theta_x+(1-U)\theta_y\\
		U\theta_x+(1-U)\theta_y
	\end{pmatrix}\fstop
\end{equation}
Observe that, whereas the forward update in \eqref{eq:KMP-update-segment-matrix} redistributes the conserved total energy $\eta_x+\eta_y$ between the two sites, the hidden-model update in \eqref{eq:hidden-update-segment} replaces $\theta_x$ and $\theta_y$ by the same random convex combination and, in general, does not preserve their sum. The same description extends to the ${\rm KMP}$ hidden model on a weighted hypergraph: whenever a block is updated, all its coordinates are replaced by a common random convex combination. 
Thus, for the ${\rm KMP}$ hidden-model dynamics,
\begin{enumerate}[(i)]
	\item\label{item:hidden-flat} every flat configuration $s\mathbf1$, $s\in\R$, is invariant;
	\item\label{item:hidden-max-principle} the minimum coordinate is nondecreasing and the maximum coordinate is nonincreasing, so every hypercube $[a,b]^n$, $-\infty< a<b< \infty$,	 is invariant;
	\item\label{item:hidden-nonconservative} the total mass need not be conserved.
\end{enumerate}

The properties in \ref{item:hidden-flat}--\ref{item:hidden-nonconservative} are not specific to the hidden model of the ${\rm KMP}$, but are shared by the hidden model of any stochastic exchange model from Section \ref{sec:SEM-intro}. Indeed, every row-stochastic matrix $M$ satisfies $M\mathbf1=\mathbf1$ and each coordinate of $M\theta$ is a convex combination of the coordinates of $\theta$, yielding respectively the invariance of flat configurations in \ref{item:hidden-flat} and the maximum principle in \ref{item:hidden-max-principle}. Moreover, in general,
\begin{equation}
	\textstyle
	\sum_{x\in[n]}(M\theta)_x
	\neq
	\sum_{x\in[n]}\theta_x\comma
\end{equation}
so the hidden-model dynamics is typically non-conservative, i.e., \ref{item:hidden-nonconservative}.

 Under an appropriate connectivity assumption, the hidden-model dynamics converges, in the long run, to a flat configuration with a possibly random height. Although the degeneracy of these steady states may at first appear as a complication, it is precisely what makes the hidden-model representation effective for the spectral gap problem.

\subsubsection{Eigenvalue problem for hidden models} 
The original ${\rm KMP}$ model, its particle representation, and its hidden model all encode the same polynomial eigenvalue problem. Indeed, for given weights $\alpha=(\alpha_x)_{x\in [n]}$ and $w=(w_B)_{B\subseteq[n]}$, $k\ge1$, and $\psi\in\mathscr H_k$ as defined in \eqref{eq:coeff-space-sym}, next to the $\eta$-variable polynomial $\widehat\psi\in\mathscr P_k$ introduced in \eqref{eq:intro-widehat-psi}, we also define
\begin{equation}\label{eq:intro-tilde-psi}
	\widetilde\psi(\theta)
	\eqdef
	\sum_{x_1,\ldots,x_k\in[n]}
	\mu_k(x_1,\ldots,x_k)\,
	\psi(x_1,\ldots,x_k)\,
	\theta_{x_1}\cdots\theta_{x_k}\comma
	\qquad \theta\in\R^n\comma
\end{equation}
where $\mu_k$ is the reversible measure of the $k$-particle system; see \eqref{eq:mu-k}. Comparing $\widetilde\psi$ in \eqref{eq:intro-tilde-psi} with $\widehat\psi$ in \eqref{eq:intro-widehat-psi}, the two polynomials differ only by the additional coefficient $\mu_k(x_1,\ldots,x_k)$ attached to each monomial.

Recall that $\cL$ denotes the generator of the $(\alpha,w)$-${\rm KMP}$ model, while $L_k$ is the generator of the associated $k$-particle system. As seen in \eqref{eq:intro-intertwining}, the map $\psi\mapsto\widehat\psi$ on $\mathscr H_k$ intertwines these two generators; the map $\psi \mapsto \widetilde \psi$ in \eqref{eq:intro-tilde-psi} yields the corresponding relation with the hidden-model dynamics. More precisely, if $\mathscr L$ denotes the generator of the hidden model, then
\begin{equation}\label{eq:intro-forward-backward-intertwining}
	\cL\widehat\psi
	=
	\widehat{L_k\psi}
	\qquad\text{and}\qquad
	\mathscr L\widetilde\psi
	=
	\widetilde{L_k\psi}\comma\qquad \text{for every}\ \psi \in \mathscr H_k\fstop
\end{equation}
In view of these intertwining relations, we shall prove that, for every $\lambda\ge 0$ and $\psi \in \mathscr H_k$,
\begin{equation}\label{eq:eigenvalue-problems-three}
	\cL\widehat\psi=-\lambda\widehat\psi
	\qquad\Longleftrightarrow\qquad
	L_k\psi=-\lambda\psi
	\qquad\Longleftrightarrow\qquad
	\mathscr L\widetilde\psi=-\lambda\widetilde\psi\fstop
\end{equation}
Since $\psi$, $\widehat\psi$, and $\widetilde\psi$ may only vanish simultaneously, the eigenvalue problem may be formulated interchangeably for the ${\rm KMP}$ generator, the $k$-particle system on symmetric observables, or the hidden model.  

In view of the degeneracy of the hidden model steady states, the last formulation in \eqref{eq:eigenvalue-problems-three} has the advantage of translating the problem of identifying the optimal $L^2(\mu)$ convergence rate of the ${\rm KMP}$ dynamics toward its Dirichlet equilibrium, as in \eqref{eq:gap_decay}, into the more tractable task of quantifying the decay of spatial fluctuations in the hidden model.

\subsubsection{Spatial variance as a Lyapunov function}\label{sec:variance-intro}

A natural first candidate for measuring spatial fluctuations of the hidden-model profile is the variance with respect to the probability measure induced by $\alpha=(\alpha_x)_{x\in[n]}$. More precisely, for $x\in[n]$ and $\theta\in\R^n$, set
\begin{equation}\label{eq:var-pi-intro}
	\textstyle
	\pi_x\eqdef\frac{\alpha_x}{\alpha_0}\comma
	\qquad
	\pi(\theta)\eqdef\sum_{x\in[n]}\pi_x\,\theta_x\comma
	\qquad
	\var_\pi(\theta)
	\eqdef
	\sum_{x\in[n]}\pi_x\tonde{\theta_x-\pi(\theta)}^2\fstop
\end{equation}
Since $\theta\mapsto \var_\pi(\theta)$ is nonnegative and vanishes precisely on flat configurations, consider its optimal exponential contraction rate
\begin{equation}\label{eq:def-delta}
	\delta(\alpha,w)
	\eqdef
	\sup\ttset{
		\delta\ge0:
		e^{t\mathscr L}\var_\pi(\theta)
		\le
		e^{-\delta t}\var_\pi(\theta)
		\ \text{for all}\ t\ge0,\, \theta\in\R^n
		}\comma
\end{equation}
where, here and in what follows, $(e^{t\mathscr L})_{t\ge0}$ denotes the hidden-model semigroup.
Following the strategy introduced in \cite{kim_quattropani_sau_spectral_2025}, and here extended to the weighted-hypergraph setting, one obtains a quantitative lower bound on
$
	\inf_{k\ge2}\gap_{k,\ast}(\alpha,w)
$
in two separate steps, as we now briefly sketch; for further details, we refer to Sections~\ref{sec:proof-main-KMP} and \ref{sec:tau-infty}.

First, one estimates the decay of the variance through an infinitesimal contraction computation. The resulting bound involves only $\gap_1(\alpha,w)$---and hence spectral information carried by linear observables of the energy variables---together with the factor 
\begin{equation}
	\gamma(\alpha,w)\eqdef \frac{\alpha_{w,{\rm min}}}{1+\alpha_{w,{\rm min}}}\tonde{1+\frac1{\alpha_0}}\in(0,1)
\end{equation} appearing on the left-hand side of \eqref{eq:gap-1-2-quantitative}. In fact,  a direct computation \cite{kim_quattropani_sau_spectral_2025} yields
\begin{equation}\label{eq:var-der-intro}
	\mathscr L\var_\pi(\theta)
	\le
	-\gamma(\alpha,w)\,\cE_1(\theta)
	\le
	-\gamma(\alpha,w)\,\gap_1(\alpha,w)\,\var_\pi(\theta)\comma\qquad \theta \in \R^n\comma
\end{equation}
where $\cE_1$ denotes the Dirichlet form of the one-particle dynamics with rates \eqref{eq:RW-rates} (see \eqref{eq:dirichlet-form-1}), while the second inequality follows by the variational characterization of $\gap_1(\alpha,w)$. Gr\"onwall's inequality then gives
\begin{equation}\label{eq:intro-delta-lower-bound}
	\delta(\alpha,w)
	\ge
	\gamma(\alpha,w)\,\gap_1(\alpha,w)\fstop
\end{equation}

Second, the quadratic vanishing of hidden-model eigenfunctions at flat configurations allows the decay estimate for the variance to be converted into the uniform bound
\begin{equation}\label{eq:gap-k-delta}
	\gap_{k,\ast}(\alpha,w)\ge\delta(\alpha,w)\comma\qquad k\ge 2\fstop
\end{equation}
Indeed, if $g=\widetilde\psi\neq 0$ is a hidden-model eigenfunction associated to $\lambda=\gap_{k,\ast}(\alpha,w)$,  $k\ge2$, then
\begin{equation}\label{eq:g-var}
	|g(\theta)|
	\le
	C\var_\pi(\theta)\comma
	\qquad \theta\in[a,b]^n\comma
\end{equation}
for every $-\infty<a<b<\infty$ and some $C=C(g,k,\alpha,a,b)>0$.
Hence,  $\mathscr Lg=-\lambda g$, positivity of the hidden-model semigroup $(e^{t\mathscr L})_{t\ge 0}$, the maximum principle in \ref{item:hidden-max-principle}, \eqref{eq:g-var}, and \eqref{eq:def-delta} give, for every $t\ge 0$ and $\theta \in [a,b]^n$,
\begin{equation}\label{eq:chain-1}
	e^{-\lambda t}|g(\theta)|
	=
	|e^{t\mathscr L}g(\theta)|
	\le
	e^{t\mathscr L}|g|(\theta)
	\le C e^{t\mathscr L}\var_\pi(\theta)\le
	Ce^{-\delta(\alpha,w) t}\var_\pi(\theta)\fstop
\end{equation}
As $g\neq 0$ on $[a,b]^n$, letting $t\to\infty$ in \eqref{eq:chain-1} proves \eqref{eq:gap-k-delta}.

Altogether, \eqref{eq:intro-delta-lower-bound} and \eqref{eq:gap-k-delta} yield the lower bound
\begin{equation}
	\inf_{k\ge 2}\gap_{k,\ast}(\alpha,w)\ge \gamma(\alpha,w)\,\gap_1(\alpha,w)\comma
\end{equation}
which, while quantitative, does not yet identify the lowest-degree gap.

\subsubsection{A Perron-Frobenius theorem for nonnegative quadratic functions}\label{sec:PF-intro} The preceding argument does not allow, in general, to prove that the optimal variance-contraction rate $\delta(\alpha,w)$ in \eqref{eq:def-delta} \textit{always} coincides with $\gap_{2,\ast}(\alpha,w)$. A closer inspection, though, shows that this identity does hold in \textit{some} instances, namely, in homogeneous mean-field settings, such as
\begin{equation}\label{eq:mean-field}
\textstyle\alpha\equiv{\rm const.}\comma	w_B=\car_{{|B|=\ell}}
	\comma\qquad \text{for some}\ \ell=2,\ldots,n-1\fstop
\end{equation}
Indeed, in this case all the inequalities in \eqref{eq:var-der-intro} become identities (Remark \ref{rem:gap-lower-sharp}), yielding
\begin{equation}
	\mathscr L\var_\pi
	=
	-\gamma(\alpha,w)\,\gap_1(\alpha,w)\,\var_\pi \fstop
\end{equation}
Thus, in this context, $\var_\pi$ is itself an eigenfunction of $-\mathscr L$ with eigenvalue $\gamma(\alpha,w)\,\gap_1(\alpha,w)$, which necessarily coincides with $\delta(\alpha,w)$ in \eqref{eq:def-delta}. Moreover, $\var_\pi$ is a nonzero, quadratic polynomial in the $\theta$-variables that vanishes precisely on constant configurations. Leveraging on the correspondence in \eqref{eq:eigenvalue-problems-three}, one then proves that $\var_\pi$ is a genuine degree-two eigenfunction, so that $\gap_{2,\ast}(\alpha,w)\le\gamma(\alpha,w)\,\gap_1(\alpha,w)$. Combined with \eqref{eq:gap-k-delta} for $k=2$, this forces
\begin{equation}
	\gamma(\alpha,w)\,\gap_1(\alpha,w)
	=
	\delta(\alpha,w)
	=
	\gap_{2,\ast}(\alpha,w)\fstop
\end{equation}

Hence, in the homogeneous mean-field setting \eqref{eq:mean-field}, one obtains not only
\begin{equation}
	\inf_{k\ge2}\gap_{k,\ast}(\alpha,w)
	=
	\gap_{2,\ast}(\alpha,w)\comma
\end{equation}
but also the stronger structural fact that $\var_\pi$ is a nonnegative quadratic eigenfunction of $-\mathscr L$ associated with $\gap_{2,\ast}(\alpha,w)$ and vanishing precisely on flat configurations. These are exactly the properties used in the preceding argument: nonnegativity and quadratic vanishing yield the pointwise domination in \eqref{eq:g-var}, while the eigenvalue equation provides the sharp decay rate $\gap_{2,\ast}(\alpha,w)$, rather than merely a computable lower bound.

For general geometries, $\var_\pi$ is typically no longer an eigenfunction. The central finding of our approach is that the properties above can nevertheless always be recovered simultaneously for any ${\rm KMP}$ hidden-model dynamics: for all positive site weights $\alpha$ and nonnegative block weights $w$, under suitable irreducibility assumptions there exists a nonzero quadratic function $g_\ast\ge0$, strictly positive away from flat configurations, such that
\begin{equation}\label{eq:intro-positive-quadratic-eigenfunction}
	\mathscr Lg_\ast
	=
	-\gap_{2,\ast}(\alpha,w)\,g_\ast\fstop
\end{equation}
Thus, $g_\ast$ may replace $\var_\pi$ as a Lyapunov function in \eqref{eq:chain-1}, while capturing the optimal rate; cf.\ \eqref{eq:chain-2}.

Let us emphasize that the existence of such a one-signed eigenfunction is a distinctive feature of the absorbing hidden-model dynamics. Every nonconstant eigenfunction of the original ${\rm KMP}$ generator associated with a nonzero eigenvalue is orthogonal to constants in $L^2(\mu)$ and must therefore change sign. In the hidden-model variables, instead, in view of the degenerate form of its steady state, the corresponding eigenfunction may be nonnegative, vanishing on the absorbing family of flat configurations.

Ultimately, the key step is therefore to establish the existence of such a distinguished eigenfunction $g_\ast$.
 Once the appropriate functional setting has been identified, this existence result follows rather directly from Perron--Frobenius theorems applied on suitable cones of nonnegative quadratic hidden-model observables. Although the underlying tool is classical, its application to degree-two polynomials in the hidden-model variables appears to be both novel and rather unexpected. It is also remarkably powerful: the resulting eigenfunction encodes substantial structural information, which we exploit systematically in the proofs of the characterization theorems in Section \ref{sec:comparison-intro}. Finally, this same mechanism proves to be quite universal, extending beyond the ${\rm KMP}$ model to all stochastic exchange models.

\subsubsection{Further remarks and results}

In fact, although the proof sketch above captures the essence of our approach, a Perron--Frobenius theorem as described there---and, in fact, the stronger version established in Proposition \ref{pr:PF-strong}---plays a key role only in the finer comparison results from Section \ref{sec:comparison-intro}. For the proofs of Theorems \ref{th:main-KMP}--\ref{th:disc-ell-KMP} and of their stochastic-exchange-model counterpart, Theorem \ref{th:SEM}, it suffices instead to exhibit a suitable quadratic nonnegative and nondegenerate---in the sense of \eqref{eq:cone-interior}---polynomial, not necessarily an eigenfunction of the generator. A convenient choice for such a polynomial is the variance functional $\var_\pi$ from \eqref{eq:var-pi-intro}.	

This more general principle also yields refined information on higher-degree spectral gaps. Indeed, at every even degree $\ell\ge2$, the same role is played by the homogeneous polynomial $\var_\pi^{\ell/2}$, yielding $\gap_{k,\ast}(\alpha,w)\ge\gap_{k-1,\ast}(\alpha,w)$ for odd $k\ge3$ and $\gap_{k,\ast}(\alpha,w)\ge\gap_{k-2,\ast}(\alpha,w)$ for even $k\ge4$. See Remarks \ref{rem:new-ordering} and \ref{rem:even-degree-cones}, as well as Remark \ref{rem:SEM-higher-degree} for the analogous statement for stochastic exchange models. In particular, when $\alpha\equiv1$, the monotonicity along even degrees in  \eqref{eq:even-chain} proves another conjecture raised by Alon and Puder in \cite{alon2026aldous}, namely, part of \cite[Conjecture 4.16]{alon2026aldous}.

\subsubsection{Open problems}
As already discussed at the beginning of Section~\ref{sec:intro}, Aldous-type spectral gap phenomena have by now been identified both for classical interacting particle systems and for stochastic dynamics on groups, creating links between statistical and quantum physics, combinatorics, group theory, and representation theory; see, e.g., \cite{cesi_remarks_2016,hermon_version_2019,quattropani2021mixing,bristiel_caputo_entropy_2021,kim_sau_spectral_2023,alon_kozma_puder_aldous_2025,alon2026aldous,levhari2026aldous,greaves2026aldous,zhu2026random} and references therein. All previously known results and conjectures associated with the symmetric group and its variants can be interpreted, in the corresponding particle or energy variables, as degree-one spectral gap reductions. A prominent open problem in this direction is Caputo's conjecture for the interchange process on arbitrary weighted hypergraphs; see Remark~\ref{rem:IP}. Another related important open problem is the general degree-two reduction conjectured by Alon and Puder for the unitary group, see \cite[Conjecture 1.7]{alon2026aldous}. 
Our results provide the first proof of general degree-two spectral gap reductions on arbitrary geometries for a broad class of stochastic exchange dynamics.
 Finding further occurrences of this phenomenon, and identifying a common mechanism behind them, remains an interesting open problem.

A natural test case is the \textit{Brownian energy process} (${\rm BEP}$), another conservative stochastic dynamics of energies on a weighted $n$-site graph, parametrized by site weights $\alpha$ and edge weights $w$ as the ${\rm KMP}$ model, but evolving diffusively rather than through random exchange updates. On $\Omega$, it is reversible with respect to $\mu={\rm Dir}(\alpha)$ and has generator
\begin{equation}
	\cL^{\tbep} f(\eta)
	=
	\frac12\sum_{x,y\in[n]}w_{\set{x,y}}
	\set{
		-\tonde{\alpha_y\eta_x-\alpha_x\eta_y}
		\tonde{\partial_{\eta_x}-\partial_{\eta_y}}
		+
		\eta_x\eta_y
		\tonde{\partial_{\eta_x}-\partial_{\eta_y}}^2
	}f(\eta)\fstop
\end{equation}
As for the stochastic exchange models above, $\cL^{\tbep}$ preserves polynomial degree and is intertwined with a particle system, known as the \textit{symmetric inclusion process} (${\rm SIP}$). The results of \cite{kim_sau_spectral_2023,kim_sau_one_2024} establish one-particle domination when $\min_{x\in[n]}\alpha_x\ge1$, show that it may fail outside this regime, and prove an asymptotic two-particle reduction under the rescaling $\alpha\mapsto\tau\alpha$ as $\tau\to0$. Whether, for arbitrary weighted graphs and positive site weights, the spectral gap is always attained at degree at most two remains open.

Our hidden-model approach does not seem to extend directly to this problem. The formal hidden-model operator would read
\begin{equation}
	\mathscr L^{\tbep}g(\theta)
	=
	\frac12\sum_{x,y\in[n]}w_{\set{x,y}}
	\set{
		-\tonde{\theta_x-\theta_y}
		\tonde{\alpha_y\partial_{\theta_x}-\alpha_x\partial_{\theta_y}}
		+
		\tonde{\theta_x-\theta_y}^2
		\partial_{\theta_x}\partial_{\theta_y}
	}g(\theta)\fstop
\end{equation}
but this operator is not Markovian. Indeed, if $w_{\set{x,y}}>0$ and $c\neq0$, then $g(\theta)=(\theta_x-\theta_y-c)^2$ attains its minimum whenever $\theta_x-\theta_y=c$, while $\mathscr L^{\tbep}g(\theta)=-2w_{\set{x,y}}c^2<0$ there, violating the positive minimum principle. Thus, even in this closely related diffusion setting, a proof of the conjectural degree-two reduction appears to require a different mechanism.

\subsection*{Organization of the paper} The rest of the paper is organized as follows.
Section~\ref{sec:KMP-main} is devoted to the proof of Theorems \ref{th:main-KMP}--\ref{th:disc-ell-KMP} and Theorem \ref{th:GS-p}. Section~\ref{sec:PF} develops the Perron--Frobenius theory used in the comparison between one- and two-particle gaps. Section~\ref{sec:hypergraph-reduction} reduces this comparison to minimal hypergraphs, while Section~\ref{sec:comparison} completes the proofs of the results stated in Section~\ref{sec:comparison-intro}. 
Section~\ref{sec:KMP-reservoirs} treats the boundary-driven ${\rm KMP}$ model and proves Theorem \ref{th:gap-res}. Finally, Section~\ref{sec:SEM} develops the general theory of stochastic exchange models, proves Theorem \ref{th:SEM}, and discusses several examples. The appendix contains the refinement needed in the analysis of minimal hypergraphs.
Sections~\ref{sec:Gibbs-sampler} and~\ref{sec:KMP-reservoirs} may each be read independently of the remaining sections. 

\section{Aldous' phenomenon in ${\rm KMP}$}\label{sec:KMP-main}
	The main goal of this section is to prove Theorems \ref{th:main-KMP}, \ref{th:disc-ell-KMP} and \ref{th:GS-p}. We start by presenting all definitions, properties and intertwining relations involving the ${\rm KMP}$ model, the corresponding particle system, and its hidden model. We then turn to the proof of Theorems \ref{th:main-KMP} and \ref{th:disc-ell-KMP} in Section \ref{sec:proof-main-KMP}. Finally, in Section~\ref{sec:Gibbs-sampler} we show how a mild extension of these results leads to the statement for Gibbs samplers on cone measures from Theorem \ref{th:GS-p}.

\subsection{Energies, particles,  hidden models, and intertwining relations}
\label{sec:prelim-particle-systems}
We now make precise the three equivalent formulations of the polynomial spectral problem
announced in \eqref{eq:eigenvalue-problems-three}. For this purpose, we first describe the ${\rm KMP}$ dynamics, its labeled-particle
representation, and the corresponding hidden model. Then, we identify the polynomial spaces on
which the three generators are intertwined. The positive site weights $\alpha$ and nonnegative block weights $w$ are fixed throughout the section, without imposing any additional assumption. For every integer $k\ge 1$, we shall use the shorthand notation
\begin{equation}\label{eq:coeff-space}
	\mathscr H_k^\per\eqdef \R^{[n]^k}=\ttset{\psi:[n]^k\to \R}\comma
\end{equation}
and write $L^2(\mu_k)$ when endowing $\mathscr H_k^\per$ with the $\mu_k$-inner product. Further, recall that  $\mathscr H_k\subseteq \mathscr H_k^\per$, defined 	in \eqref{eq:coeff-space-sym}, is its subspace of symmetric functions.

\subsubsection{KMP, particle, and hidden models}

Recall from \eqref{eq:Upsilon-KMP}--\eqref{eq:K_B^U} that a $B$-update of the ${\rm KMP}$ model is represented by the right multiplication
$
\eta\longmapsto\eta K_B^U
$,
where $U=(U_y)_{y\in B}\sim{\rm Dir}(\alpha^B)$. We extend the definition of $K_B^U$ from \eqref{eq:K_B^U} as follows: if $u=(U_x)_{x\in [n]}$ is a probability vector with $u(B)=\sum_{x\in B}u_x>0$, let $K_B^u\in\R^{n\times n}$ be the row-stochastic matrix
\begin{equation}\label{eq:K_B^U2}
	(K_B^u)_{xy}
	=
	\begin{dcases}
		\frac{u_y}{u(B)}
		&\text{if}\ x,y\in B\comma\\
		1
		&\text{if}\ x=y\notin B\comma\\
		0
		&\text{otherwise}\fstop
	\end{dcases}
\end{equation}
In particular, with a slight abuse of notation, we shall identify probability vectors on $B$ with their extensions by zero to $[n]$; for such vectors, $u(B)=1$ and \eqref{eq:K_B^U2} agrees with \eqref{eq:K_B^U}.

For each $B\subseteq[n]$, let $\mathbf E_B$ denote expectation with respect to $U\sim{\rm Dir}(\alpha^B)$. Thus, the ${\rm KMP}$ generator in \eqref{eq:gen-KMP} may be rewritten as
\begin{equation}\label{eq:gen-KMP2}	
	\cL f(\eta)
	=
	\sum_{B\subseteq[n]}w_B\,
	\mathbf E_B[
	f(\eta K_B^U)-f(\eta)]
	\comma 
	\qquad \eta\in \Omega\comma f\in\cC(\Omega)\fstop
\end{equation}

The same random matrices induce the labeled-particle dynamics introduced in Section \ref{sec:KMP-particle-intro}. Fix $k\ge 1$. Its generator on $\mathscr H_k^\per$, the space given in \eqref{eq:coeff-space}, is
\begin{equation}\label{eq:gen-particle}
	L_k
	\eqdef
	\sum_{B\subseteq[n]}w_B
	\left(
	\Pi_{B,k}-I
	\right)
	\comma
\end{equation}
where $I$ is the identity operator on $\mathscr H_k^\per$, while $\Pi_{B,k}$ is given by
\begin{equation}\label{eq:Pi-B-k-matrix}
	\Pi_{B,k}\psi
	\eqdef
	\mathbf E_B[
	(K_B^U)^{\otimes k}\psi
	]
	\comma\qquad
	\psi\in\mathscr H_k^\per\fstop
\end{equation}
Here, $(K_B^U)^{\otimes k}$ is the tensor Markov operator corresponding, conditionally on $U$, to the independent motion of the $k$ labels according to the rows of $K_B^U$: for every $x_1,\ldots, x_k\in [n]$,
\begin{equation}
	(K_B^U)^{\otimes k}\psi(x_1,\ldots,x_k)= \textstyle\sum_{y_1,\ldots,y_k\in [n]}(K_B^U)_{x_1y_1}\cdots (K_B^U)_{x_ky_k}\psi(y_1,\ldots,y_k)\comma\qquad \psi \in \mathscr H_k^\per\fstop
\end{equation}
In words, $\Pi_{B,k}$ is the heat-bath operator associated to the measure $\mu_k$ introduced in \eqref{eq:mu-k}, resampling the positions of the particles lying in $B$, conditionally on those outside $B$. Hence, $\Pi_{B,k}$ is an orthogonal projection on $L^2(\mu_k)$, and $L_k$ is self-adjoint. Moreover, $L_k$ commutes with permutations of the labels, so $\mathscr H_k$ from \eqref{eq:coeff-space-sym} is invariant. Recall from \eqref{eq:L-k-sym} that $L_k^\sym = L_k|_{\mathscr H_k}$ denotes the corresponding restriction of $L_k$.

The ${\rm KMP}$ hidden model is generated by the same random matrices in \eqref{eq:K_B^U2}, now acting by left multiplication $\theta\longmapsto K_B^U\theta$ on column vectors $\theta\in\R^n$. Consequently, at a $B$-update with the random vector $U\sim {\rm Dir}(\alpha^B)$, all variables $\theta^B=(\theta_x)_{x\in B}$ are replaced by the common random convex combination
\begin{equation}
\theta_x\longmapsto	\sum_{y\in B}U_y\theta_y
	\comma\qquad x \in B\comma
\end{equation}
while those outside $B$ remain unchanged. Hence, its generator acts on continuous functions $g:\R^n\to \R$ as
\begin{equation}\label{eq:gen-hidden-KMP}
	\mathscr L g(\theta)
	\eqdef
	\sum_{B\subseteq[n]}w_B\,
	\mathbf E_B[
	g(K_B^U\theta)-g(\theta)
	]
	\comma
	\qquad \theta \in \R^n\fstop
\end{equation}
Since $K_B^U\mathbf1=\mathbf1$, every flat configuration $s\mathbf1$, $s\in\R$, is fixed by the hidden-model dynamics. Moreover, each coordinate of $K_B^U\theta$ is a convex combination of the coordinates of $\theta$, yielding the maximum principle described in Section \ref{sec:hidden-model-intro}. Consequently, for every $-\infty<a<b<\infty$, the restriction $\mathscr L|_{\cC([a,b]^n)}$ is well defined and bounded, and generates a Feller semigroup on $\cC([a,b]^n)$.

For the labeled-particle formulation, it is useful to introduce \textit{synchronized} $k$-copy versions of the ${\rm KMP}$ energy and hidden-model dynamics. By synchronized, we mean that all $k$ copies undergo each update simultaneously, using the same block $B$ and the same redistribution vector $U$.
 Their generators act on continuous functions, respectively, as
\begin{align}
	\cL_k^\per F(\eta^{(1)},\ldots,\eta^{(k)})
	&\eqdef
	\sum_{B\subseteq[n]}w_B\,\mathbf E_B[
	F(\eta^{(1)}K_B^U,\ldots,\eta^{(k)}K_B^U)
	-F(\eta^{(1)},\ldots,\eta^{(k)})]
	\comma\label{eq:gen-KMP-tensor}\\
	\mathscr L_k^\per G(\theta^{(1)},\ldots,\theta^{(k)})
	&\eqdef
	\sum_{B\subseteq[n]}w_B\,\mathbf E_B[
	G(K_B^U\theta^{(1)},\ldots,K_B^U\theta^{(k)})
	-G(\theta^{(1)},\ldots,\theta^{(k)})
	]\comma
	\label{eq:gen-hidden-tensor}
\end{align}
where $\eta^{(1)},\ldots,\eta^{(k)}\in\Omega$ and $\theta^{(1)},\ldots,\theta^{(k)}\in\R^n$. 
When acting on continuous bounded functions, these operators are linear, bounded, and generate a Markov semigroup.  

\subsubsection{Polynomials and intertwinings}\label{sec:polynomials}

For $k\ge 1$ and $\psi\in\mathscr H_k^\per$, define
\begin{align}
	\widehat\psi^\per(\eta^{(1)},\ldots,\eta^{(k)})
	&\eqdef
	\sum_{x_1,\ldots,x_k\in[n]}
	\psi(x_1,\ldots,x_k)\,
	\eta_{x_1}^{(1)}\cdots\eta_{x_k}^{(k)}
	\comma
	\label{eq:hat-tensor}\\
	\widetilde\psi^\per(\theta^{(1)}\ldots\theta^{(k)})
	&\eqdef
	\sum_{x_1,\ldots,x_k\in[n]}
	\mu_k(x_1,\ldots,x_k)\,
	\psi(x_1,\ldots,x_k)\,
	\theta_{x_1}^{(1)}\cdots\theta_{x_k}^{(k)}
	\comma
	\label{eq:tilde-tensor}
\end{align}
where $\eta^{(1)},\ldots,\eta^{(k)}\in\Omega$ and
$\theta^{(1)},\ldots,\theta^{(k)}\in\R^n$. We write  their diagonal restrictions as
\begin{equation}\label{eq:hat-tilde-diagonal}
	\widehat\psi(\eta)
	\eqdef
	\widehat\psi^\per(\eta,\ldots,\eta)
	\comma
	\qquad
	\widetilde\psi(\theta)
	\eqdef
	\widetilde\psi^\per(\theta,\ldots,\theta)\fstop
\end{equation}
Observe that these last two definitions agree with \eqref{eq:intro-widehat-psi} and
\eqref{eq:intro-tilde-psi}, respectively. 
\begin{remark}
	Since diagonal evaluation is invariant under permutations of the $k$ arguments, both restrictions depend only on the symmetrized part of $\psi$. More precisely, writing
	\begin{equation}
\textstyle		\psi_\sym(x_1,\ldots,x_k)
		\eqdef
		\frac1{k!}
		\sum_{\varsigma\in\mathfrak S_k}
		\psi(x_{\varsigma(1)},\ldots,x_{\varsigma(k)})\comma
	\end{equation}
	one has
	$
		\widehat\psi
		=
		\widehat{\psi_\sym}
		$ and $	\widetilde\psi
		=
		\widetilde{\psi_\sym}$.	
\end{remark}	
Remark that both \eqref{eq:hat-tensor}--\eqref{eq:tilde-tensor} admit complementary probabilistic interpretations. Namely,
	\eqref{eq:hat-tensor} is the expectation of $\psi\in\mathscr H_k^\per$ with respect to the product probability measure
	$\eta^{(1)}\otimes\cdots\otimes\eta^{(k)}$ on $[n]^k$, whereas
	\eqref{eq:tilde-tensor} is the $L^2(\mu_k)$ scalar product of $\psi$ with the tensor-product function
	$\theta^{(1)}\otimes\cdots\otimes\theta^{(k)}$. More precisely,
	\begin{align}
		\widehat\psi^\per(\eta^{(1)},\ldots,\eta^{(k)})
		&=
		(\eta^{(1)}\otimes\cdots\otimes\eta^{(k)})(\psi)
		\comma
		\label{eq:hat-representation-expectation}\\
		\widetilde\psi^\per(\theta^{(1)},\ldots,\theta^{(k)})
		&=
		\ttscalar{\psi}
		{\theta^{(1)}\otimes\cdots\otimes\theta^{(k)}}_{\mu_k}
		\fstop
		\label{eq:tilde-representation2}
	\end{align}

Recall from Section \ref{sec:KMP-intro} that $\mathscr P_k$ is the space of polynomials on $\Omega$ of degree at
most $k$, with $\mathscr P_0$ consisting of the constant functions. As hidden-model polynomial spaces, set
\begin{equation}\label{eq:R-k}
	\mathscr R_k
	\eqdef
	\{
		g:\R^n\to\R:
		g\ \text{is a homogeneous polynomial of degree}\ k
		\}
	\comma
\end{equation}
for $k\ge 1$, and let $\mathscr R_0$ be the space of constants. 
We also introduce the corresponding tensor spaces as
\begin{align}\label{eq:def-P-k-tensor}
	\mathscr P_k^\per
	&\eqdef
	\{
		F:\Omega^k\to\R:
		F\ \text{is linear in each variable separately}\}\comma
\\
\label{eq:def-R-k-tensor}
	\mathscr R_k^\per
	&\eqdef
	\{
		G:(\R^n)^k\to\R:
		G\ \text{is linear in each variable separately}
		\}\fstop
\end{align}
Thus,  $\mathscr R_k^\per$ is the space of multilinear forms on
$(\R^n)^k$, whereas $\mathscr P_k^\per$ is obtained by restricting these multilinear forms to the convex set $\Omega^k\subset (\R^n)^k$. Equivalently, letting $e_x\in\Omega$ denote the $x$-th standard basis vector of $\R^n$,	 a function $F:\Omega^k\to\R$ belongs to $\mathscr P_k^\per$ if and only if, for every $i=1,\ldots,k$,
\begin{equation}\label{eq:F-characterization}
	\textstyle
	F(\eta^{(1)},\ldots,\eta^{(k)})
	=
	\sum_{x\in[n]}\eta_x^{(i)}
	F(\eta^{(1)},\ldots,\eta^{(i-1)},e_x,\eta^{(i+1)},\ldots,\eta^{(k)})
	\fstop
\end{equation}
An analogous relation, with $\eta_x^{(i)}$ replaced by $\theta_x^{(i)}$, characterizes the functions in $\mathscr R_k^\per$.

The next result identifies the polynomial spaces introduced so far with the images of the maps in 
\eqref{eq:hat-tensor}--\eqref{eq:hat-tilde-diagonal}.

\begin{proposition}\label{pr:polynomial-identifications}
	For every $k\ge1$, the maps in
	\eqref{eq:hat-tilde-diagonal} are linear isomorphisms
	\begin{equation}\label{eq:basic-isomorphisms-symmetric}
		\mathscr H_k
		\xrightarrow[\hspace*{7mm}]{\ \psi\longmapsto\widehat\psi\ }
		\mathscr P_k
		\comma
		\qquad
		\mathscr H_k
		\xrightarrow[\hspace*{7mm}]{\ \psi\longmapsto\widetilde\psi\ }
		\mathscr R_k\fstop
	\end{equation}
	Likewise, the maps in
	\eqref{eq:hat-tensor}--\eqref{eq:tilde-tensor} are linear
	isomorphisms
	\begin{equation}\label{eq:basic-isomorphisms-tensor}
		\mathscr H_k^\per
		\xrightarrow[\hspace*{7mm}]{\ \psi\longmapsto\widehat\psi^\per\ }
		\mathscr P_k^\per
		\comma
		\qquad
		\mathscr H_k^\per
		\xrightarrow[\hspace*{14mm}]{\ \psi\longmapsto\widetilde\psi^\per\ }
		\mathscr R_k^\per\fstop
	\end{equation}
\end{proposition}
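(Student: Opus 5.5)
The plan is to treat the tensor statement \eqref{eq:basic-isomorphisms-tensor} first and then obtain \eqref{eq:basic-isomorphisms-symmetric} by restricting to symmetric functions and taking diagonals. Linearity of all four maps is immediate from \eqref{eq:hat-tensor}--\eqref{eq:hat-tilde-diagonal}, so only bijectivity onto the stated spaces needs an argument.

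\emph{Tensor case.} Since $e_x\in\Omega$ for every $x\in[n]$, evaluating \eqref{eq:hat-tensor} at standard basis vectors gives
\[
\widehat\psi^\per(e_{x_1},\ldots,e_{x_k})=\psi(x_1,\ldots,x_k),
\]
so the first map is injective. It takes values in $\mathscr P_k^\per$, since each $\widehat\psi^\per$ is multilinear. For surjectivity, take $F\in\mathscr P_k^\per$ and apply \eqref{eq:F-characterization} successively in each coordinate. This yields $F=\widehat\psi^\per$ with $\psi(x_1,\ldots,x_k)\eqdef F(e_{x_1},\ldots,e_{x_k})$.

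For the second map, note that $\widetilde\psi^\per=\widehat{(\mu_k\psi)}^\per$ read on $(\R^n)^k$, where $\mu_k\psi$ denotes pointwise multiplication. By \eqref{eq:mu-k}, $\mu_k>0$ everywhere, so multiplication by $\mu_k$ is a bijection of $\mathscr H_k^\per$. Multilinear forms on $(\R^n)^k$ correspond bijectively to their coefficient arrays $G(e_{x_1},\ldots,e_{x_k})$. Together these give the isomorphism onto $\mathscr R_k^\per$.

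\emph{Symmetric case.} The map $\psi\mapsto\widetilde\psi$ on $\mathscr H_k$ is handled the same way. The weight $\mu_k$ is symmetric under label permutations, because it depends only on occupation numbers. Hence $\psi\mapsto\mu_k\psi$ is a bijection of $\mathscr H_k$. It then remains to use the classical fact that homogeneous degree-$k$ polynomials on $\R^n$ correspond bijectively to symmetric coefficient tensors, via polarization or by comparing monomial coefficients: the coefficient of $\theta^{\mathfrak n}$ equals the number of orderings times the common symmetric value.

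For $\psi\mapsto\widehat\psi$, the image lies in $\mathscr P_k$. Surjectivity follows by homogenization: a monomial of degree $j<k$ equals itself times $(\sum_x\eta_x)^{k-j}$ on $\Omega$, and one then symmetrizes the coefficients (see the remark after \eqref{eq:hat-tilde-diagonal}).

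\emph{Main obstacle.} The delicate step is injectivity of $\psi\mapsto\widehat\psi$ on $\mathscr H_k$. Here $\Omega$ is not Zariski dense in $\R^n$, so a polynomial may vanish on $\Omega$ without vanishing identically. The plan is to use homogeneity. Suppose $\widehat\psi$ vanishes on $\Omega$. The homogeneous polynomial $P(\theta)=\sum\psi(x_1,\ldots,x_k)\theta_{x_1}\cdots\theta_{x_k}$ then satisfies
\[
P(t\eta)=t^kP(\eta)=0\qquad\text{for all } t\ge0,\ \eta\in\Omega,
\]
so $P$ vanishes on the nonnegative orthant $[0,\infty)^n$. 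This set has nonempty interior, hence $P\equiv0$. By the symmetric-tensor correspondence just established, this forces $\psi=0$.

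Consequently $\dim\mathscr P_k=\dim\mathscr H_k=\binom{n+k-1}{k}$, and the map is an isomorphism.
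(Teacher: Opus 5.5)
Your proposal is correct and follows essentially the same route as the paper. It uses homogenization via $(\sum_x\eta_x)^{k-j}$ for surjectivity of the hat map, the scaling $P(t\eta)=t^kP(\eta)$ to deduce injectivity from vanishing on $\Omega$, iteration of \eqref{eq:F-characterization} for the tensor hat map, and strict positivity of $\mu_k$ together with uniqueness of monomial expansions for the tilde maps. The differences are cosmetic: you treat the tensor case first, and you make explicit that the cone over $\Omega$ is the nonnegative orthant, which has nonempty interior.
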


\begin{proof} First, consider $f\in \mathscr P_k$.
 Decomposing $f$ into its homogeneous components
	and using $\sum_{x\in[n]}\eta_x=1$ on $\Omega$, each component of degree
	$j\le k$ may be multiplied by
	$(\sum_x\eta_x)^{k-j}$. Hence, $f$ admits a homogeneous
	degree-$k$ representation, and therefore may be written as $\widehat\psi$ for
	some $\psi\in\mathscr H_k$. 	
	This representation is unique. Indeed, if a homogeneous polynomial $q$ of
	degree $k$ vanishes on $\Omega$, then
	$q(s\eta)=s^kq(\eta)=0$ for all $s\in \R$ and $\eta\in\Omega$, forcing $q\equiv 0$. This proves the
	first isomorphism in \eqref{eq:basic-isomorphisms-symmetric}. The second follows
	from the uniqueness of the monomial expansion of a homogeneous polynomial and
	strict positivity of  $\mu_k$.
	
	For the tensor hat representation, iterating \eqref{eq:F-characterization} over all $i=1,\ldots,k$ shows that
	$F\in\mathscr P_k^\per$ if and only if
$
		F(\eta^{(1)},\ldots,\eta^{(k)})
		=
		\sum_{x_1,\ldots,x_k\in[n]}
		F(e_{x_1},\ldots,e_{x_k})\,
		\eta_{x_1}^{(1)}\cdots\eta_{x_k}^{(k)}
$.	
	Hence, $F=\widehat\psi^\per$ with
	$\psi(x_1,\ldots,x_k)=F(e_{x_1},\ldots,e_{x_k})$, and this representation is
	unique. Finally, every multilinear form on $(\R^n)^k$ has a unique expansion
	in the monomials
	$\theta_{x_1}^{(1)}\cdots\theta_{x_k}^{(k)}$. Since $\mu_k$ is strictly
	positive, this expansion is uniquely of the form
	$\widetilde\psi^\per$. This proves
	\eqref{eq:basic-isomorphisms-tensor}.
\end{proof}
In view of Proposition \ref{pr:polynomial-identifications}, we shall freely use the identifications in
\eqref{eq:basic-isomorphisms-symmetric}--\eqref{eq:basic-isomorphisms-tensor}. Moreover, the diagonal evaluations in \eqref{eq:hat-tilde-diagonal} identify the subspaces of $\mathscr P_k^\per$ and $\mathscr R_k^\per$ invariant under permutations of the $k$ arguments with $\mathscr P_k$ and $\mathscr R_k$, respectively.

Next, we generalize and rigorously prove the intertwining relations in \eqref{eq:intro-intertwining} and \eqref{eq:intro-forward-backward-intertwining}.

\begin{proposition}[Intertwining relations]\label{pr:particle-system-intertwining}
	For every $k\ge1$ and $\psi\in\mathscr H_k^\per$,
	\begin{equation}\label{eq:tensor-intertwinings}
		\cL_k^\per\widehat\psi^\per
		=
		\widehat{L_k\psi}^\per
		\comma
		\qquad
		\mathscr L_k^\per\widetilde\psi^\per
		=
		\widetilde{L_k\psi}^\per\fstop
	\end{equation}
	Consequently, for every $\psi\in\mathscr H_k$,
	\begin{equation}\label{eq:diagonal-intertwinings}
		\cL\widehat\psi
		= \widehat{L_k\psi}=
		\widehat{L_k^\sym\psi}
		\comma
		\qquad
		\mathscr L\widetilde\psi
		=\widetilde{L_k\psi}=
		\widetilde{L_k^\sym\psi}\fstop
	\end{equation}
\end{proposition}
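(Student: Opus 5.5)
The plan is to reduce everything to a single block $B$ and a fixed realization $U=u$. Both $\cL_k^\per$ and $\mathscr L_k^\per$ are sums over $B$ of $w_B\,\mathbf E_B[\,\cdot\,]$ applied to a one-step update. Likewise, $L_k=\sum_B w_B(\Pi_{B,k}-I)$ with $\Pi_{B,k}=\mathbf E_B[(K_B^U)^{\otimes k}]$. By linearity it therefore suffices to prove, for every $B$ and every fixed probability vector $u$ on $B$, the two one-step identities
\begin{equation}
\widehat\psi^\per(\eta^{(1)}K_B^u,\ldots,\eta^{(k)}K_B^u)=\widehat{\Pi^u\psi}^\per(\eta^{(1)},\ldots,\eta^{(k)}),
\end{equation}
where $\Pi^u\eqdef(K_B^u)^{\otimes k}$, and the hidden-model analogue after taking $\mathbf E_B$.

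For the hat side the identity holds pathwise in $u$. Using \eqref{eq:hat-representation-expectation}, the left side is the expectation of $\psi$ under $(\eta^{(1)}K)\otimes\cdots\otimes(\eta^{(k)}K)=(\eta^{(1)}\otimes\cdots\otimes\eta^{(k)})K^{\otimes k}$. This equals $(\eta^{(1)}\otimes\cdots\otimes\eta^{(k)})(K^{\otimes k}\psi)$, because a measure pushed forward by a Markov kernel integrates $\psi$ as the original measure integrates $K^{\otimes k}\psi$. Averaging over $U\sim{\rm Dir}(\alpha^B)$, weighting by $w_B$, summing over $B$, and subtracting the identity terms gives the first relation in \eqref{eq:tensor-intertwinings}.

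For the tilde side the identity is not pathwise, and I expect it to be the main step. By \eqref{eq:tilde-representation2}, $\widetilde\psi^\per(K\theta^{(1)},\ldots,K\theta^{(k)})=\ttscalar{\psi}{K^{\otimes k}(\theta^{(1)}\otimes\cdots\otimes\theta^{(k)})}_{\mu_k}$, since $K\theta$ acting on $x$ is the same as $K$ acting on the function $\theta$. After taking $\mathbf E_B$ this becomes $\ttscalar{\psi}{\Pi_{B,k}(\theta^{(1)}\otimes\cdots\otimes\theta^{(k)})}_{\mu_k}$. I then invoke the fact, stated earlier, that $\Pi_{B,k}$ is the $\mu_k$-heat-bath operator and hence self-adjoint (an orthogonal projection) in $L^2(\mu_k)$. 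This moves it onto $\psi$ and yields $\widetilde{\Pi_{B,k}\psi}^\per$.

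If one wants this self-adjointness verified rather than quoted, the check is the following. The heat-bath property amounts to the detailed-balance identity
\begin{equation}
\mu_k(\mathbf x)\,\mathbf E_B\Bigl[\prod_i (K_B^U)_{x_iy_i}\Bigr]=\mu_k(\mathbf y)\,\mathbf E_B\Bigl[\prod_i (K_B^U)_{y_iy_i'}\Bigr]\Big|_{\text{roles swapped}}.
\end{equation}
Given the particles outside $B$ (which do not move), this reduces to the Dirichlet-multinomial identity: with $m$ particles in $B$, the quantity $\mathbf E[\prod_{z\in B}U_z^{\mathfrak n'_z}]\prod_{z\in B}\Gamma(\alpha_z+\mathfrak n_z)/\Gamma(\alpha_z)$ is symmetric in the configurations $\mathfrak n,\mathfrak n'$. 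It equals $\prod_z\Gamma(\alpha_z+\mathfrak n_z)\Gamma(\alpha_z+\mathfrak n'_z)/\Gamma(\alpha_z)^2$ up to factors depending only on $m$ and $\alpha(B)$, which is visible from \eqref{eq:mu-k} and the Dirichlet moment formula. Summing over $B$ with weights $w_B$ then gives the second relation in \eqref{eq:tensor-intertwinings}.

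Finally, \eqref{eq:diagonal-intertwinings} follows by restricting to the diagonal $\eta^{(i)}=\eta$ and $\theta^{(i)}=\theta$. The synchronized generators act on diagonal functions exactly as $\cL$ and $\mathscr L$ do, because every copy uses the same $B$ and $U$. Moreover, $L_k\psi=L_k^\sym\psi$ for $\psi\in\mathscr H_k$, since $\mathscr H_k$ is $L_k$-invariant.
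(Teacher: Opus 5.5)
Your proposal is correct and follows the paper's proof essentially step by step. The hat identity holds pathwise in $U$ via $(\eta^{(1)}K_B^U)\otimes\cdots\otimes(\eta^{(k)}K_B^U)=(\eta^{(1)}\otimes\cdots\otimes\eta^{(k)})(K_B^U)^{\otimes k}$. The tilde identity holds only after averaging, via self-adjointness of the heat-bath operator $\Pi_{B,k}$ in $L^2(\mu_k)$. The diagonal statements follow by restricting the synchronized dynamics and using $L_k$-invariance of $\mathscr H_k$.

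Your optional Dirichlet--multinomial check of that self-adjointness correctly verifies the fact the paper only quotes. The displayed detailed-balance equation is garbled as written, but the sentence after it states the right symmetric identity.
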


\begin{proof}
	Fix a block $B\subseteq[n]$. For the hat map	in \eqref{eq:hat-tensor} and \eqref{eq:hat-representation-expectation}, right multiplication
	gives, conditionally on $U$ and for all $\eta^{(1)},\ldots, \eta^{(k)}\in \Omega$,
	\begin{equation}
		\widehat\psi^\per
		(
		\eta^{(1)}K_B^U,\ldots,\eta^{(k)}K_B^U
		)
		=
		(\eta^{(1)}K_B^U\otimes \cdots \otimes \eta^{(k)}K_B^U)(\psi)= (\eta^{(1)}\otimes \cdots \otimes \eta^{(k)})((K_B^U)^{\otimes k}\psi)\fstop
	\end{equation}
	Averaging over $U$ yields the first identity in
	\eqref{eq:tensor-intertwinings}.

	For the tilde map  in \eqref{eq:tilde-tensor} and \eqref{eq:tilde-representation2}, left multiplication gives, for all $\theta^{(1)},\ldots, \theta^{(k)}\in \R^n$,
	\begin{align}
		&\mathbf E_B[
		\widetilde\psi^\per
		(
		K_B^U\theta^{(1)},\ldots,K_B^U\theta^{(k)}
		)
	]
	=
		\sum_{x_1,\ldots,x_k\in[n]}
		\mu_k(x_1,\ldots,x_k)\,
		(\Pi_{B,k}\psi)(x_1,\ldots,x_k)\,
		\theta_{x_1}^{(1)}\cdots\theta_{x_k}^{(k)}
		\comma
	\end{align}
	where we used that the heat-bath operator $\Pi_{B,k}$ given in \eqref{eq:Pi-B-k-matrix} is self-adjoint on
	$L^2(\mu_k)$. This gives the second identity in
	\eqref{eq:tensor-intertwinings}.
	
	The diagonal configurations $\eta^{(1)}=\cdots = \eta^{(k)}$ and $\theta^{(1)}=\ldots=\theta^{(k)}$ are both preserved by every synchronous update. Moreover, the
	restrictions of $\cL_k^\per$ and $\mathscr L_k^\per$ to the diagonal
	coincide with $\cL$ and $\mathscr L$, respectively. Since $L_k$ preserves
	$\mathscr H_k$, the identities in
	\eqref{eq:diagonal-intertwinings} follow.
\end{proof}

By Proposition \ref{pr:polynomial-identifications}, the intertwinings in Proposition \ref{pr:particle-system-intertwining} establish the equivalence \eqref{eq:eigenvalue-problems-three}, and induce similarity relations between finite-dimensional projections of the generators. Consequently, one obtains, counting multiplicities,
\begin{equation}\label{eq:isospectral-basic}
	\spec(\cL|_{\mathscr P_k})
	=\spec(
	L_k^\sym)=\spec(
	\mathscr L|_{\mathscr R_k})\comma
\end{equation}
\begin{equation}\label{eq:isospectral-tensor}
	\spec(\cL_k^\per|_{\mathscr P_k^\per})
	=\spec(
	L_k)
	=\spec(
	\mathscr L_k^\per|_{\mathscr R_k^\per})\fstop
\end{equation}

We conclude this part with a simple observation establishing \eqref{eq:equiv-gap>0}.
\begin{remark}\label{rem:positivity-gap}
	For fixed weights $\alpha$ and $w$, the $L_k$-particle system and the system of $k$ independent and labeled $L_1$-particles not only share the same state space $[n]^k$, but also the same communicating classes. Indeed, whenever $w_B>0$, both dynamics allow any single particle to move alone between sites of $B$ with positive rate, while not allowing any particles to leave a connected component of the $w$-hypergraph. Thus, if the latter has $m$ connected components, 
	the multiplicity of the zero eigenvalue of $L_k$ (resp.\ $L_k^\sym$) is $m^k$
	(resp.\ $\binom{k+m-1}{m-1}$).
	In particular, $L_k$ and $L_k^\sym$ have a simple zero eigenvalue if and only if $m=1$, namely, when the $w$-hypergraph is connected. By \eqref{eq:isospectral-basic}--\eqref{eq:isospectral-tensor} and \eqref{eq:gap-inf}, this proves \eqref{eq:equiv-gap>0}. 
	
	For an alternative approach to \eqref{eq:equiv-gap>0}, see \cite{alon2026aldous} and, in particular, Section 5.2 therein.
\end{remark}

\subsubsection{Decomposition of polynomial spaces} In this section, we systematically describe the subspaces $\mathscr P_{k,\ast}\subseteq \mathscr P_k$ introduced in \eqref{eq:P-k-star}, together with their particle-system and hidden-model counterparts. The resulting decompositions will be instrumental in identifying the eigenfunctions genuinely arising at degree $k$.
In view of Proposition \ref{pr:polynomial-identifications}, it is natural to start from the spaces of polynomial coefficients, thus, from the particle system.

Fix $k\ge 1$, and recall $\mathscr H_k^\per$ and $\mathscr H_k$ from \eqref{eq:coeff-space} and \eqref{eq:coeff-space-sym}, respectively.
For every $i=1,\ldots, k$, define the \textit{particle-removal} and \textit{particle-addition operators} acting on $\phi \in \mathscr H_{k-1}^\per$ and $\psi \in \mathscr H_k^\per$, respectively, as
\begin{equation}\label{eq:particle-removal}
	\mathfrak a_{k,i}\phi(x_1,\ldots,x_k)
	\eqdef
	\phi(x_1,\ldots,x_{i-1},x_{i+1},\ldots,x_k)
	\comma\qquad x_1,\ldots,x_k \in [n]\comma
	\end{equation}
	\begin{equation}\label{eq:particle-addition}
	\mathfrak b_{k-1,i}\psi(x_1,\ldots,x_{k-1})
	\eqdef
	\sum_{y\in[n]}
	\mu_k(y\mid x_1,\ldots,x_{k-1})\,
	\psi(x_1,\ldots,x_{i-1},y,x_i,\ldots,x_{k-1})
	\comma
\end{equation}
where $
	\mu_k(y\mid x_1,\ldots,x_{k-1})
	\eqdef
	\frac{\mu_k(x_1,\ldots,x_{k-1},y)}
	{\mu_{k-1}(x_1,\ldots,x_{k-1})}
	$; see \eqref{eq:mu-k}.
Thus,
\begin{equation}
	\mathfrak a_{k,i}:\mathscr H_{k-1}^\per\to\mathscr H_k^\per
	\comma\qquad
	\mathfrak b_{k-1,i}:\mathscr H_k^\per\to\mathscr H_{k-1}^\per\comma
\end{equation}
and, as their names suggest, both $\mathfrak a_{k,i}$ and $\mathfrak b_{k-1,i}$ are stochastic operators between particle configuration spaces, with a transparent probabilistic interpretation: the former induces a deterministic removal of the $i$-th particle, whereas the latter adds a particle whose position is averaged over its conditional equilibrium distribution given the positions of all remaining particles.

A simple computation shows that these operators are adjoint of one another, namely,
\begin{equation}\label{eq:particle-removal-addition-adjoint}
	\scalar{\mathfrak a_{k,i}\phi}{\psi}_{\mu_k}
	=
	\scalar{\phi}{\mathfrak b_{k-1,i}\psi}_{\mu_{k-1}}
	\comma\qquad
	\phi\in\mathscr H_{k-1}^\per
	\comma
	\psi\in\mathscr H_k^\per\fstop
\end{equation}
As $\mathfrak a_{k,i}$ is clearly injective, this ensures that $\mathfrak b_{k-1,i}$ is surjective.

The tensor form of the matrix update in 	\eqref{eq:Pi-B-k-matrix} gives
\begin{equation}
	(K_B^U)^{\otimes k}\mathfrak a_{k,i}
	=
	\mathfrak a_{k,i}(K_B^U)^{\otimes(k-1)}
	\fstop
\end{equation}
Combining this with \eqref{eq:gen-particle} and \eqref{eq:particle-removal-addition-adjoint} yields, respectively,
\begin{equation}\label{eq:particle-consistency}
	L_k\mathfrak a_{k,i}
	=
	\mathfrak a_{k,i}L_{k-1}
	\comma\qquad
	\mathfrak b_{k-1,i}L_k
	=
	L_{k-1}\mathfrak b_{k-1,i}\fstop
\end{equation}
The first relation in \eqref{eq:particle-consistency} is sometimes referred to as the \textit{consistency} of the particle system: evolving $k$ particles and then removing the $i$-th one yields the same law as first removing that particle and then evolving the remaining $k-1$ particles.

Next, we introduce the \textit{symmetrized} operators 
\begin{equation}
\label{eq:particle-removal-addition-sym}	\textstyle
	\mathfrak a_k
	\eqdef
	\frac1k\sum_{i=1}^k\mathfrak a_{k,i}
	\comma\qquad
	\mathfrak b_{k-1}
	\eqdef
	\frac1k\sum_{i=1}^k\mathfrak b_{k-1,i}\comma
\end{equation}
which admit the analogous interpretation of removing or adding a particle, whose label is now chosen uniformly at random. Moreover, one has
\begin{equation}\label{eq:particle-removal-addition-relations}
	\mathfrak a_k(\mathscr H_{k-1})\subseteq\mathscr H_k\comma\qquad \mathfrak b_{k-1}(\mathscr H_k)=\mathscr H_{k-1}\comma
\end{equation}
and it is not difficult to check that the restriction $\mathfrak a_k|_{\mathscr H_{k-1}}$ is injective; see \cite[Lemma~A.2]{kim_sau_spectral_2023}. Finally, by linearity,  $\mathfrak a_k$ and $\mathfrak b_{k-1}$  satisfy commutation relations analogous to those in \eqref{eq:particle-consistency}.

 The first identity in \eqref{eq:particle-consistency}, its symmetrized counterpart, and the aforementioned injectivity of the particle-removal operators show that lower-level eigenfunctions may be canonically lifted. More precisely, if $0\neq \phi\in \mathscr H_{k-1}$ is an eigenfunction of $L_{k-1}^\sym$ with eigenvalue $\lambda$, then $0\neq\mathfrak a_k\phi\in \mathscr H_k$ is an eigenfunction of $L_k^\sym$ with the same eigenvalue; likewise, $0\neq\mathfrak a_{k,i}\phi\in \mathscr H_k^\per$, $i=1,\ldots, k$, is an eigenfunction of $L_k$ whenever $0\neq\phi\in \mathscr H_{k-1}^\per$ is an eigenfunction of $L_{k-1}$. Injectivity of $\mathfrak a_k$ and $\mathfrak a_{k,i}$  ensures that these liftings are nontrivial.	
 Thus, the images of the removal operators contain no genuinely new eigenpairs. Since $L_k^\sym$ and $L_k$ are self-adjoint, the orthogonal complement of these images are invariant and indicate the natural spaces in which to search for new eigenfunctions. Letting  $\mathscr H_{k,\ast}\subseteq \mathscr H_k$ and $\mathscr H_{k,\ast}^\per\subseteq \mathscr H_k^\per$ denote such complements, one obtains the following orthogonal decompositions
 \begin{equation}\label{eq:decomp-particle}
 	\textstyle	
 	\mathscr H_k
 	=
 	\mathfrak a_k(\mathscr H_{k-1})
 	\oplus_{\mu_k}
 	\mathscr H_{k,\ast}\comma\qquad \mathscr H_k^\per
 	=
 	\ttonde{
 		\mathfrak a_{k,1}(\mathscr H_{k-1}^\per)+\cdots + \mathfrak a_{k,k}(\mathscr H_{k-1}^\per)
 	}
 	\oplus_{\mu_k}
 	\mathscr H_{k,\ast}^\per
 	\fstop
 \end{equation}
 Here, the symbol $\oplus_{\mu_k}$ stands for a direct sum of $\mu_k$-orthogonal subspaces, whereas $+$ indicates a (not necessarily direct) sum of the subspaces $\mathfrak a_{k,i}(\mathscr H_{k-1}^\per)$, $i=1,\ldots, k$. 
 
 In view of the above decompositions and of the adjointness relation \eqref{eq:particle-removal-addition-adjoint}, we find that such orthogonal complements may be expressed in terms of kernels of the addition operators.
 \begin{proposition}\label{pr:Hk-star}
 The orthogonal complements in \eqref{eq:decomp-particle} are given by
 	\begin{equation}\label{eq:H-k-star-symmetric}
 		\textstyle
 		\mathscr H_{k,\ast}
 		=
 		\ker(\mathfrak b_{k-1}|_{\mathscr H_k})\subseteq \mathscr H_k\comma\qquad
 			\mathscr H_{k,\ast}^\per
 		=
 		\bigcap_{i=1}^k
 		\ker(\mathfrak b_{k-1,i})\subseteq \mathscr H_k^\per
 \fstop
 	\end{equation}	
 \end{proposition}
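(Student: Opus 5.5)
This is finite-dimensional linear algebra: the orthogonal complement of the range of an operator equals the kernel of its adjoint. We apply it to the adjointness relation \eqref{eq:particle-removal-addition-adjoint}, once on the symmetric subspaces and once on the full tensor spaces.

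\emph{Tensor case.} By definition, $\mathscr H_{k,\ast}^\per$ is the $\mu_k$-orthogonal complement in $\mathscr H_k^\per$ of $\sum_{i=1}^k\mathfrak a_{k,i}(\mathscr H_{k-1}^\per)$. The orthogonal complement of a sum of subspaces is the intersection of their individual complements, so
\begin{equation}
\mathscr H_{k,\ast}^\per=\bigcap_{i=1}^k \mathfrak a_{k,i}(\mathscr H_{k-1}^\per)^{\perp_{\mu_k}}\fstop
\end{equation}
For a fixed $i$, \eqref{eq:particle-removal-addition-adjoint} shows that $\psi\perp_{\mu_k}\mathfrak a_{k,i}\phi$ for all $\phi\in\mathscr H_{k-1}^\per$ if and only if $\ttscalar{\phi}{\mathfrak b_{k-1,i}\psi}_{\mu_{k-1}}=0$ for all such $\phi$. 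Since $\mathscr H_{k-1}^\per$ is the whole space and $\mu_{k-1}$ is strictly positive, this is equivalent to $\mathfrak b_{k-1,i}\psi=0$. This gives the second identity in \eqref{eq:H-k-star-symmetric}.

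\emph{Symmetric case.} Averaging the relation \eqref{eq:particle-removal-addition-adjoint} over $i$ gives
\begin{equation}
\ttscalar{\mathfrak a_k\phi}{\psi}_{\mu_k}=\ttscalar{\phi}{\mathfrak b_{k-1}\psi}_{\mu_{k-1}}\fstop
\end{equation}
Take $\psi\in\mathscr H_k$. By \eqref{eq:particle-removal-addition-relations}, $\mathfrak b_{k-1}\psi$ lies in $\mathscr H_{k-1}$. So $\psi$ is orthogonal to $\mathfrak a_k(\mathscr H_{k-1})$ if and only if $\mathfrak b_{k-1}\psi$ is orthogonal to all of $\mathscr H_{k-1}$. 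Because $\mathfrak b_{k-1}\psi$ itself belongs to $\mathscr H_{k-1}$, we may test against $\phi=\mathfrak b_{k-1}\psi$, and orthogonality forces $\mathfrak b_{k-1}\psi=0$. Conversely, $\mathfrak b_{k-1}\psi=0$ clearly gives orthogonality. Hence $\mathscr H_{k,\ast}=\ker(\mathfrak b_{k-1}|_{\mathscr H_k})$.

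The only point needing care is the inclusion $\mathfrak b_{k-1}(\mathscr H_k)\subseteq\mathscr H_{k-1}$ from \eqref{eq:particle-removal-addition-relations}. It holds because $\mu_k$ is permutation invariant, so $\mathfrak b_{k-1,i}$ applied to a symmetric $\psi$ does not depend on $i$ and yields a symmetric function; we take this as already established. The remaining requirement, nondegeneracy of the inner product, follows from $\mu_{k-1}>0$.
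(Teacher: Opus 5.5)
Your proof is correct and follows the same route the paper indicates: you identify each complement as the kernel of the adjoint of the corresponding removal operator via the adjointness relation \eqref{eq:particle-removal-addition-adjoint}. In the labeled case you use that the complement of a sum of subspaces is the intersection of the complements, and in the symmetric case you use the inclusion $\mathfrak b_{k-1}(\mathscr H_k)\subseteq\mathscr H_{k-1}$ together with strict positivity of $\mu_{k-1}$; the paper leaves these details implicit, and you supply them correctly.
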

 
 The spaces $\mathscr H_{k,\ast}$ and $\mathscr H_{k,\ast}^\per$ consist precisely of the eigenfunctions that do not arise from $\mathscr H_{k-1}$ and $\mathscr H_{k-1}^\per$. By Proposition \ref{pr:polynomial-identifications}, the restrictions of the hat and tilde maps to these spaces are linear isomorphisms onto their respective images. The intertwining relations in Proposition \ref{pr:particle-system-intertwining} then imply that these images are invariant under the corresponding generators, and that the restricted maps identify their eigenfunctions bijectively while preserving the associated eigenvalues.
 Hence, our final task is to identify these images in the polynomial spaces of the ${\rm KMP}$ and its hidden model.
 
 We start by recovering the $\eta$-polynomials in \eqref{eq:P-k-star}.
 \begin{proposition}\label{pr:Pk-star} The diagonal-restriction hat map in \eqref{eq:hat-tilde-diagonal} is a linear isomorphism between  $\mathscr H_{k,\ast}$ and the polynomials $\mathscr P_{k,\ast}$ defined in \eqref{eq:P-k-star}. Consequently, $\cL\mathscr P_{k,\ast}\subseteq \mathscr P_{k,\ast}$.
 \end{proposition}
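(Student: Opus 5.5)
Since $\mathscr H_{k,\ast}=\mathscr H_k\cap(\mathfrak a_k(\mathscr H_{k-1}))^{\perp_{\mu_k}}$ (Proposition \ref{pr:Hk-star}) and $\mathscr P_{k,\ast}=\mathscr P_k\cap\mathscr P_{k-1}^{\perp_\mu}$, the plan is to show that the hat map carries the first orthogonal decomposition in \eqref{eq:decomp-particle} onto the $L^2(\mu)$-orthogonal decomposition $\mathscr P_k=\mathscr P_{k-1}\oplus_\mu\mathscr P_{k,\ast}$. We already know from Proposition \ref{pr:polynomial-identifications} that $\psi\mapsto\widehat\psi$ is an isomorphism $\mathscr H_k\to\mathscr P_k$. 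So two things need checking: (i) the hat map sends $\mathfrak a_k(\mathscr H_{k-1})$ onto $\mathscr P_{k-1}$, and (ii) it sends the $\mu_k$-orthogonal complement to the $\mu$-orthogonal complement.

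For (i), fix $\phi\in\mathscr H_{k-1}$. Then $\widehat{\mathfrak a_{k,i}\phi}(\eta)=\sum_{x}\phi(\ldots)\eta_{x_1}\cdots\eta_{x_k}$, where $x_i$ is summed freely. Since $\sum_{x_i}\eta_{x_i}=1$ on $\Omega$, this equals $\widehat\phi(\eta)$. Averaging over $i$ gives $\widehat{\mathfrak a_k\phi}=\widehat\phi$. Hence the image of $\mathfrak a_k(\mathscr H_{k-1})$ is $\widehat{\mathscr H_{k-1}}=\mathscr P_{k-1}$, by Proposition \ref{pr:polynomial-identifications} applied at level $k-1$.

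For (ii), compute $\mu(\widehat\phi\,\widehat\psi)$ for $\phi\in\mathscr H_{k-1}$ and $\psi\in\mathscr H_k$. Expanding the product gives
\[
\mu(\widehat\phi\,\widehat\psi)=\sum\phi(x_1,\ldots,x_{k-1})\psi(y_1,\ldots,y_k)\,\mu_{2k-1}(x,y),
\]
which is not directly the $\mu_k$-pairing. The cleaner route is to use the tilde/hat duality. By \eqref{eq:mu-k}, $\mu(\eta_{x_1}\cdots\eta_{x_{k-1}}\widehat\psi(\eta))$ is the expectation of $\psi(Y_1,\ldots,Y_k)$ jointly with $X_1,\ldots,X_{k-1}$, all i.i.d.\ given $\eta$. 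This suggests the following. Let $\mathscr P'_{k-1}$ be the span of the monomials $\eta_{x_1}\cdots\eta_{x_{k-1}}$, which equals $\mathscr P_{k-1}$. Then $\widehat\psi\perp_\mu\mathscr P_{k-1}$ iff, for all $x_1,\ldots,x_{k-1}$,
\[
\sum_y\mu_{2k-1}(x,y)\psi(y)=0 .
\]
We would then show that this is equivalent to $\mathfrak b_{k-1}\psi=0$.

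This equivalence is the main obstacle: the condition above involves $k-1$ further independent samples $y$, while $\mathfrak b_{k-1}$ involves only one added particle. I would handle it by downward induction on the number of removals. By the Pólya-urn/exchangeability structure of $\mu_m$, the kernel of ``add $j$ particles'' composed with symmetric $\psi$ factors through iterates of $\mathfrak b$. Concretely, the condition says that $\mathfrak b_{k-1}^{(\text{copies})}$ applied to $\psi$, after integrating out, vanishes. Conversely, $\mathfrak b_{k-1}\psi=0$ implies that the condition holds, by exchangeability of $\mu_{2k-1}$: condition on the first $k-1$ of the $y$'s and use that $\psi$ is symmetric.

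If this becomes messy, I would instead argue by dimension. First, (i) together with $\mu(\widehat\phi\,\widehat\psi)$ being a nondegenerate pairing shows that $\widehat{\mathscr H_{k,\ast}}$ and $\mathscr P_{k,\ast}$ both have dimension $\dim\mathscr H_k-\dim\mathscr H_{k-1}$. Then it suffices to prove one inclusion, $\widehat{\mathscr H_{k,\ast}}\subseteq\mathscr P_{k-1}^{\perp_\mu}$.

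Finally, invariance follows. Proposition \ref{pr:particle-system-intertwining} gives $\cL\widehat\psi=\widehat{L_k\psi}$, and $L_k^\sym$ preserves $\mathscr H_{k,\ast}$: it is self-adjoint and preserves $\mathfrak a_k(\mathscr H_{k-1})$ by \eqref{eq:particle-consistency}. Hence $\cL\mathscr P_{k,\ast}\subseteq\mathscr P_{k,\ast}$.
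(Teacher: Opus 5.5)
Your reduction is sound up to the decisive step. Part (i) is correct; combined with injectivity of the hat map on $\mathscr H_{k-1}$, it also gives injectivity of $\mathfrak a_k$ there, so your dimension count is right. It is also correct that $\widehat\psi\perp_\mu\mathscr P_{k-1}$ is equivalent to $\sum_{y}\mu_{2k-1}(x,y)\psi(y)=0$ for all $x\in[n]^{k-1}$.

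The gap is the implication from $\mathfrak b_{k-1}\psi=0$ to this condition. That implication is the entire content of the proposition, and your dimension fallback still needs exactly this inclusion. The reason you give does not work. If you condition on $y_1,\ldots,y_{k-1}$, the law of $y_k$ under $\mu_{2k-1}$ is the P\'olya predictive law given $(x,y_1,\ldots,y_{k-1})$, not given $(y_1,\ldots,y_{k-1})$. Using $\mu_{m+1}(z\mid z_1,\ldots,z_m)=(\alpha_z+\mathfrak n_z(z_1,\ldots,z_m))/(\alpha_0+m)$, the resulting inner sum $\sum_{y_k}\mu_{2k-1}(y_k\mid x,y_1,\ldots,y_{k-1})\,\psi(y_1,\ldots,y_k)$ equals
\[
\frac{1}{\alpha_0+2k-2}\Big((\alpha_0+k-1)\,\mathfrak b_{k-1}\psi(y_1,\ldots,y_{k-1})+\sum_{j=1}^{k-1}\psi(y_1,\ldots,y_{k-1},x_j)\Big),
\]
and the second term is in general nonzero. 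Exchangeability alone cannot close the gap either, because the implication is specific to Dirichlet mixing. For $n=k=2$ with $\eta_1$ uniform on $\{1/5,1/2\}$ instead of a Dirichlet law, I computed that the analogous statement fails.

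The missing ingredient is the explicit form of the likelihood ratio. By \eqref{eq:mu-k},
\[
\phi_x(y)\eqdef\frac{\mu_{2k-1}(x,y)}{\mu_k(y)}=\frac{\Gamma(\alpha_0+k)}{\Gamma(\alpha_0+2k-1)}\prod_{z\in[n]}\frac{\Gamma(\alpha_z+\mathfrak n_z(y)+\mathfrak n_z(x))}{\Gamma(\alpha_z+\mathfrak n_z(y))}.
\]
Since $\Gamma(a+m)/\Gamma(a)=a(a+1)\cdots(a+m-1)$, this is a polynomial of total degree at most $\sum_z\mathfrak n_z(x)=k-1$ in the occupation numbers $\mathfrak n_z(y)=\sum_i\car_{\{y_i=z\}}$. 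After expanding, each term depends on at most $k-1$ coordinates of $y$, so the symmetric function $\phi_x$ lies in $\mathfrak a_k(\mathscr H_{k-1})$. Hence $\sum_y\mu_{2k-1}(x,y)\psi(y)=\ttscalar{\psi}{\phi_x}_{\mu_k}=0$ for every $\psi\in\mathscr H_{k,\ast}$. This is how the paper proves $\widehat{\mathscr H_{k,\ast}}\subseteq\mathscr P_{k,\ast}$, after which your dimension count finishes the argument.

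Alternatively, your ``downward induction'' can be made rigorous by iterating the display above. Each conditioning step trades one free $y$ for a not-yet-used $x_j$. After $k-1$ steps, only $\mathfrak b_{k-1}\psi$ evaluated at permutations of $x$ remains, and that vanishes. Your final invariance argument via self-adjointness of $L_k$ and \eqref{eq:particle-consistency} is fine.
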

 \begin{proof}Recall from \eqref{eq:decomp-particle} and \eqref{eq:P-k-star} that $\mathscr H_k=\mathfrak a_k(\mathscr H_{k-1})\oplus_{\mu_k}\mathscr H_{k,\ast}$ and $\mathscr P_k=\mathscr P_{k-1}\oplus_\mu \mathscr P_{k,\ast}$.	Since the hat map in \eqref{eq:hat-tilde-diagonal} is a linear isomorphism for both $\mathscr H_k\to \mathscr P_k$ and $\mathfrak a_k(\mathscr H_{k-1})\to \mathscr P_{k-1}$, it suffices to prove that any $\psi \in \mathscr H_{k,\ast}$ satisfies $\widehat\psi \in \mathscr P_{k,\ast}$. 
 	
Recall from \eqref{eq:mu-k} that, for every $j\ge 1$ and $z_1,\ldots, z_j\in [n]$, 
$
	\mu(\eta_{z_1}\cdots \eta_{z_j})=\mu_j(z_1,\ldots, z_j)
$, and, for later reference, we record the occupation-number notation already used there: 
\begin{equation}\label{eq:N-x-bd}
	\textstyle
	\mathfrak n_x(z_1,\ldots,z_j)
	\eqdef
	\sum_{i=1}^j\car_{\{z_i=x\}}\comma\qquad x \in [n]\fstop
\end{equation}
Hence, for every $\psi \in \mathscr H_k$, $1\le \ell\le k-1$, and $y_1,\ldots, y_\ell\in [n]$, 
\begin{equation}\label{eq:inner-product}
	\mu(\widehat \psi\,\eta_{y_1}\cdots\eta_{y_\ell})
	= \sum_{x_1,\ldots, x_k\in [n]}\psi(x_1,\ldots, x_k)\, \mu(\eta_{x_1}\cdots\eta_{x_k}\eta_{y_1}\cdots \eta_{y_\ell})
	=\scalar{\psi}{\phi_{y_1,\ldots, y_\ell}}_{\mu_k}\comma
\end{equation}
where $\phi_{y_1,\ldots, y_\ell}\in \mathscr H_k$ is given, for all $x_1,\ldots, x_k \in [n]$, by 
\begin{align}
	&\phi_{y_1,\ldots,y_\ell}(x_1,\ldots, x_k)\eqdef \frac{\mu_{k+\ell}(x_1,\ldots, x_k,y_1,\ldots, y_\ell)}{\mu_k(x_1,\ldots, x_k)}\\
	&\qquad= \frac{\Gamma(\alpha_0+k)}{\Gamma(\alpha_0+k+\ell)} \prod_{x\in [n]}\frac{\Gamma(\alpha_x+\mathfrak n_x(x_1,\ldots, x_k)+\mathfrak n_x(y_1,\ldots, y_\ell))}{\Gamma(\alpha_x+\mathfrak n_x(x_1,\ldots, x_k))}\comma
	\label{eq:ratio-phi}
\end{align} 
where the last step used \eqref{eq:mu-k} and $\mathfrak n_x(x_1,\ldots, x_k,y_1,\ldots, y_\ell)=\mathfrak n_x(x_1,\ldots, x_k)+\mathfrak n_x(y_1,\ldots, y_\ell)$. By the property of the gamma function $\Gamma(a+1)=a\Gamma(a)$, $a>0$, each ratio in the product in \eqref{eq:ratio-phi} is a polynomial of degree at most $\mathfrak n_x(y_1,\ldots, y_\ell)$ in the occupation numbers $\mathfrak n_x(x_1,\ldots, x_k)$. Since $\sum_{x\in [n]}\mathfrak n_x(y_1,\ldots, y_\ell)=\ell\le k-1$, 
this ensures that $\phi_{y_1,\ldots, y_\ell} \in \mathfrak a_k(\mathscr H_{k-1})$. Taking $\psi \in \mathscr H_{k,\ast}$ shows that the expression in \eqref{eq:inner-product} vanishes. Since  monomials of the type $\eta_{y_1}\cdots \eta_{y_\ell}$, $\ell \le k-1$, span $\mathscr P_{k-1}$, this proves $\widehat \psi \in \mathscr P_{k,\ast}$ as desired.
 \end{proof}
Although an analogous characterization is available for $\mathscr P_{k,\ast}^\per$ $\eqdef$  the image of $\mathscr H_{k,\ast}^{\per}$ under \eqref{eq:hat-tensor}, it will not be needed here and we omit it. We turn instead to the images under the tilde maps.
 \begin{proposition}\label{pr:Rk-star}
 	Let  $\mathscr R_{k,\ast}\subseteq \mathscr R_k$ and $\mathscr R_{k,\ast}^\per\subseteq\mathscr R_k^\per$ be the images of $\mathscr H_{k,\ast}$ and  $\mathscr H_{k,\ast}^\per$ under the tilde isomorphisms in  \eqref{eq:hat-tilde-diagonal} and \eqref{eq:tilde-tensor}, respectively. Then, 
 	\begin{equation}\label{eq:R-k-ast-inv}
 		\mathscr L\mathscr R_{k,\ast}\subseteq \mathscr R_{k,\ast}\comma\qquad \mathscr L_k^\per\mathscr R_{k,\ast}^\per\subseteq \mathscr R_{k,\ast}^\per\comma
 	\end{equation}
 	and
 	\begin{equation}\label{eq:R-k-ast}
 		\mathscr R_{k,\ast}=\tset{g\in \mathscr R_k\mid s\longmapsto g(\theta+s\mathbf1)\ \text{is constant on $\R$, for all}\ \theta \in \R^n}\comma
 	\end{equation}	
 	\begin{equation}\label{eq:R-k-ast-per}
 		\mathscr R_{k,\ast}^\per=\set{G\in \mathscr R_k^\per\middle| 
 			\begin{array}{c}
 			s\longmapsto G(\theta^{(1)},\ldots, 
 			\theta^{(i)}+s\mathbf1,
 			\ldots, \theta^{(k)})\
 			\text{is constant on $\R$}\comma\\[.1cm] \text{for all}\ i=1,\ldots, k\ 
 			\text{and}\ \theta^{(1)},\ldots, \theta^{(k)}\in \R^n
 			\end{array}
 		}\fstop
 	\end{equation}
 \end{proposition}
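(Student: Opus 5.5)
The invariance \eqref{eq:R-k-ast-inv} comes directly from the intertwining relations in Proposition \ref{pr:particle-system-intertwining}. The spaces $\mathscr H_{k,\ast}$ and $\mathscr H_{k,\ast}^\per$ are invariant under $L_k$. Indeed, by the symmetrized version of \eqref{eq:particle-consistency}, $\mathfrak b_{k-1}L_k=L_{k-1}\mathfrak b_{k-1}$, and likewise $\mathfrak b_{k-1,i}L_k=L_{k-1}\mathfrak b_{k-1,i}$. Hence the kernels in Proposition \ref{pr:Hk-star} are $L_k$-invariant. Applying \eqref{eq:diagonal-intertwinings} and \eqref{eq:tensor-intertwinings} then gives $\mathscr L\widetilde\psi=\widetilde{L_k\psi}\in\mathscr R_{k,\ast}$ and the tensor analogue.

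For the characterizations \eqref{eq:R-k-ast}--\eqref{eq:R-k-ast-per}, I will first treat the tensor case. The key computation is the shift of a single argument in $\widetilde\psi^\per$. By multilinearity,
\begin{equation}
\widetilde\psi^\per(\theta^{(1)},\ldots,\theta^{(i)}+s\mathbf1,\ldots,\theta^{(k)})-\widetilde\psi^\per(\theta^{(1)},\ldots,\theta^{(k)})
= s\sum_{x_j,\,j\neq i}\Big(\sum_{y}\mu_k(\ldots,y,\ldots)\psi(\ldots,y,\ldots)\Big)\prod_{j\neq i}\theta^{(j)}_{x_j},
\end{equation}
where $y$ sits in slot $i$. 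Writing $\mu_k(\ldots,y,\ldots)=\mu_{k-1}(x_{-i})\,\mu_k(y\mid x_{-i})$, I will use exchangeability of $\mu_k$, which follows from \eqref{eq:mu-k} depending only on occupation numbers. With it, the inner sum equals $\mu_{k-1}(x_{-i})\,\mathfrak b_{k-1,i}\psi(x_{-i})$, where $x_{-i}$ denotes $(x_j)_{j\ne i}$. So the increment is $s\,\widetilde{(\mathfrak b_{k-1,i}\psi)}^\per$ evaluated at the remaining $k-1$ arguments. This is a multilinear form in $k-1$ variables, and by Proposition \ref{pr:polynomial-identifications} (strict positivity of $\mu_{k-1}$) it vanishes identically if and only if $\mathfrak b_{k-1,i}\psi=0$. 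Since every $G\in\mathscr R_k^\per$ is of the form $\widetilde\psi^\per$, Proposition \ref{pr:Hk-star} yields \eqref{eq:R-k-ast-per}.

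For the symmetric case, let $\psi\in\mathscr H_k$ and $g=\widetilde\psi$. I will compute $\frac{\dd}{\dd s}g(\theta+s\mathbf1)$ using the polarization $g(\theta)=\widetilde\psi^\per(\theta,\ldots,\theta)$. By the computation above, the derivative is $\sum_i\widetilde{(\mathfrak b_{k-1,i}\psi)}^\per(\theta+s\mathbf 1,\ldots)$ on the diagonal. For symmetric $\psi$ the functions $\mathfrak b_{k-1,i}\psi$ coincide for all $i$ (again by exchangeability), so the derivative equals $k\,\widetilde{\mathfrak b_{k-1}\psi}(\theta+s\mathbf1)$. Constancy in $s$ for all $\theta$ is therefore equivalent to $\widetilde{\mathfrak b_{k-1}\psi}\equiv0$ on $\R^n$. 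By the injectivity in \eqref{eq:basic-isomorphisms-symmetric}, with $\mathfrak b_{k-1}\psi\in\mathscr H_{k-1}$ by \eqref{eq:particle-removal-addition-relations}, this is equivalent to $\mathfrak b_{k-1}\psi=0$, i.e.\ $\psi\in\mathscr H_{k,\ast}$.

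The main point needing care is the exchangeability bookkeeping. The definition \eqref{eq:particle-addition} inserts $y$ in position $i$ within $\psi$, while $\mu_k(y\mid\cdot)$ is defined with $y$ appended last, so I must check that the weight $\mu_k(x_1,\ldots,y,\ldots,x_{k-1})$ matches $\mu_{k-1}\mu_k(y\mid\cdot)$. This holds because $\mu_k$ is permutation invariant. Everything else is linear algebra.
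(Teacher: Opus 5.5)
Your proposal is correct and follows the paper's argument. The key steps match: the single-slot shift producing $s\,\widetilde{\mathfrak b_{k-1,i}\psi}^\per$ of the remaining arguments, injectivity of the tilde maps at level $k-1$, Proposition \ref{pr:Hk-star}, and the symmetric case via the derivative $k\,\widetilde{\mathfrak b_{k-1}\psi}$.

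The only cosmetic difference is how you get $L_k$-invariance of $\mathscr H_{k,\ast}$ and $\mathscr H_{k,\ast}^\per$. You use the commutation $\mathfrak b_{k-1,i}L_k=L_{k-1}\mathfrak b_{k-1,i}$, so the kernels are invariant. The paper instead relies on these spaces being orthogonal complements of $L_k$-invariant subspaces for a self-adjoint operator. Both are valid.
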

 \begin{proof}The invariance in \eqref{eq:R-k-ast-inv} follows at once from the intertwinings in Proposition \ref{pr:particle-system-intertwining}. 
 	
 	Next, focus on \eqref{eq:R-k-ast-per}.
 		Let $\psi\in\mathscr H_k^\per$ and $G=\widetilde\psi^\per\in \mathscr R_k^\per$. By multilinearity,
 	for $i=1,\ldots,k$,
 	\begin{align}
 		&\frac{\dd}{\dd s}G(\theta^{(1)},\ldots, \theta^{(i)}+s\mathbf1,\ldots, \theta^{(k)})\bigg|_{s=0}\\
 		&\qquad= \sum_{\substack{x_1,\ldots, x_{i-1},\\
 				x_{i+1},\ldots,x_k\in [n]}}\bigg(\sum_{x_i\in [n]}\mu_k(x_1,\ldots, x_k)\,\psi(x_1,\ldots, x_k)\bigg)\,\theta_{x_1}^{(1)}\cdots \theta_{x_{i-1}}^{(i-1)}\theta_{x_{i+1}}^{(i+1)}\cdots \theta_{x_k}^{(k)}\\
 		&\qquad= \widetilde{\mathfrak b_{k-1,i}\psi}^{\!\!\per}\!\!(\theta^{(1)},\ldots, \theta^{(i-1)},\theta^{(i+1)},\ldots, \theta^{(k)})\comma
 		\label{eq:tilde-shift-contraction}
 	\end{align}
 	where for the last step we used the symmetry of $\mu_k$, 
 	\begin{equation}
 	\mu_k(x_1,\ldots,x_k)
 	=
 	\mu_{k-1}(x_1,\ldots,x_{i-1},x_{i+1},\ldots, x_k)\,
 	\mu_k(x_i\mid x_1,\ldots,x_{i-1},x_{i-1},\ldots, x_k)
 	\comma
 	\end{equation}
 	and the definitions of $\mathfrak b_{k-1,i}$ and of the tilde map in \eqref{eq:particle-addition} and \eqref{eq:tilde-tensor}, respectively.	
  	By Proposition \ref{pr:polynomial-identifications}, the right-hand side of
 	\eqref{eq:tilde-shift-contraction} vanishes identically if and only if
 	$\mathfrak b_{k-1,i}\psi=0$. Intersecting over $i$ and using the second identity
 	in \eqref{eq:H-k-star-symmetric} proves \eqref{eq:R-k-ast-per}.
 	
 	The argument for \eqref{eq:R-k-ast} is analogous.
 	If $\psi\in\mathscr H_k$ and $g=\widetilde\psi\in \mathscr R_k$, differentiating gives
 	\begin{equation}
 	\frac{\dd}{\dd s}g(\theta+s\mathbf1)\bigg|_{s=0}
 		=
 		k\,\widetilde{\mathfrak b_{k-1}\psi}(\theta)
 		\fstop
 	\end{equation}
 	Therefore, $s\longmapsto g(\theta+s\mathbf1)$ is constant for every $\theta$ if
 	and only if $\mathfrak b_{k-1}\psi=0$. The first identity in
 	\eqref{eq:H-k-star-symmetric} now yields \eqref{eq:R-k-ast}.
 \end{proof}
 
 \subsection{Proof of Theorems \ref{th:main-KMP} and \ref{th:disc-ell-KMP}}\label{sec:proof-main-KMP}
 As in the rest of the section, the weights $\alpha$ and $w$ are fixed throughout.
 By the previous intertwinings, the corresponding restricted generators are similar. Hence, counting algebraic multiplicities,
  \begin{align}\label{eq:restricted-isospectrality-sym}
 		\spec(-\cL|_{\mathscr P_{k,\ast}})
 		&=
 		\spec(-L_k|_{\mathscr H_{k,\ast}})
 		=
 		\spec(-\mathscr L|_{\mathscr R_{k,\ast}})
 		\comma\\
 		\label{eq:restricted-isospectrality-per}
 		\spec(-\cL_k^\per|_{\mathscr P_{k,\ast}^\per})
 		&=
 		\spec(-L_k|_{\mathscr H_{k,\ast}^\per})
 		=
 		\spec(-\mathscr L_k^\per|_{\mathscr R_{k,\ast}^\per})
 		\fstop
 \end{align}
 Since $L_k$ is self-adjoint and the spaces $\mathscr H_{k,\ast}$ and $\mathscr H_{k,\ast}^\per$ are invariant, these spectra are real. Moreover, $\gap_{k,\ast}(\alpha,w)$ from \eqref{eq:gap-inf-star-intro} is the smallest eigenvalue in \eqref{eq:restricted-isospectrality-sym}; thus, 
 \begin{equation}
 \gap_{k,\ast}(\alpha,w)=\gap(L_k|_{\mathscr H_{k,\ast}})\comma\qquad k\ge 1\comma
 \end{equation}
 and we analogously define 
 \begin{equation}\label{eq:gap-k-ast-per}
 \gap_{k,\ast}^\per(\alpha,w)\eqdef \gap(L_k|_{\mathscr H_{k,\ast}^\per})\comma\qquad k\ge 1\fstop
 \end{equation}
 
 With this notation, both Theorems \ref{th:main-KMP} and \ref{th:disc-ell-KMP} follow from the lower bound
 \begin{equation}\label{eq:target-gap-star}
 	\gap_{k,\ast}^{\per}(\alpha,w)
 	\ge
 	\gap_{2,\ast}(\alpha,w)
 	\comma
 	\qquad k\ge2\fstop
 \end{equation}
 Indeed, since $\mathscr H_{k,\ast}\subseteq\mathscr H_{k,\ast}^{\per}$,
 \begin{equation}\label{eq:gap-gap-per}
 	\gap_{k,\ast}^{\per}(\alpha,w)
 	\le
 	\gap_{k,\ast}(\alpha,w)
 \end{equation}
 Moreover, iterating the decompositions in \eqref{eq:decomp-particle} yields
 \begin{equation}\label{eq:gap-decomp-particle}
 	\gap(L_k^{\sym}) = \gap(L_k|_{\mathscr H_k})
 	=
 	\min_{1\le j\le k}\gap_{j,\ast}(\alpha,w)
 	\comma
 	\qquad
 	\gap(L_k)
 	=
 	\min_{1\le j\le k}\gap_{j,\ast}^{\per}(\alpha,w)
 	\fstop
 \end{equation}
 Together with
$ 	\gap_{1,\ast}^{\per}(\alpha,w)
 	=
 	\gap_{1,\ast}(\alpha,w)
 	=
 	\gap_1(\alpha,w)
 	$, 
 these identities show that \eqref{eq:target-gap-star} implies precisely the claims of Theorems \ref{th:main-KMP} and \ref{th:disc-ell-KMP}.
 
 By \eqref{eq:restricted-isospectrality-sym}--\eqref{eq:restricted-isospectrality-per}, it suffices to prove \eqref{eq:target-gap-star} for the hidden model. This is the convenient choice: flat configurations are fixed by the hidden-model dynamics, while departure from flatness is measured by the variance $\var_\pi$, with $\pi=\alpha/\alpha_0$ and $\var_\pi$ defined in \eqref{eq:var-pi-intro}. 
 
 The proof of \eqref{eq:target-gap-star} rests on the characterizations of $\mathscr R_{k,\ast}$ and $\mathscr R_{k,\ast}^{\per}$ in Proposition \ref{pr:Rk-star}, and two elementary observations, Lemmas \ref{lem:var-power} and \ref{lem:G-Var} below.

\begin{lemma}\label{lem:var-power}
	For every even integer $k\ge 2$, $\var_\pi^{k/2}\in\mathscr R_{k,\ast}$.
	Moreover, $\var_\pi^{k/2}$ vanishes precisely on flat configurations and, thus, is not identically zero.
\end{lemma}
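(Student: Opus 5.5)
The plan is to use the characterization of $\mathscr R_{k,\ast}$ from Proposition \ref{pr:Rk-star}, namely \eqref{eq:R-k-ast}. Two things must be checked: that $\var_\pi^{k/2}$ lies in $\mathscr R_k$, i.e.\ it is a homogeneous polynomial of degree $k$ in $\theta$; and that $s\mapsto \var_\pi^{k/2}(\theta+s\mathbf1)$ is constant for every $\theta\in\R^n$. Once these hold, Proposition \ref{pr:Rk-star} gives $\var_\pi^{k/2}\in\mathscr R_{k,\ast}$ directly, with no further reference to the particle-addition operators.

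For homogeneity, I would expand
\begin{equation}
\var_\pi(\theta)=\sum_{x\in[n]}\pi_x\theta_x^2-\Big(\sum_{x\in[n]}\pi_x\theta_x\Big)^2 .
\end{equation}
This is a homogeneous quadratic form in $\theta$. Since $k$ is even, $k/2$ is a nonnegative integer, so $\var_\pi^{k/2}$ is a genuine polynomial, homogeneous of degree $2\cdot k/2=k$. For shift invariance, I would use $\sum_x\pi_x=1$. Then $\pi(\theta+s\mathbf1)=\pi(\theta)+s$, so each centered coordinate $\theta_x+s-\pi(\theta+s\mathbf1)=\theta_x-\pi(\theta)$ is independent of $s$. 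Hence $\var_\pi(\theta+s\mathbf1)=\var_\pi(\theta)$, and the same holds for its $(k/2)$-th power.

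For the vanishing statement, $\var_\pi(\theta)$ is a sum of the nonnegative terms $\pi_x(\theta_x-\pi(\theta))^2$ with every $\pi_x=\alpha_x/\alpha_0>0$. It therefore vanishes if and only if $\theta_x=\pi(\theta)$ for all $x$, that is, if and only if $\theta=s\mathbf1$ is flat. The same is true of $\var_\pi^{k/2}$. Because $n\ge2$, non-flat configurations exist (for instance $e_1$), so $\var_\pi^{k/2}\not\equiv0$.

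There is no real obstacle here. The only point needing care is that membership in $\mathscr R_{k,\ast}$ is read off from \eqref{eq:R-k-ast} rather than from the definition as the image of $\mathscr H_{k,\ast}$, and that this relies on the evenness of $k$ so that $\var_\pi^{k/2}$ is a polynomial of the correct degree.
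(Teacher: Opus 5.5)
Your proposal is correct and follows the paper's proof essentially verbatim: you read membership in $\mathscr R_{k,\ast}$ off the characterization \eqref{eq:R-k-ast} from homogeneity of degree $k$ and invariance under translations along $\mathbf1$. You then obtain the vanishing statement from the strict positivity of $\pi_x=\alpha_x/\alpha_0$; your remark that $n\ge2$ guarantees a non-flat configuration is a small detail the paper leaves implicit.
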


\begin{proof}
	The function $\var_\pi$ from \eqref{eq:var-pi-intro} is a homogeneous polynomial of degree two and satisfies
	\begin{equation}
		\var_\pi(\theta+s\mathbf1)
		=
		\var_\pi(\theta)
		\comma\qquad  \theta\in\R^n\comma
		s\in\R\fstop
	\end{equation}
	Hence, $\var_\pi^{k/2}$ is homogeneous of degree $k$ and is invariant under translations along $\mathbf1$. By \eqref{eq:R-k-ast}, this proves that $\var_\pi^{k/2}\in\mathscr R_{k,\ast}$.
The second claim follows as 
 all weights $\pi_x=\alpha_x/\alpha_0$ are strictly positive.
\end{proof}

The next result is the multilinear counterpart of \eqref{eq:g-var}; see also \cite[Eq.\ (4.1)]{kim_quattropani_sau_spectral_2025}.

\begin{lemma}\label{lem:G-Var}
	Fix $k\ge 1$ and $G\in\mathscr R_{k,\ast}^\per$. Then, for some constant $C=C(G,k,\pi)>0$,
	\begin{equation}\label{eq:G-Var}
	\textstyle	|G(\theta^{(1)},\ldots,\theta^{(k)})|
		\le
		C
		\var_\pi^{1/2}(\theta^{(1)})\cdots \var_\pi^{1/2}(\theta^{(k)})
		\comma\qquad
		\theta^{(1)},\ldots,\theta^{(k)}\in\R^n\fstop
	\end{equation}
\end{lemma}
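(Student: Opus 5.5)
The plan is to exploit the translation invariance of $G$ in each argument, given by \eqref{eq:R-k-ast-per}, to replace every $\theta^{(i)}$ by its $\pi$-centered version, and then to use multilinearity together with equivalence of norms on a finite-dimensional space.

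First, for each $i$ set $\bar\theta^{(i)}\eqdef\theta^{(i)}-\pi(\theta^{(i)})\mathbf 1$. Applying \eqref{eq:R-k-ast-per} successively in each of the $k$ slots, with $s=-\pi(\theta^{(i)})$, gives
\begin{equation}
G(\theta^{(1)},\ldots,\theta^{(k)})=G(\bar\theta^{(1)},\ldots,\bar\theta^{(k)}).
\end{equation}
Thus it suffices to bound $G$ on the product $V^k$, where $V\eqdef\{\theta\in\R^n:\pi(\theta)=0\}$ is a linear subspace of dimension $n-1$.

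On $V$, the map $\theta\mapsto\var_\pi^{1/2}(\theta)=\big(\sum_x\pi_x\theta_x^2\big)^{1/2}$ is a genuine norm. It is the $L^2(\pi)$ norm, and since every $\pi_x=\alpha_x/\alpha_0>0$ it vanishes only at $\theta=0$. Moreover $\var_\pi(\bar\theta^{(i)})=\var_\pi(\theta^{(i)})$.

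A multilinear form on a finite-dimensional normed space is bounded. Concretely, take $C\eqdef\sup\{|G(v_1,\ldots,v_k)|: v_i\in V,\ \var_\pi(v_i)=1\}$. This supremum is finite because $G$ is continuous and the unit sphere of $V$ is compact. Multilinearity then gives
\begin{equation}
|G(\bar\theta^{(1)},\ldots,\bar\theta^{(k)})|\le C\prod_{i=1}^k\var_\pi^{1/2}(\bar\theta^{(i)}).
\end{equation}
If some $\bar\theta^{(i)}=0$, both sides vanish, so this case needs no separate treatment. If $C=0$ (for instance when $n=1$ or $G\equiv0$), any positive constant works.

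The resulting constant depends only on $G$, $k$ and $\pi$, as the statement requires. I expect no real obstacle in this argument. The only point requiring care is to check that the shift invariance holds separately in each argument, and this is exactly the content of \eqref{eq:R-k-ast-per}.
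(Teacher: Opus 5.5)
Your proposal is correct and follows essentially the same route as the paper: use the slotwise translation invariance from \eqref{eq:R-k-ast-per} to replace each $\theta^{(i)}$ by $\theta^{(i)}-\pi(\theta^{(i)})\mathbf 1$, then bound the multilinear form $G$ by the product of the $\pi$-weighted $\ell_2$ norms of the centered arguments. Your compactness argument for finiteness of $C$ and your remark on the case $C=0$ just make explicit what the paper compresses into ``follows from the multilinearity of $G$.''
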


\begin{proof}
	By \eqref{eq:R-k-ast-per}, $G$ is invariant under the addition of a constant vector to any one of its arguments. Hence (recall  $\pi(\theta)\eqdef\sum_{x\in [n]}\pi_x\theta_x\in \R$),	
	\begin{equation}
		G(\theta^{(1)},\ldots,\theta^{(k)})
		=
		G\big(
		\theta^{(1)}-\pi(\theta^{(1)})\mathbf1,
		\ldots,
		\theta^{(k)}-\pi(\theta^{(k)})\mathbf1
		\big)\fstop
	\end{equation}
Since $\pi_x>0$ for all $x\in [n]$, and $\var_\pi^{1/2}(\theta)$ is the $\pi$-weighted $\ell_2$-norm of $\theta-\pi(\theta)$ on $\R^n$,  the bound in \eqref{eq:G-Var} follows from the multilinearity of $G$.
\end{proof}

We now have all the ingredients to prove Aldous' phenomenon in ${\rm KMP}$.

\begin{proof}[Proof of Theorems \ref{th:main-KMP} and \ref{th:disc-ell-KMP}]
	Fix $k\ge 2$, and let $G\in\mathscr R_{k,\ast}^\per$ be a nonzero eigenfunction of $-\mathscr L_k^\per$ given in \eqref{eq:gen-hidden-tensor} associated with an eigenvalue $\lambda$. As discussed at the beginning of the section, $\lambda$ is real and satisfies $\lambda \ge  \gap_{k,\ast}^\per(\alpha,w)$ as defined in \eqref{eq:gap-k-ast-per}.
	
	Let $(e^{t\mathscr L_k^\per})_{t\ge0}$ denote the synchronized tensor-hidden-model semigroup on $\mathscr R_k^\per$. Throughout the proof, we repeatedly use its Markovianity, which implies
	\begin{equation}\textstyle
		G_1\le G_2
		\qquad\Longrightarrow\qquad
		e^{t\mathscr L_k^\per}G_1
		\le
		e^{t\mathscr L_k^\per}G_2
		\comma t\ge0\fstop
	\end{equation}
	In particular, for every $t>0$ and $\theta^{(1)},\ldots,\theta^{(k)}\in\R^n$, Lemma \ref{lem:G-Var} gives 
	\begin{align}\label{eq:eigenfunction-variance-bound}
		\begin{aligned}
		e^{-\lambda t}
		|G(\theta^{(1)},\ldots,\theta^{(k)})|
		&=
		|
		e^{t\mathscr L_k^\per}G
		(\theta^{(1)},\ldots,\theta^{(k)})
		|\\
		&\le
		e^{t\mathscr L_k^\per}|G|
		(\theta^{(1)},\ldots,\theta^{(k)})\\
		&\le
		C
		e^{t\mathscr L_k^\per}
		(\var_\pi^{1/2})^{\otimes k}
		(\theta^{(1)},\ldots,\theta^{(k)})\fstop
		\end{aligned}
	\end{align}
	The constant $C=C(G,k,\pi)>0$ does not depend on $t$.
	
	Let now $2\le\ell\le k$ be even, fix $-\infty<a<b<\infty$, and suppose that $\theta^{(1)},\ldots,\theta^{(k)}\in[a,b]^n$. The maximum principle for the hidden-model dynamics guarantees that every evolved profile remains in $[a,b]^n$ almost surely. Since
	\begin{equation}\label{eq:var-a-b}
		\var_\pi^{1/2}(\theta)\le b-a
		\comma\qquad \theta\in[a,b]^n\comma
	\end{equation}
	and the first $\ell$ coordinates of the synchronized $k$-tensor dynamics evolve according to the synchronized $\ell$-tensor dynamics, we obtain
	\begin{align}	\label{eq:tensor-holder}
		\begin{aligned}
		e^{t\mathscr L_k^\per}
		(\var_\pi^{1/2})^{\otimes k}
		(\theta^{(1)},\ldots,\theta^{(k)})
		&\le
		(b-a)^{k-\ell}
		e^{t\mathscr L_\ell^\per}
		(\var_\pi^{1/2})^{\otimes\ell}
		(\theta^{(1)},\ldots,\theta^{(\ell)})\\
		&
		\textstyle\le
		(b-a)^{k-\ell}
		\prod_{i=1}^{\ell}
		(
		e^{t\mathscr L}
		\var_\pi^{\ell/2}(\theta^{(i)})
		)^{1/\ell}\comma
	\end{aligned}
	\end{align}
	where the last inequality follows from H\"older's inequality. 
		
	For every $g:\R^n\to\R$, set
	$
		\ttnorm{g}_{a,b}
		\eqdef
		\sup_{\theta\in[a,b]^n}|g(\theta)|$.
	Thus, \eqref{eq:tensor-holder} yields
	\begin{equation}\label{eq:tensor-semigroup-norm}
		e^{t\mathscr L_k^\per}
		(\var_\pi^{1/2})^{\otimes k}
		(\theta^{(1)},\ldots,\theta^{(k)})
		\le
		(b-a)^{k-\ell}
		\ttnorm{
			e^{t\mathscr L}\var_\pi^{\ell/2}
		}_{a,b}\fstop
	\end{equation}
		By Lemma \ref{lem:var-power},
	$\var_\pi^{\ell/2}\in\mathscr R_{\ell,\ast}$.
	Moreover, $\ttnorm{\cdot}_{a,b}$ is a norm on the finite-dimensional space $\mathscr R_{\ell,\ast}$, and this space is invariant under $\mathscr L$ by \eqref{eq:R-k-ast-inv}.	
	Since $G\neq0$, we may choose $\theta^{(1)},\ldots,\theta^{(k)}\in[a,b]^n$ such that
	$
	G(\theta^{(1)},\ldots,\theta^{(k)})\neq0$.
	Altogether, combining \eqref{eq:eigenfunction-variance-bound}, \eqref{eq:tensor-holder} and \eqref{eq:tensor-semigroup-norm} yields
	\begin{equation}\label{eq:key-bound}
		e^{-\lambda t}|G(\theta^{(1)},\ldots,\theta^{(k)})|\le C(b-a)^{k-\ell} \ttnorm{e^{t\mathscr L}\var_\pi^{\ell/2}}_{a,b}\le C(b-a)^k\ttnorm{e^{t\mathscr L|_{\mathscr R_{\ell,\ast}}}}_{a,b}\fstop
	\end{equation}	
 Hence, taking logarithms in \eqref{eq:key-bound}, dividing by $t$, and letting $t\to\infty$ yield
		\begin{equation}
			-\lambda
			\le
			\lim_{t\to\infty}
			\frac1t
			\log
			\ttnorm{e^{t\mathscr L|_{\mathscr R_{\ell,\ast}}}}_{a,b}
			=
			-\gap_{\ell,\ast}(\alpha,w)
			\fstop
		\end{equation}
		Indeed, by Gelfand's formula (see, e.g., \cite[Corollary 5.6.14]{horn_matrix_2012} or, for a continuous-time statement, \cite[Exercise 1.3]{salez2025modern}), the limit on the right exists and equals the spectral bound $\max\{{\rm Re}(\sigma): \sigma \in \spec(\mathscr L|_{\mathscr R_{\ell,\ast})}\}$. The latter is $-\gap_{\ell,\ast}(\alpha,w)$, as the spectrum is real and $\gap_{\ell,\ast}(\alpha,w)$ is the smallest eigenvalue of $-\mathscr L|_{\mathscr R_{\ell,\ast}}$.
		
		As $\lambda \ge \gap_{k,\ast}^\per(\alpha,w)$, this proves, for every $2\le \ell \le k$ with $\ell\ge 2$ even,
		\begin{equation}\label{eq:lambda-gap-ell}
			\gap_{\ell,\ast}(\alpha,w)\le \gap_{k,\ast}^\per(\alpha,w)\fstop
		\end{equation} 
		Choosing $\ell=2$ proves \eqref{eq:target-gap-star} and, thus, concludes the proof.	
	\end{proof}

\begin{remark}\label{rem:new-ordering}
	The estimate in \eqref{eq:lambda-gap-ell} yields two nontrivial refinements of Theorems \ref{th:main-KMP} and \ref{th:disc-ell-KMP}. First, by choosing $\ell=k-1$ when $k$ is odd and $\ell=k-2$ when $k$ is even, one obtains
	\begin{equation}\label{eq:gap-even-odd}
		\gap_{k,\ast}(\alpha,w)
		\ge
		\begin{dcases}
			\gap_{k-1,\ast}(\alpha,w)
			&\text{if $k\ge3$ is odd}\comma\\
			\gap_{k-2,\ast}(\alpha,w)
			&\text{if $k\ge4$ is even}\fstop
		\end{dcases}
	\end{equation}
	Second, if $k\ge2$ is even, choosing $\ell=k$ in \eqref{eq:lambda-gap-ell} together with \eqref{eq:gap-gap-per} gives 
	\begin{equation}
		\gap_{k,\ast}^\per(\alpha,w)
		=
		\gap_{k,\ast}(\alpha,w)\fstop
	\end{equation}

	In particular, iterating \eqref{eq:gap-even-odd} over the even indices $k\ge2$ proves, for all weights $\alpha$ and $w$,
	\begin{equation}\label{eq:even-chain}
		\gap_{2,\ast}(\alpha,w)\le \gap_{4,\ast}(\alpha,w)\le \gap_{6,\ast}(\alpha,w)\le \ldots\comma
	\end{equation}
	which, in the special case $\alpha\equiv1$, is precisely the second claim of \cite[Conjecture~4.16]{alon2026aldous}. In the same conjecture, Alon and Puder also propose the  odd-index analogue
	\begin{equation}\label{eq:odd-chain}
		\gap_1(\alpha,w)\le \gap_{3,\ast}(\alpha,w)\le \gap_{5,\ast}(\alpha,w)\le \ldots\comma
	\end{equation}for $\alpha \equiv 1$ and arbitrary block weights $w$. While our results neither prove or disprove their claim, it is not difficult to see that \eqref{eq:odd-chain} cannot hold for general weights. Indeed, consider the homogeneous $n$-site complete graph with
	\begin{equation}
		\alpha \equiv {\rm const.}= \tau >0\comma\qquad w_B=\car_{|B|=2}\comma
	\end{equation}
	for which $\gap_1(\alpha,w)=n/2$.
	A direct computation of the action of $\cL$ on the cubic polynomial $\eta \mapsto \sum_{x\in [n]}\eta_x^3 $ 	gives
$
		 \gap_{3,\ast}(\alpha,w)\le \frac{3}{2}\frac{(\tau n+2)}{(2\tau+1)}$. 
	Hence, for every $n\ge 7$ and $0<\tau<1-6/n$, one finds $\gap_{3,\ast}(\alpha,w)<\gap_1(\alpha,w)$.
\end{remark}

\subsection{Gibbs sampler for cone measures}\label{sec:Gibbs-sampler}
Here we prove Theorem \ref{th:GS-p}. 
Throughout, we set
$\alpha_x=1/p_x$, $x\in[n]$, and $\mu={\rm Dir}(\alpha)$, with $\kappa$ denoting the corresponding cone measure on $\mathbb S_p$.
In view of \eqref{eq:dirp}, we identify $L^2(\kappa)$ with the space of square-integrable functions of $(\eta,\sigma)$, where $\eta\sim\mu$, while $\sigma$ is independent of $\eta$ and has i.i.d.\ coordinates uniform on $\{\pm1\}$. For $J\subseteq[n]$, set
\begin{equation}\textstyle
	\sigma_J\eqdef\prod_{x\in J}\sigma_x\comma
\end{equation}
with $\sigma_\emp=1$. Every $h\in L^2(\kappa)$ admits the orthogonal expansion
\begin{equation}\label{eq:Walsh-decomposition-Gibbs}
	\textstyle	h(\eta,\sigma)
	=
	\sum_{J\subseteq[n]} h_J(\eta)\,\sigma_J
	\comma
	\qquad h_J\in L^2(\mu)\fstop
\end{equation}

\begin{lemma}\label{lem:Walsh-sectors-Gibbs}
	For every $B,J\subseteq[n]$ and $h_J\in L^2(\mu)$,
	\begin{equation}\label{eq:block-action-Walsh-Gibbs}
		\kappa_{B}(h_J\sigma_J)
		=
		\begin{cases}
			(\mu_Bh_J)\,\sigma_J&\text{if $B\cap J=\emp$}\\
			0&\text{if $B\cap J\neq\emp$}\fstop
		\end{cases}
	\end{equation}
	Consequently, {each subspace $L^2(\mu)\otimes \spanop(\sigma_J)$} is invariant under $\cG$, and
	\begin{align}\label{eq:Dirichlet-Walsh-Gibbs}
		\ttscalar{h_J\sigma_J}{-\cG(h_J\sigma_J)}_{\kappa}
		=
		\sum_{B:B\cap J=\emp}
		w_B\,\ttnorm{h_J-\mu_Bh_J}_\mu^2
		+
		\sum_{B:B\cap J\neq\emp}
		w_B\,\ttnorm{h_J}_\mu^2\fstop
	\end{align}
\end{lemma}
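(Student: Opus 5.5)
The plan is to compute $\kappa_B(h_J\sigma_J)$ directly from the structure of the conditional law of $\zeta^B$ given $\zeta^{B^\complement}$. In the $(\eta,\sigma)$ coordinates, \eqref{eq:dirp} says that $\kappa$ is the product of $\mu$ and the uniform measure on $\{\pm1\}^n$. Conditioning on $\zeta^{B^\complement}$ is therefore the same as conditioning on $(\eta^{B^\complement},\sigma^{B^\complement})$. The first step is to check this. The map $\zeta\mapsto(\eta,\sigma)$ is a bijection up to null sets, with $\zeta_x=\sigma_x\eta_x^{1/p_x}$, and it acts coordinatewise. Hence the $\sigma$-algebra generated by $\zeta^{B^\complement}$ coincides with that generated by $(\eta^{B^\complement},\sigma^{B^\complement})$.

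Next, independence of $\eta$ and $\sigma$, together with the i.i.d.\ structure of $\sigma$, gives the conditional law. Given $(\eta^{B^\complement},\sigma^{B^\complement})$, the pair $(\eta^B,\sigma^B)$ is distributed as $\mu(\,\cdot\mid\eta^{B^\complement})\otimes\mathrm{Unif}(\{\pm1\}^B)$. Factor $\sigma_J=\sigma_{J\setminus B}\,\sigma_{J\cap B}$. The factor $\sigma_{J\setminus B}$ is measurable with respect to the conditioning, and the factor $\sigma_{J\cap B}$ is conditionally independent of $\eta$. Therefore
\[
\kappa_B(h_J\sigma_J)=\sigma_{J\setminus B}\,(\mu_Bh_J)(\eta)\,\mathbf E[\sigma_{J\cap B}]\fstop
\]
The expectation $\mathbf E[\sigma_{J\cap B}]$ equals $1$ if $J\cap B=\emp$ and $0$ otherwise, which yields \eqref{eq:block-action-Walsh-Gibbs}. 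The only point that needs care is the measure-theoretic justification of the factorization of conditional expectations, which follows from Fubini applied to product measures.

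Invariance of each sector $L^2(\mu)\otimes\spanop(\sigma_J)$ is then immediate, since $\cG$ is a finite sum of the operators $w_B(\kappa_B-I)$. Each $\kappa_B$ maps the sector into itself by \eqref{eq:block-action-Walsh-Gibbs}.

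For \eqref{eq:Dirichlet-Walsh-Gibbs}, I will use that each $\kappa_B$ is an orthogonal projection on $L^2(\kappa)$. This gives
\[
\ttscalar{h}{-\cG h}_\kappa=\sum_B w_B\ttnorm{h-\kappa_Bh}_\kappa^2
\]
with $h=h_J\sigma_J$. In the case $B\cap J=\emp$, we have $h-\kappa_Bh=(h_J-\mu_Bh_J)\sigma_J$. Since $\sigma_J^2=1$ and $\sigma$ is independent of $\eta$, its squared norm is $\ttnorm{h_J-\mu_Bh_J}_\mu^2$. In the case $B\cap J\neq\emp$, we have $h-\kappa_Bh=h$, whose squared norm is $\ttnorm{h_J}_\mu^2$. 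Summing over $B$ gives the claim. I expect no real obstacle here; the main step is the identification of the conditional law in the second paragraph.
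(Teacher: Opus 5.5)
Your proposal is correct and follows the same route as the paper. Under $\kappa$, conditionally on $\zeta^{B^\complement}$, the energies $\eta^B$ undergo the KMP $B$-resampling while the signs on $B$ are independent and uniform; this gives the block action on $h_J\sigma_J$, and the Dirichlet-form identity then follows from the fact that the block operators are orthogonal projections. You simply spell out details the paper leaves implicit, namely the coordinatewise identification $\zeta\leftrightarrow(\eta,\sigma)$ and the factorization $\sigma_J=\sigma_{J\setminus B}\,\sigma_{J\cap B}$, and using that $\kappa_B$ rather than $\mu_B$ is a projection is an immaterial variation.
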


\begin{proof}
	Conditionally on the variables outside $B$, the signs $(\sigma_x)_{x\in B}$ are resampled independently and uniformly on $\{\pm1\}$, while the energy variables are resampled according to the $B$-update of the $(\alpha,w)$-${\rm KMP}$ model. This gives \eqref{eq:block-action-Walsh-Gibbs}. Since $\mu_B$ is an orthogonal projection in $L^2(\mu)$, \eqref{eq:Dirichlet-Walsh-Gibbs} follows from \eqref{eq:gen-cone}.
\end{proof}

\begin{proof}[Proof of Theorem \ref{th:GS-p}]
	For $J=\emp$, \eqref{eq:block-action-Walsh-Gibbs} shows that the restriction of $\cG$ to functions of the energy variables alone coincides with $\cL$. Hence,
	\begin{equation}\label{eq:Gibbs-gap-upper-KMP}
		\gap(\cG)\le \gap(\cL)\fstop
	\end{equation}
	
	We prove the reverse inequality. For every $x\in[n]$, the nonzero function
	$f_x(\eta)=\eta_x-\mu(\eta_x)$ satisfies
	\begin{align}
		\ttscalar{f_x}{-\cL f_x}_\mu
		=
		\sum_{B\ni x}w_B\,\ttnorm{f_x-\mu_Bf_x}_\mu^2
		\le
		\ttonde{\sum_{B\ni x}w_B}\ttnorm{f_x}_\mu^2\fstop
	\end{align}
	Therefore, by the variational characterization of the gap,
	\begin{equation}\label{eq:KMP-gap-refresh-rate}
		\textstyle	\gap(\cL)
		\le
		\min_{x\in[n]}\sum_{B\ni x}w_B\fstop
	\end{equation}
	
	Now fix $J\neq\emp$. By \eqref{eq:Dirichlet-Walsh-Gibbs}, for any $x\in J$,
	\begin{equation}
		\ttscalar{h_J\sigma_J}{-\cG(h_J\sigma_J)}_{\kappa}
		\textstyle\ge
		\tonde{\sum_{B:B\cap J\neq\emp}w_B}\ttnorm{h_J}_\mu^2
		\textstyle\ge
		\tonde{\sum_{B\ni x}w_B}\ttnorm{h_J}_\mu^2
		\ge
		\gap(\cL)\ttnorm{h_J}_\mu^2\fstop
	\end{equation}
	For $J=\emp$, the same lower bound holds whenever $\mu(h_\emp)=0$, by the Poincar\'e inequality for $\cL$. If $h$ in \eqref{eq:Walsh-decomposition-Gibbs} satisfies $\kappa(h)=0$, then $\mu(h_\emp)=0$. Summing the preceding bounds over $J\subseteq[n]$ and using the orthogonality of the expansion in \eqref{eq:Walsh-decomposition-Gibbs}, we obtain
	\begin{equation}
		\ttscalar{h}{-\cG h}_{\kappa}
		\ge
		\gap(\cL)\ttnorm{h}_{\kappa}^2\fstop
	\end{equation}
	
	Together with \eqref{eq:Gibbs-gap-upper-KMP}, this proves $\gap(\cG)=\gap(\cL)$. Theorem \ref{th:main-KMP} gives
	\begin{equation}
		\gap(\cG)=\gap(\cL)=\gap(\cL|_{\mathscr P_2})\fstop
	\end{equation}
	Finally, let $f\in\mathscr P_2$ be a $\gap(\cL)$-eigenfunction of $-\cL$. Since $f$ depends only on $\eta$, it gives rise to a $\gap(\cG)$-eigenfunction of $-\cG$. This proves the last claim.
\end{proof}

\section{Perron--Frobenius theorems on quadratic cones for KMP}\label{sec:PF}

In this section, we prepare the ground for establishing the main results described in Section \ref{sec:comparison-intro}, aimed at comparing $\gap_1(\alpha,w)$ with $\gap_{2,\ast}(\alpha,w)$. As already announced in Section \ref{sec:PF-intro} and motivated by the role played by the variance functional $\var_\pi$ in \eqref{eq:var-pi-intro}, the key idea is to identify a \textit{nonnegative quadratic eigenfunction} of the hidden model, and exploit the accompanying structural properties---most notably, its strict positivity and uniqueness, together with the analogous properties of the dual eigenfunction. These conclusions will follow from the Perron--Frobenius theory for cone-preserving operators---the finite-dimensional counterpart of the Krein--Rutman theorem---and its refinements. Although we focus on quadratic cones, some of our conclusions admit extensions to even-degree polynomials (Remark \ref{rem:even-degree-cones}).

Throughout the section, the weights $\alpha$ and $w$ are fixed, while additional assumptions on them will be introduced when needed.
 	
 	\subsection{The quadratic cone and positivity}\label{sec:positivity}
 	
 	Our first task is to identify the appropriate setting for analyzing the eigenspace associated to $\gap_{2,\ast}(\alpha,w)$. In view of the linear isomorphisms intertwining the operators in \eqref{eq:restricted-isospectrality-sym}, we focus, as already done in Section \ref{sec:proof-main-KMP}, on the restriction $\mathscr L|_{\mathscr R_{2,\ast}}$ of the hidden-model generator to the finite-dimensional space $\mathscr R_{2,\ast}$ described in \eqref{eq:R-k-ast}. For brevity, throughout this section we simply write
 	\begin{equation}\label{eq:R=R_2-ast}
 		\mathscr R\eqdef\mathscr R_{2,\ast}
 	\fstop
 	\end{equation} Recall that every $g\in\mathscr R$ is a homogeneous quadratic $\theta$-polynomial invariant under translations along $\mathbf1\in \R^n$. In particular, since $g\in \mathscr R$ satisfies $g(\mathbf0)=0$, it must also vanish on constant configurations: $g(s\mathbf1)=0$ for every $s\in \R$.

 	The natural subset in which to seek the desired eigenfunction is therefore
 	\begin{equation}\label{eq:cone-C-2-star}
 		\mathscr C\eqdef	\mathscr R_\tp=
 		\{
 		g\in\mathscr R:
 		g(\theta)\ge 0
 		\ \text{for every}\ \theta\in\R^n
 		\}\fstop
 	\end{equation}
 	Following the terminology in, e.g., \cite[Definition 1]{schneider_cross_1970} or \cite[\S36.1]{hogben_handbook_2014}, it is immediate to verify that $\mathscr C$ is a proper cone in $\mathscr R$. Indeed, it is closed and convex, stable under multiplication by nonnegative scalars, and pointed, since
 	$
 	\mathscr C
 	\cap
 	(-\mathscr C)
 	=
 	\{0\}$.
 	Moreover, its interior
 	\begin{equation}\label{eq:cone-interior}
 		\interior(\mathscr C)=\set{g\in \mathscr C: g(\theta)=0\ \text{if and only if}\ \theta = {\rm const.}}
 	\end{equation}
 	is nonempty as, for instance, $\var_\pi$ belongs to it; cf.\  Lemma \ref{lem:var-power} with $k=2$. Here and below, interiors are taken in the finite-dimensional space $\mathscr R$, endowed with any norm; for instance, for any $-\infty<a<b<\infty$, one may consider the uniform norm $\norm{g}_{a,b}=\sup_{\theta \in [a,b]^n}|g(\theta)|$ already used in \eqref{eq:tensor-semigroup-norm}.
 	
 	Crucially, the hidden-model dynamics preserves the cone $\mathscr C$: recalling that $(e^{t\mathscr L})_{t\ge0}$ denotes the hidden-model semigroup,
 	\begin{equation}\label{eq:positivity}
 		e^{t\mathscr L}\mathscr C
 		\subseteq
 		\mathscr C\comma
 		\qquad t\ge0\fstop
 	\end{equation}
 	Indeed, \eqref{eq:R-k-ast-inv} gives
 	$e^{t\mathscr L}\mathscr R\subseteq\mathscr R$, while the Markov property of the hidden-model semigroup ensures that nonnegative functions remain nonnegative under the semigroup action.

We now record the first consequence of the positivity property in
\eqref{eq:positivity}. The Perron--Frobenius theorem for an operator preserving a proper cone in a finite-dimensional space states that its spectral radius is an eigenvalue and admits a corresponding eigenfunction in the cone; see, e.g.,
\cite[Theorem 3.1]{vandergraft_spectral_1968} and
\cite[Theorem 6]{schneider_cross_1970}.
In the present setting, the first assertion is in fact automatic: by
\eqref{eq:restricted-isospectrality-sym},
$\mathscr L|_{\mathscr R}$ is isospectral to a self-adjoint operator, and hence
$e^{t\mathscr L}|_{\mathscr R}$ has real positive spectrum. Hence, here the main role of the cone theorem is to locate an eigenfunction associated with its spectral radius inside $\mathscr C$.

The same conclusion applies to the dual operator. More precisely, let
$
	\mathscr R'
	\eqdef
	\{
	h:\mathscr R\to\R\ \text{linear}
	\}
$
be the dual space of $\mathscr R$, and let
\begin{equation}\label{eq:dual-cone-C-2-star}
	\mathscr C'
	\eqdef
	\{
	h\in\mathscr R':
	h(g)\ge0
	\ \text{for every}\ g\in\mathscr C
	\}
\end{equation}
be the dual cone. If we endow $\mathscr R$ with the norm $\norm{\emparg}_{a,b}$ for some $-\infty<a<b<\infty$, then $\mathscr R'$ carries the corresponding dual norm $\norm{h}_{a,b}\eqdef \sup_{\norm{g}_{a,b}=1}|h(g)|$, $h\in \mathscr R'$. Remark that, since $\mathscr C$ has nonempty interior, every nonzero element of $\mathscr C'$ is strictly positive on $\interior(\mathscr C)$, that is,
\begin{equation}\label{eq:dual-positive-on-interior}
	h(g)>0\comma
	\qquad
	0\neq h\in\mathscr C'\comma
	g\in\interior(\mathscr C)\fstop
\end{equation}
In what follows, for $h\in\mathscr R'$, we write
\begin{equation}
	(h\mathscr L)(g)
	\eqdef
	h(\mathscr Lg)\comma
	\qquad g\in\mathscr R\fstop
\end{equation}

Applying Perron--Frobenius theory to the hidden-model semigroup and its dual, and then using the spectral mapping theorem, yields the following.

\begin{proposition}[Perron eigenfunctions I]\label{pr:PF-positive}
	There exist
	$
		0\neq g_\ast\in\mathscr C
		$, $
		0\neq h_\ast\in\mathscr C'
	$
	satisfying
	\begin{equation}\label{eq:PF-right-left}
		\mathscr Lg_\ast
		=
		-\gap_{2,\ast}(\alpha,w)g_\ast
		\qquad\text{and}\qquad
		h_\ast\mathscr L
		=
		-\gap_{2,\ast}(\alpha,w)h_\ast\fstop
	\end{equation}
	We also refer to $g_\ast$ and $h_\ast$ as, respectively, right and left Perron eigenfunctions of $\mathscr L|_{\mathscr R}$.
\end{proposition}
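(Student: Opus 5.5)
Fix $t>0$ and consider the linear operator $T_t\eqdef e^{t\mathscr L}|_{\mathscr R}$ on the finite-dimensional space $\mathscr R=\mathscr R_{2,\ast}$. By \eqref{eq:positivity}, $T_t$ preserves the proper cone $\mathscr C$. We then invoke the finite-dimensional Perron--Frobenius (Krein--Rutman) theorem for cone-preserving operators, e.g.\ \cite[Theorem 3.1]{vandergraft_spectral_1968}. It gives that the spectral radius $\rho(T_t)$ is an eigenvalue of $T_t$, with an eigenvector $0\neq g_t\in\mathscr C$.

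Next we identify $\rho(T_t)$. By \eqref{eq:restricted-isospectrality-sym}, $\mathscr L|_{\mathscr R}$ is similar to the self-adjoint operator $L_2|_{\mathscr H_{2,\ast}}$, so it is diagonalizable with real spectrum. Its largest eigenvalue is $-\gap_{2,\ast}(\alpha,w)$, and by the spectral mapping theorem $\rho(T_t)=e^{-t\gap_{2,\ast}(\alpha,w)}$.

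The step needing care is passing from an eigenvector of $T_t$ to an eigenvector of $\mathscr L$. Because $\mathscr L|_{\mathscr R}$ is diagonalizable with real eigenvalues, the $\rho(T_t)$-eigenspace of $T_t$ is the direct sum of the $\mathscr L$-eigenspaces for eigenvalues $\sigma$ with $e^{t\sigma}=e^{-t\gap_{2,\ast}}$. Since $\sigma\mapsto e^{t\sigma}$ is injective on $\R$, only $\sigma=-\gap_{2,\ast}(\alpha,w)$ contributes. Hence $g_\ast\eqdef g_t$ satisfies $\mathscr L g_\ast=-\gap_{2,\ast}(\alpha,w)g_\ast$. (Without diagonalizability one would instead take limits along $t\to0$, using compactness of the cone slice $\{g\in\mathscr C:\|g\|_{a,b}=1\}$; here that is unnecessary.)

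For the left eigenfunction, apply the same argument to the dual operator $T_t'$ on $\mathscr R'$, defined by $(T_t'h)(g)=h(T_tg)$. The dual cone $\mathscr C'$ is proper, because $\mathscr C$ is proper: it has nonempty interior (it contains $\var_\pi$) and is pointed and closed, so $\mathscr C'$ is pointed and solid. The cone $\mathscr C'$ is preserved by $T_t'$, since $h\in\mathscr C'$ and $g\in\mathscr C$ give $h(T_tg)\ge0$ by \eqref{eq:positivity}. Moreover, $T_t'$ has the same spectrum as $T_t$ and is still diagonalizable. Perron--Frobenius then yields $0\neq h_\ast\in\mathscr C'$ with $T_t'h_\ast=e^{-t\gap_{2,\ast}}h_\ast$. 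The same eigenspace-identification argument gives $h_\ast\mathscr L=-\gap_{2,\ast}(\alpha,w)h_\ast$, which completes \eqref{eq:PF-right-left}.
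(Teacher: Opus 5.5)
Your proof is correct and follows the paper's route: you apply the finite-dimensional cone Perron--Frobenius theorem to $e^{t\mathscr L}|_{\mathscr R}$ on $\mathscr C$ and to its dual on $\mathscr C'$, identify the spectral radius as $e^{-t\gap_{2,\ast}(\alpha,w)}$ via the similarity with the self-adjoint $L_2|_{\mathscr H_{2,\ast}}$, and then return to $\mathscr L$ by spectral mapping. The paper leaves two details implicit that you supply explicitly, namely the diagonalizability argument for passing from a $T_t$-eigenvector to an $\mathscr L$-eigenvector and the check that $\mathscr C'$ is a proper cone.
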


We stress that Proposition \ref{pr:PF-positive} requires no geometric assumption on the weights: it holds for arbitrary positive site weights $\alpha$ and nonnegative block weights $w$.

For the comparison arguments developed in Section \ref{sec:comparison}, the mere existence of Perron eigenfunctions will not suffice. In fact, we shall need both $g_\ast$ and $h_\ast$ to be:
\begin{itemize}
	\item \textit{unique} up to positive scaling;
	\item \textit{nondegenerate}, in the sense that
	$
	g_\ast\in\interior(\mathscr C)
	$ and 
	$	h_\ast\in\interior(\mathscr C')$.
\end{itemize}  
Here, $\interior(\mathscr C)$ is described in \eqref{eq:cone-interior}, whereas, as $\mathscr C\subseteq \mathscr R$ is a finite-dimensional proper (i.e., closed, convex and pointed) cone,
\begin{equation}\label{eq:cone-interior-dual}
	\interior(\mathscr C')
	=
	\{
	h\in\mathscr C':
	h(g)>0
	\ \text{for every}\ g\in\mathscr C\setminus\{0\}
	\}\fstop
\end{equation}

These stronger conclusions rest on a corresponding strengthening of \eqref{eq:positivity}, that is, the \textit{strong positivity} of the hidden-model semigroup, i.e., 
\begin{equation}\label{eq:strong-positivity}
	e^{t\mathscr L}
	(\mathscr C\setminus\{0\})
	\subseteq
	\interior(\mathscr C)\comma\qquad t >0\fstop
\end{equation}
Unlike \eqref{eq:positivity}, strong positivity need not hold for arbitrary hypergraph weights. It does, however, hold under a natural geometric condition on the block weights, which will turn out to be optimal when analyzing spectral gaps.

\subsection{Strong positivity}\label{sec:strong-positivity}
The aim of this section is to establish \eqref{eq:strong-positivity} under optimal geometric conditions. Since both the relevant condition and the proof of strong positivity take a particularly transparent form when $w$ induces a graph, we treat the graph and hypergraph cases separately when needed.

\subsubsection*{Graph case.}
 When $w$ induces a graph, connectivity alone upgrades the positivity property \eqref{eq:positivity} to strong positivity.

\begin{proposition}[Strong positivity: graph case]\label{pr:strong-positivity-graph}
	Assume that $n\ge3$, that $w$ satisfies \eqref{eq:graph-case}, and that the graph induced by $w$ is connected. Then, the strong-positivity property \eqref{eq:strong-positivity} holds.
\end{proposition}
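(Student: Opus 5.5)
The plan is to show that, for every $0\neq g\in\mathscr C$, every $t>0$ and every nonconstant $\theta\in\R^n$, one has $e^{t\mathscr L}g(\theta)>0$. By \eqref{eq:cone-interior}, this is exactly the statement $e^{t\mathscr L}g\in\interior(\mathscr C)$. Membership in $\mathscr R$ is already guaranteed by \eqref{eq:R-k-ast-inv}, and vanishing on flat configurations is automatic.

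\emph{Step 1: structure of the zero set.} A nonnegative homogeneous quadratic $g$ is a positive semidefinite form. Hence its zero set $Z\eqdef\{\theta: g(\theta)=0\}$ is the kernel of the associated symmetric matrix, so $Z$ is a linear subspace. It contains $\mathbf1$, by translation invariance and $g(\mathbf0)=0$. It is proper because $g\neq0$. Thus it suffices to show that the hidden-model process started at $\theta$ leaves $Z$ with positive probability by time $t$.

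\emph{Step 2: reduction to reachability.} Write $e^{t\mathscr L}g(\theta)=\mathbb E[g(\Theta_t)]$, where $\Theta_t$ is obtained by applying to $\theta$ the edge updates that occur in $[0,t]$. On the event that exactly a prescribed finite sequence of edges $\{x_1,y_1\},\dots,\{x_m,y_m\}$ rings in $[0,t]$, which has positive probability since $t>0$ and $w_{\{x_i,y_i\}}>0$, the outcome is a continuous function of the i.i.d.\ Beta parameters $(U_1,\dots,U_m)$, whose law has full support on $(0,1)^m$. By continuity of $g$, if some choice of parameters in $[0,1]^m$ yields a configuration outside $Z$, then so does a nonempty open set of parameters in $(0,1)^m$ (using that the complement of $Z$ is open). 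This gives $e^{t\mathscr L}g(\theta)>0$. So we argue by contradiction: we assume every configuration reachable from $\theta$ by finite edge-update sequences with parameters in $[0,1]$ lies in $Z$, and we derive $Z=\R^n$.

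\emph{Step 3: linear-algebra propagation.} Take a reachable configuration $\psi$ and an edge $\{x,y\}$ with $w_{\{x,y\}}>0$ and $\psi_x\neq\psi_y$. The update gives
\[
\psi^{u}=\psi+(1-u)(\psi_y-\psi_x)e_x+u(\psi_x-\psi_y)e_y\in Z,\qquad u\in[0,1].
\]
Differentiating in $u$ yields $e_x+e_y\in Z$. Then $u=0$ together with $\psi\in Z$ yields $e_x\in Z$, and hence $e_y\in Z$. Since $\theta$ is nonconstant and the graph is connected, some edge already carries distinct values, which starts the process. To propagate to a neighbour $z$ of $y$, I use an update on $\{x,y\}$ with a generic $u$ to produce a reachable state in which $\psi_y\neq\psi_z$. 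If needed, I further move values along a path by updates with $u$ near $0$ or $1$, which essentially copy values to neighbours. A genericity argument in the parameters, namely that the finitely many polynomial conditions $\psi_y=\psi_z$ cannot all hold identically, provides such a state. By connectivity this gives $e_v\in Z$ for all $v$, so $Z=\R^n$ and $g\equiv0$, a contradiction.

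The main obstacle is Step 3's genericity claim: one must ensure that along every edge one can reach a state with distinct endpoint values. This is where I expect the hypotheses $n\ge3$ and connectivity to enter. For $n=2$, the single update flattens the configuration, and moreover $\mathscr R$ is one-dimensional, so the statement is degenerate there. I would verify this first by treating a path $x-y-z$ explicitly with the two-step update sequence.
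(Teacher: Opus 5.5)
Your proposal is correct and follows essentially the paper's route. The zero set of a nonzero $g\in\mathscr C$ is a proper subspace containing every configuration reachable from $\theta$ (this is Lemma~\ref{lem:propagation-zeros}). Copying updates $K_{\{x,y\}}^{e_x}$, i.e.\ your choices $u\in\{0,1\}$, produce pairs of reachable states differing in a single coordinate, which forces every $e_v$ into that subspace (the graph case of Lemma~\ref{lem:full-range}).

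The step you flag as the main obstacle is immediate and needs no genericity. If $\psi_x\neq\psi_y=\psi_z$ with $z\notin\{x,y\}$, the $\{x,y\}$-update with $u=1$ gives a reachable state with distinct values on $\{y,z\}$, so you can walk along any path. Also, $n\ge3$ plays no role in the argument; it is only there to match $w_{[n]}=0$.
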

For the proof of this and the next result, see Section \ref{sec:strong-positivity-proof} below.

\subsubsection*{Hypergraph case.}
For general block weights, connectivity alone is not sufficient for strong positivity, and the appropriate condition is expressed through the notion of a \textit{minimal hypergraph}, which we now introduce.

Recall that a partition of $[n]$ as in \eqref{eq:partition} is $w$-compatible if every block $B$ with $w_B>0$ is either contained in one of its atoms or is a union of atoms, and that, under \eqref{eq:w-n}--\eqref{eq:connected-hypergraph}, its coarsest partition is the unique one with the smallest number of atoms. We call the hypergraph induced by $w$ \textit{minimal} if $w$ satisfies \eqref{eq:w-n}--\eqref{eq:connected-hypergraph} and its coarsest partition is the discrete partition
$[n]
	=
	\{1\}\sqcup\cdots\sqcup\{n\}$.
Equivalently, a minimal hypergraph is connected, with $w_{[n]}=0$, and admits no nontrivial $w$-compatible partition. On the other hand, connectedness plus $w_{[n]}=0$ is strictly weaker than minimality, as the example in Figure \ref{fig:nonminimal-hypergraph} shows.

\begin{figure}[t]
	\centering
\begin{tikzpicture}[
	vertex/.style={circle,fill=black,inner sep=2pt},
	internal/.style={line width=.9pt},
	hyperedge/.style={
		draw,
		rounded corners=8pt,
		line width=1pt,
		inner xsep=16pt,
		inner ysep=10pt
	},
	blocklabel/.style={font=\small},
	atomlabel/.style={font=\small}
	]
	
	% ------------------------------------------------
	% Vertices
	% ------------------------------------------------
	\node[vertex,label={[yshift=1pt]below:$1$}] (v1) at (0,0) {};
	\node[vertex,label={[yshift=1pt]below:$2$}] (v2) at (1,0) {};
	\node[vertex,label={[yshift=1pt]below:$3$}] (v3) at (2,0) {};
	
	\node[vertex,label={[yshift=1pt]below:$4$}] (v4) at (3.5,0) {};
	\node[vertex,label={[yshift=1pt]below:$5$}] (v5) at (4.5,0) {};
	\node[vertex,label={[yshift=1pt]below:$6$}] (v6) at (5.5,0) {};
	
	\node[vertex,label={[yshift=1pt]below:$7$}] (v7) at (7,0) {};
	
	% Internal blocks
	\draw[internal] (v1) -- (v2);
	\draw[internal] (v2) -- (v3);
	\draw[internal] (v4) -- (v5);
	\draw[internal] (v5) -- (v6);
	
	% ------------------------------------------------
	% Large hyperedges
	% ------------------------------------------------
	\begin{scope}[on background layer]
		
		\node[
		hyperedge,
		fit=(v1)(v2)(v3)(v4)(v5)(v6),
		inner xsep=18pt,
		inner ysep=12pt
		] (B1) {};
		
		% Slight vertical shift separates the two boundaries
		\node[
		hyperedge,
		dashed,
		fit=(v4)(v5)(v6)(v7),
		inner xsep=18pt,
		inner ysep=12pt,
		yshift=-5pt
		] (B2) {};
		
	\end{scope}
	
	% ------------------------------------------------
	% Block labels
	% ------------------------------------------------
	
	% Well above B1
	\node[blocklabel,anchor=south]
	at ([yshift=8pt]B1.north)
	{$B_1=N_1\sqcup N_2$};
	
	% Put B2 label below the atoms, rather than between them
	\node[blocklabel,anchor=north]
	at ([yshift=-18pt]B2.south)
	{$B_2=N_2\sqcup N_3$};
	
	% ------------------------------------------------
	% Atom labels
	% ------------------------------------------------
	\node[atomlabel,anchor=north]
	at (1,-0.72)
	{$N_1=\{1,2,3\}$};
	
	\node[atomlabel,anchor=north]
	at (4.5,-0.72)
	{$N_2=\{4,5,6\}$};
	
	\node[atomlabel,anchor=north]
	at (7,-0.72)
	{$N_3=\{7\}$};
	
\end{tikzpicture}
	
	\caption{
		An example of a connected non-minimal hypergraph on $n=7$ vertices with $w_{[7]}=0$. The positive weight blocks are $\{1,2\}$, $\{2,3\}$, $\{4,5\}$, $\{5,6\}$, $B_1$ and $B_2$ (defined in the picture). 
		The partition
		$[7]=N_1\sqcup N_2\sqcup N_3$ is $w$-compatible, hence the non-minimality. 
	}
	\label{fig:nonminimal-hypergraph}
\end{figure}

\begin{proposition}[Strong positivity: hypergraph case]\label{pr:strong-positivity-hypergraph}
	Assume that the hypergraph induced by $w$ is minimal as defined above. Then, the strong-positivity property \eqref{eq:strong-positivity} holds.
\end{proposition}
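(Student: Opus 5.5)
We need to show: for minimal $w$, for every $0\neq g\in\mathscr C$ and $t>0$, the function $e^{t\mathscr L}g$ vanishes only on flat configurations. The plan is to argue through the zero set. Since $g\in\mathscr C$ is a nonnegative quadratic form invariant under translations along $\mathbf 1$, we can write $g(\theta)=\theta^\top A\theta$ with $A\succeq0$ and $A\mathbf1=0$. Hence its zero set is the linear subspace $Z(g)=\ker A\ni\mathbf1$. The claim becomes $Z(e^{t\mathscr L}g)=\spanop(\mathbf1)$ for $t>0$.

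The first step is a monotonicity statement. Write $e^{t\mathscr L}=e^{-tW}\sum_j \frac{t^j}{j!}P^j$ with $W=\sum_B w_B$ and $P g=\sum_B w_B\mathbf E_B[g(K_B^U\cdot)]$; all terms are nonnegative. It follows that $Z(e^{t\mathscr L}g)=\bigcap_j Z(P^jg)$, and this is independent of $t>0$. Further, $\theta\in Z(P g)$ iff, for every $B$ with $w_B>0$, $K_B^U\theta\in Z(g)$ for $\mathbf E_B$-a.e.\ $U$. Since $U\mapsto K^U_B\theta$ is polynomial and the Dirichlet law has full support on the simplex, this is equivalent to $K_B^u\theta\in Z(g)$ for \emph{all} probability vectors $u$ on $B$. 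We also keep the $j=0$ term, so $V\eqdef Z(e^{t\mathscr L}g)$ is the largest subspace contained in $Z(g)$ that is stable under all maps $\theta\mapsto K_B^u\theta$ with $w_B>0$. In particular, taking $u=e_y$ for $y\in B$, we get that $V$ is closed under $\theta\mapsto$ (set $\theta_x\eqdef\theta_y$ for all $x\in B$).

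The main step is to show that any such invariant subspace $V\supseteq\spanop(\mathbf1)$ with $V\neq\R^n$ (guaranteed since $g\neq0$) is $\spanop(\mathbf1)$ under minimality. Suppose $\theta\in V$ is nonflat. The idea is to define the equivalence relation $x\sim y$ iff $\theta_x=\theta_y$ for all $\theta\in V$, and to show that its classes form a $w$-compatible partition. Take $B$ with $w_B>0$ and suppose $B$ meets a class $N$ without containing it or being a union of classes. By linearity, $V$ is also invariant under $\theta\mapsto K_B^u\theta-\theta$, which equals $(\sum_{y\in B}u_y\theta_y-\theta_x)$ on $x\in B$ and $0$ elsewhere. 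Combining these operations should produce enough vectors in $V$ to show that $V$ is determined by the partition into classes, i.e.\ $V=\{\theta:\theta \text{ constant on classes}\}$, and that each block is a union of classes or lies within one. Minimality then forces the classes to be singletons, so $V=\R^n$, a contradiction. That leaves only the case of a single class, where $V=\spanop(\mathbf1)$ as required.

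The expected obstacle is precisely this combinatorial step: proving that a subspace invariant under all block-flattenings and contained in the proper subspace $Z(g)$ must be a "partition subspace" whose partition is $w$-compatible. My first attempt would proceed by induction on blocks. Starting from a nonflat $\theta\in V$, we apply block flattenings $K_B^{e_y}$ to create vectors taking only two values, $\car_S$ plus constants. We then show that the family $\mathcal S=\{S:\car_S\in V\}$ is closed under the operations induced by blocks: flattening $B$ to a point inside $S$ gives $S\cup B$, and flattening to a point outside gives $S\setminus B$. The collection $\mathcal S$ is therefore an algebra closed under these operations. Its atoms give the partition, and any block crossing an atom improperly would split that atom. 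The graph case (Proposition \ref{pr:strong-positivity-graph}) falls out because edges $\{x,y\}$ already separate any nonsingleton atom.
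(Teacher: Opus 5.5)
Your proposal follows essentially the same route as the paper. The zero set of a form in $\mathscr C$ is a subspace containing $\mathbf 1$, and zeros propagate along all updates $K_B^u$ with $w_B>0$: you get this via $e^{t\mathscr L}=e^{-tW}e^{tP}$ and the full support of the Dirichlet law, the paper via the support of the law of $\theta(t)$ in Lemma~\ref{lem:propagation-zeros}. Your indicator-splitting step is the proof of Lemma~\ref{lem:full-range}, with your ``largest invariant subspace $V\subseteq Z(g)$'' playing the dual role of the orbit span $\cS(\theta)$.

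One repair is needed: the claim that $\mathcal S$ is ``therefore an algebra'' does not follow from the operations you list, and it is unnecessary. Instead, do as the paper does. Start from $\mathbf 1_B\in V$, obtained via $(K_B^{e_x}-K_B^{e_y})\theta=(\theta_x-\theta_y)\mathbf 1_B$ for a block with $w_B>0$ on which $\theta$ is nonconstant; such a block exists by connectedness, and $B\neq[n]$ since $w_{[n]}=0$. Then refine the partition $\{B,[n]\setminus B\}$ iteratively: replace an atom $N$ crossed improperly by a block $D$ with $N\setminus D$ (since $K_D^{e_z}\mathbf 1_N=\mathbf 1_{N\setminus D}$ for $z\in D\setminus N$) and $N\cap D$ (by subtraction). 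Stop when a $w$-compatible partition is reached; by minimality it is discrete.
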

\begin{remark}\label{rem:minimality-graph-connected}
In the graph case \eqref{eq:graph-case}, minimality reduces to ordinary connectivity.
 Hence, under \eqref{eq:graph-case}, Proposition \ref{pr:strong-positivity-hypergraph} reduces to Proposition \ref{pr:strong-positivity-graph}, as expected.
\end{remark}

Before turning to the proof of Propositions \ref{pr:strong-positivity-graph} and \ref{pr:strong-positivity-hypergraph}, we record some important implications of the strong-positivity property
\eqref{eq:strong-positivity}.
\subsubsection*{Perron--Frobenius consequences.}
 The strong Perron--Frobenius theorem for an operator preserving a proper cone in a finite-dimensional space states that, if every nonzero element of the cone is mapped into its interior, then the spectral radius is an algebraically simple eigenvalue, strictly larger in modulus than every other eigenvalue, and its eigenfunction is unique up to positive scaling and belongs to the interior of the cone; see, e.g.,
\cite[Theorem~4.4]{vandergraft_spectral_1968} or \cite[Theorems~2 and 5]{schneider_cross_1970}.
The same conclusions hold for the dual operator. Indeed, strong positivity on $\mathscr C$ implies strong positivity on the dual cone $\mathscr C'$: if $0\neq h\in\mathscr C'$ and $0\neq g\in\mathscr C$, then
\begin{equation}
	h(e^{t\mathscr L}g)>0\comma
	\qquad t>0\fstop
\end{equation}
Thus, the right and left Perron eigenfunctions lie in $\interior(\mathscr C)$ and $\interior(\mathscr C')$, respectively; see \eqref{eq:cone-interior} and \eqref{eq:cone-interior-dual}.
Applying these conclusions to $e^{t\mathscr L}|_{\mathscr R}$ and using the spectral mapping theorem gives the following strengthening of Proposition \ref{pr:PF-positive}.

\begin{proposition}[Perron eigenfunctions II]
	\label{pr:PF-strong}
	Under the assumptions of either Proposition
	\ref{pr:strong-positivity-graph} or Proposition
	\ref{pr:strong-positivity-hypergraph}, the eigenvalue
	$-\gap_{2,\ast}(\alpha,w)$ of $\mathscr L|_{\mathscr R}$ is algebraically simple. Moreover, the right and left Perron eigenfunctions $g_\ast$ and $h_\ast$ from Proposition \ref{pr:PF-positive} are unique up to multiplication by a positive scalar and satisfy
	\begin{equation}\label{eq:PF-interior}
		g_\ast\in\interior(\mathscr C)
		\qquad\text{and}\qquad
		h_\ast\in\interior(\mathscr C')\fstop
	\end{equation}
\end{proposition}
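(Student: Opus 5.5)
The plan is to apply the strong Perron--Frobenius theorem for cone-preserving operators to the operator $T\eqdef e^{t\mathscr L}|_{\mathscr R}$ for a fixed $t>0$, and then transfer the conclusions back to $\mathscr L|_{\mathscr R}$.

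First, under the assumptions of Proposition~\ref{pr:strong-positivity-graph} or Proposition~\ref{pr:strong-positivity-hypergraph}, strong positivity \eqref{eq:strong-positivity} holds. So $T$ maps $\mathscr C\setminus\{0\}$ into $\interior(\mathscr C)$, and $\mathscr C$ is a proper cone with nonempty interior in the finite-dimensional space $\mathscr R$. The strong Perron--Frobenius theorem (\cite[Theorem~4.4]{vandergraft_spectral_1968}, \cite[Theorems~2 and 5]{schneider_cross_1970}) then gives three facts. The spectral radius $\rho(T)$ is an algebraically simple eigenvalue. It strictly dominates all other eigenvalues in modulus. Its eigenvector is unique up to positive scaling and lies in $\interior(\mathscr C)$.

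By \eqref{eq:restricted-isospectrality-sym}, $\mathscr L|_{\mathscr R}$ is similar to the self-adjoint operator $L_2|_{\mathscr H_{2,\ast}}$. It is therefore diagonalizable with real spectrum whose largest element is $-\gap_{2,\ast}(\alpha,w)$. By the spectral mapping theorem, $T$ is diagonalizable with eigenvalues $e^{t\sigma}$, $\sigma\in\spec(\mathscr L|_{\mathscr R})$, with matching multiplicities, so $\rho(T)=e^{-t\gap_{2,\ast}(\alpha,w)}$. Algebraic simplicity of $\rho(T)$ forces $-\gap_{2,\ast}(\alpha,w)$ to be algebraically simple for $\mathscr L|_{\mathscr R}$, because $\sigma\mapsto e^{t\sigma}$ is injective on the reals. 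The $e^{t\sigma}$-eigenspace of $T$ coincides with the $\sigma$-eigenspace of $\mathscr L|_{\mathscr R}$, since both operators are diagonalizable in a common basis. Hence the $g_\ast$ of Proposition~\ref{pr:PF-positive} spans this one-dimensional eigenspace. Its membership in $\mathscr C\setminus\{0\}$ forces it to be the Perron vector of $T$, up to a positive scalar, and so $g_\ast\in\interior(\mathscr C)$.

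For the left eigenfunction, I would pass to the dual operator $T'h\eqdef h\circ T$ on $\mathscr R'$, which preserves $\mathscr C'$. The dual cone $\mathscr C'$ is again proper with nonempty interior, because $\mathscr C$ is proper and $\mathscr R$ is finite dimensional. The step I expect to need care is showing that $T'$ is strongly positive on $\mathscr C'$, that is, that $h\circ T\in\interior(\mathscr C')$ whenever $0\neq h\in\mathscr C'$. By \eqref{eq:cone-interior-dual}, this means checking $h(Tg)>0$ for every $0\neq g\in\mathscr C$. Strong positivity gives $Tg\in\interior(\mathscr C)$, and \eqref{eq:dual-positive-on-interior} then gives $h(Tg)>0$. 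The same Perron--Frobenius argument applies to $T'$, whose spectrum equals that of $T$ with the same algebraic multiplicities. It yields uniqueness up to positive scaling of $h_\ast$ and $h_\ast\in\interior(\mathscr C')$, using that $h_\ast\mathscr L=-\gap_{2,\ast}(\alpha,w)h_\ast$ is equivalent to $h_\ast\circ T=e^{-t\gap_{2,\ast}(\alpha,w)}h_\ast$ on the simple eigenspace.

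One minor point to confirm is that the eigenvectors of $T$ and of $\mathscr L|_{\mathscr R}$ coincide. This relies on diagonalizability, which is inherited via similarity from self-adjointness; no Jordan blocks can arise, so the argument closes.
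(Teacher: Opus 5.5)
Your proposal is correct and follows the paper's route: the strong Perron--Frobenius theorem applied to $e^{t\mathscr L}|_{\mathscr R}$, strong positivity of the dual operator obtained from $h(e^{t\mathscr L}g)>0$ via \eqref{eq:dual-positive-on-interior}, and transfer back to $\mathscr L|_{\mathscr R}$ by the spectral mapping theorem. Your additional steps (diagonalizability inherited from the similarity with the self-adjoint $L_2|_{\mathscr H_{2,\ast}}$, and injectivity of $\sigma\mapsto e^{t\sigma}$ on the real spectrum) only make explicit what the paper compresses into ``using the spectral mapping theorem.''
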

\begin{remark}
	The condition $g_\ast\in\interior(\mathscr C)$ is precisely the nondegeneracy of the right Perron eigenfunction: $g_\ast(\theta)>0$
		for every nonconstant $\theta\in\R^n$.
	Actually, more is true: after fixing a normalization of $g_\ast$, there exist constants $C_1,C_2>0$, depending only on $\alpha$ and $w$, such that
	\begin{equation}\label{eq:var-g-ast}
		C_1\var_\pi(\theta)
		\le
		g_\ast(\theta)
		\le
		C_2\var_\pi(\theta)\comma
		\qquad \theta\in\R^n\fstop
	\end{equation}
	In particular, $g_\ast\in\interior(\mathscr C)$ provides a suitable substitute for $\var_\pi$ in \eqref{eq:chain-1}. Indeed, if $\lambda\ge0$ and $g\in\mathscr R_{k,\ast}$, $k\ge2$, satisfy $\mathscr Lg=-\lambda g$, then, for every $-\infty<a<b<\infty$,  there exists $C=C(g,k,\alpha,a,b)>0$ such that, for every $t>0$ and $\theta \in [a,b]^n$,
	\begin{equation}\label{eq:chain-2}
		e^{-\lambda t}|g(\theta)|
		=
		|e^{t\mathscr L}g(\theta)|
		\le
		e^{t\mathscr L}|g|(\theta)
		\le
		Ce^{t\mathscr L}g_\ast(\theta)
		=
		Ce^{-\gap_{2,\ast}(\alpha,w)t}g_\ast(\theta)
		\comma
	\end{equation}
	where the first and last steps used, respectively, that $g$ and $g_\ast$ are $\mathscr L$-eigenfunctions, the second step is a consequence of the Markovianity of the $\mathscr L$-semigroup, whereas the third step combined \eqref{eq:var-g-ast}, Lemma \ref{lem:G-Var} and \eqref{eq:var-a-b}.
	\end{remark}

\subsection{Proof of strong positivity}\label{sec:strong-positivity-proof}
The proofs of Propositions \ref{pr:strong-positivity-graph} and \ref{pr:strong-positivity-hypergraph} rely on two ingredients. The first, Lemma \ref{lem:propagation-zeros}, is a support argument requiring no geometric assumption on $w$: zeros of a nonnegative quadratic function propagate along every finite sequence of hidden-model updates. The second, Lemma \ref{lem:full-range}, is geometric: under the assumptions of either proposition, the linear range in \eqref{eq:range-theta} generated by such updates equals $\R^n$ whenever the initial configuration is nonconstant.

We start by introducing some notation.
Recall from \eqref{eq:Upsilon-KMP}--\eqref{eq:K_B^U} and \eqref{eq:K_B^U2} that, for the $(\alpha,w)$-${\rm KMP}$ model, the support of its update measure equals
\begin{equation}\label{eq:support-Upsilon-KMP} 
	\supp(\Upsilon)
	=
	\big\{\textstyle
	K_B^U: B\subseteq[n]\ \text{with}\ w_B>0\,,\,
	U\in[0,1]^B\ \text{and}\ 
	\sum_{x\in B}U_x=1
	\big\}
	\fstop
\end{equation}
Indeed, conditionally on the choice of $B$, the redistribution vector has law ${\rm Dir}(\alpha^B)$, whose support is the whole probability simplex on $B$. Further, for $\theta\in\R^n$, define	
\begin{equation}\label{eq:range-theta}
	\cS(\theta) = \cS^\tups(\theta)
	\eqdef
	\spanop
	\big\{
	K_\ell\cdots K_1\theta:
	\ell\ge0\,,\, 
	K_1,\ldots, K_\ell\in\supp(\Upsilon)
	\big\}\subseteq \R^n
\end{equation}
as the linear range generated by applying finitely many successive updates from $\supp(\Upsilon)$ to $\theta\in\R^n$. Since the empty sequence of updates is allowed, one always has $\theta\in\cS(\theta)$.

The next result is the first step in the proof of Propositions \ref{pr:strong-positivity-graph} and \ref{pr:strong-positivity-hypergraph}.	
\begin{lemma}[Propagation of zeros]\label{lem:propagation-zeros}
	Let $g\in\mathscr C$, $\theta\in\R^n$, and $t>0$. If
	\begin{equation}\label{eq:vanish-semigroup}
		(e^{t\mathscr L}g)(\theta)=0\comma
	\end{equation}
	then $g$ vanishes identically on $\cS(\theta)$.
\end{lemma}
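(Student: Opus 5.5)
The strategy is to turn the vanishing of $e^{t\mathscr L}g$ at $\theta$ into vanishing of $g$ along every trajectory the hidden process can follow from $\theta$. Positivity of $g$ and of the Markov semigroup does this; quadraticity of $g$ then lets us pass from individual points to their linear span.

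\emph{Step 1 (representation).} Since $\mathscr L$ is a bounded generator on $\cC([a,b]^n)$ (choose $[a,b]$ containing the relevant coordinates), the semigroup has the Dyson/Poisson expansion
\begin{equation}
e^{t\mathscr L}g(\theta)=e^{-W t}\sum_{\ell\ge0}\frac{t^\ell}{\ell!}\,\Big(\sum_{B}w_B\,\mathbf E_B[g(K_B^U\,\cdot)]\Big)^{\ell}(\theta)\comma\qquad W\eqdef\sum_B w_B\comma
\end{equation}
once the diagonal term is absorbed into the exponential. Each term is an integral of $g(K_\ell\cdots K_1\theta)\ge0$ against the law of $\ell$ i.i.d.\ updates $K_i=K_{B_i}^{U_i}$, where $B_i$ is drawn with probability $w_{B}/W$ and $U_i\sim{\rm Dir}(\alpha^{B_i})$. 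If the sum is zero, every term vanishes. Hence $g(K_\ell\cdots K_1\theta)=0$ for $\mathbf{P}$-a.e.\ choice of $(U_1,\dots,U_\ell)$, for each sequence of blocks with positive weights.

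\emph{Step 2 (from a.e.\ to all of the support).} The map $(U_1,\dots,U_\ell)\mapsto g(K_{B_\ell}^{U_\ell}\cdots K_{B_1}^{U_1}\theta)$ is continuous. The Dirichlet laws have full support on their simplices, as noted in \eqref{eq:support-Upsilon-KMP}. So vanishing almost everywhere upgrades to vanishing everywhere, and $g=0$ on every point $K_\ell\cdots K_1\theta$ with $K_i\in\supp(\Upsilon)$ and $\ell\ge0$. The case $\ell=0$ is $g(\theta)=0$.

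\emph{Step 3 (from points to the span).} This is the only non-routine step. Write $g(v)=\langle v,Qv\rangle$ with $Q$ symmetric. Since $g\ge0$ everywhere, $Q$ is positive semidefinite, and therefore $g(v)=0$ if and only if $Qv=0$, i.e.\ $v\in\ker Q$. The zero set of $g$ is thus a linear subspace. It contains all the generating vectors of $\cS(\theta)$, so it contains $\cS(\theta)$ itself, which is the claim.
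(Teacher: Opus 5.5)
Your proof is correct and follows the paper's argument. Positivity of $g$ and of the semigroup forces $g$ to vanish on the support of the law of the hidden configuration at time $t$; full support of the Dirichlet laws together with continuity then gives vanishing at every point $K_\ell\cdots K_1\theta$; and the zero set of a nonnegative quadratic form is a linear subspace. The only difference is presentational: you make the support step explicit through the expansion $e^{t\mathscr L}=e^{-Wt}e^{tP}$, whereas the paper says that every neighborhood of $K_\ell\cdots K_1\theta$ has positive probability under ${\rm Law}(\theta(t))$.
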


\begin{proof}Let $\theta(t)\in \R^n$ be the hidden-model configuration at time $t>0$, started from $\theta \in \R^n$. 
 Since $g\ge0$, \eqref{eq:vanish-semigroup} implies
 $g(\theta(t))=0$ almost surely.
 	
	Fix $\ell\ge1$ and matrices
	$K_{B_1}^{U_1},\ldots,K_{B_\ell}^{U_\ell}\in\supp(\Upsilon)$. Every open neighborhood of
	\begin{equation}\label{eq:conf-K---K}
	\theta'\eqdef	K_{B_\ell}^{U_\ell}\cdots K_{B_1}^{U_1}\theta\in \R^n
	\end{equation}
	has positive probability under the law of the hidden model: one may require exactly $\ell$ updates before time $t$, with the update matrices lying, in the prescribed order, in arbitrarily small neighborhoods of $K_{B_1}^{U_1},\ldots,K_{B_\ell}^{U_\ell}$. Hence, the configuration $\theta'$ in \eqref{eq:conf-K---K} belongs to the support of ${\rm Law}(\theta(t))$, and continuity of $g$ yields $g(\theta')=0$. The same argument, applied to the event that no update occurs before time $t$ (i.e., $\ell=0$), shows that $g(\theta)=0$.
	
	This proves that $g$ vanishes on $\{K_\ell\cdots K_1\theta: \ell\ge 0\ \text{and}\ K_1,\ldots, K_\ell \in \supp(\Upsilon)\}$.	As the zero set of a nonnegative quadratic form is a linear subspace, $g$ vanishes on the span in \eqref{eq:range-theta}.
\end{proof}

The geometric ingredient is the following. 

\begin{lemma}[Full range]\label{lem:full-range}  Assume that the hypergraph induced by $w$ is minimal as in Proposition \ref{pr:strong-positivity-hypergraph}.
 Then, for every nonconstant $\theta\in\R^n$,
	\begin{equation}\label{eq:full-range}
		\cS(\theta)+\R\mathbf 1=\R^n\fstop
	\end{equation}
In particular,  \eqref{eq:full-range} holds true if $w$ induces a connected graph (Remark~\ref{rem:minimality-graph-connected}). 
\end{lemma}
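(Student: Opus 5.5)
Set $V\eqdef\cS(\theta)+\R\mathbf 1$, a linear subspace of $\R^n$ containing $\mathbf 1$. The plan is to show that $V$ is stable under every block averaging, that it contains enough indicator vectors, and that minimality then forces $V=\R^n$.

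\textbf{Step 1 (invariance).} $\cS(\theta)$ is invariant under every $K\in\supp(\Upsilon)$, since it is spanned by orbit points and is closed under further updates. Because $K\mathbf 1=\mathbf 1$, the same holds for $V$. Fix a block $B$ with $w_B>0$ and take $U=e_y$ for $y\in B$ (allowed by \eqref{eq:support-Upsilon-KMP}). For $v\in V$ we get $K_B^{e_y}v-v\in V$, and this vector equals $(v_y-v_x)_{x\in B}$ on $B$ and zero outside $B$. Taking differences over two choices $y,y'\in B$ yields
\[
(v_{y'}-v_y)\car_B\in V,\qquad y,y'\in B.
\]
Hence $\car_B\in V$ as soon as $v$ is nonconstant on $B$ for some $v\in V$.

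\textbf{Step 2 (closure).} Let $\cD$ be the set of subsets $D\subseteq[n]$ with $\car_D\in V$. This set contains $\emp$ and $[n]$, and it is closed under complement because $\mathbf 1\in V$. Set $\sim$ to be the equivalence relation ``$v_x=v_y$ for all $v\in V$''. Its classes $N_1,\ldots,N_m$ are exactly the level sets of $V$. Since $\theta\in V$ is nonconstant, $m\ge 2$.

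The central claim is that $\{N_j\}$ is a $w$-compatible partition. Take $B$ with $w_B>0$ and suppose $B$ is not contained in a single atom. Then some $v\in V$ is nonconstant on $B$, so $\car_B\in V$ by Step 1. It remains to show that $B$ is a union of atoms. If $x\in B$ and $x'\notin B$, then $\car_B$ itself separates $x$ from $x'$, so $x\not\sim x'$. Thus every atom meeting $B$ lies inside $B$, and the partition is $w$-compatible.

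\textbf{Step 3 (conclusion).} By minimality the only $w$-compatible partition is the discrete one, so every atom is a singleton. Consequently, for every $B$ with $w_B>0$ and $|B|\ge 2$ we have $\car_B\in V$.

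Next, Step 1 applied with the vector $\car_{B'}\in V$ gives refinements. If $B\cap B'$ and $B\setminus B'$ are both nonempty, then $\car_{B'}$ is nonconstant on $B$, and $K_B^{e_y}\car_{B'}-\car_{B'}$ with $y\in B\cap B'$ equals $\car_{B\setminus B'}$. Hence $\car_{B\setminus B'}\in V$, and by the symmetric choice also $\car_{B\cap B'}\in V$.

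So $V$ contains the indicators of the atoms of the Boolean algebra generated by the positive-weight blocks. This is the step I expect to be the main obstacle: showing that these atoms are singletons, i.e. that the blocks separate points. The approach is as follows. Suppose two sites $x\neq x'$ belong to exactly the same positive-weight blocks. Then the partition with atom $\{x,x'\}$ and all other atoms singletons is $w$-compatible. This is nondiscrete, which contradicts minimality (note that $n\ge 3$ keeps $m\ge 2$). Hence every singleton indicator lies in $V$, and $V=\R^n$.

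The graph case follows from Remark~\ref{rem:minimality-graph-connected}.
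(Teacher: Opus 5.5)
Your proof is correct, but it is organized differently from the paper's.

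\textbf{The paper's route.} The paper proves the stronger identity $\cS(\theta)=\R^n$ in three steps.
\begin{enumerate}
\item It shows $\mathbf 1\in\cS(\theta)$ by spreading a nonzero value $\theta_x$ through the hypergraph with overwriting updates.
\item It extracts $\mathbf 1_B$ for a single block on which $\theta$ is nonconstant.
\item It runs one greedy refinement loop. As long as the current partition, whose atom indicators lie in $\cS(\theta)$, is not $w$-compatible, an atom $N_j$ cut by a block $D$ is split via $K_D^{e_z}\mathbf 1_{N_j}=\mathbf 1_{N_j\setminus D}$ with $z\in D\setminus N_j$.
\end{enumerate}
Minimality is used once, at the end.

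\textbf{Your route.} You work with $V=\cS(\theta)+\R\mathbf 1$, so you never need $\mathbf 1\in\cS(\theta)$. That suffices both for the statement and for its use in the strong-positivity proof. You observe that the level-set partition of the invariant subspace $V$ is automatically $w$-compatible. A first use of minimality then gives $\mathbf 1_B\in V$ for every positive-weight block with $|B|\ge 2$. You finish with a Boolean-algebra refinement and a second use of minimality: two sites lying in exactly the same blocks would yield a nondiscrete compatible partition.

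\textbf{Comparison.} Your argument is more structural. It isolates two clean facts: level sets of an invariant subspace form a compatible partition, and in a minimal hypergraph the blocks of size at least two separate points. The cost is invoking minimality twice plus some extra bookkeeping. The paper's loop is shorter and yields the stronger $\cS(\theta)=\R^n$.

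\textbf{Two points to tighten.}
\begin{enumerate}
\item Pairwise refinements of blocks do not by themselves produce all Boolean atoms. State the step for an arbitrary $D$ with $\mathbf 1_D\in V$. If a positive-weight block $B$ meets both $D$ and $[n]\setminus D$, then for $y\in B\setminus D$ one has $K_B^{e_y}\mathbf 1_D-\mathbf 1_D=-\mathbf 1_{B\cap D}$. Hence $\mathbf 1_{D\cap B}$ and $\mathbf 1_{D\setminus B}=\mathbf 1_D-\mathbf 1_{D\cap B}$ both lie in $V$. The cases $B\cap D=\emp$ and $B\subseteq D$ are immediate once $\mathbf 1_B\in V$. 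Then induct over the blocks.
\item You only establish $\mathbf 1_B\in V$ for $|B|\ge 2$; singleton blocks act trivially. So both the Boolean algebra and the separation argument should use only positive-weight blocks of size at least two. This costs nothing, because a singleton block lies inside an atom of any partition and so never obstructs $w$-compatibility.
\end{enumerate}
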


\begin{proof} We shall actually prove 
	\begin{equation}\label{eq:full-range-stronger}
		\cS(\theta)=\R^n\comma
	\end{equation} 
	which is stronger than \eqref{eq:full-range}. We present the proof of \eqref{eq:full-range-stronger} first for connected graphs, and then for minimal hypergraphs.
	
\smallskip \noindent
\emph{Graph case.} Assume that $w$ and induces a connected graph as in Proposition \ref{pr:strong-positivity-graph}. Fix a nonconstant $\theta \in \R^n$, and choose $y,z \in [n]$ such that $\theta_y\neq \theta_z$. 

Fix $x\in [n]$, and let us prove that $e_x = \mathbf1_{\{x\}} \in \cS(\theta)$. Choose $x_0 \in \{y,z\}$ so that 
\begin{equation}\label{eq:theta-x0-x}\theta_{x_0}\neq \theta_x\fstop
	\end{equation} By the assumed connectivity, there exists a simple path $x_0, x_1, \ldots, x_\ell = x$ along edges $\{x_{i-1},x_i\}$ with $w_{\{x_{i-1},x_i\}}>0$, $i=1,\ldots,\ell$. Starting from $\theta^{\{0\}}=\theta \in \R^n$, we successively define, for every $i=1,\ldots,\ell$,
\begin{equation}\label{eq:K-i}
	\theta^{\{i\}}\eqdef K^{\{i\}}\theta^{\{i-1\}}\comma\quad \text{with}\   K^{\{i\}}\eqdef K_{\{x_{i-1},x_i\}}^{e_{x_{i-1}}}\comma
\end{equation}
where $e_{x_{i-1}}$ stands for the probability vector concentrated at $x_{i-1}\in [n]$. Observe that each matrix in \eqref{eq:K-i} belongs to $\supp(\Upsilon)$ in \eqref{eq:support-Upsilon-KMP} and, thus, $\theta^{\{0\}},\ldots, \theta^{\{\ell\}}\in \cS(\theta)$. Moreover, by \eqref{eq:K_B^U2}, $K^{\{i\}}$ acts on $\theta^{\{i-1\}}$ only by overwriting the value at site $x_i$ with $\theta_{x_{i-1}}^{\{i-1\}}$.
Consequently, 
\begin{equation}
	\theta^{\{\ell\}}-\theta^{\{\ell-1\}}=\tonde{\theta_{x_0}-\theta_x}e_x\in \cS(\theta)\subseteq\R^n\fstop
\end{equation}
By \eqref{eq:theta-x0-x}, the coefficient is nonzero, thus, $e_x\in\cS(\theta)$. Since $x\in[n]$ was arbitrary, $\cS(\theta)=\R^n$. 
\subsubsection*{Hypergraph case.}
Assume that the hypergraph induced by $w$ is minimal as in Proposition \ref{pr:strong-positivity-hypergraph}. Fix a nonconstant $\theta\in\R^n$. We again use only updates of the form $K_B^{e_x}$, for some $x\in B$, overwriting the values at $y\in B$ with $\theta_x\in \R$.

We first show that $\mathbf1\in\cS(\theta)$. Choose $x\in[n]$ with $\theta_x\neq0$, and let $N$ be the set of coordinates currently equal to $\theta_x$. As long as $N\neq[n]$, connectedness provides a block $B$ with $w_B>0$ crossing the partition $N\sqcup N^\complement$. Choosing $z\in B\cap N$ and applying $K_B^{e_z}$ makes every coordinate in $B$ equal to $\theta_x$, and therefore strictly enlarges $N$. Iterating, one reaches the constant configuration $\theta_x\mathbf1$. Thus,
\begin{equation}\label{eq:one-in-range}
	\mathbf1\in\cS(\theta)
	\fstop
\end{equation}

By connectedness (see \eqref{eq:connected-hypergraph}), there exists a block $B$ with $w_B>0$ on which $\theta$ is nonconstant. Choose $x,y\in B$ with $\theta_x\neq\theta_y$. Then,
\begin{equation}
	\ttonde{K_B^{e_x}-K_B^{e_y}}\theta
	=
	(\theta_x-\theta_y)\mathbf1_B
	\fstop
\end{equation}
Hence, $\mathbf1_B\in\cS(\theta)$, and \eqref{eq:one-in-range} also gives $\mathbf1_{B^\complement}\in\cS(\theta)$. Since $w_{[n]}=0$, both $B$ and $B^\complement$ are nonempty. We have therefore obtained a partition of $[n]$ into at least two atoms whose indicator vectors belong to $\cS(\theta)$.

Starting from this partition, refine it successively as follows. Suppose that
$
	[n]=N_1\sqcup\cdots\sqcup N_m
$
is a partition with $2\le m\le n$ and $\mathbf1_{N_j}\in\cS(\theta)$ for every $j=1,\ldots,m$. If it is not $w$-compatible, there exist a block $D$ with $w_D>0$ and an atom $N_j$ such that
\begin{equation}
	N_j\cap D\neq\emp\comma
	\qquad
	N_j\setminus D\neq\emp\comma
	\qquad
	D\setminus N_j\neq\emp
	\fstop
\end{equation}
Choose $z\in D\setminus N_j$. Since
$K_D^{e_z}\mathbf1_{N_j}
	=
	\mathbf1_{N_j\setminus D}$, both
\begin{equation}
	\mathbf1_{N_j\setminus D}
	\qquad\text{and}\qquad
	\mathbf1_{N_j\cap D}
	=
	\mathbf1_{N_j}-\mathbf1_{N_j\setminus D}
\end{equation}
belong to $\cS(\theta)$. We may therefore replace $N_j$ by the two nonempty atoms $N_j\cap D$ and $N_j\setminus D$.

Each refinement increases the number of atoms, so after finitely many steps one obtains a $w$-compatible partition all of whose atom indicators belong to $\cS(\theta)$. By minimality, this partition must be the discrete one. Hence, $\mathbf1_{\{x\}}=e_x \in \cS(\theta)$ for every $x\in [n]$, thus, 
 proving \eqref{eq:full-range-stronger}.	
\end{proof}

We may now conclude the proof of strong positivity.
\begin{proof}[Proof of Propositions \ref{pr:strong-positivity-graph} and \ref{pr:strong-positivity-hypergraph}]
 Fix $t>0$ and $0\neq g\in\mathscr C$. By \eqref{eq:positivity}, one has $e^{t\mathscr L}g\in\mathscr C$. If $e^{t\mathscr L}g\notin\interior(\mathscr C)$, then \eqref{eq:cone-interior} yields a nonconstant $\theta\in\R^n$ such that
\begin{equation}
	(e^{t\mathscr L}g)(\theta)=0\fstop
\end{equation}
By Lemma \ref{lem:propagation-zeros} and the definition of $\mathscr R$, $g$ vanishes, respectively, on $\cS(\theta)$ and $\R\mathbf 1$, whereas Lemma \ref{lem:full-range} gives $\cS(\theta)+\R\mathbf 1=\R^n$. Hence, $g$ vanishes identically, a contradiction. This concludes the proof.
\end{proof}

We mention below some possible refinements of Lemma \ref{lem:full-range}, which will play a role in Section \ref{sec:tau=infty}.
\begin{remark}	\label{rem:full-range-refinements}
The full-range property in Lemma \ref{lem:full-range} depends on both the geometry induced by $w$ and the chosen updates. However, the proof uses only matrices of the form $K_B^{e_x}$: setting
	\begin{equation}\label{eq:supp-Upsilon-0}
		\supp(\Upsilon_0)
		\eqdef
		\big\{
		K_B^{e_x}:
		B\subseteq[n]\ \text{with}\ w_B>0\,,\
		x\in B
		\big\}
		\subsetneq
		\supp(\Upsilon)\comma
	\end{equation}
	the proof actually shows that, for every nonconstant $\theta\in\mathbb R^n$,
	\begin{equation}\label{eq:full-range-Upsilon-0}
		\cS^{\tups_0}(\theta)
		\eqdef
		\operatorname{span}
		\big\{
		K_\ell\cdots K_1\theta:
		\ell\ge0\,,\
		K_1,\ldots,K_\ell\in\supp(\Upsilon_0)
		\big\}
		=
		\mathbb R^n\fstop
	\end{equation}
While the inclusion in \eqref{eq:supp-Upsilon-0} yields
$\cS^{\tups_0}(\theta)\subseteq\cS^\tups(\theta)=\cS(\theta)$,
the strictness of that inclusion makes \eqref{eq:full-range-Upsilon-0} a strictly stronger statement than \eqref{eq:full-range-stronger}.
	
	In Section \ref{sec:tau=infty} below, we will need a refinement of Lemma \ref{lem:full-range}, now involving (recall \eqref{eq:K_B^U2})
	\begin{equation}\label{eq:supp-Upsilon-infty}
		\supp(\Upsilon_\infty)
		\eqdef
		\big\{
		K_B^\pi:
		B\subseteq[n]\ \text{with}\ w_B>0
		\big\}
		\subsetneq
		\supp(\Upsilon)\comma
	\end{equation}
	with $\pi=\alpha/\alpha_0$ as given in \eqref{eq:var-pi-intro}. For the corresponding linear range
	\begin{equation}\label{eq:full-range-Upsilon-infty}
		\cS^{\tups_\infty}(\theta)
		\eqdef
	\spanop
		\big\{
		K_\ell\cdots K_1\theta:
		\ell\ge0\,,\
		K_1,\ldots,K_\ell\in\supp(\Upsilon_\infty)
		\big\}
		\comma
	\end{equation}
	we shall show that $\cS^{\tups_\infty}(\theta)\subseteq \cS^{\tups_0}(\theta)$ (see \eqref{eq:range-inclusion-infty-zero-app}) and that, under the assumptions of Lemma~\ref{lem:full-range}, $\cS^{\tups_\infty}(\theta)+\R\mathbf1=\R^n$. The proof of this last claim is more delicate than that of Lemma~\ref{lem:full-range} and is deferred to Appendix~\ref{app:modules-etc}.
\end{remark}

 	Finally, we conclude this section by noting a partial extension of (strong) positivity on even higher-degree polynomials.

 	\begin{remark}[Even-degree cones]\label{rem:even-degree-cones} Thus far, we have focused exclusively on the quadratic positive cone
 		$\mathscr C\subseteq\mathscr R=\mathscr R_{2,\ast}$.
	 		More generally, we could have considered, for every $k\ge1$,
 		\begin{equation}
 			\mathscr C_{k,\ast}
 			\eqdef
 			\{
 			g\in\mathscr R_{k,\ast}:
 			g(\theta)\ge0
 			\ \text{for every}\ \theta\in\R^n
 			\}\fstop
 		\end{equation}
 		It is not difficult to check that this is a proper cone in $\mathscr R_{k,\ast}$ with nonempty interior if and only if $k\ge 1$ is \textit{even}; recall from Lemma \ref{lem:var-power} that
 		$\var_\pi^{k/2}\in\interior(\mathscr C_{k,\ast})$ if $k\ge 2$ is even.	 Moreover, Markovianity of the hidden-model semigroup ensures
 		$e^{t\mathscr L}\mathscr C_{k,\ast}\subseteq\mathscr C_{k,\ast}$ for every $t\ge0$.
 		Thus, letting $\mathscr C_{k,\ast}'$ denote the dual cone, the Perron--Frobenius
 		argument of Proposition \ref{pr:PF-positive} yields nonzero right and left
 		Perron eigenfunctions
 	$
 			g_{k,\ast}\in\mathscr C_{k,\ast}
 			$ and $			h_{k,\ast}\in\mathscr C_{k,\ast}'
 		$
 		such that
 		\begin{equation}
 			\mathscr Lg_{k,\ast}
 			=
 			-\gap_{k,\ast}(\alpha,w)g_{k,\ast}
 			\qquad\text{and}\qquad
 			h_{k,\ast}\mathscr L
 			=
 			-\gap_{k,\ast}(\alpha,w)h_{k,\ast}\fstop
 		\end{equation}
 		The strong-positivity and uniqueness conclusions of Proposition
 		\ref{pr:PF-strong}, however, seem specific to the quadratic cone and need
 		not hold for $k\ge4$: taking, e.g., $k=4$ and the three-site segment, the hidden-model semigroup does not map the polynomial $\theta\longmapsto g(\theta)=(\theta_1-\theta_2)^2(\theta_2-\theta_3)^2\in \mathscr C_{4,\ast}\setminus \{0\}$ into $\interior(\mathscr C_{4,\ast})$, as $e^{t\mathscr L}g(\theta)=0$ for every $t\ge 0$ if either $\theta_1=\theta_2$ or $\theta_2=\theta_3$.
 	\end{remark}

 \section{Hypergraph reduction and gaps of ${\rm KMP}$}
 \label{sec:hypergraph-reduction}

Before turning to the comparison of $\gap_1(\alpha,w)$ and $\gap_{2,\ast}(\alpha,w)$ (Section \ref{sec:comparison}), we reduce the underlying weighted hypergraph to a minimal one; readers interested only in graphs, rather than general hypergraphs, may proceed directly to Section~\ref{sec:comparison}, as \eqref{eq:w-n}--\eqref{eq:connected-hypergraph} and \eqref{eq:graph-case} alone already imply minimality.

Throughout, we fix positive site weights
$\alpha$ and nonnegative block weights
$w$.

Recall that the hypergraph strong-positivity result of
Proposition \ref{pr:strong-positivity-hypergraph} requires minimality, whereas the
assumptions of our main theorems from
Section \ref{sec:comparison-intro} impose only
\eqref{eq:w-n}--\eqref{eq:connected-hypergraph}, namely,
$w_{[n]}=0$ and connectedness. This apparent discrepancy is removed by
collapsing the atoms of its coarsest partition into new sites. The main goal of this section is to prove that the resulting quotient hypergraph is minimal and, more importantly, preserves the relevant gaps: for each $k\ge1$, $\gap_{k,\ast}(\alpha,w)$ coincides with the corresponding gap of the quotient
${\rm KMP}$ model with aggregated site weights.
This is the content of
Propositions \ref{pr:reduction}--\ref{pr:reduction-gap} below. Thus, after this reduction, minimality may be assumed without loss of
generality when proving the main results in Section \ref{sec:comparison}.

We first need a few additional definitions. Recall from \eqref{eq:partition} that a partition
\begin{equation}\label{eq:partition-coarsest-compatible}
	[n]=N_1\sqcup\cdots\sqcup N_m
	\comma
	\qquad
	2\le m\le n\comma
\end{equation}
is $w$-compatible if every block $B$ with $w_B>0$ is either contained in one
atom or is a union of them.  Such a partition exists, since the discrete
partition is $w$-compatible. Fix one and, for $j\in [m]$ and $J\subseteq[m]$, set
\begin{equation}\label{eq:weights-quotient}
	N_J\eqdef\bigsqcup_{j\in J}N_j
	\comma
	\qquad
	\bar\alpha_j\eqdef\alpha(N_j) = \textstyle \sum_{x\in N_j}\alpha_x
	\comma
	\qquad
	\bar w_J
	\eqdef
	\begin{dcases}
		w_{N_J}
		&\text{if}\ |J|\ge2\comma\\
		0
		&\text{if}\ |J|\le1\fstop
	\end{dcases}
\end{equation}
We call the weighted hypergraph on $[m]$ with site weights
$\bar\alpha=(\bar\alpha_j)_{j\in[m]}$ and block weights
$\bar w=(\bar w_J)_{J\subseteq[m]}$ the \textit{quotient hypergraph} associated with
\eqref{eq:partition-coarsest-compatible}.

\begin{proposition}[Minimal-hypergraph reduction]
	\label{pr:reduction}
	Assume \eqref{eq:w-n}--\eqref{eq:connected-hypergraph}, and let
	\eqref{eq:partition-coarsest-compatible} be its coarsest partition. Then the associated quotient hypergraph is
	minimal. 
\end{proposition}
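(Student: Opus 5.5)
To prove Proposition \ref{pr:reduction}, I will verify the three defining properties of minimality for the quotient weights $\bar w$ on $[m]$: $\bar w_{[m]}=0$, connectedness of the hypergraph induced by $\bar w$, and the absence of any nontrivial $\bar w$-compatible partition of $[m]$. The basic dictionary is that every $J\subseteq[m]$ corresponds to the union of atoms $N_J\subseteq[n]$. By \eqref{eq:weights-quotient}, $\bar w_J>0$ holds exactly when $|J|\ge2$ and $w_{N_J}>0$. In other words, the positive-weight blocks of $\bar w$ are precisely the images of the blocks $B$ with $\widetilde w_B>0$ from \eqref{eq:weights-tilde}.

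First, $\bar w_{[m]}=w_{N_{[m]}}=w_{[n]}=0$ by \eqref{eq:w-n}. For connectedness, take $\emp\neq M\subsetneq[m]$ and set $N\eqdef N_M$, which is a nonempty proper subset of $[n]$. By \eqref{eq:connected-hypergraph} there is a block $B$ with $w_B>0$ meeting both $N$ and its complement. Such a $B$ cannot lie inside a single atom, since every atom sits either in $N$ or in $[n]\setminus N$. By $w$-compatibility, $B$ is therefore a union $N_J$ of atoms with $|J|\ge2$. Then $\bar w_J=w_B>0$, and $J$ meets both $M$ and $[m]\setminus M$.

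For the compatibility property, suppose $[m]=M_1\sqcup\cdots\sqcup M_r$ with $2\le r<m$ is $\bar w$-compatible. The induced partition $[n]=N_{M_1}\sqcup\cdots\sqcup N_{M_r}$ has $r<m$ atoms, so I want to show it is $w$-compatible, which contradicts the minimal number of atoms of the coarsest partition. Take $B$ with $w_B>0$. If $B$ lies inside a single atom $N_j$, then it lies inside the coarser atom $N_{M_i}$ with $M_i\ni j$. Otherwise $B=N_J$ with $|J|\ge2$, so $\bar w_J>0$. By $\bar w$-compatibility, $J$ is either contained in some $M_i$, in which case $B\subseteq N_{M_i}$, or is a union of some $M_i$'s, in which case $B$ is a union of the atoms $N_{M_i}$. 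This handles every block, so the only $\bar w$-compatible partition is the discrete one, which is exactly minimality.

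I do not expect a genuine obstacle here, since every step is bookkeeping through the correspondence $J\leftrightarrow N_J$. The only delicate point is to invoke both alternatives of compatibility consistently in each direction: blocks of $w$ lying inside one atom must be absorbed into the coarser partition, and blocks of $\bar w$ with $|J|\ge2$ are the only ones that exist by definition. I will also note that $m\ge3$, as recorded before Theorem \ref{th:dichotomy-hypergraph}. This guarantees that the quotient hypergraph is in the nondegenerate regime where the definition of minimality applies.
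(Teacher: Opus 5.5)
Your proof is correct and follows the paper's structure. To rule out a nontrivial $\bar w$-compatible partition of $[m]$, you lift it to a $w$-compatible partition of $[n]$ with fewer than $m$ atoms, which is exactly the paper's argument, and you even carry out the compatibility check the paper leaves implicit. You also use $\bar w_{[m]}=w_{[n]}=0$ in the same way the paper does.

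The only difference is in connectivity. The paper argues by contradiction, merging atoms along the connected components of the quotient to get a coarser $w$-compatible partition. You derive it directly from \eqref{eq:connected-hypergraph} and $w$-compatibility, which is slightly more economical and shows that the quotient by \emph{any} $w$-compatible partition is connected; coarseness is then needed only for the last property.
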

\begin{proof}
	Suppose that the quotient hypergraph were
	disconnected, and let $C_1,\ldots,C_\ell$, $\ell\ge2$, denote its
	connected components. Merging the original atoms according to these
	components gives the partition
	\begin{equation}
		[n]
		=
		 N_1'\sqcup\cdots\sqcup N_\ell'
		\comma
		\qquad
		 N_a'
		\eqdef
		\bigsqcup_{j\in C_a}N_j\fstop
	\end{equation}
	This partition is again $w$-compatible. Indeed, a block contained in
	some $N_j$ remains internal. If instead a block is of the form
	$N_J$ with $\bar w_J=w_{N_J}>0$, then the quotient block $J$ cannot meet two distinct connected
	components, and hence $N_J$ is contained in one of the
	$N_a'$.

	Since the original hypergraph is connected and $m\ge2$, some
	block $B\subseteq[n]$ with $w_B>0$ must meet at least two atoms. Thus, at least one component $C_a$
	contains at least two quotient sites, and consequently
	$2\le\ell<m$. This contradicts the coarseness of
the original partition \eqref{eq:partition-coarsest-compatible}. Hence, the quotient hypergraph is
	connected.
	
	If $m=2$, its only possible block connecting the two quotient sites is
	$[2]$. However, 
	$\bar w_{[2]}=w_{[n]}=0$ by \eqref{eq:w-n}, contradicting quotient connectivity. Therefore
	$m\ge3$. Finally, if the quotient admitted a compatible partition
	\begin{equation}
		[m]=C_1\sqcup\cdots\sqcup C_\ell
		\comma
		\qquad 1<\ell<m\comma
	\end{equation}
	then replacing each $C_a$ by
	$\sqcup_{j\in C_a}N_j$ would produce a $w$-compatible partition of
	$[n]$ with fewer than $m$ atoms. This is again impossible by
	coarseness. The quotient hypergraph is therefore minimal.
	\end{proof}
	
	Only in the next proposition, where ${\rm KMP}$ models with different numbers of sites are
	considered simultaneously, we indicate the number of sites by a superscript.
	Thus, for the original model on $[n]$, we write, for instance,
	$\cL^\tn=\cL$ and
	$\gap_{k,\ast}^\tn(\alpha,w)=\gap_{k,\ast}(\alpha,w)$ for the objects
	defined in \eqref{eq:gen-KMP} and \eqref{eq:gap-inf-star-intro},
	respectively; the corresponding objects for the quotient model on $[m]$ carry the
	superscript $[m]$.
	
	\begin{proposition}[Hypergraph reduction and gaps]\label{pr:reduction-gap} Given a $w$-compatible partition as in \eqref{eq:partition-coarsest-compatible}, one has,	
		for the corresponding weights defined in
		\eqref{eq:weights-quotient},
		\begin{equation}\label{eq:gap-reduction}
			\gap_{k,\ast}^\tn(\alpha,w)
			=
			\gap_{k,\ast}^\tm(\bar\alpha,\bar w)
			\comma
			\qquad k\ge1\fstop
		\end{equation}
	\end{proposition}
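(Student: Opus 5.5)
The plan is to prove \eqref{eq:gap-reduction} in the hidden-model picture, using the restricted isospectrality \eqref{eq:restricted-isospectrality-sym}: $\gap_{k,\ast}^\tn(\alpha,w)$ is the smallest eigenvalue of $-\mathscr L^\tn|_{\mathscr R_{k,\ast}^\tn}$, and similarly on $[m]$. Two facts drive the argument. A block contained in a single atom only mixes coordinates inside that atom. A block that is a union of atoms replaces all its coordinates by a random convex combination. We decompose $\theta\in\R^n$ into atom averages $\bar\theta_j\eqdef\sum_{x\in N_j}\frac{\alpha_x}{\bar\alpha_j}\theta_x$ and intra-atom fluctuations. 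We then compare spectra through a projection onto functions of $\bar\theta$ and a lifting back.

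\emph{Step 1 (lifting).} Let $P:\R^n\to\R^m$, $\theta\mapsto\bar\theta$. For $g\in\mathscr R_{k,\ast}^\tm$, set $\iota g\eqdef g\circ P\in\mathscr R_{k,\ast}^\tn$; invariance under shifts by $\mathbf1$ is preserved because $P\mathbf1=\mathbf1$. Check that $\mathscr L^\tn(g\circ P)=(\bar{\mathscr L}g)\circ P$, where $\bar{\mathscr L}$ is the quotient hidden-model generator with weights $(\bar\alpha,\bar w)$.
\begin{itemize}
\item Internal block $B\subseteq N_j$: the update $\theta_x\mapsto\sum_{y\in B}U_y\theta_y$ for $x\in B$ generally changes $\bar\theta_j$. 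So $\theta$-level intertwining fails, and this is the \emph{main obstacle}.
\item Fix for the obstacle: a cleaner route is the particle side. The atom-occupation map $[n]^k\to[m]^k$ sends the $(k,\alpha,w)$ particle system to a Markov chain. Under an internal update, particles stay in their atom. Under a union-of-atoms update $N_J$, Dirichlet aggregation gives that atom-level weights $(U(N_j))_{j\in J}\sim{\rm Dir}(\bar\alpha^J)$. Hence the projected chain is exactly the $(k,\bar\alpha,\bar w)$ particle system, up to the internal blocks, which act trivially at atom level. Also $\mu_k$ projects to $\bar\mu_k$, by Dirichlet aggregation again.
\item Consequently, functions of atom labels, $\psi=\bar\psi\circ\text{proj}$, satisfy $L_k^\tn\psi=\overline{L_k\bar\psi}\circ\text{proj}$. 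The lift commutes with the addition operators $\mathfrak b_{k-1}$, because $\mu_k(\cdot\mid\cdot)$ aggregates to $\bar\mu_k(\cdot\mid\cdot)$. Therefore it maps $\mathscr H_{k,\ast}^\tm$ isometrically into $\mathscr H_{k,\ast}^\tn$. This gives $\spec$ inclusion and $\gap_{k,\ast}^\tn\le\gap_{k,\ast}^\tm$.
\end{itemize}

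\emph{Step 2 (reverse inequality).} Take the $L^2(\mu_k)$-orthogonal complement $\mathscr V$ of the lifted functions inside $\mathscr H_{k,\ast}^\tn$. It is $L_k$-invariant by self-adjointness, and I must show every eigenvalue of $-L_k$ on $\mathscr V$ is at least $\gap_{k,\ast}^\tm$. The plan is a spectral comparison.
\begin{itemize}
\item Write $L_k=L_k^{\rm int}+L_k^{\rm ext}$, internal plus union blocks. Every union block $N_J$ is again a heat-bath projection, and $L_k^{\rm int}\le0$.
\item Use that the internal-block dynamics together with the coarse dynamics decouples. Conditionally on the atom labels, the positions of the particles within each atom are independent of the coarse evolution. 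Indeed, a union update resamples within-atom positions from the conditional law, i.e.\ Dirichlet-multinomial restricted to the atom.
\item So each union update acts on non-lifted directions as a projection killing the within-atom dependence. Heuristically, non-lifted modes decay at least as fast as coarse ones, and I would make this precise by a tensor decomposition $L^2(\mu_k)=$ (coarse) $\otimes$ (conditional fine).
\item A simpler alternative for this step: apply the hidden-model argument of Section~\ref{sec:proof-main-KMP} to the intra-atom fluctuation part. I expect this Step~2 comparison, showing no mode in $\mathscr V$ decays slower than $\gap_{k,\ast}^\tm$, to be the delicate part. Note that the statement as written asserts equality for every $k$, which suggests the non-lifted spectrum is dominated through this conditional-independence structure.
\end{itemize}
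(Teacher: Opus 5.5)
\textbf{Step~1 is correct.} The atom-label map $[n]^k\to[m]^k$ is lumpable. Internal blocks fix atom labels, and a union block $N_J$ sends each particle to atom $N_l$ with probability $U(N_l)$, where $(U(N_l))_{l\in J}\sim{\rm Dir}(\bar\alpha^J)$. Moreover, $\mu_k$ aggregates to the quotient $k$-particle law. Summing the P\'olya conditionals $\mu_k(y\mid x_1,\ldots,x_{k-1})=(\alpha_y+\mathfrak n_y(x_1,\ldots,x_{k-1}))/(\alpha_0+k-1)$ over $y\in N_j$ shows that your lift commutes with the addition operators. Hence $\mathscr H_{k,\ast}^{[m]}$ embeds isometrically and equivariantly into $\mathscr H_{k,\ast}^{[n]}$, which gives $\gap_{k,\ast}^{[n]}(\alpha,w)\le\gap_{k,\ast}^{[m]}(\bar\alpha,\bar w)$. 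Under the hat map, your lifted functions are exactly the polynomials in the atom masses $\bar\eta=(\eta(N_j))_{j\in[m]}$. This is the sector the paper identifies with the quotient model.

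\textbf{Step~2 is not an argument.} You isolate the right invariant complement $\mathscr V$. However, the needed bound is replaced by a heuristic and by a ``tensor decomposition'' that is not available as stated, since the within-atom conditional law depends on the coarse occupation numbers. The hidden-model argument of Section~\ref{sec:proof-main-KMP} cannot substitute either: it compares different degrees of one model, not a model with its quotient. So the reverse inequality, and with it the equality, remains unproved.

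\textbf{First missing idea: a finer splitting of $\mathscr V$.} The within-atom conditional expectations $E_j\eqdef\mu_{N_j}$ (on the particle side, the heat-bath operators $\Pi_{N_j,k}$) commute with each other and with every $\mu_B$, $w_B>0$. This holds because each such $B$ is either nested with or disjoint from each atom. Thus $\mathscr P_{k,\ast}^{[n]}$ splits into mutually orthogonal $\cL$-invariant sectors $\prod_{j\in S}(I-E_j)\prod_{j\notin S}E_j\,\mathscr P_{k,\ast}^{[n]}$, $S\subseteq[m]$, and $S=\emp$ is your lifted space. If $j\in S$ and $f$ lies in that sector, then $E_jf=0$. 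Combined with $\mu_{N_J}=\mu_{N_J}E_j$ for $J\ni j$, this gives $\mu_{N_J}f=0$. Keeping only these blocks yields $\ttscalar{f}{-\cL f}_\mu\ge(\sum_{J\ni j}\bar w_J)\ttnorm{f}_\mu^2$. This is the precise form of your ``projection killing the within-atom dependence''. Note that a non-lifted mode is annihilated only by the union blocks covering an atom where it has genuine within-atom dependence, not by all of them. So what you obtain is the quotient \emph{refresh rate} of site $j$, not the quotient gap.

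\textbf{Second missing idea: refresh rate dominates the quotient gap.} You still need $\sum_{J\ni j}\bar w_J\ge\gap_{k,\ast}^{[m]}(\bar\alpha,\bar w)$. Test the quotient Rayleigh quotient with a nonzero $F\in\mathscr P_{k,\ast}^{[m]}$ depending on $\bar\eta_j$ alone. A valid choice is the degree-$k$ orthogonal polynomial of the Beta marginal of $\bar\eta_j$. It is orthogonal to all of $\mathscr P_{k-1}^{[m]}$ because, under ${\rm Dir}(\bar\alpha)$, the conditional expectation of a polynomial $p$ given $\bar\eta_j$ is a polynomial in $\bar\eta_j$ of degree at most $\deg p$. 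Blocks $J\not\ni j$ fix $F$. Each block $J\ni j$ contributes at most $\bar w_J\ttnorm{F}^2$, since $I-\mu_J$ is an orthogonal projection.
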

	\begin{proof}	
	By $w$-compatibility of \eqref{eq:partition-coarsest-compatible} and \eqref{eq:weights-quotient}, the generator in \eqref{eq:gen-KMP} decomposes as
	$
	\cL^\tn
	=
	\cL_\bullet^\tn
	+
	\cL_\circ^\tn
	$,
	where
	\begin{equation}
		\textstyle
		\cL_\bullet^\tn
		\eqdef
		\sum_{J\subseteq[m]}
		\bar w_J\,\tttonde{\mu_{N_J}^\tn-I}
		\comma
		\qquad
		\cL_\circ^\tn
		\eqdef
		\sum_{j\in[m]}\sum_{B\subseteq N_j}
		w_B\,\tttonde{\mu_B^\tn-I}
		\comma
	\end{equation}
	with $\mu_B^\tn$, $B\subseteq[n]$,  being the conditional $\mu^\tn$-expectation given $\eta^{B^\complement}$.
Thus, $\cL_\bullet^\tn$ and $\cL_\circ^\tn$ retain, respectively, the updates across multiple atoms and those within a single atom of \eqref{eq:partition-coarsest-compatible}.

Besides the orthogonal projections $\mu_B^\tn$, for $B\subseteq[n]$ with $w_B>0$, we also consider $E_j\eqdef \mu_{N_j}^\tn$, $j\in [m]$. Since the atoms $N_1,\ldots,N_m$ are disjoint, $E_1,\ldots, E_m$ are commuting orthogonal projections. Moreover, the corresponding conditional expectations commute: for nested blocks this follows from the tower property, whereas for disjoint blocks it follows from the fact that, under $\mu^{[n]}={\rm Dir}(\alpha)$, the conditional law of the configuration inside a block given its complement depends on the latter only through the total mass within that block, which is left unchanged by updates on a disjoint block. 
Hence,
\begin{equation}\label{eq:E-j1}
	E_j\mu_B^{[n]}
	=
	\mu_B^{[n]}E_j
	\comma
	\qquad
	E_j\mu_{N_J}^{[n]}
	=
	\mu_{N_J}^{[n]}E_j
	\comma
\end{equation}
for every $j\in [m]$, $B\subseteq N_\ell$, $\ell\in [m]$, and $J\subseteq[m]$, with the further identities
\begin{equation}\label{eq:E-j2}
	E_j\mu_B^{[n]}
	=
	E_j
	\quad\text{if}\ B\subseteq N_j\comma
	\qquad
	\mu_{N_J}^{[n]}E_j
	=
	\mu_{N_J}^{[n]}
	\quad\text{if}\ j\in J\fstop
\end{equation}

	Recall \eqref{eq:P-k-star}. By the argument in the proof of Proposition \ref{pr:Pk-star}, these projections preserve
	$\mathscr P_{k,\ast}^\tn$ and each summand in
	\begin{equation}\label{eq:profile-sector-decomposition}\textstyle
		\mathscr P_{k,\ast}^\tn
		=
		\bigoplus_{S\subseteq[m]}
		\mathscr P_{k,\ast}^\tn(S)\comma
		\quad\text{with}\
		\mathscr P_{k,\ast}^\tn(S)
		\eqdef
		\big(
		\prod_{j\in S}(I-E_j)
		\prod_{j\in[m]\setminus S}E_j
		\big)
		\mathscr P_{k,\ast}^\tn\fstop
	\end{equation}
	
	Let us first focus on the summand corresponding to $S=\emp$. 
	Since $E\eqdef E_1\cdots E_m$ is conditional expectation given the atom masses
	$\bar \eta=(\eta(N_j))_{j\in[m]}$, Dirichlet aggregation and self-adjointness of $E$ give
	\begin{equation}\label{eq:quotient-polynomial-component}
		\mathscr P_{k,\ast}^\tn(\emp)
		=
		\{
			F(\bar\eta):
			F\in\mathscr P_{k,\ast}^\tm
		\}\fstop
	\end{equation}
	By combining this with  $\mu_{N_J}^\tn F(\bar\eta)= (\mu_J^\tm F)(\bar \eta)$ and the fact that, by the first identity in \eqref{eq:E-j2}, $\cL_\circ^\tn$ vanishes on $\mathscr P_{k,\ast}^\tn(\emp)$, $\cL^\tn|_{\mathscr P_{k,\ast}^\tn(\emp)}$ is unitarily equivalent to $\cL^\tm|_{\mathscr P_{k,\ast}^\tm}$.

	It remains to show that the spectral gap of $\cL^\tn$ on $\mathscr P_{k,\ast}^\tn$ is not attained by functions in  $\mathscr P_{k,\ast}^\tn(S)$, $\emp\neq S\subseteq[m]$. Fix a nonempty $S\subseteq[m]$, $j\in S$, and
	$f\in\mathscr P_{k,\ast}^\tn(S)$. Then, 
	\begin{equation}\label{eq:nonquotient-gap-lower-bound}
	\textstyle	\scalar{f}{-\cL^\tn f}_{\mu^\tn}
	\ge \sum_{J\ni j}\bar w_J\,\ttscalar{f}{(I-\mu_{N_J}^\tn)f}_{\mu^\tn}= \norm{f}_{\mu^\tn}^2 \tonde{\sum_{J\ni j}
	\bar w_J}\comma
	\end{equation}
	 	where for the first step we only kept updates of blocks $B=N_J$ with $j\in J$, while the second step used $\mu_{N_J}^{[n]}f=0$, $J\ni j$, which follows from   $f\in \mathscr P_{k,\ast}^{[n]}(S)\subseteq (I-E_j)\mathscr P_{k,\ast}^{[n]}$, $j\in S$, implying $E_j f=0$, together with the second inequality in \eqref{eq:E-j2}.	
	 Here, $\sum_{J\ni j}\bar w_J$ may also be interpreted as the refresh rate of site $j\in [m]$ in the quotient hypergraph.
	 	We claim that
	 	\begin{equation}\label{eq:refresh-rate}\textstyle \big(	\sum_{J\ni j}\bar w_J\big)\ge \gap_{k,\ast}^\tm(\bar\alpha,\bar w)\fstop
	 	\end{equation}
	 Indeed, for any continuous function $F=F(\bar \eta)$ of the only coordinate $\bar\eta_j$, 
	 	\begin{equation}
	 	\textstyle	\ttscalar{F}{-\cL^\tm F}_{\mu^\tm}= \sum_{J\ni j} \bar w_J\,\ttscalar{F}{(I-\mu_J^\tm)F}_{\mu^\tm}\le \norm{F}_{\mu^\tm}^2\tonde{\sum_{J\ni j}\bar w_J}\comma
	 	\end{equation}
	 	where the first step follows because updates not involving $j\in [m]$ do not contribute to the Dirichlet form, while for the second step we used that $I-\mu_J^\tm$ is an orthogonal projection.  This proves \eqref{eq:refresh-rate} and,  together with the aforementioned unitary equivalence,   \eqref{eq:gap-reduction}.
\end{proof}

\begin{remark}Given a $w$-compatible partition as in \eqref{eq:partition}, recall from \eqref{eq:weights-tilde} the definition of the block weights $\widetilde w$ on $[n]$ obtained from $w$.  Proposition \ref{pr:reduction-gap} also shows
$
	\gap_{k,\ast}^\tn(\alpha,w)=\gap_{k,\ast}^\tn(\alpha,\widetilde w)$, $k\ge 1$.
Indeed, since $w$ and $\widetilde w$ induce the same quotient hypergraph,  $\gap_{k,\ast}^\tn(\alpha,\widetilde w)=\gap_{k,\ast}^\tm(\bar\alpha,\bar w)$, whereas the monotonicity $w_B\ge \widetilde w_B$, $B\subseteq[n]$, induces $\gap_{k,\ast}^\tn(\alpha,w)\ge\gap_{k,\ast}^\tn(\alpha,\widetilde w)$.
\end{remark}

In view of Propositions \ref{pr:reduction}--\ref{pr:reduction-gap}, the analysis of
$\gap_{k,\ast}(\alpha,w)$ under
\eqref{eq:w-n}--\eqref{eq:connected-hypergraph} may be reduced without loss
of generality to minimal hypergraphs. In the next section, we therefore no
longer need compatible partitions or quotient hypergraphs and revert to the
notation without the superscripts $[n]$ and $[m]$.

 \section{One- and two-particle gaps of ${\rm KMP}$}
 \label{sec:comparison}
 
 In this section, we compare
 $\gap_1(\alpha,w)$ and
 $\gap_{2,\ast}(\alpha,w)$, and prove the results stated in
 Section \ref{sec:comparison-intro}. The Perron--Frobenius theory developed in
 Section \ref{sec:PF}, together with the strong-positivity results of
 Section \ref{sec:strong-positivity}, is central to this comparison.

 The section is organized in three main parts: we first establish strict monotonicity in the scale parameter $\tau$, and then analyze separately the limiting regimes $\tau\to \infty$ and $\tau\to 0$. These are the contents, respectively, of Sections \ref{sec:monotonicity}, \ref{sec:tau-infty} and \ref{sec:tau-0}. At the end, we collect these ingredients and show how they yield Theorems \ref{th:gap-1-2-quantitative}, \ref{th:phase-transition}, \ref{th:dichotomy-graph} and \ref{th:dichotomy-hypergraph} in Section \ref{sec:comparison-final}.

 Throughout this section, we fix positive site weights
 $\alpha$ and nonnegative block weights $w$. For most of the analysis, in view of
 Propositions \ref{pr:reduction}--\ref{pr:reduction-gap}, we assume without loss of generality that
 $w$ induces a minimal hypergraph.
  Readers interested only in graphs \eqref{eq:graph-case} may simply assume that $n\ge 3$ and that $w$ induces a connected graph.

\subsection{Two-particle gap monotonicity}\label{sec:monotonicity}

The main goal of this section is to prove that the map in \eqref{eq:gap-2-monotonicity} is strictly increasing anytime the underlying hypergraph is minimal.

\begin{proposition}[Monotonicity]\label{pr:monotonicity} For all positive site weights $\alpha$ and nonnegative block weights $w$ inducing a minimal hypergraph, the map	$(0,\infty)\ni \tau \longmapsto \gap_{2,\ast}(\tau\alpha,w) \in (0,\infty)
$	is continuous and strictly increasing.
	\end{proposition}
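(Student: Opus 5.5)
The plan is to work in the hidden-model picture on the quadratic space $\mathscr R=\mathscr R_{2,\ast}$. There the Perron--Frobenius machinery of Proposition \ref{pr:PF-strong} is available, and it applies because the hypergraph is assumed minimal. Write $\mathscr L^{(\tau)}$ for the hidden-model generator associated with $(\tau\alpha,w)$; by \eqref{eq:restricted-isospectrality-sym}, $\gap_{2,\ast}(\tau\alpha,w)$ is minus the top eigenvalue of $\mathscr L^{(\tau)}|_{\mathscr R}$. The first step is to make the $\tau$-dependence explicit. For $g\in\mathscr R$ (quadratic), $\mathbf E_B[g(K_B^U\theta)]$ depends only on the second moments of $U\sim{\rm Dir}(\tau\alpha^B)$. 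Set $\pi^B\eqdef\alpha^B/\alpha(B)$ and $s_\tau(B)\eqdef \tau\alpha(B)/(\tau\alpha(B)+1)$. The Dirichlet moments give
\begin{equation}
\mathbf E[UU^{\top}]=s_\tau(B)\,\pi^B(\pi^B)^{\top}+\ttonde{1-s_\tau(B)}\,{\rm diag}(\pi^B)\fstop
\end{equation}
Since $K_B^U\theta$ is linear in $U$, this yields, for quadratic $g$,
\begin{equation}
\mathbf E_B[g(K_B^U\theta)]=s_\tau(B)\,(A_Bg)(\theta)+\ttonde{1-s_\tau(B)}(D_Bg)(\theta)\comma
\end{equation}
where $A_Bg\eqdef g\circ K_B^{\pi^B}$ and $D_Bg\eqdef\sum_{x\in B}\pi^B_x\,g\circ K_B^{e_x}$. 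Hence $\mathscr L^{(\tau)}|_{\mathscr R}=\sum_B w_B\ttonde{s_\tau(B)A_B+(1-s_\tau(B))D_B-I}$. This expression is polynomial in $s_\tau$, with $\tau\mapsto s_\tau(B)$ continuous and strictly increasing.

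Continuity of $\tau\mapsto\gap_{2,\ast}(\tau\alpha,w)$ then follows from continuity of eigenvalues of finite-dimensional matrices depending continuously on $\tau$. The eigenvalues are real by isospectrality with a self-adjoint operator, and the top one is algebraically simple by Proposition \ref{pr:PF-strong}. For monotonicity, the key observation is a Jensen inequality on the cone. Since $K_B^{\pi^B}\theta=\sum_x\pi^B_xK_B^{e_x}\theta$ and every $g\in\mathscr C$ is a positive semidefinite (hence convex) quadratic form, $A_Bg\le D_Bg$ pointwise. Thus for $\tau<\tau'$ the difference $\Delta\eqdef\mathscr L^{(\tau')}-\mathscr L^{(\tau)}=\sum_Bw_B\ttonde{s_{\tau'}(B)-s_\tau(B)}(A_B-D_B)$ maps $\mathscr C$ into $-\mathscr C$.

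Next, take the left Perron functional $h_\tau\in\interior(\mathscr C')$ of $\mathscr L^{(\tau)}$ and the right Perron eigenfunction $g_{\tau'}\in\interior(\mathscr C)$ of $\mathscr L^{(\tau')}$, both from Proposition \ref{pr:PF-strong}. Pairing the eigenvalue equations gives
\begin{equation}
-\gap_{2,\ast}(\tau'\alpha,w)\,h_\tau(g_{\tau'})=h_\tau(\mathscr L^{(\tau)}g_{\tau'})+h_\tau(\Delta g_{\tau'})=-\gap_{2,\ast}(\tau\alpha,w)\,h_\tau(g_{\tau'})+h_\tau(\Delta g_{\tau'})\fstop
\end{equation}
Here $h_\tau(g_{\tau'})>0$ by \eqref{eq:dual-positive-on-interior}, and $h_\tau(\Delta g_{\tau'})\le0$. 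This gives $\gap_{2,\ast}(\tau'\alpha,w)\ge\gap_{2,\ast}(\tau\alpha,w)$.

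The main point is strictness. By \eqref{eq:cone-interior-dual} it suffices to show $\Delta g_{\tau'}\neq0$, that is, some block $B$ with $w_B>0$ has $(D_B-A_B)g_{\tau'}\not\equiv0$. Writing $g_{\tau'}(\theta)=\ttscalar{\theta}{Q\theta}$, the Jensen gap equals
\begin{equation}
\sum_{x\in B}\pi^B_x\,\big\langle v_x-\bar v,\,Q\,(v_x-\bar v)\big\rangle\comma\qquad v_x\eqdef K_B^{e_x}\theta\comma\quad \bar v\eqdef\sum_{x\in B}\pi^B_xv_x\fstop
\end{equation}
Moreover $v_x-v_y=(\theta_x-\theta_y)\mathbf 1_B$. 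Pick any $B$ with $w_B>0$ (connectedness gives one) and $\theta$ nonconstant on $B$. Because $w_{[n]}=0$, the vector $\mathbf 1_B$ is nonconstant, so the vectors $v_x-\bar v$ are nonzero multiples of a nonconstant vector. Since $g_{\tau'}\in\interior(\mathscr C)$ is strictly positive on nonconstant vectors, the Jensen gap is strictly positive. Positivity of $\gap_{2,\ast}(\tau\alpha,w)$ comes from connectivity, via Remark \ref{rem:positivity-gap}. The only delicate point I anticipate is verifying the second-moment mixture identity carefully, so that $A_B,D_B$ are independent of $\tau$ and only the coefficients $s_\tau(B)$ move.
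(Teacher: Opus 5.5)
Your proposal is correct and follows essentially the paper's argument. Your Jensen gap $(D_B-A_B)g=g(\mathbf 1_B)\var_{\pi_B}$ is exactly the paper's explicit formula for $\mathscr L_{\tau_1}-\mathscr L_{\tau_2}$ on $\mathscr R$ (Lemma~\ref{lem:hidden-generator-monotonicity}), and strict monotonicity then comes from pairing left and right Perron eigenfunctions at the two scales, with only a cosmetic difference: you pair $h_\tau$ with $g_{\tau'}$ and need only $\Delta g_{\tau'}\neq 0$, whereas the paper pairs $h_{\tau_2}$ with $g_{\tau_1}$ and shows that the difference lands in $\interior(\mathscr C)$.
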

	\begin{remark}\label{rem:monotonicity-gap-1}
		This monotone behavior is specific to $\gap_{2,\ast}(\tau\alpha,w)$. Indeed, the one-particle rates of the $(\tau\alpha,w)$-${\rm KMP}$ model, given by (cf.\ \eqref{eq:RW-rates})
		\begin{equation}
			r_{xy}(\tau\alpha,w)
			=
			\tau\alpha_y
			\sum_{B\ni x,y}
			\frac{w_B}{\tau\alpha(B)}
			=
			r_{xy}(\alpha,w)
			\comma
			\qquad x,y\in[n]\comma x\neq y\comma
		\end{equation}
		do not depend on $\tau>0$. Consequently, the map
		$
			\tau\longmapsto\gap_1(\tau\alpha,w)
	$
		is constant.
	\end{remark}

	We now begin the proof of Proposition \ref{pr:monotonicity}. As in most of our arguments, we work at the level of the hidden model. We start by fixing some notation used throughout.
	
	The weights $\alpha$ and $w$ are kept fixed and are often suppressed from the notation, whereas the dependence on $\tau\in(0,\infty)$ is indicated by subscripts. For instance, we write $\mathscr L_\tau$, $g_\tau$, and $h_\tau$ in place of $\mathscr L$, $g_\ast$, and $h_\ast$ from Propositions \ref{pr:PF-positive} and \ref{pr:PF-strong}, respectively, and set
	\begin{equation}\label{eq:lambda-tau}
		\lambda_\tau
		\eqdef
		\gap_{2,\ast}(\tau\alpha,w)
		\comma
		\qquad \tau\in(0,\infty)\fstop
	\end{equation}
	Note that the space $\mathscr R=\mathscr R_{2,\ast}$ and the cone $\mathscr C\subseteq\mathscr R$, defined respectively in \eqref{eq:R=R_2-ast} and \eqref{eq:cone-C-2-star}, depend neither on the weights $\alpha,w$ nor on $\tau$.
	
	We also introduce the following blockwise notation. Recall $\pi_x=\alpha_x/\alpha_0$. For every nonempty block $B\subseteq[n]$, $x\in[n]$, and $\theta\in\R^n$, let
	\begin{equation}\label{eq:var-pi-conditional-pi}
	\textstyle	\pi_{x|B}
		\eqdef
		\mathbf1_B(x)\frac{\alpha_x}{\alpha(B)} = \mathbf1_B(x)\frac{\pi_x}{\pi(B)}
		\comma
		\qquad
		\pi_B(\theta)
		\eqdef
		\sum_{x\in B}\pi_{x|B}\,\theta_x\comma
	\end{equation}
	\begin{equation}\label{eq:var-pi-conditional-var}\textstyle
		\var_{\pi_B}(\theta)
		\eqdef
		\sum_{x\in B}
		\pi_{x|B}
		\tonde{\theta_x-\pi_B(\theta)}^2\fstop
	\end{equation}
	These are the $B$-conditional analogues of the quantities in \eqref{eq:var-pi-intro}; in particular, they are invariant under the rescaling $\alpha\mapsto\tau\alpha$. Moreover,
	$\var_{\pi_B}\in\mathscr C$.

To prove Proposition \ref{pr:monotonicity}, we first establish an explicit comparison formula for the hidden-model generators corresponding to two values of $\tau$.
\begin{lemma}\label{lem:hidden-generator-monotonicity} Fix some positive site weights $\alpha$ and some nonnegative block weights $w$. Then, 	for every $0<\tau_1<\tau_2<\infty$ and $g\in\mathscr R$,
	\begin{equation}		\label{eq:hidden-generator-difference}
		\tonde{\mathscr L_{\tau_1}-\mathscr L_{\tau_2}}g
		=
		\sum_{B\subseteq[n]}
		w_B
		\left(
		\frac{1}{1+\tau_1\alpha(B)}
		-
		\frac{1}{1+\tau_2\alpha(B)}
		\right)
		g(\mathbf1_B)\,
		\var_{\pi_B}\in \mathscr R\fstop
	\end{equation}
	In particular,
	\begin{equation}\label{eq:hidden-generator-cone-monotonicity}
		\tonde{\mathscr L_{\tau_1}-\mathscr L_{\tau_2}}\mathscr C
		\subseteq
		\mathscr C\comma
	\end{equation}
	and, if $w$ additionally satisfies \eqref{eq:w-n}--\eqref{eq:connected-hypergraph}, 
	\begin{equation}\label{eq:hidden-generator-cone-monotonicity-int}
		\tonde{\mathscr L_{\tau_1}-\mathscr L_{\tau_2}}
		\interior(\mathscr C)
		\subseteq
		\interior(\mathscr C)\fstop
	\end{equation}
\end{lemma}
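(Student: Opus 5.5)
The plan is to compute the single-block hidden-model operator $g\mapsto \mathbf E_B[g(K_B^U\theta)]-g(\theta)$ explicitly on $\mathscr R$. Since $\mathscr L_\tau$ is a sum over blocks weighted by $w_B$, it then suffices to track the $\tau$-dependence block by block. Fix $B$ and $g\in\mathscr R$, a quadratic form invariant under translation along $\mathbf1$. Write $g(\theta)=\theta^\top A\theta$ with $A$ symmetric and $A\mathbf1=0$. Under the update $\theta\mapsto K_B^U\theta$, the coordinates in $B$ are replaced by $S\eqdef\sum_{y\in B}U_y\theta_y$, while those outside $B$ are kept. Thus $K_B^U\theta=\theta^{B^\complement}+S\mathbf1_B$, where $\theta^{B^\complement}$ is $\theta$ with the $B$-coordinates set to zero. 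By bilinearity,
\begin{equation}
g(K_B^U\theta)=g(\theta^{B^\complement})+2S\,(\theta^{B^\complement})^\top A\mathbf1_B+S^2\,g(\mathbf1_B)\fstop
\end{equation}
Only the last term is nonlinear in $U$.

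Under $U\sim{\rm Dir}(\tau\alpha^B)$, the mean $\mathbf E[S]=\pi_B(\theta)$ does not depend on $\tau$. For the second moment, the Dirichlet covariance gives
\begin{equation}
\mathbf E[S^2]=\pi_B(\theta)^2+\frac{1}{1+\tau\alpha(B)}\var_{\pi_B}(\theta)\comma
\end{equation}
using ${\rm Cov}(U_x,U_y)=\frac{\pi_{x|B}\mathbf1_{x=y}-\pi_{x|B}\pi_{y|B}}{1+\tau\alpha(B)}$. Hence
\begin{equation}
\mathbf E_B[g(K_B^U\theta)]=\big(\text{$\tau$-independent part}\big)+\frac{g(\mathbf1_B)}{1+\tau\alpha(B)}\var_{\pi_B}(\theta)\comma
\end{equation}
and the $-g(\theta)$ term is also $\tau$-independent. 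Subtracting the expressions at $\tau_1$ and $\tau_2$ and summing over $B$ with weights $w_B$ yields \eqref{eq:hidden-generator-difference}. Membership in $\mathscr R$ holds because each $\var_{\pi_B}$ is a homogeneous quadratic that is translation invariant along $\mathbf1$, so $\var_{\pi_B}\in\mathscr C\subseteq\mathscr R$. For $B=[n]$ one has $g(\mathbf1_{[n]})=0$, so that block contributes nothing, consistent with \eqref{eq:global-shift-gap-k-ast}.

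For \eqref{eq:hidden-generator-cone-monotonicity}: if $g\in\mathscr C$, then $g(\mathbf1_B)\ge0$. The prefactor $\frac1{1+\tau_1\alpha(B)}-\frac1{1+\tau_2\alpha(B)}$ is positive since $\tau_1<\tau_2$ and $\alpha(B)>0$. So the right-hand side is a nonnegative combination of the $\var_{\pi_B}\in\mathscr C$, and it lies in $\mathscr C$.

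The main step is the interior statement \eqref{eq:hidden-generator-cone-monotonicity-int}. Take $g\in\interior(\mathscr C)$. By \eqref{eq:cone-interior}, $g(\mathbf1_B)>0$ for every $B\neq\emp,[n]$, since $\mathbf1_B$ is then nonconstant. By \eqref{eq:w-n}, every block with $w_B>0$ is proper; blocks with $|B|\le1$ have $\var_{\pi_B}=0$ and may be discarded. The right-hand side is therefore a positive combination of $\var_{\pi_B}$ over the blocks $B$ with $w_B>0$ and $|B|\ge2$. It remains to show that
\begin{equation}
\sum_{B:\,w_B>0}\var_{\pi_B}(\theta)=0
\end{equation}
forces $\theta$ to be constant. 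Vanishing of $\var_{\pi_B}(\theta)$ means $\theta$ is constant on $B$, because all $\pi_{x|B}>0$. If $\theta$ is constant on every positive-weight block, then, taking $N$ to be a level set of $\theta$, connectedness \eqref{eq:connected-hypergraph} gives a positive-weight block crossing $N$ and its complement whenever $N\neq[n]$. Such a block would be nonconstant, a contradiction, so $\theta$ is constant. Hence the combination vanishes exactly on constants and lies in $\interior(\mathscr C)$.
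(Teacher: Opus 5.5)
Your proof is correct and follows essentially the same route as the paper. The paper centers the update as $K_B^U\theta=K_B^\pi\theta+W_{B,\tau}(\theta)\mathbf 1_B$ with $\mathbf E_{B,\tau}[W_{B,\tau}(\theta)]=0$ and applies the Dirichlet covariance formula to $\mathbf E_{B,\tau}[W_{B,\tau}(\theta)^2]$; this is the same computation as your split $\mathbf E[S^2]=\pi_B(\theta)^2+\var_{\pi_B}(\theta)/(1+\tau\alpha(B))$. The interior inclusion is likewise obtained from a positive-weight proper block on which $\theta$ is nonconstant, exactly as in your argument.
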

\begin{proof}
	For every nonempty $B\subseteq[n]$,  $\tau>0$ and $\theta \in \R^n$, let
	\begin{equation}\label{eq:U-tau-Z}
		\textstyle
		U^{B,\tau}\sim
		{\rm Dir}\tonde{(\tau\alpha_x)_{x\in B}}
\comma\qquad 
		W_{B,\tau}(\theta)
		\eqdef
		\sum_{x\in B}
		\tttonde{U_x^{B,\tau}-\pi_{x|B}}\,\theta_x\in \R\fstop
	\end{equation}
	Then, recalling \eqref{eq:K_B^U2},
	$
		K_B^{U^{B,\tau}}\theta
		=
		K_B^\pi\theta
		+
		W_{B,\tau}(\theta)\mathbf1_B\in \R^n$.
	Since taking expectation with respect to $U^{B,\tau}$ gives $\mathbf E_{B,\tau}[W_{B,\tau}(\theta)]=0$, and since $g\in \mathscr R$ is quadratic, expanding yields
	\begin{equation}
		\mathbf  E_{B,\tau}
		[
		g\tttonde{K_B^{U^{B,\tau}}\theta}
		]
		=
		g\tttonde{K_B^\pi\theta}
		+
		\mathbf E_{B,\tau}
		[
		W_{B,\tau}(\theta)^2
		]\,
		g(\mathbf1_B)\fstop
	\end{equation}
	The Dirichlet covariance formula for \eqref{eq:U-tau-Z} gives
	\begin{equation}
		\mathbf E_{B,\tau}
		[
		W_{B,\tau}(\theta)^2
		]
		=
		\frac{\var_{\pi_B}(\theta)}
		{1+\tau\alpha(B)}\fstop
	\end{equation}
	Inserting this identity into the definition of $\mathscr L_\tau$ and subtracting the resulting expressions at $\tau_1$ and $\tau_2$ proves
	\eqref{eq:hidden-generator-difference}.
	
	If $g\in\mathscr C$, then $g(\mathbf1_B)\ge0$, and every coefficient on the right-hand side of \eqref{eq:hidden-generator-difference} is nonnegative. Since $\var_{\pi_B}\in\mathscr C$, this proves \eqref{eq:hidden-generator-cone-monotonicity}.
	
	Finally,  assume \eqref{eq:w-n}--\eqref{eq:connected-hypergraph}, and let $g\in\interior(\mathscr C)$ and $\theta\in \R^n$ be nonconstant. By connectedness \eqref{eq:connected-hypergraph}, there exists a block $B$ with $w_B>0$ on which $\theta$ is nonconstant; thus,
	$
		\var_{\pi_B}(\theta)>0$.
	As by \eqref{eq:w-n} every block with positive $w$-weight is a nonempty proper subset of $[n]$,  $\mathbf1_B\in \R^n$ is nonconstant; thus, $g(\mathbf1_B)>0$.
Consequently, the $B$-summand in \eqref{eq:hidden-generator-difference} is strictly positive, while all the others are nonnegative. Therefore,
	$\tttonde{\mathscr L_{\tau_1}-\mathscr L_{\tau_2}}g(\theta)>0$
	for every nonconstant $\theta$, proving  \eqref{eq:hidden-generator-cone-monotonicity-int}.
\end{proof}

We now complete the proof of monotonicity of $\lambda_\tau=\gap_{2,\ast}(\tau\alpha,w)$ in \eqref{eq:lambda-tau}.

\begin{proof}[Proof of Proposition \ref{pr:monotonicity}]
	By the assumed hypergraph minimality and Proposition~\ref{pr:PF-strong}, for every $\tau>0$ there exist right and left Perron eigenfunctions
	\begin{equation}
		g_\tau\in\interior(\mathscr C)
		\qquad\text{and}\qquad
		h_\tau\in\interior(\mathscr C')
	\end{equation}
	of $-\mathscr L_\tau$, associated with the eigenvalue $\lambda_\tau>0$.

	Fix $0<\tau_1<\tau_2<\infty$. By linearity and the eigenvalue equations in \eqref{eq:PF-right-left},
	\begin{equation}\label{eq:mono-3}
		h_{\tau_2}\tttonde{
			\tonde{\mathscr L_{\tau_1}-\mathscr L_{\tau_2}}g_{\tau_1}
		}
		=
		-\lambda_{\tau_1}h_{\tau_2}(g_{\tau_1})
		+
		\lambda_{\tau_2}h_{\tau_2}(g_{\tau_1})
		=
		\tonde{\lambda_{\tau_2}-\lambda_{\tau_1}}
		h_{\tau_2}(g_{\tau_1})\fstop
	\end{equation}
	Since $g_{\tau_1}\in\interior(\mathscr C)$, the  inclusion in \eqref{eq:hidden-generator-cone-monotonicity-int} yields
	\begin{equation}
		\tonde{\mathscr L_{\tau_1}-\mathscr L_{\tau_2}}g_{\tau_1}
		\in\interior(\mathscr C)\fstop
	\end{equation}
	As $h_{\tau_2}\in\interior(\mathscr C')$, one further gets
	\begin{equation}
		h_{\tau_2}(g_{\tau_1})>0
		\qquad\text{and}\qquad
		h_{\tau_2}\tttonde{
			\tonde{\mathscr L_{\tau_1}-\mathscr L_{\tau_2}}g_{\tau_1}
		}>0\fstop
	\end{equation}
	Therefore, \eqref{eq:mono-3} implies
$
		\lambda_{\tau_2}>\lambda_{\tau_1}
$,
	as claimed. 
	
Finally, $\tau\longmapsto\lambda_\tau$ is continuous. Indeed, \eqref{eq:hidden-generator-difference} shows that
$\tau\longmapsto\mathscr L_\tau|_{\mathscr R}$
is continuous in operator norm.
 Since $\mathscr R$ is finite-dimensional, this proves the claim.
\end{proof}

\begin{remark}
	Proposition \ref{pr:monotonicity} shows that hypergraph minimality---or, in view of Propositions \ref{pr:reduction}--\ref{pr:reduction-gap}, conditions \eqref{eq:w-n}--\eqref{eq:connected-hypergraph}---ensures the \emph{strict} monotonicity of $\tau\mapsto\lambda_\tau$. Without these geometric assumptions, strict monotonicity may fail (take, e.g., $w_B=\car_{|B|=n}$, in which case  $\gap_{k,\ast}(\alpha,w)=1$ for every $k\ge 1$ and  $\alpha$), although the map remains \textit{nondecreasing}. Since this weaker statement will not be needed, we leave its verification to the reader.
\end{remark}

\begin{remark}[Directional \emph{vs.}\ componentwise monotonicity]
	The monotonicity in Proposition \ref{pr:monotonicity} is \textit{directional}: it holds along the rays $\tau\longmapsto\tau\alpha$, but not under arbitrary componentwise increases of the site weights. Indeed, let $n=3$ and $w_B=\car_{|B|=2}$.
	For
	$
		\alpha=(1,1,1)$
		and $ 
		\alpha'=(s,s,1)$, 
	$s\ge 1$, 
	one has $\alpha\le\alpha'$ componentwise, whereas
	\begin{equation}
		\gap_{2,\ast}(\alpha,w)
		=
		\frac49
	\comma\qquad
		\gap_{2,\ast}(\alpha',w)
		=
		\frac{
			6s^2+19s+8
			-
			\sqrt{(2s+1)(2s^3+s^2+8s+16)}
		}{
			6(s+2)(2s+1)
		}\fstop
	\end{equation}
	Taking $\lim_{s\to \infty}\gap_{2,\ast}(\alpha',w)=\frac13<\frac49$, thus, shows that increasing the site weights componentwise may decrease the two-particle gap.
\end{remark}

Proposition \ref{pr:monotonicity} and Remark \ref{rem:monotonicity-gap-1} establish Theorem \ref{th:phase-transition} for minimal hypergraphs, while Propositions \ref{pr:reduction}--\ref{pr:reduction-gap} lifts this result to the original setting of Theorem \ref{th:phase-transition}, thereby recovering the statement in its full form.
 Consequently, there exists a unique threshold $\tau_c(\alpha,w)\in[0,\infty]$ separating the regime in which $\gap_{2,\ast}(\tau\alpha,w)$ is smaller than $\gap_1(\tau\alpha,w)$ from that in which the reverse inequality holds. To characterize the extremal cases $\tau_c(\alpha,w)\in\{0,\infty\}$, in Sections \ref{sec:tau-infty} and \ref{sec:tau-0} we analyze the gaps of the limiting dynamics obtained as $\tau\to\infty$ and $\tau\to0$, respectively. We begin with the former.

\subsection{The limit $\tau\to\infty$}\label{sec:tau-infty}

To study the extremal regime $\tau\to\infty$, it is convenient to work directly with the limiting hidden-model dynamics. Indeed, for every nonempty block $B\subseteq[n]$,
\begin{equation}
	{\rm Dir}(\tau\alpha^B)
	\Longrightarrow
	\delta_{\pi_B}
	\qquad\text{as}\ \tau\to\infty\comma
\end{equation}
where $\pi_B$ is the $B$-conditional measure on $[n]$ given in \eqref{eq:var-pi-conditional-pi}.
Thus, each random Dirichlet redistribution degenerates into the deterministic weighted averaging of the coordinates in $B$. This defines a Markov dynamics also at $\tau=\infty$, namely, the hidden model associated with a block variant of the averaging process. The latter process, originally introduced by Aldous, has received considerable attention in recent years; see, e.g.,
\cite{aldous_lecture_2012,quattropani2021mixing}.

We first introduce this limiting hidden dynamics and relate its generator to $\mathscr L_\tau$ by operator-norm convergence. We then compute its action on the spatial variance, obtaining the limiting lower bound
$\lambda_\infty\ge\gap_1(\alpha,w)$, together with the quantitative finite-$\tau$ estimate from Theorem \ref{th:gap-1-2-quantitative}. Finally, we characterize the geometries for which equality holds and establish the strict upper bound
$\lambda_\infty<2\gap_1(\alpha,w)$, including the sharpness of the constant $2$.

\subsubsection{The limiting hidden model}

Recall that $\pi_x=\alpha_x/\alpha_0$ and $\pi(B)=\alpha(B)/\alpha_0$. For every nonempty $B\subseteq[n]$, the map $K_B^\pi$ replaces all coordinates in $B$ by their weighted average $\pi_B(\theta)$ and leaves the remaining coordinates unchanged. Define, for every $g\in\mathscr R=\mathscr R_{2,\ast}$,
\begin{equation}\label{eq:hidden-generator-infty}
	\mathscr L_\infty g(\theta)
	\eqdef
	\sum_{B\subseteq[n]}w_B
	\left(
	g(K_B^\pi\theta)-g(\theta)
	\right)\comma
	\qquad \theta\in\mathbb R^n\fstop
\end{equation}
Thus, $\mathscr L_\infty$ is the generator of the hidden model of the block $\pi$-averaging process; see also Section \ref{sec:SEM-AVG} below.

Next, we rigorously relate $\mathscr L_\infty$ to the hidden-model generators $\mathscr L_\tau$ from Section \ref{sec:monotonicity}.	

\begin{lemma}\label{lem:diff-generators}
	For every $\tau>0$ and $g\in\mathscr R$,
	\begin{equation}\label{eq:diff-generators-infty}
		\tonde{\mathscr L_\tau-\mathscr L_\infty}g
		=
		\sum_{B\subseteq[n]}
		\frac{w_B}{1+\tau\alpha(B)}\,
		g(\mathbf1_B)\,
		\var_{\pi_B}\fstop
	\end{equation}
	Consequently,
	\begin{equation}
		\mathscr L_\tau|_{\mathscr R}
		\longrightarrow
		\mathscr L_\infty|_{\mathscr R}
		\qquad\text{in operator norm as }\tau\to\infty\fstop
	\end{equation}
\end{lemma}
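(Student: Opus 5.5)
The plan is to repeat, almost verbatim, the computation in the proof of Lemma \ref{lem:hidden-generator-monotonicity}, now with the deterministic averaging $K_B^\pi$ playing the role of the $\tau=\infty$ endpoint. Fix a nonempty block $B$ with $w_B>0$, $\tau>0$ and $\theta\in\R^n$. As in \eqref{eq:U-tau-Z}, write
\[
K_B^{U^{B,\tau}}\theta=K_B^\pi\theta+W_{B,\tau}(\theta)\mathbf1_B ,
\]
where $W_{B,\tau}(\theta)=\sum_{x\in B}(U^{B,\tau}_x-\pi_{x|B})\theta_x$ is a centered random scalar.

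The key step is to expand the quadratic $g\in\mathscr R$ along $\mathbf1_B$. Let $\bar g$ denote the symmetric bilinear form of $g$. Then
\[
g(v+s\mathbf1_B)=g(v)+2s\,\bar g(v,\mathbf1_B)+s^2g(\mathbf1_B).
\]
Taking expectations with $v=K_B^\pi\theta$ fixed and $s=W_{B,\tau}(\theta)$, the cross term vanishes because $\mathbf E_{B,\tau}[W_{B,\tau}(\theta)]=0$. This gives
\[
\mathbf E_{B,\tau}\big[g(K_B^{U^{B,\tau}}\theta)\big]=g(K_B^\pi\theta)+\mathbf E_{B,\tau}\big[W_{B,\tau}(\theta)^2\big]\,g(\mathbf1_B).
\]

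Next, the Dirichlet covariance formula is needed:
\[
\mathrm{Cov}(U_x,U_y)=\frac{\pi_{x|B}\delta_{xy}-\pi_{x|B}\pi_{y|B}}{1+\tau\alpha(B)}.
\]
With it, $\mathbf E[W^2]=\var_{\pi_B}(\theta)/(1+\tau\alpha(B))$, exactly as in the earlier proof. Subtracting the term $g(\theta)$ and summing $w_B$ times the result over $B$ yields \eqref{eq:diff-generators-infty}. The constant $-g(\theta)$ cancels between $\mathscr L_\tau$ and $\mathscr L_\infty$.

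Operator-norm convergence then follows directly. $\mathscr R$ is finite-dimensional, and each map $g\mapsto g(\mathbf1_B)\var_{\pi_B}$ is a fixed bounded linear operator on $\mathscr R$. The number of blocks is finite, and the coefficients $w_B/(1+\tau\alpha(B))\to0$ as $\tau\to\infty$.

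There is one point to check: that $\mathscr L_\infty$ maps $\mathscr R$ into itself, so the statement is an operator identity on $\mathscr R$. This holds because $K_B^\pi\mathbf1=\mathbf1$, so $g\circ K_B^\pi$ remains homogeneous quadratic and translation invariant along $\mathbf1$. Alternatively, it follows from \eqref{eq:diff-generators-infty} itself, since $\mathscr L_\tau\mathscr R\subseteq\mathscr R$ by \eqref{eq:R-k-ast-inv} and $\var_{\pi_B}\in\mathscr R$. There is no real obstacle here; the only care needed is the covariance computation, which was already used in Lemma \ref{lem:hidden-generator-monotonicity}.
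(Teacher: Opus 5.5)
Your proof is correct and follows the paper's argument. The paper obtains \eqref{eq:diff-generators-infty} from the same centered decomposition and Dirichlet covariance computation as in Lemma \ref{lem:hidden-generator-monotonicity}, formally letting $\tau_2\to\infty$ there; you read the identity off directly with $K_B^\pi$ as the deterministic endpoint, which avoids that limit, and the operator-norm convergence then follows from finite dimensionality of $\mathscr R$ exactly as you say.
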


\begin{proof}
	This follows from the same computation as in \eqref{eq:hidden-generator-difference} by taking $\tau_1=\tau$ and letting $\tau_2\to\infty$. Passing to the limit, the operator-norm convergence follows since $\mathscr R$ is finite-dimensional.
\end{proof}

Finite-dimensional spectral continuity and Proposition \ref{pr:monotonicity} therefore give (recall \eqref{eq:lambda-tau})
\begin{equation}\label{eq:lambda-infty}
\spec(-\mathscr L_\infty|_{\mathscr R})\subseteq(0,\infty)\comma\qquad	\lambda_\infty
	\eqdef \sup_{\tau>0}\lambda_\tau =
	\lim_{\tau\to\infty}\lambda_\tau
	=
	\min\spec\tttonde{-\mathscr L_\infty|_{\mathscr R}}\fstop
\end{equation}

\subsubsection{Variance dissipation and gap lower bounds}\label{sec:var-dissipation-gap-lb}

We next compute the dissipation of the spatial variance $\var_\pi \in \interior(\mathscr C)$. Recall \eqref{eq:var-pi-intro} and \eqref{eq:var-pi-conditional-pi}--\eqref{eq:var-pi-conditional-var}, and that the Dirichlet form of the one-particle dynamics with rates \eqref{eq:RW-rates} reads, for every $\theta \in \R^n$, as
\begin{equation}\label{eq:dirichlet-form-1}\textstyle
	\cE_1(\theta)
	\eqdef \ttscalar{\theta}{-L_1\theta}_\pi =
	\sum_{B\subseteq[n]}
	w_B\,\pi(B)\,\var_{\pi_B}(\theta)\fstop
\end{equation}

\begin{lemma}\label{lem:variance-infty}
	For every $\theta\in\R^n$,
	\begin{equation}\label{eq:L-infty-var}
		\mathscr L_\infty\var_\pi(\theta)
		=
		-\cE_1(\theta)\fstop
	\end{equation}
\end{lemma}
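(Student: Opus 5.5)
The plan is to compute the action of a single averaging update on $\var_\pi$ exactly, using the law of total variance, and then sum over blocks. Fix a nonempty block $B$ with $w_B>0$ and $\theta\in\R^n$, and set $\theta'\eqdef K_B^\pi\theta$. By construction, $\theta'_x=\pi_B(\theta)$ for $x\in B$ and $\theta'_x=\theta_x$ otherwise.

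\emph{Step 1: the mean is preserved.} We have
\[
\pi(\theta')=\textstyle\sum_{x\notin B}\pi_x\theta_x+\pi(B)\,\pi_B(\theta)=\pi(\theta),
\]
since $\pi(B)\pi_B(\theta)=\sum_{x\in B}\pi_x\theta_x$ by \eqref{eq:var-pi-conditional-pi}. Both variances are therefore centered at the same value $m\eqdef\pi(\theta)$.

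\emph{Step 2: the within-block decomposition.} Split $\var_\pi(\theta)=\sum_x\pi_x(\theta_x-m)^2$ into the contributions from $x\notin B$ and from $x\in B$. The terms with $x\notin B$ coincide for $\theta$ and $\theta'$. For the $B$-part, expand
\[
\theta_x-m=\bigl(\theta_x-\pi_B(\theta)\bigr)+\bigl(\pi_B(\theta)-m\bigr).
\]
The cross term vanishes because $\sum_{x\in B}\pi_{x|B}(\theta_x-\pi_B(\theta))=0$. This gives
\[
\textstyle\sum_{x\in B}\pi_x(\theta_x-m)^2=\pi(B)\var_{\pi_B}(\theta)+\pi(B)\bigl(\pi_B(\theta)-m\bigr)^2 .
\]
The second summand is exactly the $B$-contribution to $\var_\pi(\theta')$. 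Consequently,
\[
\var_\pi(K_B^\pi\theta)-\var_\pi(\theta)=-\pi(B)\var_{\pi_B}(\theta).
\]

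\emph{Step 3: summing over blocks.} Multiplying by $w_B$ and summing over $B$ in \eqref{eq:hidden-generator-infty} yields
\[
\mathscr L_\infty\var_\pi(\theta)=-\textstyle\sum_B w_B\,\pi(B)\var_{\pi_B}(\theta),
\]
which is $-\cE_1(\theta)$ by \eqref{eq:dirichlet-form-1}.

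\emph{Step 4: checking \eqref{eq:dirichlet-form-1}.} It remains to confirm the block expression for the Dirichlet form, since it is stated rather than derived. With rates \eqref{eq:RW-rates} and reversible measure $\pi$,
\[
\cE_1(\theta)=\tfrac12\textstyle\sum_{x\ne y}\pi_x r_{xy}(\theta_x-\theta_y)^2=\tfrac12\sum_B w_B\sum_{x,y\in B}\frac{\pi_x\alpha_y}{\alpha(B)}(\theta_x-\theta_y)^2 .
\]
Using $\frac{\pi_x\alpha_y}{\alpha(B)}=\pi(B)\,\pi_{x|B}\pi_{y|B}$ together with the identity
\[
\var_{\pi_B}(\theta)=\tfrac12\textstyle\sum_{x,y\in B}\pi_{x|B}\pi_{y|B}(\theta_x-\theta_y)^2,
\]
we recover \eqref{eq:dirichlet-form-1}.

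Overall this is a routine computation. The only point requiring care is the bookkeeping between the conditional weights $\pi_{x|B}$ and the global weights $\pi_x$ in Steps 2 and 4.
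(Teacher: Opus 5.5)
Your proof is correct and follows the paper's argument: you derive the per-block identity $\var_\pi(K_B^\pi\theta)-\var_\pi(\theta)=-\pi(B)\var_{\pi_B}(\theta)$ from the law of total variance and then sum it with weights $w_B$ against \eqref{eq:dirichlet-form-1}. Your Step 4, which derives the block form of $\cE_1$ from the rates \eqref{eq:RW-rates}, checks something the paper leaves implicit, and that check is also correct.
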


\begin{proof}
	The formula for total variance gives, for every $B\subseteq[n]$,
	\begin{equation}
		\var_\pi(K_B^\pi\theta)-\var_\pi(\theta)
		=
		-\pi(B)\var_{\pi_B}(\theta)\fstop
	\end{equation}
	After recalling \eqref{eq:hidden-generator-infty} and \eqref{eq:dirichlet-form-1}, 
	summing with weights $w_B$ proves the claim.
\end{proof}

Next, we combine Lemmas \ref{lem:diff-generators} and \ref{lem:variance-infty} to prove both the lower bound in Theorem \ref{th:gap-1-2-quantitative} and its limiting counterpart, namely, the second inequality in \eqref{eq:limits}.
Recall (cf.\ Theorem \ref{th:gap-1-2-quantitative}) \begin{equation}\label{eq:alpha-min}\alpha_{w,\rm min}\eqdef \min_{B\,:\,w_B>0} \alpha(B)\fstop
\end{equation}

\begin{corollary}\label{cor:gap-lower-infty}
	For every $\tau>0$,
	\begin{equation}\label{eq:gap-lower-tau}
		\frac{\tau\alpha_{w,{\rm min}}}
		{1+\tau\alpha_{w,{\rm min}}}
		\left(1+\frac1{\tau\alpha_0}\right)
		\gap_1(\alpha,w)\le \lambda_\tau
		\fstop
	\end{equation}
	Consequently, taking $\tau\to \infty$ gives
	\begin{equation}\label{eq:gap-lower-infty}
		\gap_1(\alpha,w)\le\lambda_\infty = \lim_{\tau\to \infty}\gap_{2,\ast}(\tau\alpha,w)\fstop
	\end{equation}
\end{corollary}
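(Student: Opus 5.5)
The plan is to compute $\mathscr L_\tau\var_\pi$ exactly and then feed the resulting decay rate into the variance-contraction argument of Section~\ref{sec:variance-intro}. By Lemma~\ref{lem:diff-generators} and Lemma~\ref{lem:variance-infty} we have, for $\theta\in\R^n$,
\begin{equation}
	\mathscr L_\tau\var_\pi(\theta)
	=
	-\cE_1(\theta)
	+
	\sum_{B\subseteq[n]}\frac{w_B}{1+\tau\alpha(B)}\,\var_\pi(\mathbf1_B)\,\var_{\pi_B}(\theta)\fstop
\end{equation}
Since $\var_\pi(\mathbf1_B)=\pi(B)(1-\pi(B))$, this becomes
\begin{equation}
	\mathscr L_\tau\var_\pi(\theta)
	=
	-\sum_{B}w_B\,\pi(B)\,\var_{\pi_B}(\theta)\left(1-\frac{1-\pi(B)}{1+\tau\alpha(B)}\right)\fstop
\end{equation}
Next I simplify the bracket. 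Using $\pi(B)=\alpha(B)/\alpha_0$, it equals
\begin{equation}
	\frac{\tau\alpha(B)+\alpha(B)/\alpha_0}{1+\tau\alpha(B)}
	=
	\frac{\tau\alpha(B)}{1+\tau\alpha(B)}\left(1+\frac{1}{\tau\alpha_0}\right)\comma
\end{equation}
which is increasing in $\alpha(B)$. So for every $B$ with $w_B>0$ it is at least $\gamma_\tau\eqdef\frac{\tau\alpha_{w,\rm min}}{1+\tau\alpha_{w,\rm min}}\bigl(1+\frac1{\tau\alpha_0}\bigr)$. This gives
\begin{equation}
	\mathscr L_\tau\var_\pi\le -\gamma_\tau\,\cE_1\le -\gamma_\tau\,\gap_1(\alpha,w)\,\var_\pi\comma
\end{equation}
where the last step is the variational characterization of $\gap_1$ for the $\pi$-reversible walk with rates \eqref{eq:RW-rates}; these rates, and hence $\gap_1$, are $\tau$-invariant by Remark~\ref{rem:monotonicity-gap-1}.

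I then convert this differential inequality into a bound on $\lambda_\tau$ without any minimality assumption. Take the left Perron eigenfunction $h_\ast\in\mathscr C'\setminus\{0\}$ of $\mathscr L_\tau|_{\mathscr R}$ from Proposition~\ref{pr:PF-positive}. Because $\var_\pi\in\mathscr R$ and $-\mathscr L_\tau\var_\pi-\gamma_\tau\gap_1\var_\pi\in\mathscr C$, applying $h_\ast$ gives
\begin{equation}
	\lambda_\tau h_\ast(\var_\pi)\ge\gamma_\tau\gap_1(\alpha,w)\,h_\ast(\var_\pi)\fstop
\end{equation}
Here $h_\ast(\var_\pi)>0$ by \eqref{eq:dual-positive-on-interior}, since $\var_\pi\in\interior(\mathscr C)$, so we can divide and obtain \eqref{eq:gap-lower-tau}. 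As an alternative route, Grönwall gives contraction of $\var_\pi$ at rate $\gamma_\tau\gap_1$, and the chain \eqref{eq:chain-1} applied to a $\lambda_\tau$-eigenfunction in $\mathscr R_{2,\ast}$ yields the same bound.

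Finally, letting $\tau\to\infty$ sends $\gamma_\tau\to1$, and \eqref{eq:lambda-infty} then gives \eqref{eq:gap-lower-infty}. The only delicate point is the algebraic simplification of the bracket. The positivity argument is routine once the variance is known to lie in the interior of the cone.
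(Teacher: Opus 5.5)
Your proof is correct and follows the paper's own argument essentially verbatim. You derive the exact formula for $\mathscr L_\tau\var_\pi$ from Lemmas \ref{lem:diff-generators} and \ref{lem:variance-infty}, extract the prefactor by monotonicity in $\alpha(B)$, apply the Poincar\'e inequality for $\cE_1$, and pair with a left Perron eigenfunction in $\mathscr C'$ using $\var_\pi\in\interior(\mathscr C)$; the Gr\"onwall alternative you mention is the route sketched in Section~\ref{sec:variance-intro}.
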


\begin{proof}
	Since
	$
	\var_\pi(\mathbf1_B)=\pi(B)(1-\pi(B))
	$,
	Lemmas \ref{lem:diff-generators} and \ref{lem:variance-infty} give
	\begin{equation}\label{eq:L-tau-var-exact}
		\mathscr L_\tau\var_\pi
		=
		-\sum_{B\subseteq[n]}
		w_B\,\pi(B)\,
		\frac{\tau\alpha(B)+\pi(B)}
		{1+\tau\alpha(B)}\,
		\var_{\pi_B}\fstop
	\end{equation}
	As $\pi(B)=\alpha(B)/\alpha_0$ and $s\mapsto \tau s/(1+\tau s)$ is increasing, recalling \eqref{eq:dirichlet-form-1} and \eqref{eq:alpha-min}, 
	\begin{align}\label{eq:L-tau-var-bound}
		\begin{aligned}
		\mathscr L_\tau\var_\pi
		&\le
		-\frac{\tau\alpha_{w,{\rm min}}}
		{1+\tau\alpha_{w,{\rm min}}}
		\left(1+\frac1{\tau\alpha_0}\right)
		\cE_1
		\\
		&\le
		-\frac{\tau\alpha_{w,{\rm min}}}
		{1+\tau\alpha_{w,{\rm min}}}
		\left(1+\frac1{\tau\alpha_0}\right)
		\gap_1(\alpha,w)\var_\pi\comma
	\end{aligned}	
	\end{align}
	where the last step used the variational characterization of the spectral gap of the reversible one-particle dynamics with rates \eqref{eq:RW-rates}.
	Let $h_\tau\in\mathscr C'\setminus\{0\}$ be a left Perron eigenfunction from Proposition \ref{pr:PF-positive}. Since $\var_\pi\in\interior(\mathscr C)$, one has $h_\tau(\var_\pi)>0$; see \eqref{eq:dual-positive-on-interior}. Applying $h_\tau$ to \eqref{eq:L-tau-var-bound} and using
	$
	h_\tau\mathscr L_\tau=-\lambda_\tau h_\tau
	$
	proves \eqref{eq:gap-lower-tau}.
\end{proof}

\begin{remark}[Sharpness of the lower bound in \eqref{eq:gap-1-2-quantitative}]\label{rem:gap-lower-sharp}
	In every homogeneous mean-field setting \eqref{eq:mean-field}, the bound
	\eqref{eq:gap-lower-tau} is an equality for all $\tau>0$. Indeed, all
	positive-weight blocks have the same $\alpha$-mass
	$\alpha(B)=\alpha_{w,{\rm min}}$, so the first inequality in
	\eqref{eq:L-tau-var-bound} is an equality; moreover, the one-particle
	dynamics is mean-field, and hence
	\begin{equation}
		\cE_1
		=
		\gap_1(\alpha,w)\var_\pi\comma
	\end{equation}
	so the second inequality in \eqref{eq:L-tau-var-bound} is an equality as well. Consequently,
	$\var_\pi$ is an eigenfunction of $-\mathscr L_\tau$ and
	\begin{equation}
		\gap_{2,\ast}(\tau\alpha,w)
		=
		\frac{\tau\alpha_{w,{\rm min}}}
		{1+\tau\alpha_{w,{\rm min}}}
		\left(1+\frac1{\tau\alpha_0}\right)
		\gap_1(\alpha,w)\fstop
	\end{equation}
	In particular, the lower bound in
	\eqref{eq:gap-1-2-quantitative} is sharp. Together with Remark
	\ref{rem:lambda-infty-sharp}, this proves the sharpness of both constants
	in Theorem \ref{th:gap-1-2-quantitative}.
\end{remark}

\subsubsection{Geometries corresponding to $\tau_c(\alpha,w)=\infty$}\label{sec:tau=infty}

We proceed with the characterization of equality in \eqref{eq:gap-lower-infty}. Here, we impose minimality of the underlying hypergraph induced by $w$. Observe that, under minimality, the weights $\widetilde w$ defined in \eqref{eq:weights-tilde} coincide with $w$. The proof of Theorem \ref{th:dichotomy-graph}\ref{it:graph-dichotomy-mean-field} and Theorem \ref{th:dichotomy-hypergraph}\ref{it:hypergraph-dichotomy-mean-field} will then follow from an application of the reduction procedure from Section \ref{sec:hypergraph-reduction}.

\begin{proposition}[Characterization of $\tau_c(\alpha,w)=\infty$]\label{pr:mean-field-infty}
	Assume that $w$ induces a minimal hypergraph. Then,
	\begin{equation}\label{eq:mean-field-infty-equivalence}
		\lambda_\infty=\gap_1(\alpha,w)
		\quad\Longleftrightarrow\quad
		\sum_{B\ni{x,y}}
		\frac{w_B}{\alpha(B)}
		\quad\text{does not depend on}\ x,y\in[n], x\neq y\fstop
	\end{equation}
	Equivalently, $\tau_c(\alpha,w)=\infty$ if and only if $(\alpha,w)$ satisfies the mean-field condition on the right-hand side of \eqref{eq:mean-field-infty-equivalence}.
\end{proposition}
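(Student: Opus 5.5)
The plan is to prove the two implications separately. Both rest on the variance identity of Lemma \ref{lem:variance-infty}, $\mathscr L_\infty\var_\pi=-\cE_1$, and on the lower bound $\lambda_\infty\ge\gap_1(\alpha,w)$ from Corollary \ref{cor:gap-lower-infty}. Set $c_{xy}\eqdef\sum_{B\ni x,y}w_B/\alpha(B)$, so that $r_{xy}(\alpha,w)=\alpha_y c_{xy}$ by \eqref{eq:RW-rates}. By reversibility,
\begin{equation}
\cE_1(\theta)=\tfrac12\sum_{x\neq y}\pi_x r_{xy}(\theta_x-\theta_y)^2=\tfrac{\alpha_0}{2}\sum_{x\neq y}\pi_x\pi_y c_{xy}(\theta_x-\theta_y)^2 .
\end{equation}

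\emph{Mean-field implies equality.} Suppose $c_{xy}\equiv c$. The display gives $\cE_1=c\alpha_0\var_\pi$, and then $\var_\pi$ is a Rayleigh-quotient minimizer for $L_1$, so $\gap_1(\alpha,w)=c\alpha_0$. Lemma \ref{lem:variance-infty} then reads $\mathscr L_\infty\var_\pi=-\gap_1(\alpha,w)\var_\pi$. Since $0\neq\var_\pi\in\mathscr R$, the number $\gap_1(\alpha,w)$ lies in $\spec(-\mathscr L_\infty|_{\mathscr R})$. By \eqref{eq:lambda-infty} this gives $\lambda_\infty\le\gap_1(\alpha,w)$, and \eqref{eq:gap-lower-infty} supplies the reverse inequality.

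\emph{Equality implies mean-field.} Assume $\lambda_\infty=\gap_1(\alpha,w)$. The semigroup $e^{t\mathscr L_\infty}$ is Markov and preserves $\mathscr R$, hence preserves the cone $\mathscr C$. Exactly as in Proposition \ref{pr:PF-positive}, there is therefore a left Perron eigenfunction $0\neq h_\infty\in\mathscr C'$ with $h_\infty\mathscr L_\infty=-\lambda_\infty h_\infty$. Set $D\eqdef\cE_1-\gap_1(\alpha,w)\var_\pi$. The variational characterization of $\gap_1$ gives $D\in\mathscr C$, and $D$ is translation invariant, so $D\in\mathscr R$. Applying $h_\infty$ to $\mathscr L_\infty\var_\pi=-\cE_1$ yields
\begin{equation}
h_\infty(D)=\big(\lambda_\infty-\gap_1(\alpha,w)\big)h_\infty(\var_\pi)=0 .
\end{equation}
If $h_\infty\in\interior(\mathscr C')$, then \eqref{eq:cone-interior-dual} forces $D=0$, that is, $\cE_1=\gap_1(\alpha,w)\var_\pi$ as quadratic forms. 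Comparing the coefficients of $\theta_x\theta_y$, $x\neq y$, gives $\pi_x r_{xy}=\gap_1(\alpha,w)\pi_x\pi_y$, hence $c_{xy}=\gap_1(\alpha,w)/\alpha_0$ for all $x\neq y$, which is the mean-field condition. The final sentence of the proposition then follows from Theorem \ref{th:phase-transition} together with the strict monotonicity of Proposition \ref{pr:monotonicity}: $\tau_c=\infty$ exactly when $\lambda_\tau<\gap_1$ for all $\tau$, which is equivalent to $\lambda_\infty=\sup_\tau\lambda_\tau\le\gap_1$, hence to equality.

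\emph{Main obstacle.} The step I expect to be hardest is showing $h_\infty\in\interior(\mathscr C')$, i.e.\ strong positivity \eqref{eq:strong-positivity} for the limiting semigroup $e^{t\mathscr L_\infty}$. I would rerun the proof of Propositions \ref{pr:strong-positivity-graph}--\ref{pr:strong-positivity-hypergraph} with $\supp(\Upsilon)$ replaced by $\supp(\Upsilon_\infty)$ from \eqref{eq:supp-Upsilon-infty}. The propagation-of-zeros Lemma \ref{lem:propagation-zeros} goes through verbatim, since $\mathscr L_\infty$ is a finite-rate jump generator whose updates are exactly the matrices $K^\pi_B$ with $w_B>0$. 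What is genuinely needed is the full-range property $\cS^{\tups_\infty}(\theta)+\R\mathbf1=\R^n$ for every nonconstant $\theta$ under minimality. Remark \ref{rem:full-range-refinements} defers this to Appendix \ref{app:modules-etc}, and I would invoke it there. Once it holds, strong positivity of the primal semigroup transfers to the dual cone, and the strong Perron--Frobenius theorem places $h_\infty$ in $\interior(\mathscr C')$, which completes the argument.
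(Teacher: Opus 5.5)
Your proposal is correct and follows the paper's proof essentially step for step. You use the same pairwise rewriting of $\cE_1$ and $\var_\pi$, and the same combination of Lemma~\ref{lem:variance-infty} with \eqref{eq:gap-lower-infty} for the mean-field direction. For the converse, you test the same Poincar\'e defect $\cE_1-\gap_1(\alpha,w)\var_\pi\in\mathscr C$ against a left Perron eigenfunction $h_\infty\in\interior(\mathscr C')$ obtained from the strong positivity of Proposition~\ref{pr:full-range-infty}; arguing by contraposition of the paper's strict-inequality conclusion is immaterial, and your reduction of $\tau_c(\alpha,w)=\infty$ to $\lambda_\infty=\gap_1(\alpha,w)$ via strict monotonicity also matches.
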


\begin{proof}
	For distinct $x,y\in[n]$, set
	\begin{equation}
	c_{xy}
		\eqdef
		\sum_{B\ni {x,y}}
		\frac{w_B}{\alpha(B)}\fstop
	\end{equation}
	Then, the one-particle Dirichlet form in \eqref{eq:dirichlet-form-1} and the corresponding variance read as
	\begin{equation}
		\cE_1(\theta)
		=
		\frac1{2\alpha_0}
		\sum_{x,y\in [n]}
		\alpha_x\alpha_y\,c_{xy}
		\tonde{\theta_x-\theta_y}^2\comma
		\label{eq:E1-var-pairwise}\qquad
		\var_\pi(\theta)
		=
		\frac1{2\alpha_0^2}
		\sum_{x,y\in [n]}
		\alpha_x\alpha_y
		\tonde{\theta_x-\theta_y}^2\fstop
	\end{equation}
	Thus, the condition on the right-hand side of \eqref{eq:mean-field-infty-equivalence} is equivalent to
	\begin{equation}\label{eq:Poincare-identity}
		\cE_1
		=
		\gap_1(\alpha,w)\var_\pi\fstop
	\end{equation}

	If \eqref{eq:Poincare-identity} holds, Lemma \ref{lem:variance-infty} gives
	\begin{equation}
		\mathscr L_\infty\var_\pi
		=
		-\gap_1(\alpha,w)\var_\pi\fstop
	\end{equation}
	Hence, $\lambda_\infty\le\gap_1(\alpha,w)$, and equality follows from \eqref{eq:gap-lower-infty}.
	
	Conversely, suppose that \eqref{eq:Poincare-identity} fails. By  Lemma~\ref{lem:variance-infty} and the one-particle Poincaré inequality,
	\begin{equation}
		\mathscr D
		\eqdef
		-\mathscr L_\infty\var_\pi
		-\gap_1(\alpha,w)\var_\pi = \cE_1-\gap_1(\alpha,w)\var_\pi
		\in
		\mathscr C\setminus\{0\}\fstop
	\end{equation}
	By Proposition \ref{pr:full-range-infty}, the hidden-model semigroup associated to $\mathscr L_\infty|_{\mathscr R}$ is strongly positive. 
 Hence, as in Proposition \ref{pr:PF-strong}, there exists $h_\infty\in\interior(\mathscr C')$ satisfying
	\begin{equation}\label{eq:h-dual-infty}
		h_\infty\mathscr L_\infty
		=
		-\lambda_\infty h_\infty\fstop
	\end{equation}
	Therefore, recalling that $\mathscr D\in \mathscr C\setminus \{0\}$ and $h_\infty \in \interior(\mathscr C')$,
	\begin{equation}
		0<h_\infty(\mathscr D)
		=
		\tonde{\lambda_\infty-\gap_1(\alpha,w)}
		h_\infty(\var_\pi)\fstop
	\end{equation}
	Since $h_\infty(\var_\pi)>0$, this proves
	$
	\lambda_\infty>\gap_1(\alpha,w)
	$.
	
	Finally, Proposition \ref{pr:monotonicity} and \eqref{eq:lambda-infty} give
	$
	\lambda_\tau<\lambda_\infty
	$
	for every finite $\tau>0$. Hence $\lambda_\infty=\gap_1(\alpha,w)$ precisely when
	$
	\lambda_\tau<\gap_1(\alpha,w)
	$
	for every $\tau>0$, which is equivalent to $\tau_c(\alpha,w)=\infty$.
\end{proof}

\subsubsection{Gap upper bounds} This section is devoted to the proof of the upper bound in Theorem~\ref{th:gap-1-2-quantitative}.
The idea behind it is quite simple: by lifting a one-particle gap-eigenfunction to the hidden model and considering its square, the failure of the usual derivation rule is measured by the carré-du-champ of $\mathscr L_\infty$, which provides the strict correction below $2\gap_1(\alpha,w)$.

Recall \eqref{eq:R-k-ast} and \eqref{eq:tilde-tensor}--\eqref{eq:hat-tilde-diagonal}; in particular, 
for every $\psi:[n]\to \R$ in $\mathscr H_{1,\ast}$,  
\begin{equation}
\textstyle	\widetilde\psi(\theta)
	=
	\scalar{\psi}{\theta}_\pi
	=
	\sum_{x\in[n]}
	\pi_x\,\psi(x)\,\theta_x\comma
	\qquad
	\theta\in\R^n\comma
\end{equation}
belongs to $\mathscr R_{1,\ast}$.
Further, recalling that $\mathscr R=\mathscr R_{2,\ast}$ and writing $(g\circ K_B^\pi)(\theta)=g(K_B^\pi \theta)$, $\theta \in \R^n$,
\begin{equation}\label{eq:carre-du-champ-infty}
	\textstyle
\Gamma_\infty(g)
	\eqdef
		\mathscr L_\infty(g^2)
		-
		2g\mathscr L_\infty g
	=
	\sum_{B\subseteq[n]}
	w_B
	\left(
	g\circ K_B^\pi-g
	\right)^2\comma\qquad g \in \mathscr R\comma
\end{equation}
denotes the carré-du-champ associated with $\mathscr L_\infty$.  Clearly, $\Gamma_\infty\, \mathscr R_{1,\ast}\subseteq \mathscr C$.
\begin{proposition}\label{pr:lambda-infty-upper}
	If the hypergraph induced by $w$ is minimal, then
	\begin{equation}\label{eq:lambda-infty-comparison}
		\lambda_\infty
		<
		2\gap_1(\alpha,w)\fstop
	\end{equation}
	Consequently, by \eqref{eq:lambda-tau} and \eqref{eq:lambda-infty}, 
	\begin{equation}\label{eq:gap-upper-strict-minimal}
		\gap_{2,\ast}(\tau\alpha,w)
		<
		2\gap_1(\alpha,w)\comma\qquad \tau>0\fstop
	\end{equation}
\end{proposition}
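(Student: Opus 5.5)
Let $\psi\in\mathscr H_{1,\ast}$ be a one-particle gap eigenfunction, $L_1\psi=-\gap_1(\alpha,w)\psi$. Set $\phi\eqdef\widetilde\psi\in\mathscr R_{1,\ast}$. Since $\mathscr L_\infty$ acts linearly on linear functions, $\phi(K_B^\pi\theta)=\sum_x (K_B^\pi\theta)_x\pi_x\psi(x)$, and the intertwining of Proposition~\ref{pr:particle-system-intertwining} should persist at $\tau=\infty$ in degree one. Indeed, the one-particle rates \eqref{eq:RW-rates} do not depend on $\tau$, and linear functions only see $\mathbf E_B[U]=\pi_B$. This gives $\mathscr L_\infty\phi=-\gap_1(\alpha,w)\phi$. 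The candidate quadratic test function is then $\phi^2$. It is homogeneous of degree two and translation invariant along $\mathbf1$, because $\phi(\mathbf1)=\scalar{\psi}{\mathbf1}_\pi=0$. By \eqref{eq:R-k-ast} it therefore lies in $\mathscr C\setminus\{0\}$.

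By definition of the carré-du-champ \eqref{eq:carre-du-champ-infty},
\begin{equation}
	-\mathscr L_\infty(\phi^2)=2\gap_1(\alpha,w)\,\phi^2-\Gamma_\infty(\phi)\fstop
\end{equation}
Under minimality, Proposition~\ref{pr:full-range-infty} gives strong positivity of the $\mathscr L_\infty$ semigroup on $\mathscr R$. As in the proof of Proposition~\ref{pr:mean-field-infty}, this yields a left Perron eigenfunction $h_\infty\in\interior(\mathscr C')$ with $h_\infty\mathscr L_\infty=-\lambda_\infty h_\infty$. Applying $h_\infty$ to the identity above gives
\begin{equation}
	\lambda_\infty\,h_\infty(\phi^2)=2\gap_1(\alpha,w)\,h_\infty(\phi^2)-h_\infty(\Gamma_\infty(\phi))\fstop
\end{equation}
Since $\phi^2\in\mathscr C\setminus\{0\}$, we have $h_\infty(\phi^2)>0$ by \eqref{eq:cone-interior-dual}. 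Strict inequality $\lambda_\infty<2\gap_1(\alpha,w)$ then follows once $\Gamma_\infty(\phi)\in\mathscr C\setminus\{0\}$.

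The main point is to check that $\Gamma_\infty(\phi)\neq0$. Each summand is $w_B(\phi\circ K_B^\pi-\phi)^2$, a square of a linear form. It vanishes identically only if $\phi\circ K_B^\pi=\phi$ for every $B$ with $w_B>0$. In that case $\mathscr L_\infty\phi=0$, which contradicts $\mathscr L_\infty\phi=-\gap_1\phi$ with $\gap_1>0$ and $\phi\neq0$ (connectedness gives $\gap_1>0$). One still has to confirm that $\Gamma_\infty(\phi)$ lies in $\mathscr R$, i.e.\ is translation invariant. This holds because $\phi\circ K_B^\pi-\phi$ kills $\mathbf1$, as $K_B^\pi\mathbf1=\mathbf1$ and $\phi(\mathbf1)=0$. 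The only delicate input is thus the strong positivity at $\tau=\infty$, which is supplied by Proposition~\ref{pr:full-range-infty} (Appendix).

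Finally, \eqref{eq:gap-upper-strict-minimal} follows from \eqref{eq:lambda-infty}: $\lambda_\tau\le\sup_\tau\lambda_\tau=\lambda_\infty<2\gap_1(\alpha,w)$ for every $\tau>0$, using that $\gap_1(\tau\alpha,w)=\gap_1(\alpha,w)$ by Remark~\ref{rem:monotonicity-gap-1}.
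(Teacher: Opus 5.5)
Your proposal is correct and follows essentially the same route as the paper. It lifts the one-particle gap eigenfunction to $\widetilde\phi$, uses the carr\'e-du-champ identity $-\mathscr L_\infty(\widetilde\phi^{\,2})=2\gap_1(\alpha,w)\widetilde\phi^{\,2}-\Gamma_\infty(\widetilde\phi)$, rules out $\Gamma_\infty(\widetilde\phi)=0$ via $\gap_1(\alpha,w)>0$, and pairs with the left Perron eigenfunction $h_\infty\in\interior(\mathscr C')$ supplied by Proposition~\ref{pr:full-range-infty}. Your explicit check that $\widetilde\phi^{\,2}$ and $\Gamma_\infty(\widetilde\phi)$ are translation invariant, and hence lie in $\mathscr C$, fills in a detail the paper only asserts.
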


\begin{proof}
	Let $\phi\in\mathscr H_{1,\ast}$ be a one-particle gap-eigenfunction, so that
	\begin{equation}
		\pi(\phi)=0
		\qquad\text{and}\qquad
		L_1\phi
		=
		-\gap_1(\alpha,w)\phi\fstop
	\end{equation}
Hence, by an immediate extension of Proposition \ref{pr:particle-system-intertwining} to the case $\tau=\infty$,
	\begin{equation}\label{eq:L-infty-linear-eigenfunction}
		\mathscr L_\infty\widetilde\phi
		=
		\widetilde{L_1\phi}
		=
		-\gap_1(\alpha,w)\widetilde\phi\fstop
	\end{equation}
	Combining the definition in \eqref{eq:carre-du-champ-infty} with
	\eqref{eq:L-infty-linear-eigenfunction} gives
	\begin{equation}\label{eq:L-infty-square}
		-\mathscr L_\infty((\widetilde\phi)^2)
		=
		2\gap_1(\alpha,w)(\widetilde\phi)^2
		-
		\Gamma_\infty(\widetilde\phi)\fstop
	\end{equation}

	Both $(\widetilde\phi)^2$ and
	$\Gamma_\infty(\widetilde\phi)$ belong to $\mathscr C\subseteq \mathscr R$. Moreover,
	\begin{equation}
		(\widetilde\phi)^2\neq0
		\qquad\text{and}\qquad
		\Gamma_\infty(\widetilde\phi)\neq0\fstop
	\end{equation}
	Indeed, if $\Gamma_\infty(\widetilde\phi)=0$, then
	\begin{equation}
		\widetilde\phi\circ K_B^\pi
		=
		\widetilde\phi
		\qquad
		\text{for every $B\subseteq[n]$ with $w_B>0$}\comma
	\end{equation}
	and therefore $\mathscr L_\infty\widetilde\phi=0$. This contradicts
	\eqref{eq:L-infty-linear-eigenfunction}, because minimality implies
	$\gap_1(\alpha,w)>0$.
	
	By Proposition \ref{pr:full-range-infty}, there exists a left Perron eigenfunction
	$h_\infty\in\interior(\mathscr C')$ of $\mathscr L_\infty$, see \eqref{eq:h-dual-infty}. 
	Applying $h_\infty$ to \eqref{eq:L-infty-square} yields
	\begin{equation}
		\lambda_\infty
		h_\infty((\widetilde\phi)^2)
		=
		2\gap_1(\alpha,w)
		h_\infty((\widetilde\phi)^2)
		-
		h_\infty(\Gamma_\infty(\widetilde\phi))\fstop
	\end{equation}
	Since $h_\infty\in\interior(\mathscr C')$ is strictly positive on
	$\mathscr C\setminus\{0\}$,  one has
	$
		h_\infty((\widetilde\phi)^2), 
		h_\infty(\Gamma_\infty(\widetilde\phi))>0$.
	Dividing by
	$h_\infty((\widetilde\phi)^2)$ proves
	\eqref{eq:lambda-infty-comparison}.
\end{proof}

\begin{remark}[Extension to non-minimal hypergraphs]\label{rem:lambda-infty-upper-general}
	The conclusion of Proposition \ref{pr:lambda-infty-upper} is stated for minimal hypergraphs, hence, by Propositions \ref{pr:reduction}--\ref{pr:reduction-gap}, under assumptions \eqref{eq:w-n}--\eqref{eq:connected-hypergraph}. The general upper bound in Theorem \ref{th:gap-1-2-quantitative} follows by treating separately the two possible failures of these conditions.

If \eqref{eq:connected-hypergraph} fails, then
	$\lambda_\tau
			=
			\gap_1(\alpha,w)
			=
			0$; see, e.g., Remark \ref{rem:positivity-gap}.
		Thus, \eqref{eq:gap-upper-strict-minimal} remains valid with the strict inequality replaced by equality.
		 Now suppose that \eqref{eq:w-n} fails. By \eqref{eq:global-shift-gap-k-ast} and passing to the limit $\tau\to\infty$, we obtain
		\begin{equation}
			\lambda_\infty(\alpha,w) = \lambda_\infty(\alpha,w^{\scriptscriptstyle \circ})+w_{[n]}
			\le
			2\gap_1(\alpha,w^{\scriptscriptstyle\circ})
			+
			w_{[n]}
		<
			2\gap_1(\alpha,w^{\scriptscriptstyle\circ})
			+
			2w_{[n]}
			=
			2\gap_1(\alpha,w)\comma
		\end{equation}
		where for the second step one applies   either Proposition \ref{pr:lambda-infty-upper} if $w^{\scriptscriptstyle \circ}\eqdef\car_{B\neq [n]}\,w$ induces a connected hypergraph,  or the preceding disconnected case otherwise.

	Thus, the upper bound is strict whenever the hypergraph is connected, while equality occurs in the disconnected case, where both sides vanish.
\end{remark}

The constant $2$ in Proposition \ref{pr:lambda-infty-upper} cannot be improved, although equality is never attained in the nondegenerate case.

\begin{remark}[Sharpness of the upper bound in \eqref{eq:gap-1-2-quantitative}]\label{rem:lambda-infty-sharp}
	For every $\varepsilon>0$, there exist weights $\alpha$ and $w$ such that
	\begin{equation}
		\gap_{2,\ast}(\alpha,w)
		>
	\tonde{2-\varepsilon}\gap_1(\alpha,w)\fstop
	\end{equation}
	Hence, the factor $2$ in the upper bound of Theorem \ref{th:gap-1-2-quantitative} is sharp.

	A concrete example is the three-site path with
	\begin{equation}
\alpha=(1,s,1)\comma
 s>0\comma\qquad		w_B = \begin{dcases}
			1 &\text{if}\ B=\{1,2\}, \{2,3\}\\
			0 &\text{else}\fstop
		\end{dcases}
	\end{equation}
	Writing $c_s=(1+s)^{-1}$, a direct computation gives
	\begin{equation}
		\gap_1(\alpha,w)=1-c_s
		\comma\qquad
		\lambda_\infty
		=
		\lim_{\tau\to\infty}\gap_{2,\ast}(\tau\alpha,w)
		=
		1-c_s^2\fstop
	\end{equation}
	Thus, $\lambda_\infty/\gap_1=1+c_s\to2$ as $s\to 0$. Choosing first $s=s(\varepsilon)>0$ sufficiently small and then, by Proposition \ref{pr:monotonicity}, $\tau=\tau(\varepsilon)>0$ sufficiently large proves the claim.
\end{remark}

\subsection{The limit $\tau\to0$}\label{sec:tau-0}
We now turn to the regime $\tau\to0$, with the goal of proving the first inequality in \eqref{eq:limits} and characterizing the geometries for which $\tau_c(\alpha,w)=0$.

As $\tau\to0$, the Dirichlet redistribution law concentrates on the vertices of the simplex: for every nonempty $B\subseteq[n]$ with $w_B>0$,
\begin{equation}\label{eq:dirichlet-zero-limit}
	{\rm Dir}(\tau\alpha^B)
	\Longrightarrow
	\sum_{z\in B}\pi_{z|B}\,e_{e_z}
	\qquad\text{as }\tau\to0\comma
\end{equation}
where $\pi_{z|B}$ is defined in \eqref{eq:var-pi-conditional-pi} and $(e_z)_{z\in B}$ are the vertices of the simplex on $B$. Consequently, in the limiting hidden-model dynamics, when the clock of $B$ rings, a site $Z\in B$ is sampled with law $\pi_B$, all coordinates in $B$ are replaced by the common value $\theta_Z$, and the coordinates outside $B$ are left unchanged.

The corresponding particle dynamics admits an equally simple description: all particles lying in the updated block are sent to the same sampled site $Z$. Thus, once two particles meet, they subsequently move together as a single particle, effectively reducing the system to one with fewer particles.  When one restricts attention to a fixed particle number, such mergers are instead recorded as killing: the process is stopped as soon as two particles coalesce.

We shall use both viewpoints. The hidden-model description gives a direct passage from the finite-$\tau$ setup to the limiting dynamics, whereas the particle-system description, through a direct intertwining at $\tau=0$, identifies the limiting eigenvalue with the principal Dirichlet eigenvalue of a killed two-particle chain.

\subsubsection{The limiting hidden model}

We now introduce the candidate limiting hidden-model dynamics obtained by letting $\tau\to0$ in the generators $\mathscr L_\tau$, and make this convergence precise in Lemma \ref{lem:diff-generators-zero} below.

For every $g\in\mathscr R=\mathscr R_{2,\ast}$, define
\begin{equation}\label{eq:hidden-generator-zero}
	\mathscr L_0g(\theta)
	\eqdef
	\sum_{B\subseteq[n]}w_B
	\sum_{z\in B}\pi_{z|B}
	\left(
	g(K_B^{e_z}\theta)-g(\theta)
	\right)
	\comma
	\qquad \theta\in\R^n\fstop
\end{equation}

\begin{lemma}\label{lem:diff-generators-zero}
	For every $\tau>0$ and $g\in\mathscr R$,
	\begin{equation}\label{eq:diff-generators-zero}
		\tonde{\mathscr L_0-\mathscr L_\tau}g
		=
		\sum_{B\subseteq[n]}
		w_B\,
		\frac{\tau\alpha(B)}{1+\tau\alpha(B)}\,
		g(\mathbf1_B)\,
		\var_{\pi_B}\fstop
	\end{equation}
	Consequently,
	\begin{equation}
		\mathscr L_\tau|_{\mathscr R}
		\longrightarrow
		\mathscr L_0|_{\mathscr R}
		\qquad\text{in operator norm as }\tau\to0\fstop
	\end{equation}
\end{lemma}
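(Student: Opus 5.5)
The plan is to redo, at fixed $\tau$, the quadratic expansion from the proof of Lemma~\ref{lem:hidden-generator-monotonicity}. This time we compare the Dirichlet update with the limiting vertex-sampling update instead of with deterministic averaging. Both updates will be written relative to the common center $K_B^\pi\theta$, and the difference will be read off from their second moments.

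First, fix a nonempty block $B$ and $\theta\in\R^n$. For $z\in B$ we have $K_B^{e_z}\theta=K_B^\pi\theta+(\theta_z-\pi_B(\theta))\mathbf1_B$. Let $Z\sim\pi_B$ on $B$ and set $W_0\eqdef\theta_Z-\pi_B(\theta)$. Then $W_0$ has mean zero and $\mathbf E[W_0^2]=\var_{\pi_B}(\theta)$.

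Next, we expand $g\in\mathscr R$, which is a homogeneous quadratic form, around $K_B^\pi\theta$ in the direction $\mathbf1_B$. Write $g(v+s\mathbf1_B)=g(v)+2s\,\beta(v,\mathbf1_B)+s^2g(\mathbf1_B)$, where $\beta$ is the associated symmetric bilinear form. Since $W_0$ is centered, the linear term disappears after averaging, and we obtain
\[
\textstyle\sum_{z\in B}\pi_{z|B}\,g(K_B^{e_z}\theta)=g(K_B^\pi\theta)+\var_{\pi_B}(\theta)\,g(\mathbf1_B).
\]
The proof of Lemma~\ref{lem:hidden-generator-monotonicity} already gives the finite-$\tau$ analogue: there the coefficient of $g(\mathbf1_B)$ is $\var_{\pi_B}(\theta)/(1+\tau\alpha(B))$, which comes from the Dirichlet covariance formula.

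We then subtract the two expressions block by block and weight by $w_B$. The terms $g(K_B^\pi\theta)-g(\theta)$ cancel between $\mathscr L_0$ and $\mathscr L_\tau$. What remains is the coefficient $w_B\bigl(1-\tfrac{1}{1+\tau\alpha(B)}\bigr)g(\mathbf1_B)\var_{\pi_B}=w_B\tfrac{\tau\alpha(B)}{1+\tau\alpha(B)}g(\mathbf1_B)\var_{\pi_B}$, and this proves \eqref{eq:diff-generators-zero}. Equivalently, the identity is the $\tau_1\to0$ limit of \eqref{eq:hidden-generator-difference} at $\tau_2=\tau$, where $\mathbf E_{B,\tau}[g(K_B^{U}\theta)]$ converges to the vertex average by \eqref{eq:dirichlet-zero-limit}.

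Finally, for the convergence statement we note that $g\mapsto g(\mathbf1_B)\var_{\pi_B}$ is a fixed linear map on the finite-dimensional space $\mathscr R$. Its coefficient $\tau\alpha(B)/(1+\tau\alpha(B))$ tends to $0$ as $\tau\to0$, and only finitely many blocks contribute, so $\mathscr L_\tau|_{\mathscr R}\to\mathscr L_0|_{\mathscr R}$ in operator norm. No step is a real obstacle. The only point that needs care is checking that the vertex mixture has second moment exactly $\var_{\pi_B}(\theta)$, which is the $\tau\to0$ limit of the Dirichlet factor $1/(1+\tau\alpha(B))$.
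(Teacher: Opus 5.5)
Your proof is correct and is essentially the paper's argument. The paper gets \eqref{eq:diff-generators-zero} directly from \eqref{eq:hidden-generator-difference}, which rests on the same quadratic expansion of $g$ around $K_B^\pi\theta$ along $\mathbf1_B$ with Dirichlet second moment $\var_{\pi_B}(\theta)/(1+\tau\alpha(B))$, and then uses finite dimensionality of $\mathscr R$; your direct computation that vertex sampling has second moment exactly $\var_{\pi_B}(\theta)$ just makes the $\tau_1\to0$ endpoint explicit without any limiting argument.
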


\begin{proof}The first identity follows directly from \eqref{eq:hidden-generator-difference}. Since $\mathscr R$ is finite-dimensional, letting $\tau\to0$ in that identity also yields convergence in operator norm.
\end{proof}

Finite-dimensional spectral continuity and Proposition \ref{pr:monotonicity} now yield
\begin{equation}\label{eq:lambda-zero}
	\spec(-\mathscr L_0|_{\mathscr R})\subseteq[0,\infty)
	\comma\qquad
	\lambda_0
	\eqdef
	\inf_{\tau>0}\lambda_\tau
	=
	\lim_{\tau\to0}\lambda_\tau
	=
	\min\spec\tttonde{-\mathscr L_0|_{\mathscr R}}\fstop
\end{equation}

\subsubsection{The killed two-particle system}

We next turn the limiting particle description into an exact algebraic statement, by intertwining the hidden-model generator at $\tau=0$ with a killed two-particle generator.
Set
\begin{equation}
	[n]^2_\tneq
	\eqdef
	\tset{(x,y)\in[n]^2:x\neq y}\fstop
\end{equation}
For every block $B\subseteq[n]$, define the killed two-particle transition operator $\Pi_{B,2}^\dagger$ on functions $\psi:[n]^2_\tneq\to\R$ by
\begin{equation}\label{eq:Pi-B-two-dagger}
	\Pi_{B,2}^\dagger\psi(x,y)
	\eqdef
	\begin{dcases}\textstyle
		\psi(x,y)
		&\text{if}\ x,y\notin B\\ 
		\textstyle
		\Pi_{B,1}\psi(\emparg,y)(x)=
		\sum_{z\in B}\pi_{z|B}\,\psi(z,y)
		&\text{if}\ x\in B, y\notin B\\
		\textstyle
		\Pi_{B,1}\psi(x,\emparg)(y)=\sum_{z\in B}\pi_{z|B}\,\psi(x,z)
		&\text{if}\ x\notin B, y\in B\\
		0
		&\text{if}\ x,y\in B\fstop
	\end{dcases}
\end{equation}
Accordingly, in analogy with \eqref{eq:gen-particle}, define
\begin{equation}\label{eq:L-two-dagger}
	L_2^\dagger
	\eqdef
	\sum_{B\subseteq[n]}
	w_B\,\big(\Pi_{B,2}^\dagger-I\big)\fstop
\end{equation}
Before killing, each particle marginally follows the one-particle dynamics, with each block $B$ ringing at rate $w_B$ and relocating any particle it contains according to $\pi_B$, until the first update involving both particles, at which time they merge and the two-particle process is killed. \begin{remark}Even when $w$ induces a connected hypergraph, this killed dynamics need not be irreducible on $[n]^2_\tneq$; the form and number of its communicating classes depend on the underlying geometry and will play a central role in Proposition \ref{pr:segment-zero} below.
\end{remark}

Next, we present an intertwining relation involving $\mathscr L_0$ and $L_2^\dagger$. In what follows, let 
\begin{equation}\label{eq:Hneq}
\mathscr H_\tneq^\per=	\mathscr H_{2,\ast,\tneq}^\per \eqdef\{\psi:[n]_\tneq^2\to \R\}\comma\qquad 
\mathscr H_\tneq=	\mathscr H_{2,\ast,\tneq}\eqdef\set{\psi \in \mathscr H_\tneq^\per: \text{symmetric}}\fstop
\end{equation}

\begin{proposition}[Intertwining at $\tau=0$]\label{pr:intertwining-zero}
	The map
	$\Lambda:
		\mathscr H_\tneq
		\to
		\mathscr R$,
	given by
	\begin{equation}\label{eq:Lambda-two-dagger}
		\Lambda\psi(\theta)
		\eqdef
			\frac12
		\sum_{(x,y)\in[n]^2_\tneq}
		\pi_x\pi_y\,
		\psi(x,y)
		\tonde{\theta_x-\theta_y}^2
		\comma
		\qquad \theta\in\R^n\comma \psi \in \mathscr H_\tneq\comma
	\end{equation}
	is a linear isomorphism and satisfies
	\begin{equation}\label{eq:intertwining-zero}
		\mathscr L_0\Lambda
		=
		\Lambda L_2^\dagger
		\qquad\text{on}\ \mathscr H_\tneq\fstop
	\end{equation}
\end{proposition}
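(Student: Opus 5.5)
The proof has two independent parts: linear isomorphism, then the intertwining identity.

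\emph{Isomorphism.} First, $\Lambda\psi$ is a homogeneous quadratic polynomial. It is invariant under $\theta\mapsto\theta+s\mathbf1$, so by \eqref{eq:R-k-ast} we have $\Lambda\psi\in\mathscr R$. For injectivity, note that the quadratic forms $(\theta_x-\theta_y)^2$, $x<y$, are linearly independent: the coefficient of the monomial $\theta_x\theta_y$, $x\neq y$, in $\Lambda\psi$ equals $-\pi_x\pi_y\psi(x,y)$ (after symmetrization), and $\pi_x\pi_y>0$. Hence $\Lambda\psi=0$ forces $\psi=0$. For surjectivity, compare dimensions. We have $\dim\mathscr H_\tneq=\binom n2$. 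Also $\dim\mathscr R=\dim\mathscr R_{2,\ast}=\dim\mathscr H_{2,\ast}=\dim\mathscr H_2-\dim\mathscr H_1=\binom{n+1}2-n=\binom n2$, using \eqref{eq:decomp-particle} and injectivity of $\mathfrak a_2$. Alternatively, one can argue directly: a translation-invariant quadratic form $g$ satisfies $g(\theta)=\sum_{x,y}a_{xy}\theta_x\theta_y$ with $\sum_y a_{xy}=0$, so $g=-\frac12\sum_{x\neq y}a_{xy}(\theta_x-\theta_y)^2$.

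\emph{Intertwining.} By linearity in $\psi$ and in the blocks, it suffices to fix a block $B$ and a pair $(x,y)$ with $x\neq y$, taking $\psi=\psi_{xy}=\car_{(x,y)}+\car_{(y,x)}$. In that case $\Lambda\psi_{xy}=\pi_x\pi_y(\theta_x-\theta_y)^2$. I compute the $B$-part of $\mathscr L_0$ applied to $q_{xy}(\theta)\eqdef(\theta_x-\theta_y)^2$, namely $\sum_{z\in B}\pi_{z|B}\big(q_{xy}(K_B^{e_z}\theta)-q_{xy}(\theta)\big)$, splitting into the cases of \eqref{eq:Pi-B-two-dagger}.
\begin{itemize}
\item If $x,y\notin B$, then $q_{xy}(K_B^{e_z}\theta)=q_{xy}(\theta)$, so the contribution is $0$.
\item If $x,y\in B$, then $K_B^{e_z}\theta$ has equal $x$- and $y$-coordinates, so $q_{xy}(K_B^{e_z}\theta)=0$ and the contribution is $-q_{xy}$.
\item If $x\in B$ and $y\notin B$, the contribution is $\sum_{z\in B}\pi_{z|B}(\theta_z-\theta_y)^2-(\theta_x-\theta_y)^2$. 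This is a combination of the forms $q_{zy}$; the term $z=y$ cannot occur since $y\notin B$.
\end{itemize}

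On the particle side I then compute $\Lambda(\Pi_{B,2}^\dagger-I)\psi$ for general symmetric $\psi$. I write $\Lambda\psi=\frac12\sum_{(x,y)}\pi_x\pi_y\psi(x,y)q_{xy}$ and move $\Pi_{B,2}^\dagger$ to the other side by a detailed-balance rearrangement. In the case $x\in B$, $y\notin B$, the weight $\pi_x\pi_y\Pi_{B,1}\psi(\cdot,y)(x)$ summed over $x\in B$ equals $\sum_{x,z\in B}\pi_x\pi_{z|B}\pi_y\psi(z,y)$. Since $\sum_{x\in B}\pi_x\pi_{z|B}=\pi_z$, this reduces to $\sum_{z\in B}\pi_z\pi_y\psi(z,y)$, paired against $q_{xy}$ averaged with weights $\pi_{x|B}$. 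After relabeling $x\leftrightarrow z$, this matches the third bullet above. The killed case matches $-q_{xy}$, and the exterior case is trivial. Summing over $B$ with weights $w_B$ gives \eqref{eq:intertwining-zero}.

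The only delicate step is this bookkeeping. One must use the reversibility of $\pi_B$-resampling, i.e.\ $\pi_x\pi_{z|B}=\pi_z\pi_{x|B}$ for $x,z\in B$, to move the averaging from $\psi$ onto $q$. One must also handle the ordered/unordered pair symmetry, and the fact that the killed mass appears precisely as the $-\psi(x,y)$ term when $x,y\in B$.
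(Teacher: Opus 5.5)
Your proof is correct and follows essentially the same route as the paper. Your case-by-case computation of the $B$-part of $\mathscr L_0$ on $(\theta_x-\theta_y)^2$ is exactly the block duality $\sum_{z\in B}\pi_{z|B}\,(K_B^{e_z}\theta)$-evaluation $=\Pi_{B,2}^\dagger$ applied to $(x,y)\mapsto(\theta_x-\theta_y)^2$. Your identity $\pi_x\pi_{z|B}=\pi_z\pi_{x|B}$ is precisely the self-adjointness of $\Pi_{B,2}^\dagger$ in $L^2([n]^2_\tneq,\pi\otimes\pi)$ that the paper uses to move the update from the quadratic forms onto $\psi$. The isomorphism part (translation invariance, coefficient comparison, dimension count) also matches, and your computation $\dim\mathscr R=\binom{n+1}{2}-n=\binom n2$ supplies a justification that the paper leaves implicit.
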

\begin{proof}
	For $\theta\in\R^n$, set
	\begin{equation}
		d_\theta(x,y)
		\eqdef
		\tonde{\theta_x-\theta_y}^2
		\comma
		\qquad (x,y)\in[n]^2_\tneq\fstop
	\end{equation}
	For every block $B\subseteq[n]$, the definitions of the two limiting updates give
	\begin{equation}\label{eq:block-duality-zero}
		\sum_{z\in B}\pi_{z|B}\,
		d_{K_B^{e_z}\theta}(x,y)
		=
		\Pi_{B,2}^\dagger d_\theta(x,y)
		\comma
		\qquad (x,y)\in[n]^2_\tneq\fstop
	\end{equation}
	Indeed, if exactly one of $x,y$ belongs to $B$, both sides average the corresponding coordinate according to $\pi_B$; if both belong to $B$, both sides vanish.

	Moreover, $\Pi_{B,2}^\dagger$ is self-adjoint in
	$L^2([n]^2_\tneq,\pi\otimes\pi)$. For instance, if $x,z\in B$ and $y\notin B$, this follows from
$
		\pi_x\pi_y\,\pi_{z|B}
		=
		\pi_z\pi_y\,\pi_{x|B}$; the other cases are analogous. Hence, by \eqref{eq:block-duality-zero},
	\begin{equation}		\label{eq:block-intertwining-zero}
		\sum_{z\in B}\pi_{z|B}\,
		\Lambda\psi(K_B^{e_z}\theta)
		=
		\frac12
		\ttscalar{\psi}{\Pi_{B,2}^\dagger d_\theta}_{\pi\otimes\pi}
		=
		\frac12
		\ttscalar{\Pi_{B,2}^\dagger\psi}{d_\theta}_{\pi\otimes\pi}
		=
		\Lambda\tttonde{\Pi_{B,2}^\dagger\psi}(\theta)\fstop
	\end{equation}
	Subtracting $\Lambda\psi(\theta)$, multiplying by $w_B$, and summing over $B$ proves \eqref{eq:intertwining-zero}.
	
	It remains to prove that $\Lambda:\mathscr H_\tneq\to \mathscr R$ is an isomorphism. Its image consists of homogeneous quadratic polynomials invariant under translations $\theta\mapsto\theta+s\mathbf1$, and is therefore contained in $\mathscr R$ (Proposition \ref{pr:Rk-star}). If $\Lambda\psi=0$, comparison of the coefficient of $\theta_x\theta_y$, for $x\neq y$, gives
	\begin{equation}
		-2\pi_x\pi_y\psi(x,y)=0\fstop
	\end{equation}
	Thus, $\psi=0$, so $\Lambda$ is injective. Since
$
		\dim\mathscr H_\tneq
		=
		\binom n2
		=
		\dim\mathscr R$,
	it is also surjective.
\end{proof}

Since every $\Pi_{B,2}^\dagger$ is self-adjoint in
$L^2([n]^2_\tneq,\pi\otimes\pi)$, the same holds for $L_2^\dagger$. The intertwining in Proposition \ref{pr:intertwining-zero} and \eqref{eq:lambda-zero} therefore imply
\begin{equation}\label{eq:lambda-zero-killed-symmetric}
	\lambda_0
	=
	\min\spec
	(
	-L_2^\dagger|_{\mathscr H_\tneq}
	)\fstop
\end{equation}
In fact, the restriction to symmetric functions may be removed: recalling \eqref{eq:Hneq},
\begin{equation}\label{eq:lambda-zero-killed}
	\lambda_0
	=
	\min\spec(-L_2^\dagger|_{\mathscr H_\tneq^\per})\fstop
\end{equation}
Indeed, transposition $(x,y)\mapsto(y,x)$ commutes with $L_2^\dagger$. Consider a communicating class attaining the smallest principal Dirichlet eigenvalue. If the class is invariant under transposition, uniqueness of its positive principal eigenfunction makes this eigenfunction symmetric. Otherwise, the class and its transpose are distinct, and the sum of their positive principal eigenfunctions is now a symmetric eigenfunction. Thus, the smallest Dirichlet eigenvalue is always attained in $\mathscr H_\tneq$. 

This observation will be crucial below; see, e.g., \eqref{eq:lambda-0-lambda}. Indeed, we shall mainly consider  antisymmetric $L_2^\dagger$-eigenfunctions of the form
\begin{equation}
	\psi(x,y)
	=
	\phi(x)-\phi(y)\comma
	\qquad
	(x,y)\in[n]_\tneq^2\comma
\end{equation}
as
introduced in \eqref{eq:Dphi}, which inherit their eigenvalues from one-particle eigenfunctions $\phi$. A central part of the analysis will consist in characterizing the geometries for which such a $\psi$ has a fixed sign on each communicating class of the killed two-particle dynamics; in that case, $|\psi|$ is a symmetric, nonnegative eigenfunction with the same eigenvalue.

We may now complete the limiting comparison announced in Section \ref{sec:comparison-intro}.

\begin{proposition}\label{pr:first-limit}
	For all positive site weights $\alpha$ and nonnegative block weights $w$,
	\begin{equation}\label{eq:first-limit}
	\lambda_0=	\lim_{\tau\to0}
		\gap_{2,\ast}(\tau\alpha,w)
		\le
		\gap_1(\alpha,w)\fstop
	\end{equation}
\end{proposition}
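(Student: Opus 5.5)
The identity $\lambda_0=\lim_{\tau\to0}\gap_{2,\ast}(\tau\alpha,w)$ is already contained in \eqref{eq:lambda-zero}. So the only thing to prove is the inequality $\lambda_0\le\gap_1(\alpha,w)$, and by \eqref{eq:lambda-zero-killed} this amounts to exhibiting a nonzero $\psi\in\mathscr H_\tneq^\per$ with
\[
\ttscalar{\psi}{-L_2^\dagger\psi}_{\pi\otimes\pi}\le\gap_1(\alpha,w)\,\ttscalar{\psi}{\psi}_{\pi\otimes\pi}.
\]
This suffices because $L_2^\dagger$ is self-adjoint on $L^2([n]^2_\tneq,\pi\otimes\pi)$, so $\min\spec(-L_2^\dagger|_{\mathscr H_\tneq^\per})$ is given by a Rayleigh quotient. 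If $\gap_1(\alpha,w)=0$ there is nothing to prove beyond $\lambda_0\ge0$, which already follows from \eqref{eq:lambda-zero}. I therefore assume $\gap_1>0$.

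The test function will be the antisymmetric lift of the one-particle gap eigenfunction: take $\phi$ with $\pi(\phi)=0$ and $L_1\phi=-\gap_1\phi$, and set $\psi(x,y)=\phi(x)-\phi(y)$. I claim that $\psi$ is actually an eigenfunction, $L_2^\dagger\psi=-\gap_1\psi$. At $\tau=0$ the one-particle rates are $r_{xy}=\sum_{B\ni x,y}w_B\pi_{y|B}$, which agree with \eqref{eq:RW-rates}. So $L_1=\sum_B w_B(\Pi_{B,1}-I)$, where $\Pi_{B,1}\phi(x)=\sum_{z\in B}\pi_{z|B}\phi(z)$ for $x\in B$ and $\Pi_{B,1}\phi(x)=\phi(x)$ otherwise.

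I verify $\Pi_{B,2}^\dagger\psi=\Pi_{B,1}\phi(x)-\Pi_{B,1}\phi(y)$ block by block, using \eqref{eq:Pi-B-two-dagger}.
\begin{itemize}
\item If $x,y\notin B$, both sides equal $\phi(x)-\phi(y)$.
\item If $x\in B$ and $y\notin B$, the left side is $\sum_z\pi_{z|B}(\phi(z)-\phi(y))=\Pi_{B,1}\phi(x)-\phi(y)$, which is the right side. The case $x\notin B$, $y\in B$ is symmetric.
\item If $x,y\in B$, the killed operator gives $0$, while $\Pi_{B,1}\phi(x)=\Pi_{B,1}\phi(y)$ because both equal the $\pi_B$-average. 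So the right side is also $0$.
\end{itemize}
This last case is the only place the killing could cause trouble, and it is exactly compensated. Summing over $B$ with weights $w_B$ gives $L_2^\dagger\psi=(L_1\phi)(x)-(L_1\phi)(y)=-\gap_1\psi$.

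It remains to check that $\psi\ne0$ on $[n]^2_\tneq$. Since $\phi$ is nonconstant (it is orthogonal to constants and nonzero), there exist $x\neq y$ with $\phi(x)\ne\phi(y)$. Hence $\gap_1(\alpha,w)\in\spec(-L_2^\dagger|_{\mathscr H_\tneq^\per})$. By \eqref{eq:lambda-zero-killed}, whose proof was given above through the transposition-symmetry argument, we get $\lambda_0\le\gap_1$.

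The main point needing care is the reliance on \eqref{eq:lambda-zero-killed} rather than \eqref{eq:lambda-zero-killed-symmetric}, since $\psi$ is antisymmetric. If one prefers not to rely on that argument, an alternative is to use $|\psi|$ as a symmetric test function in the Rayleigh quotient. Since $\Pi_{B,2}^\dagger$ is substochastic, $\Pi^\dagger_{B,2}|\psi|\ge|\Pi^\dagger_{B,2}\psi|$ pointwise, which gives $\ttscalar{|\psi|}{-L_2^\dagger|\psi|}\le\ttscalar{\psi}{-L_2^\dagger\psi}$. This still yields the bound via \eqref{eq:lambda-zero-killed-symmetric}.
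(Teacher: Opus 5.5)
Your proof is essentially the paper's. It uses the same antisymmetric lift $\cD\phi(x,y)=\phi(x)-\phi(y)$ and the same blockwise identity $\Pi_{B,2}^\dagger\cD\phi=\cD\Pi_{B,1}\phi$, where the killed case is compensated because $\Pi_{B,1}\phi$ is constant on $B$. It concludes with the same appeal to \eqref{eq:lambda-zero-killed}.

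One slip: your treatment of $\gap_1(\alpha,w)=0$ is backwards. In that case the required inequality is $\lambda_0\le 0$, which does not follow from $\lambda_0\ge 0$. No case split is needed, though. When the hypergraph is disconnected there is a nonconstant $\phi$ with $\pi(\phi)=0$ and $L_1\phi=0$, for instance the indicator of a connected component minus its $\pi$-mass. Your argument then gives $0\neq\cD\phi\in\ker L_2^\dagger$, hence $\lambda_0=0$. The paper avoids the issue by taking any nonconstant eigenfunction with eigenvalue $\lambda\ge 0$, which covers this case uniformly.

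Your closing $|\psi|$ variant is correct and is a slightly more elementary way to pass to symmetric functions. Substochasticity of $\Pi_{B,2}^\dagger$ gives $\ttscalar{|\psi|}{-L_2^\dagger|\psi|}_{\pi\otimes\pi}\le\ttscalar{\psi}{-L_2^\dagger\psi}_{\pi\otimes\pi}$. So \eqref{eq:lambda-zero-killed-symmetric} alone suffices, and the Perron--Frobenius/transposition argument behind \eqref{eq:lambda-zero-killed} is not needed.
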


\begin{proof}
	Let $0\neq \phi\in \mathscr H_{1,\ast}$ be a nonconstant one-particle eigenfunction satisfying
	$
		L_1\phi
		=
		-\lambda\phi$, for some $\lambda \ge 0$.
	Define $\cD\phi\in \mathscr H_\tneq^\per$ as the anti-symmetric function given by
	\begin{equation}\label{eq:Dphi}
		\cD\phi(x,y)
		\eqdef
		\phi(x)-\phi(y)
		\comma
		\qquad (x,y)\in[n]^2_\tneq\fstop
	\end{equation}
	For every block $B\subseteq[n]$, a direct verification gives, for every $\phi\in \mathscr H_1$,
	\begin{equation}
		\Pi_{B,2}^\dagger \cD\phi
		=
		\cD\Pi_{B,1}\phi\fstop
	\end{equation}
	Indeed, when both particles belong to $B$, both sides vanish, since $\Pi_{B,1}\phi$ is constant on $B$. Consequently,
	\begin{equation}\label{eq:eigen-asym}
		L_2^\dagger \cD\phi
		=
		\cD L_1\phi
		=
		-\lambda\cD\phi\fstop
	\end{equation}
	Since $\phi$ is nonconstant, $\cD\phi\neq0$. Hence, \eqref{eq:eigen-asym} shows that $\cD\phi\in \mathscr H_\tneq^\per$ is an anti-symmetric eigenfunction of $L_2^\dagger$ with eigenvalue $\lambda\ge 0$.  By \eqref{eq:lambda-zero-killed-symmetric}--\eqref{eq:lambda-zero-killed}, 
	\begin{equation}\label{eq:lambda-0-lambda}
		\lambda_0
		\le
		\lambda\fstop
	\end{equation}
	The claim now follows from
	$
		\lambda_0
		=
		\lim_{\tau\to0}
		\gap_{2,\ast}(\tau\alpha,w)
	$,
	as established in \eqref{eq:lambda-zero}, by taking $\lambda = \gap_1(\alpha,w)$.
\end{proof}

\subsubsection{Geometries corresponding to $\tau_c(\alpha,w)=0$}

The preceding section identifies $\lambda_0$ with the principal Dirichlet eigenvalue of the killed two-particle system. We now characterize equality in \eqref{eq:first-limit}, which, by strict monotonicity, is precisely the regime $\tau_c(\alpha,w)=0$.

\begin{proposition}[Characterization of $\tau_c(\alpha,w)=0$]\label{pr:segment-zero}
	Assume that $w$ induces a minimal hypergraph. Then, for every positive $\alpha$, the following are equivalent:
	\begin{enumerate}[(i)]
		\item $\tau_c(\alpha,w)=0$;
		\item $\lambda_0=\gap_1(\alpha,w)$;
		\item after a relabeling of $[n]$, every $B\subseteq[n]$ with $w_B>0$ and $|B|\ge2$ is an interval.
	\end{enumerate}
	Whenever these conditions hold,
	\begin{equation}\label{eq:lambda-zero-gap-one}
		\lambda_0
		=
		\gap_1(\alpha,w)
		<
		\lambda_\tau
		\comma
		\qquad \tau>0\fstop
	\end{equation}
\end{proposition}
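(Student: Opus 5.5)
The plan is to settle the equivalence (i)$\Leftrightarrow$(ii) by soft arguments, and then to prove (ii)$\Leftrightarrow$(iii) through the killed two-particle chain, using the identifications \eqref{eq:lambda-zero-killed-symmetric}--\eqref{eq:lambda-zero-killed} of $\lambda_0$ as a principal Dirichlet eigenvalue of $-L_2^\dagger$.

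For (i)$\Leftrightarrow$(ii), recall that by Proposition \ref{pr:monotonicity} and \eqref{eq:lambda-zero} we have $\lambda_0=\inf_{\tau>0}\lambda_\tau$, and $\lambda_\tau>\lambda_0$ for every $\tau>0$. By Remark \ref{rem:monotonicity-gap-1}, $\gap_1(\tau\alpha,w)=\gap_1(\alpha,w)$. Suppose first that $\lambda_0=\gap_1(\alpha,w)$. Then $\lambda_\tau>\gap_1(\alpha,w)$ for all $\tau>0$, so $\tau_c(\alpha,w)=0$; the same chain of inequalities gives \eqref{eq:lambda-zero-gap-one}. Suppose instead that $\lambda_0<\gap_1(\alpha,w)$. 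Then continuity gives $\lambda_\tau<\gap_1(\alpha,w)$ for all small $\tau$, hence $\tau_c(\alpha,w)>0$. Since Proposition \ref{pr:first-limit} rules out $\lambda_0>\gap_1(\alpha,w)$, this proves (i)$\Leftrightarrow$(ii).

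For (ii)$\Leftrightarrow$(iii), the central object is the antisymmetric eigenfunction $\cD\phi$ of $L_2^\dagger$ from \eqref{eq:eigen-asym}, where $\phi$ is a one-particle gap eigenfunction. Its eigenvalue is $\gap_1(\alpha,w)$, and the question is whether this eigenvalue is principal on some communicating class of the killed chain.

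\emph{(iii)$\Rightarrow$(ii).} Assume all positive-weight blocks are intervals. If $x<y$ and a block $B$ contains $x$ but not $y$, then $B$ lies entirely to the left of $y$. Hence $L_2^\dagger$ never swaps the order of the two particles, and the sets $\{x<y\}$ and $\{x>y\}$ are unions of communicating classes, exchanged by transposition. Next I will show that $\phi$ can be chosen monotone. The one-particle chain with interval blocks is stochastically monotone, since a relocation within an interval preserves order in the coupled sense. Reversibility then lets one run the Perron--Frobenius argument on the cone of nondecreasing functions orthogonal to constants, which yields a monotone gap eigenfunction. With $\phi$ monotone, $\cD\phi$ is one-signed on $\{x<y\}$. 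On each communicating class inside $\{x<y\}$ on which it does not vanish identically, $\cD\phi$ is a nonnegative nonzero eigenfunction, and a nonnegative eigenfunction of a substochastic irreducible block is its Perron eigenfunction. I will check that every class carries the same Dirichlet eigenvalue bound or is dominated by this one; if needed, I will instead use the variational principle restricted to $\{x<y\}$. This gives $\lambda_0\ge\gap_1(\alpha,w)$, and Proposition \ref{pr:first-limit} supplies the reverse inequality, so $\lambda_0=\gap_1(\alpha,w)$.

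\emph{(ii)$\Rightarrow$(iii).} I will argue by contrapositive. Suppose no relabeling makes every positive-weight block with $|B|\ge2$ an interval. Using minimality, I aim to show that the killed chain has a communicating class $\mathcal K$ that is invariant under transposition and on which $\cD\phi$ is not identically zero. The most natural target is that $[n]^2_\tneq$ itself is irreducible, meaning the two particles can exchange order without ever sharing a block. On such a class $\cD\phi$ is antisymmetric, so it changes sign there. By Perron--Frobenius on $\mathcal K$, the principal Dirichlet eigenfunction is strictly positive and its eigenvalue is simple, so $\gap_1(\alpha,w)$ cannot be the principal eigenvalue of that class. Since the principal eigenvalue is the smallest eigenvalue of the class, it follows that $\lambda_0<\gap_1(\alpha,w)$.

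The main obstacle I expect is this combinatorial step: translating the failure of the interval property, together with minimality, into the existence of a transposition-invariant class, i.e.\ a way for the two particles to overtake each other. For graphs this is the classical fact that a connected graph which is not a path admits a vertex of degree at least $3$ or a cycle, and either one lets one particle pass the other. For hypergraphs I would try to pick a minimal counterexample block configuration, in the spirit of the refinement argument in Lemma \ref{lem:full-range}, and use it to route one particle around the other. A secondary difficulty is the monotonicity of $\phi$ in the interval case.
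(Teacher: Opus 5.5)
Your argument for (i)$\Leftrightarrow$(ii) is fine and matches the paper. Both directions of (ii)$\Leftrightarrow$(iii), however, have genuine gaps.

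\emph{(iii)$\Rightarrow$(ii).} A merely nondecreasing gap eigenfunction $\phi$ is not enough. On any communicating class where $\phi(x)=\phi(y)$ throughout, $\cD\phi$ vanishes identically and says nothing about that class's principal Dirichlet eigenvalue. Your fallback (``I will check that every class \dots'') is not an argument. The missing idea is that $\phi$ is \emph{strictly} increasing, and this is exactly where minimality enters; your argument for this direction never uses it. The paper proves strict monotonicity as follows. If $\phi(i)=\phi(i+1)$, then $0=L_1\phi(i+1)-L_1\phi(i)$ is a sum of nonnegative terms, because an interval containing $i+1$ but not $i$ lies to the right of $i$, and one containing $i$ but not $i+1$ lies to its left. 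Hence $\phi$ is constant on every positive-weight block starting at $i+1$ or ending at $i$. It follows that no block meeting two level sets can start or end strictly inside a level set. So the level-set partition of $\phi$ is $w$-compatible, and minimality forces all level sets to be singletons. Only then does $\cD\phi$ have a strict sign on every class (each class lies in $\{x<y\}$ or $\{x>y\}$), which makes $\gap_1(\alpha,w)$ the principal eigenvalue of every class.

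\emph{(ii)$\Rightarrow$(iii).} Your contrapositive rests on a global combinatorial lemma: failure of the interval property plus minimality should produce a transposition-invariant class on which $\cD\phi\not\equiv0$. You do not prove this. For hypergraphs it is far from routine, since it needs an ``overtaking'' analysis and also control that $\phi$ is not constant along that class. The paper avoids the lemma with a direct, local argument.
\begin{itemize}
\item Assuming (ii), $\cD\phi$ is an eigenfunction for the bottom eigenvalue $\lambda_0$.
\item Communicating classes of $L_2^\dagger$ are closed, since it is reversible with respect to $\pi\otimes\pi$. Hence on each class $\cD\phi$ is either identically zero or strictly signed.
\item The only communication needed is the trivial one, $(x,y)\leftrightarrow(z,y)$ for $x,z\in B$, $y\notin B$, $w_B>0$.
\item Suppose $B$ meets level sets $N_i,N_k$ with $i<k$ but misses some $N_j$ with $i<j<k$. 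Taking $y\in N_j$ gives opposite signs at $(x,y)$ and $(z,y)$, a contradiction.
\item Suppose $B$ meets two level sets but does not contain one of them, say $N_i$. Taking $x\in B\cap N_i$ and $y\in N_i\setminus B$ gives the value zero at $(x,y)$ next to a nonzero value at $(z,y)$, again a contradiction.
\end{itemize}
So every positive-weight block lies in one level set or is a union of consecutive level sets. The level-set partition is therefore $w$-compatible, minimality makes it discrete, and ordering the sites by the values of $\phi$ gives the interval labeling.
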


\begin{proof}
	By Proposition \ref{pr:first-limit} and the strict monotonicity in Proposition \ref{pr:monotonicity},
	\begin{equation}\label{eq:tau-zero-lambda-zero}
		\tau_c(\alpha,w)=0
		\quad\Longleftrightarrow\quad
		\lambda_0=\gap_1(\alpha,w)\fstop
	\end{equation}

	We next prove that equality in \eqref{eq:tau-zero-lambda-zero} forces the interval geometry. Let $\phi$ be a nonconstant one-particle eigenfunction associated to $\lambda=\gap_1(\alpha,w)$, and let
	$a_1<a_2<\cdots<a_m$
	be its distinct values, with level sets
	\begin{equation}\label{eq:gap-level-sets-zero}
		N_j
		\eqdef
		\tset{x\in[n]:\phi(x)=a_j}
		\comma
		\qquad j=1,\ldots,m\fstop
	\end{equation}
	Since $\phi$ is nonconstant, $m\ge2$. By the intertwining used in the proof of Proposition \ref{pr:first-limit},
	\begin{equation}
		L_2^\dagger \cD\phi
		=
		-\gap_1(\alpha,w)\cD\phi
		=
		-\lambda_0\cD\phi\fstop
	\end{equation}
	Therefore, by the Perron--Frobenius discussion following \eqref{eq:lambda-zero-killed}, the restriction of $\cD\phi$ to each communicating class of the killed chain is either identically zero or has a strict sign.
	
	Fix $B\subseteq[n]$ with $w_B>0$. Suppose first that $B$ meets $N_i$ and $N_k$, with $i<k$, but misses some $N_j$, $i<j<k$. Choose
	\begin{equation}
		x\in B\cap N_i
		\comma\qquad
		y\in N_j\setminus B
		\comma\qquad
		z\in B\cap N_k\fstop
	\end{equation}
	The states $(x,y)$ and $(z,y)$ communicate through $B$-updates of the $L_2^\dagger$-dynamics, whereas
	\begin{equation}
		\cD\phi(x,y)=a_i-a_j<0
		\comma\qquad
		\cD\phi(z,y)=a_k-a_j>0\comma
	\end{equation}
	contradicting the preceding sign property. Hence, the level sets met by $B$ have consecutive indices.
	
	Suppose next that $B$ meets at least two level sets but does not contain one of them. Then, for some $i\neq j$, we may choose
	\begin{equation}
		x\in B\cap N_i
		\comma\qquad
		y\in N_i\setminus B
		\comma\qquad
		z\in B\cap N_j\fstop
	\end{equation}
	Again, $(x,y)$ and $(z,y)$ communicate, but
	\begin{equation}
		\cD\phi(x,y)=0
		\comma\qquad
		\cD\phi(z,y)=a_j-a_i\neq0\comma
	\end{equation}
	which is impossible on a communicating class on which $\cD\phi$ is either identically zero or strictly signed. Thus, every positive-weight block is either contained in one level set or is a union of consecutive complete level sets. The partition
	\begin{equation}
		[n]=N_1\sqcup\cdots\sqcup N_m
	\end{equation}
	is therefore $w$-compatible. Minimality and $m\ge2$ force $m=n$, so every $N_j$ is a singleton. Relabeling the sites according to the increasing values of $\phi$ proves the interval condition.
	
	Conversely, assume that the sites are labeled so that every positive-weight
	block of size at least two is an interval. Since every $\Pi_{B,1}$ preserves
	nondecreasing functions, the one-particle semigroup is monotone. Hence, by
	\cite[Lemma 22.17]{levin2017markov}, there exists a nonconstant
	nondecreasing eigenfunction $\phi$ such that
	\begin{equation}\label{eq:monotone-gap-eigenfunction}
		L_1\phi=-\gap_1(\alpha,w)\phi\fstop
	\end{equation}
	We claim that $\phi$ is strictly increasing.
	
	Suppose that $\phi(i)=\phi(i+1)$ for some $i=1,\ldots, n-1$. Taking the difference
	of \eqref{eq:monotone-gap-eigenfunction} at $i+1$ and $i$ gives
	\begin{equation}\label{eq:flat-gradient}
		0
		=
		L_1\phi(i+1)-L_1\phi(i)
	=
		\sum_{\substack{B\ni i+1\\B\not\ni i}}
		w_B\bigl(\pi_B(\phi)-\phi(i+1)\bigr)
		+
		\sum_{\substack{B\ni i\\B\not\ni i+1}}
		w_B\bigl(\phi(i)-\pi_B(\phi)\bigr)\fstop
	\end{equation}
	Every term on the right-hand side is nonnegative. Indeed, an interval containing
	$i+1$ but not $i$ lies to the right of $i$, while an interval containing $i$ but
	not $i+1$ lies to its left. Consequently, equality in \eqref{eq:flat-gradient}
	implies that $\phi$ is constant on every positive-weight block starting at
	$i+1$ or ending at $i$.
	Let
	$
		[n]=N_1\sqcup\cdots\sqcup N_m
	$
	be the partition into the level sets of $\phi$, ordered according to their
	values. Since $\phi$ is nondecreasing, the sets $N_j$ are intervals. We claim
	that every positive-weight block $B$ is either contained in one $N_j$, or is a
	union of consecutive level sets.
	Indeed, suppose that $B$ meets more than one level set and starts strictly
	inside some $N_j$. If $a=\min B$, then $\phi(a-1)=\phi(a)$, while $B$ starts
	at $a$ and contains points where $\phi>\phi(a)$. This contradicts the conclusion
	drawn from \eqref{eq:flat-gradient}. The analogous argument at $\max B$ shows
	that $B$ cannot end strictly inside a level set. The claim follows.
	
	Thus, the partition $(N_1,\ldots,N_m)$ is $w$-compatible. Since $\phi$ is
	nonconstant, $m\ge2$, while minimality excludes $2\le m<n$. Therefore
	$m=n$, and
	\begin{equation}\label{eq:strictly-increasing-gap-eigenfunction}
		\phi(1)<\phi(2)<\cdots<\phi(n)\fstop
	\end{equation}
	Before killing, the two-particle dynamics cannot change the order of the two coordinates: if an interval $B$ contains exactly one coordinate of $(x,y)$, relocating that coordinate inside $B$ cannot move it across the other one. Hence, every communicating class is contained in one of the two sets
	\begin{equation}
		\tset{(x,y)\in[n]^2_\tneq:x<y}
		\qquad\text{or}\qquad
		\tset{(x,y)\in[n]^2_\tneq:x>y}\fstop
	\end{equation}
	By \eqref{eq:strictly-increasing-gap-eigenfunction}, $\cD\phi$ has a strict sign on every communicating class. Since it is a $\gap_1(\alpha,w)$-eigenfunction of $-L_2^\dagger$, Perron--Frobenius theory shows that $\gap_1(\alpha,w)$ is the principal Dirichlet eigenvalue on every class. Recalling \eqref{eq:lambda-zero-killed-symmetric}--\eqref{eq:lambda-zero-killed}, we obtain
	\begin{equation}
		\lambda_0=\gap_1(\alpha,w)\fstop
	\end{equation}
	The equivalence \eqref{eq:tau-zero-lambda-zero} and strict monotonicity then give \eqref{eq:lambda-zero-gap-one}.
\end{proof}

\subsection{Proof of Theorems \ref{th:gap-1-2-quantitative}, \ref{th:phase-transition}, \ref{th:dichotomy-graph} and \ref{th:dichotomy-hypergraph}}\label{sec:comparison-final}

We collect the results established above.

\begin{proof}[Proof of Theorem \ref{th:gap-1-2-quantitative}]
	The lower bound was proved in Corollary \ref{cor:gap-lower-infty}, while the upper bound and its extension to general hypergraphs were established in Proposition \ref{pr:lambda-infty-upper} and Remark \ref{rem:lambda-infty-upper-general}. Sharpness of the two constants was proved in Remarks \ref{rem:gap-lower-sharp} and \ref{rem:lambda-infty-sharp}.
\end{proof}

\begin{proof}[Proof of Theorem \ref{th:phase-transition}]
	For minimal hypergraphs, the claim follows from Proposition \ref{pr:monotonicity} and Remark \ref{rem:monotonicity-gap-1}. The general case follows from Propositions \ref{pr:reduction}--\ref{pr:reduction-gap}.
\end{proof}

\begin{proof}[Proof of Theorems \ref{th:dichotomy-graph} and \ref{th:dichotomy-hypergraph}]
	For minimal hypergraphs, the characterizations of $\tau_c(\alpha,w)=\infty$ and $\tau_c(\alpha,w)=0$ were established in Propositions \ref{pr:mean-field-infty} and \ref{pr:segment-zero}, respectively.
	
	We now remove the minimality assumption, retaining only \eqref{eq:w-n}--\eqref{eq:connected-hypergraph}. By Proposition~\ref{pr:reduction-gap}, passing to any $w$-compatible partition preserves $\gap_1$ and $\gap_{2,\ast}$ at every scale $\tau>0$, and hence also $\lambda_0$, $\lambda_\infty$, and $\tau_c$. Moreover, the quotient hypergraph associated with the coarsest partition is minimal (Proposition \ref{pr:reduction}). Applying Proposition \ref{pr:segment-zero} to this quotient shows that $\tau_c(\alpha,w)=0$ if and only if, after ordering the quotient sites, every positive-weight quotient block is an interval. This is precisely the segment-like condition in Theorem \ref{th:dichotomy-hypergraph}\ref{it:hypergraph-dichotomy-segment}. Likewise, Proposition \ref{pr:mean-field-infty} yields the corresponding characterization of $\tau_c(\alpha,w)=\infty$. This proves Theorem \ref{th:dichotomy-hypergraph}.
	
	Theorem \ref{th:dichotomy-graph} then follows from the observation following the statement of its hypergraph counterpart.
\end{proof}

\section{Boundary-driven ${\rm KMP}$}\label{sec:KMP-reservoirs}
In this section, we prove Theorem \ref{th:gap-res}. The argument is based on a suitable decomposition of polynomials and on a killed-particle representation of their highest-degree components. In particular, it does not use the hidden model associated with the boundary-driven dynamics. That representation has instead played a central role in the description of the corresponding nonequilibrium steady state
\cite{de_masi_ferrari_gabrielli_hidden_2023,giardina_redig_tol_intertwining_2024}.

\subsection{Polynomials and killed particles}\label{sec:bd-particles}Recall from \eqref{eq:coeff-space} and \eqref{eq:L-k-sym} that
$\mathscr H_k^\per$ denotes the space of functions $\psi:[n]^k\to\R$, endowed with the
$L^2(\mu_k)$ scalar product, whereas $\mathscr H_k\subseteq\mathscr H_k^\per$ is the
subspace of symmetric functions. The generator $L_k$ acts on
$\mathscr H_k^\per$ and describes the labeled $k$-particle dynamics, while
$L_k^\sym=L_k|_{\mathscr H_k}$ describes its unlabeled counterpart.
The same hat notation as in \eqref{eq:hat-tilde-diagonal} will be used for
polynomials on $\Omega_{\rm bd}$: for $\psi \in \mathscr H_k$,
\begin{equation}\label{eq:hat-bd}
	\textstyle
	\widehat\psi(\eta)
	\eqdef
	\sum_{x_1,\ldots,x_k\in[n]}
	\psi(x_1,\ldots,x_k)\,
	\eta_{x_1}\cdots\eta_{x_k}
	\comma
	\qquad
	\eta\in\Omega_{\rm bd}\fstop
\end{equation}
Since $\Omega_{\rm bd}$ has nonempty interior, the map
in \eqref{eq:hat-bd} is a linear isomorphism between $\mathscr H_k$ and
the homogeneous polynomials of degree $k$ on $\Omega_{\rm bd}$.

Recall from Section \ref{sec:reservoirs-intro} that $\mathscr Q_k$ denotes the space of polynomials of degree at most
$k$ on $\Omega_{\rm bd}$, with $\mathscr Q_{-1}\eqdef{0}$. The generator
$\cL_{\rm bd}$ preserves each $\mathscr Q_k$, but its action is no longer
homogeneous: a reservoir update produces terms of degree strictly smaller than
the original one. In Proposition \ref{pr:bd-top-degree} below we identify the action of $\cL_{\rm bd}$ on the
highest-degree component of a polynomial.

First, we introduce some notation. Recall \eqref{eq:gen-res}--\eqref{eq:gen-bd} and \eqref{eq:N-x-bd}, and define, for $k\ge 1$, the multiplication operator $L_{\partial,k}$ on $\mathscr H_k^\per$ by
\begin{equation}\label{eq:L-partial-k-bd}
\textstyle	L_{\partial,k}\psi(x_1,\ldots,x_k)
	\eqdef
	-\psi(x_1,\ldots,x_k)
	\sum_{x\in[n]}\omega_x\,
	\tttonde{1-\mathbf E_x\big[U_x^{\mathfrak n_x(x_1,\ldots,x_k)}\big]}
	\comma
\end{equation}
for $x_1,\ldots, x_k \in [n]$.
Here, $\omega_x$, $U_x$ and $\mathbf E_x$ are as in
\eqref{eq:xi-x-u-x}--\eqref{eq:gen-res}:   $\mathbf E_x$ denotes expectation with respect to the
reservoir variables $(U_x,\xi_x)$ associated with an update at $x$, while $\omega_x\ge 0$ denotes the rate of such updates.
The corresponding boundary-driven particle generator is
\begin{equation}\label{eq:L-bd-k}
	L_{{\rm bd},k}
	\eqdef
	L_k+L_{\partial,k}
	\comma
	\qquad
	L_{{\rm bd},k}^{\sym}
	\eqdef
	L_{{\rm bd},k}|_{\mathscr H_k}
	\fstop
\end{equation}
Since $L_k$ is self-adjoint on $L^2(\mu_k)=L^2([n]^k,\mu_k)$ and $L_{\partial,k}$ is a nonpositive multiplication operator, $L_{{\rm bd},k}$ is self-adjoint and sub-Markovian. Moreover, it commutes with permutations of the labels, so that $\mathscr H_k$ is invariant.

The operator in \eqref{eq:L-partial-k-bd} has the following probabilistic interpretation. At a reservoir update at $x$, conditionally on $U_x$, each particle at $x$ survives independently with probability $U_x$; the process is killed as soon as at least one particle does not survive. Thus, $\mathbf E_x[U_x^{\mathfrak n_x}]$ is precisely the probability that all particles currently at $x$ survive the update.

\begin{proposition}[Top-degree intertwining]\label{pr:bd-top-degree}
	For every $k\ge1$ and every $\psi\in\mathscr H_k$,
	\begin{equation}\label{eq:bd-top-degree}
		\cL_{\rm bd}\widehat\psi
		-
		\widehat{L_{{\rm bd},k}^{\sym}\psi}
		\in
		\mathscr Q_{k-1}
		\fstop
	\end{equation}
	In particular, the action induced by $\cL_{\rm bd}$ on the quotient space \begin{equation}\mathscr Q_k/\mathscr Q_{k-1}=\set{[f]_k\eqdef f+\mathscr Q_{k-1}: f\in \mathscr Q_k}\end{equation} is similar to $L_{{\rm bd},k}^{\sym}$ and does not depend on $\rho$.
\end{proposition}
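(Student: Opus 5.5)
The plan is to split $\cL_{\rm bd}=\cL+\cL_\partial$ and handle each part on homogeneous degree-$k$ polynomials $\widehat\psi$, $\psi\in\mathscr H_k$, extended to $\Omega_{\rm bd}=[0,\infty)^n$. For the conservative part, the computation in the proof of Proposition \ref{pr:particle-system-intertwining} uses only linearity of the update $\eta\mapsto\eta K_B^U$ and the tensor identity $(\eta K_B^U)^{\otimes k}(\psi)=\eta^{\otimes k}((K_B^U)^{\otimes k}\psi)$. This holds for every $\eta\in\R^n$, not just on the simplex. Hence $\cL\widehat\psi=\widehat{L_k\psi}$ exactly on $\Omega_{\rm bd}$, with no lower-order remainder, and $L_k$ preserves $\mathscr H_k$.

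For the reservoir part, I fix a site $x$ and write $\widehat\psi(\eta)=\sum_{j=0}^k \eta_x^j\,P_j(\eta^{\{x\}^\complement})$. Here $P_j$ is homogeneous of degree $k-j$ in the other coordinates. The update replaces $\eta_x^j$ by $U_x^j(\eta_x+\xi_x)^j$. Since $U_x$ and $\xi_x$ are independent, the binomial expansion gives
\begin{equation}
\mathbf E_x\big[U_x^j(\eta_x+\xi_x)^j\big]=\mathbf E_x[U_x^j]\,\eta_x^j+\big(\text{polynomial in }\eta_x\text{ of degree}<j\big),
\end{equation}
and the lower-degree coefficients involve moments of $\xi_x$, i.e.\ $\rho$. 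Summing over $j$, the reservoir term equals $\omega_x\sum_j(\mathbf E_x[U_x^j]-1)\eta_x^jP_j$ modulo $\mathscr Q_{k-1}$. In coefficient language, each monomial $\eta_{x_1}\cdots\eta_{x_k}$ is multiplied by $\mathbf E_x[U_x^{\mathfrak n_x(x_1,\ldots,x_k)}]-1$. Summing over $x$ gives exactly the multiplication operator $L_{\partial,k}$ in \eqref{eq:L-partial-k-bd}, acting diagonally on the monomials, and hence on $\psi$ through $\widehat{\ \cdot\ }$. Together with the conservative part this proves \eqref{eq:bd-top-degree}.

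For the quotient statement, the invariance \eqref{eq:Qk-invariance-bd} makes $[f]_k\mapsto[\cL_{\rm bd}f]_k$ well defined on $\mathscr Q_k/\mathscr Q_{k-1}$. The map $\psi\mapsto[\widehat\psi]_k$ is a linear isomorphism $\mathscr H_k\to\mathscr Q_k/\mathscr Q_{k-1}$: homogeneous degree-$k$ polynomials on $\Omega_{\rm bd}$ are uniquely represented by symmetric coefficients, because $\Omega_{\rm bd}$ has nonempty interior, and they complement $\mathscr Q_{k-1}$. Therefore \eqref{eq:bd-top-degree} reads $[\cL_{\rm bd}\widehat\psi]_k=[\widehat{L_{{\rm bd},k}^\sym\psi}]_k$, which is the claimed similarity. $L_{{\rm bd},k}^\sym$ involves only $\alpha,\beta,w,\omega$ through $\mathbf E_x[U_x^m]$ with $U_x\sim{\rm Beta}(\alpha_x,\beta_x)$, so the induced action does not depend on $\rho$.

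The only real care point is the bookkeeping that the top-degree coefficient from the reservoir is exactly $\mathbf E_x[U_x^{\mathfrak n_x}]$, uniformly for labeled tuples. This holds because the monomial's $x$-degree is $\mathfrak n_x$, and the diagonal multiplier commutes with symmetrization.
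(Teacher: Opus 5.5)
Your proposal is correct and follows essentially the same route as the paper: the exact conservative intertwining, plus a binomial expansion of $U_x^{\mathfrak n_x}(\eta_x+\xi_x)^{\mathfrak n_x}$ whose top coefficient $\mathbf E_x[U_x^{\mathfrak n_x}]$ produces $L_{\partial,k}$ modulo $\mathscr Q_{k-1}$. Your extra remarks fill in details the paper leaves implicit: the conservative intertwining is an algebraic identity valid on all of $\Omega_{\rm bd}$, and $\psi\mapsto[\widehat\psi]_k$ is an isomorphism $\mathscr H_k\to\mathscr Q_k/\mathscr Q_{k-1}$. The independence of $U_x$ and $\xi_x$ that you invoke is harmless but not needed to identify the top-degree coefficient.
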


\begin{proof}
	The conservative part satisfies the exact intertwining
	$\cL\widehat\psi=\widehat{L_k^{\sym}\psi}$ by Proposition \ref{pr:particle-system-intertwining}. It remains to identify the highest-degree part of the reservoir generator.
	
	Fix $x_1,\ldots,x_k\in[n]$ and write $\mathfrak n_x=\mathfrak n_x(x_1,\ldots,x_k)$. For the monomial $\eta_{x_1}\cdots\eta_{x_k}$, a reservoir update at $x$ gives
	\begin{equation}	\label{eq:reservoir-monomial-top}
		\mathbf E_x\bigg[
		\prod_{i=1}^k
		\eta^{x,U_x,\xi_x}_{x_i}
		\bigg]
		=
		\mathbf E_x[
		U_x^{\mathfrak n_x}(\eta_x+\xi_x)^{\mathfrak n_x}
		]
		\prod_{y\neq x}\eta_y^{\mathfrak n_y}
		=
		\mathbf E_x[U_x^{\mathfrak n_x}]\,
		\eta_{x_1}\cdots\eta_{x_k}
		+
		q_x(\eta)
		\comma
	\end{equation}
	where $q_x\in\mathscr Q_{k-1}$. Indeed, every term in the binomial expansion of $(\eta_x+\xi_x)^{\mathfrak n_x}$ containing a positive power of $\xi_x$ has degree at most $k-1$ in $\eta$ and, moreover, $\xi_x\sim {\rm Gamma}(\beta_x,\rho_x)$ has finite moments of all orders. Multiplying \eqref{eq:reservoir-monomial-top} by the coefficients of $\psi$, summing over $x_1,\ldots,x_k$, and then over the reservoir sites gives
	\begin{equation}
		\cL_\partial\widehat\psi
		-
		\widehat{L_{\partial,k}\psi}
		\in
		\mathscr Q_{k-1}
		\fstop
	\end{equation}
	Together with the conservative intertwining, this proves \eqref{eq:bd-top-degree}. The operator $L_{{\rm bd},k}$ depends on the reservoir variables only through the moments of $U_x$, and hence is independent of $\rho$.
\end{proof}

Every polynomial in $\mathscr Q_k$ decomposes uniquely into homogeneous
components. By \eqref{eq:hat-bd}, the homogeneous component of degree $j$ is
identified with a unique element of $\mathscr H_j$. Equivalently, the hat map
identifies the quotient $\mathscr Q_j/\mathscr Q_{j-1}$ with
$\mathscr H_j$. Proposition \ref{pr:bd-top-degree} therefore gives the
following spectral decomposition.

\begin{proposition}[Polynomial spectrum]\label{pr:bd-polynomial-spectrum}
	For every $k\ge0$, counting algebraic multiplicities,
	\begin{equation}\label{eq:bd-polynomial-spectrum}
		\spec\tttonde{-\cL_{\rm bd}|_{\mathscr Q_k}}
		=
		\{0\}
		\cup
		\bigcup_{j=1}^k
		\spec\tttonde{-L_{{\rm bd},j}^{\sym}}
		\subseteq[0,\infty)
		\fstop
	\end{equation}
	Consequently, the polynomial spectrum of $-\cL_{\rm bd}$ is real, nonnegative, and independent of $\rho$.
\end{proposition}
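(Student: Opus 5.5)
We plan to use the filtration $\mathscr Q_0\subseteq\mathscr Q_1\subseteq\cdots\subseteq\mathscr Q_k$, which is invariant under $\cL_{\rm bd}$ by \eqref{eq:Qk-invariance-bd}. Choose a basis of $\mathscr Q_k$ adapted to this flag: take the constant $1$, and for each $j=1,\ldots,k$ take the degree-$j$ monomials $\widehat\psi$, $\psi$ ranging over a basis of $\mathscr H_j$. These monomials represent a basis of the quotient $\mathscr Q_j/\mathscr Q_{j-1}$. In such a basis, the matrix of $\cL_{\rm bd}|_{\mathscr Q_k}$ is block upper triangular. The characteristic polynomial therefore factors as the product of the characteristic polynomials of the diagonal blocks, which are the induced operators on the successive quotients. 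This gives equality of spectra with algebraic multiplicities.

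We then identify the diagonal blocks. The degree-zero block is the action on constants, and it vanishes because $\cL_{\rm bd}1=0$; this is the eigenvalue $\{0\}$. For $j\ge1$, Proposition \ref{pr:bd-top-degree} says the induced action on $\mathscr Q_j/\mathscr Q_{j-1}$ is conjugate, via the hat map $\mathscr H_j\to\mathscr Q_j/\mathscr Q_{j-1}$, to $L_{{\rm bd},j}^\sym$. Here we use that the hat map is injective into homogeneous polynomials on $\Omega_{\rm bd}$, since $\Omega_{\rm bd}$ has nonempty interior, and that the dimensions match. Inducting on $k$ (or reading off the block triangular form directly) yields \eqref{eq:bd-polynomial-spectrum} with multiplicities.

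Next we check that $\spec(-L_{{\rm bd},j}^\sym)\subseteq[0,\infty)$. As noted after \eqref{eq:L-bd-k}, $L_{{\rm bd},j}$ is self-adjoint on $L^2(\mu_j)$ and leaves $\mathscr H_j$ invariant, so its restriction is self-adjoint with real spectrum. Nonnegativity follows because $-L_j\ge0$ as a Markov generator reversible with respect to $\mu_j$. In addition, $-L_{\partial,j}$ is multiplication by $\sum_x\omega_x(1-\mathbf E_x[U_x^{\mathfrak n_x}])\ge0$, since $U_x\in[0,1]$. The sum of these two nonnegative self-adjoint operators is nonnegative.

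Independence of $\rho$ holds because each $L_{{\rm bd},j}$ involves only $\alpha$, $w$, $\omega$, and the moments of $U_x\sim{\rm Beta}(\alpha_x,\beta_x)$, none of which depends on $\rho$. The only delicate point is the multiplicity bookkeeping: the restriction to $\mathscr Q_k$ may not be diagonalizable, so we must work with characteristic polynomials of the triangular block form rather than with eigenvectors. Once the block form is written down, this is routine.
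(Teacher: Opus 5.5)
Your proposal is correct and follows the paper's proof essentially verbatim: a basis adapted to the filtration $\mathscr Q_0\subseteq\cdots\subseteq\mathscr Q_k$ makes $\cL_{\rm bd}|_{\mathscr Q_k}$ block triangular, with diagonal blocks $0$ and, via Proposition~\ref{pr:bd-top-degree}, $L_{{\rm bd},j}^{\sym}$. The paper then derives reality, nonnegativity and $\rho$-independence, as you do, from $L_{{\rm bd},j}$ being self-adjoint and nonpositive on $L^2(\mu_j)$ with $\mathscr H_j$ invariant. Your explicit check that $-L_{\partial,j}\ge 0$ (because $U_x\in[0,1]$) and your use of characteristic polynomials rather than eigenvectors are details the paper leaves implicit.
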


\begin{proof}
	Choose a basis of $\mathscr Q_k$ adapted to the filtration
	\begin{equation}
		\mathscr Q_0
		\subseteq
		\mathscr Q_1
		\subseteq\cdots\subseteq
		\mathscr Q_k
		\fstop
	\end{equation}
	By Proposition \ref{pr:bd-top-degree}, the matrix of $\cL_{\rm bd}|_{\mathscr Q_k}$ is block triangular, with diagonal blocks $0$ and $L_{{\rm bd},j}^{\sym}$, $1\le j\le k$. This proves \eqref{eq:bd-polynomial-spectrum}.
	
	Each $L_{{\rm bd},j}^{\sym}$ is the restriction of the self-adjoint, nonpositive operator $L_{{\rm bd},j}$ to an invariant subspace. Its spectrum is therefore real and nonpositive. The final claim follows from the last statement of Proposition \ref{pr:bd-top-degree}.
\end{proof}

\begin{remark}[The reversible case]\label{rem:bd-reversible-spectrum} The above characterization of the polynomial spectrum of $\cL_{\rm bd}$ is valid in the general, not necessarily reversible, case.
	In the special case in which $\rho_x\equiv\rho_\ast$ on $\supp(\omega)$, the product Gamma measure in \eqref{eq:nu-rhoa} is reversible. Then, self-adjointness and \eqref{eq:Qk-invariance-bd} imply that the orthogonal complement of $\mathscr Q_{k-1}$ in $\mathscr Q_k$ is invariant under $\cL_{\rm bd}$. Since polynomials are dense in $L^2(\nu)$, the generator admits a complete orthonormal family of polynomial eigenfunctions, and its $L^2(\nu)$-spectrum coincides with the closure of the polynomial spectrum in \eqref{eq:bd-polynomial-spectrum}. In particular, the polynomial gap in \eqref{eq:def-polynomial-gap-bd} is the usual $L^2(\nu)$-spectral gap.
\end{remark}

\subsection{One-particle domination}\label{sec:bd-gap}
We next compare the principal eigenvalues of the killed particle generators. The comparison is most transparent through their common graphical construction.

\begin{lemma}[Killing-time comparison]\label{lem:bd-killing-comparison}
	For every $k\ge1$,
	\begin{equation}\label{eq:bd-principal-comparison}
		\min\spec\tonde{-L_{{\rm bd},k}}
		\ge
		\min\spec\tonde{-L_{{\rm bd},1}}
		\fstop
	\end{equation}
\end{lemma}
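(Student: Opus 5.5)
We will compare the principal eigenvalues through the survival probabilities of the killed particle systems, all built on one graphical construction. Since $L_{{\rm bd},k}$ is self-adjoint and sub-Markovian on $L^2(\mu_k)$, its smallest eigenvalue $\lambda_k\eqdef\min\spec(-L_{{\rm bd},k})$ is characterized by
\begin{equation}
\lambda_k=-\lim_{t\to\infty}\frac1t\log\max_{\mathbf x\in[n]^k}\mathbb P_{\mathbf x}\big(T_k>t\big)\comma
\end{equation}
where $T_k$ is the killing time. Indeed, $e^{tL_{{\rm bd},k}}\mathbf 1(\mathbf x)=\mathbb P_{\mathbf x}(T_k>t)$, the operator norm of the positive matrix $e^{tL_{{\rm bd},k}}$ is comparable, up to constants depending on $\mu_k$, to $\max_{\mathbf x}e^{tL_{{\rm bd},k}}\mathbf 1(\mathbf x)$, and the norm of a self-adjoint semigroup equals $e^{-\lambda_k t}$. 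So it suffices to prove the pointwise bound
\begin{equation}
\mathbb P_{(x_1,\ldots,x_k)}(T_k>t)\le \mathbb P_{x_1}(T_1>t)\comma\qquad t\ge0\fstop
\end{equation}

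To prove this bound we couple the two processes. Run the labeled $k$-particle KMP dynamics with the Poisson clocks of the blocks and the Dirichlet vectors $U$, and let the reservoir clocks at each $x$ ring at rate $\omega_x$, each ring carrying a sample $U_x$. At a reservoir ring at $x$, each particle present at $x$ flips an independent coin of success probability $U_x$, and the $k$-system is killed as soon as any coin fails. This reproduces exactly the killing rate in \eqref{eq:L-partial-k-bd}, since, conditionally on $U_x$, all $\mathfrak n_x$ particles survive with probability $U_x^{\mathfrak n_x}$.

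By the consistency relation \eqref{eq:particle-consistency}, the marginal motion of particle $1$ is the one-particle walk with rates \eqref{eq:RW-rates}. Its own coin at reservoir rings occurs with probability $U_x$, so the probability that it survives a ring is $\mathbf E_x[U_x]$, which matches $L_{\partial,1}$. We define $T_1$ as the first failure of particle $1$'s coin. Then $T_1$ has the law of the killing time of $L_{{\rm bd},1}$ started at $x_1$, and $T_k\le T_1$ almost surely. This proves the bound and hence \eqref{eq:bd-principal-comparison}.

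The main point to check carefully is that the particle-$1$ marginal, including its killing mechanism, is exactly the $L_{{\rm bd},1}$ process, even though the coin probabilities $U_x$ are shared with the other particles. This holds because particle $1$'s displacement and survival depend only on the clocks and on the randomization of particle $1$. An alternative, equivalent route is the Feynman–Kac form: write $e^{tL_{{\rm bd},k}}\mathbf 1(\mathbf x)$ as an expectation of a product of conditional survival factors $U_x^{\mathfrak n_x}\le U_x^{\mathfrak n_x(x_1)}$, and then average over particles $2,\ldots,k$.
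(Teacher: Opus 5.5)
Your proposal is correct and follows essentially the same route as the paper: a common graphical construction in which the first label evolves and is killed as the $L_{{\rm bd},1}$ process and $T_k\le T_k^{(1)}$, combined with identifying the principal eigenvalue as the exponential decay rate of $\max_{\mathbf x}\P_{\mathbf x}(T_k>t)$. The only cosmetic difference is in that last step: you obtain the decay rate from self-adjointness in $L^2(\mu_k)$ plus equivalence of norms, whereas the paper applies Gelfand's formula directly to the $\infty\to\infty$ norm, which does not need self-adjointness.
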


\begin{proof}
	Let $T_k$ denote the killing time of the labeled $k$-particle process generated by $L_{{\rm bd},k}$. Couple simultaneously the systems with different numbers of particles as follows. At each bulk update, use the same block and the same Dirichlet vector, and move all particles in that block conditionally independently given this vector. At a reservoir update at $x$, use the same $U_x$ and let each particle at $x$ survive independently with probability $U_x$.
	
	For $i=1,\ldots,k$, let $T_k^{(i)}$ be the killing time of the $i$-th label under this construction. Each label has the one-particle dynamics generated by $L_{{\rm bd},1}$, and
	\begin{equation}\label{eq:Tk-min-T1}
		T_k
		\eqdef
		\min_{1\le j\le k}T_k^{(j)}
		\le
		T_k^{(i)}
		\qquad\text{a.s.}\comma
	\end{equation}
	where the inequality holds for every $i=1,\ldots,k$. Hence, for all $t\ge0$,
	\begin{equation}\label{eq:survival-comparison-bd}
		\max_{x_1,\ldots,x_k\in[n]}
		\P_{x_1,\ldots,x_k}(T_k>t) \le \max_{x_1,\ldots,x_k\in [n]} \P_{x_1,\ldots, x_k}(T_k^{(i)}>t)
		=
		\max_{x\in[n]}
		\P_x(T_1>t)
		\comma
	\end{equation}
	where $\P_{x_1,\ldots,x_k}$ stands for the law of the $L_{{\rm bd},k}$ process started from
	$(x_1,\ldots,x_k)\in[n]^k$, and $\P_x$ for the law of the one-particle
	process started from $x\in[n]$. The equality in \eqref{eq:survival-comparison-bd} follows because, under $\P_{x_1,\ldots,x_k}$, the marginal law of the $i$-th label is exactly that of the one-particle process started from $x_i$.
	
	For any finite sub-Markov generator $A$, positivity of its semigroup gives
	\begin{equation}\label{eq:}
		\|e^{tA}\|_{\infty\to\infty}
		= \max_z\,(e^{tA}\mathbf 1)(z)=
		\max_z\P_z(T>t)\comma\qquad t \ge 0
		\comma
	\end{equation}
	where $T\ge 0$ denotes the random killing time of the corresponding continuous-time chain.
	Hence, the above identity and Gelfand's formula yield
	\begin{equation}\label{eq:survival-spectral-bound}
		\lim_{t\to\infty}
		\frac1t
		\log\max_z\P_z(T>t)
		=
		\max\spec(A)
		=
		-\min\spec(-A)
		\fstop
	\end{equation}
	Applying \eqref{eq:survival-spectral-bound} to both sides of \eqref{eq:survival-comparison-bd} proves \eqref{eq:bd-principal-comparison}.
\end{proof}

\begin{proof}[Proof of Theorem \ref{th:gap-res}]
	Since $\mathscr H_k\subseteq\mathscr H_k^\per$ and $L_{{\rm bd},k}$ is self-adjoint,
	\begin{equation}\label{eq:bd-sym-labeled-comparison}
		\min\spec\tttonde{-L_{{\rm bd},k}^{\sym}}
		\ge
		\min\spec\tttonde{-L_{{\rm bd},k}}
		\ge
		\min\spec\tttonde{-L_{{\rm bd},1}}
		\comma
		\qquad k\ge1
		\fstop
	\end{equation}
	Here, the second inequality is Lemma \ref{lem:bd-killing-comparison}, while the first is the variational characterization of the smallest eigenvalue. Since $\mathscr H_1=\mathscr H_1^\per$, equality holds throughout \eqref{eq:bd-sym-labeled-comparison} when $k=1$.
	
	Proposition \ref{pr:bd-polynomial-spectrum} now gives, for every $k\ge1$,
	\begin{equation}
		\gap\tttonde{\cL_{\rm bd}|_{\mathscr Q_k}}
		=
		\min_{1\le j\le k}
		\min\spec\tttonde{-L_{{\rm bd},j}^{\sym}}
		=
		\min\spec\tttonde{-L_{{\rm bd},1}}
		=
		\gap\tttonde{\cL_{\rm bd}|_{\mathscr Q_1}}
		\fstop
	\end{equation}
	Taking the infimum over $k\ge1$ proves \eqref{eq:gap-res-intro}. Independence of $\rho$ follows from Proposition~\ref{pr:bd-polynomial-spectrum}.
	
	If this common gap is positive, let $\phi\in\mathscr H_1\setminus\{0\}$ satisfy
	$L_{{\rm bd},1}\phi=-\gap(\cL_{\rm bd})\,\phi$. Proposition~\ref{pr:bd-top-degree} gives, for some constant $c\in\R$,
	\begin{equation}
		\cL_{\rm bd}\widehat\phi
		=
		-\gap(\cL_{\rm bd})\,\widehat\phi+c
		\fstop
	\end{equation}
	Therefore, $\widehat\phi-c/\gap(\cL_{\rm bd})$ is a degree-one eigenfunction associated with the gap.
\end{proof}

\begin{remark}[Further reservoir mechanisms]\label{rem:general-reservoir}
	The family in \eqref{eq:xi-x-u-x} interpolates between several reservoir mechanisms considered in the literature. The choice
	\begin{equation}
		\beta_x=\alpha_x
	\end{equation}
	corresponds to the boundary update used, for instance, in
	\cite{de_masi_ferrari_gabrielli_hidden_2023,giardina_redig_tol_intertwining_2024}. For finite $\beta_x$, the factor $U_x$ retains part of the energy present at $x$ before the update. At the opposite extreme,
	\begin{equation}
		U_x(\eta_x+\xi_x)\
		 \Longrightarrow\
		\xi_x'\sim{\rm Gamma}(\alpha_x,\rho_x)\comma\quad  \text{as}\  \beta_x\to\infty\comma
	\end{equation}
	with $\xi_x'$ being independent of $\eta_x$, and one recovers the full-refresh mechanism
	\cite{kipnis_heat_1982}; see also \cite{carinci_duality_2013-1}. In particular, the original KMP reservoirs correspond, up to the choice of the boundary rates and temperatures, to $\alpha_x=1$ and $\beta_x=\infty$: at each boundary update, the energy is replaced by an exponential random variable with mean $\rho_x$. Conversely, as $\beta_x\to0$, one has $U_x\to1$ and $\xi_x\to0$ in probability, so that the reservoir action becomes void. One may also allow $\rho_x=0$, interpreting $\xi_x=0$ almost surely, in which case the update only removes energy through the multiplication $\eta_x\longmapsto U_x\eta_x$.

	More generally, our proof does not rely on the beta--gamma form of the variables in \eqref{eq:xi-x-u-x}. It only uses that, at an update of the reservoir attached to $x$, the energy is replaced by
	\begin{equation}
		\eta_x\longmapsto U_x(\eta_x+\xi_x)\comma
	\end{equation}
	where $(U_x,\xi_x)\in[0,1]\times[0,\infty)$ is sampled independently of the current configuration and has finite moments of every order. The variables $U_x$ and $\xi_x$ need not be independent. The same top-degree computation gives \eqref{eq:L-partial-k-bd}, and the killing-time argument is unchanged. The full-refresh update is covered directly by setting the coefficient of every positive-degree term equal to zero. Analogous block-reservoir mechanisms may be treated in the same way, at the cost of heavier notation.
\end{remark}

\section{Stochastic exchange models}\label{sec:SEM}

The main goal of this section is to prove Theorem~\ref{th:SEM} from
Section~\ref{sec:SEM-intro}. We begin by giving a rigorous construction of
stochastic exchange models together with their associated hidden dynamics.
The core of the proof of Theorem~\ref{th:SEM} occupies
Sections~\ref{sec:SEM-particles}--\ref{sec:SEM-proof}, whereas examples of stochastic exchange models are
discussed in Section~\ref{sec:SEM-examples}.

\subsection{Construction and first properties}\label{sec:SEM-basic}
Fix $n\ge 2$.
Throughout, let
\begin{equation}
	\mathfrak M
	\eqdef
	\{
	M=(M_{xy})_{x,y\in[n]}\in[0,1]^{n\times n}:
	M\mathbf1=\mathbf1
	\}
	\subseteq\R^{n\times n}
\end{equation}
denote the compact space of row-stochastic $n\times n$ matrices, and let
$\Upsilon$ be a Borel measure on $\mathfrak M$, possibly with
$\Upsilon(\mathfrak M)=\infty$, satisfying
\eqref{eq:SEM-assumption-intro}. 
By row-stochasticity, this assumption equivalently reads
\begin{equation}\label{eq:SEM-assumption}
	\textstyle
	\int\Upsilon(\dd M)\,\sum_{x\in [n]}\tonde{1-M_{xx}}<\infty\fstop
\end{equation}

Recall from Section~\ref{sec:SEM-intro} that the stochastic exchange model
associated with $\Upsilon$ is built from the updates, with infinitesimal rate $\Upsilon(\dd M)$,
\begin{equation}\label{eq:SEM-updates}
	\eta\longmapsto\eta M
	\comma\qquad
	\theta\longmapsto M\theta\comma
\end{equation}
where the first update acts on energy configurations
$\eta\in\Omega$, regarded as row vectors, while the second defines the
corresponding hidden dynamics on column vectors $\theta\in\R^n$.
Row-stochasticity ensures that $\eta M\in\Omega$ whenever
$\eta\in\Omega$, as well as
\begin{equation}\label{eq:SEM-cube-invariance}
	M[a,b]^n\subseteq[a,b]^n
	\comma\qquad
	-\infty<a<b<\infty\comma
\end{equation}
since every coordinate of $M\theta$ is a convex combination of the
coordinates of $\theta$. In particular, although the hidden dynamics is
not conservative in general, every cube $[a,b]^n$ is invariant and every
flat configuration is fixed.

In the following proposition, we rigorously construct the stochastic exchange model and its hidden model starting from their infinitesimal descriptions. We show that the sole integrability condition \eqref{eq:SEM-assumption} is sufficient both to make the corresponding generators well defined on the corresponding polynomial spaces and, through standard Hille--Yosida theory, to construct the associated Feller processes; see, e.g., \cite{liggett_interacting_2005}. In particular, this construction allows $\Upsilon(\mathfrak M)$ to be infinite. If instead $\Upsilon(\mathfrak M)<\infty$, as happens, for instance, for the KMP model, the conclusion is immediate, since the operators in \eqref{eq:gen-SEM} and \eqref{eq:gen-SEM-hidden} below extend uniquely to bounded Markov generators on the corresponding spaces of continuous functions.

Before stating the proposition, recall from Section~\ref{sec:polynomials} that $\mathscr P_k$ denotes the space of polynomial functions on $\Omega$ of degree at most $k$, while $\mathscr R_k$ denotes the corresponding space of degree-$k$ homogeneous polynomials for the hidden model. Set, for $-\infty<a<b<\infty$,
\begin{equation}\label{eq:P-R-SEM}
	\textstyle
	\mathscr P
	\eqdef
	\bigcup_{k\ge0}\mathscr P_k
	\comma\qquad
	\mathscr R|_{[a,b]^n}
	\eqdef
	\bigoplus_{k\ge0}
	\mathscr R_k|_{[a,b]^n}
	\comma
\end{equation}
where $\mathscr R_k|_{[a,b]^n}$ denotes the restriction to $[a,b]^n$ of the polynomials in $\mathscr R_k$. For a compact set $\Xi$, we write $\cC(\Xi)$ for the Banach space of real-valued continuous functions on $\Xi$, endowed with the uniform norm. Below, we shall take either $\Xi=\Omega$ or $\Xi=[a,b]^n$.

\begin{proposition}[Construction and Feller property]
	\label{pr:SEM-Feller}
	Assume \eqref{eq:SEM-assumption}.
	Then, for every $f\in\mathscr P$ and
	$g\in\mathscr R|_{[a,b]^n}$,  $-\infty<a<b<\infty$, the integrals
	\begin{align}
		\cL^\tups f(\eta)
		&\eqdef
		\int\Upsilon(\dd M)
		\tonde{
			f(\eta M)-f(\eta)
		}
		\comma\qquad
		\eta\in\Omega\comma
		\label{eq:gen-SEM}
		\\
		\mathscr L^\tups g(\theta)
		&\eqdef
		\int\Upsilon(\dd M)
		\tonde{
			g(M\theta)-g(\theta)
		}
		\comma\qquad
		\theta\in[a,b]^n\comma
		\label{eq:gen-SEM-hidden}
	\end{align}
	are well defined and finite. Moreover, for every $k\ge0$,
	\begin{equation}\label{eq:SEM-poly-preservation}
		\cL^\tups\mathscr P_k
		\subseteq
		\mathscr P_k
		\comma\qquad
		\mathscr L^\tups
		\mathscr R_k|_{[a,b]^n}
		\subseteq
		\mathscr R_k|_{[a,b]^n}\fstop
	\end{equation}
	As a consequence,
	$(\cL^\tups,\mathscr P)$
	(resp.\ $(\mathscr L^\tups,\mathscr R|_{[a,b]^n})$)
	is a Markov pregenerator on $\cC(\Omega)$
	(resp.\ $\cC([a,b]^n)$), whose closure generates a unique
	Feller process on $\Omega$ (resp.\ $[a,b]^n$).
\end{proposition}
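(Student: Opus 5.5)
The plan is to handle the energy and hidden generators in parallel by working at the level of monomials, and then to invoke the Hille--Yosida/Liggett criterion for Markov pregenerators.

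\emph{Step 1: finiteness and polynomial preservation.} I would begin with a single monomial. Fix a multi-index $x_1,\ldots,x_k$ and set $f(\eta)=\eta_{x_1}\cdots\eta_{x_k}$. Then
\begin{equation}
f(\eta M)=\sum_{y_1,\ldots,y_k}\eta_{y_1}\cdots\eta_{y_k}\,M_{y_1x_1}\cdots M_{y_kx_k}\fstop
\end{equation}
Consequently $f(\eta M)-f(\eta)=\sum_{y}\eta_{y_1}\cdots\eta_{y_k}\bigl(\prod_i M_{y_ix_i}-\prod_i\car_{\{y_i=x_i\}}\bigr)$. The key estimate is the following. For each fixed $y$, the coefficient $\prod_iM_{y_ix_i}-\prod_i\delta_{y_ix_i}$ is bounded in absolute value by $\sum_{i}|M_{y_ix_i}-\delta_{y_ix_i}|$, since all entries lie in $[0,1]$. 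Each $|M_{zx}-\delta_{zx}|$ is at most $1-M_{zz}$: for $z\neq x$ this holds because $M_{zx}\le 1-M_{zz}$, and for $z=x$ it is an identity. By \eqref{eq:SEM-assumption}, every coefficient is therefore $\Upsilon$-integrable. Integrating term by term (dominated convergence, with finitely many terms) shows that $\cL^\tups f$ is again a homogeneous polynomial of degree $k$ with finite coefficients $\int\Upsilon(\dd M)(\prod_iM_{y_ix_i}-\prod_i\delta_{y_ix_i})$.

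By linearity, together with the fact that $\mathscr P_k$ is spanned by degree-$k$ homogeneous monomials on $\Omega$ (Proposition~\ref{pr:polynomial-identifications}), this gives $\cL^\tups\mathscr P_k\subseteq\mathscr P_k$. The hidden case is identical with $g(\theta)=\theta_{x_1}\cdots\theta_{x_k}$ and $(M\theta)_{x}=\sum_yM_{xy}\theta_y$. The integrand is $\prod_iM_{x_iy_i}-\prod_i\delta_{x_iy_i}$, which is dominated by $\sum_x(1-M_{xx})$ in the same way, and this yields $\mathscr L^\tups\mathscr R_k\subseteq\mathscr R_k$. Restricting to $[a,b]^n$ is harmless, because homogeneous polynomials are determined by their values on a set with nonempty interior.

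\emph{Step 2: pregenerator properties.} The domains $\mathscr P$ and $\mathscr R|_{[a,b]^n}$ contain the constants and are dense in $\cC(\Omega)$ and $\cC([a,b]^n)$ by Stone--Weierstrass. The generators annihilate constants. The positive maximum principle follows by truncation. Let $\Upsilon_\eps$ denote the restriction of $\Upsilon$ to $\{M:\sum_x(1-M_{xx})\ge\eps\}$, which is a finite measure by \eqref{eq:SEM-assumption}. If $f$ attains its maximum at $\eta$, then $f(\eta M)-f(\eta)\le0$ for every $M$, because $\eta M\in\Omega$ (resp.\ $M\theta\in[a,b]^n$ by \eqref{eq:SEM-cube-invariance}). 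Hence the integral, defined as the $\eps\to0$ limit, is $\le 0$.

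\emph{Step 3: range condition.} This is where I expect the main obstacle, since one must show that $(\cL^\tups,\mathscr P)$ is closable and that its closure generates a semigroup. Here the polynomial invariance does the work. Each $\mathscr P_k$ (resp.\ $\mathscr R_{\le k}|_{[a,b]^n}$) is finite-dimensional and invariant, so for $\lambda$ large enough, namely larger than the norm of the restriction, $\lambda-\cL^\tups$ maps $\mathscr P_k$ onto itself. The range of $\lambda-\cL^\tups$ on $\mathscr P$ is therefore all of $\mathscr P$, which is dense. I would then cite Liggett's version of Hille--Yosida, \cite{liggett_interacting_2005}: a Markov pregenerator with dense range of $\lambda-\cL$ for small $\lambda^{-1}$ is closable, and its closure is a Markov generator. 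One subtlety needs checking, namely that the range condition is required for all small $\lambda^{-1}$ uniformly. This is fine because the dissipativity coming from the maximum principle gives $\|(\lambda-\cL)f\|\ge\lambda\|f\|$, so injectivity holds for every $\lambda>0$, and on finite-dimensional invariant spaces injectivity implies surjectivity. The resulting semigroup preserves $\cC$ and is positive and conservative, hence Feller. Uniqueness follows from the core property.
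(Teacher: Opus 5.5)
Your proposal is correct and follows essentially the same route as the paper's proof in Appendix~\ref{app:SEM-Feller}:
\begin{itemize}
\item $\Upsilon$-integrability of the polynomial coefficients, via the bound $|M_{zx}-\delta_{zx}|\le 1-M_{zz}$, which the paper phrases through $\ell_1$-bounds on $\eta M-\eta$ and $M\theta-\theta$;
\item the positive maximum principle, from $\eta M\in\Omega$ and $M[a,b]^n\subseteq[a,b]^n$;
\item bijectivity of $I-\lambda\cL^\tups$ on each finite-dimensional invariant $\mathscr P_k$, with injectivity coming from dissipativity for every $\lambda>0$;
\item Liggett's closure and closed-range results.
\end{itemize}
The only superfluous piece is the $\varepsilon$-truncation in Step~2. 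Your Step~1 already shows the integrand is $\Upsilon$-integrable, so the integral of a pointwise nonpositive integrand is directly nonpositive.
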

Since the proof of Proposition \ref{pr:SEM-Feller} is rather standard, we postpone it to Appendix \ref{app:SEM-Feller}.

By Proposition~\ref{pr:SEM-Feller}, the polynomial spaces $\mathscr P$ and
$\mathscr R|_{[a,b]^n}$ in \eqref{eq:P-R-SEM} are cores for the
respective generators. Since we shall primarily study their restrictions to these polynomial cores, with a slight abuse of notation, we use $\cL^\tups$ and $\mathscr L^\tups$ for both the pregenerators and
their closures.

\begin{remark}[Invariant measures]\label{rem:SEM-invariant-measures}
	Both $\Omega$ and $[a,b]^n$ are compact. Hence, by the Feller property
	and, e.g., \cite[Proposition~I.1.8(f)]{liggett_interacting_2005}, both
	processes admit invariant probability measures. Henceforth,
	$\mu^\tups$ denotes an arbitrary invariant probability measure of
	the stochastic exchange model on $\Omega$.
	
	Under the sole assumption \eqref{eq:SEM-assumption},
	$\mu^\tups$ need not be unique. For instance, if $\Upsilon$ is
	supported on permutation matrices, the dynamics only permutes the
	coordinates of configurations $\eta\in \Omega$, and many invariant probability measures coexist.
Proposition~\ref{pr:SEM-uniqueness} below provides several equivalent characterizations
of uniqueness, all involving only the second-order moments of the entries
of $M\in\mathfrak M$ with respect to $\Upsilon$.

In contrast, for the hidden process, invariant probability measures are
never unique:  row-stochasticity gives $M(c\mathbf1)=c\mathbf1$ for every
$M\in\mathfrak M$ and $c\in [a,b]$, thus, $\delta_{c\mathbf 1}$ is invariant.	
\end{remark}

\subsection{Particle systems, polynomials, and intertwinings}
\label{sec:SEM-particles}
This section develops, for general stochastic exchange models, the polynomial characterizations of the eigenvalue problem established for the ${\rm KMP}$ model in Section~\ref{sec:prelim-particle-systems}. This amounts to introducing the natural labeled-particle dynamics, identifying the corresponding linearly isomorphic polynomial spaces, and establishing the associated intertwining relations.
Most of the objects introduced in
Section~\ref{sec:prelim-particle-systems} have natural analogues in the
present setting, and the corresponding statements carry over verbatim. The main difference is that the particle system need not possess a canonical strictly positive reversible measure, which was used there to define the tilde maps \eqref{eq:tilde-tensor}--\eqref{eq:hat-tilde-diagonal} and characterize the spaces $\mathscr H_{k,\ast}$ and $\mathscr P_{k,\ast}$ (Proposition \ref{pr:Hk-star}). For the algebraic study of the eigenvalue problem, it suffices to replace that measure, at each level $k$, by an arbitrary strictly positive symmetric probability measure on $[n]^k$, say $\varpi_k$. Since $\varpi_k$ need be neither invariant nor
reversible for the particle dynamics, the corresponding statements
require minor modifications, which we record below and summarize in Proposition \ref{pr:SEM-pure-degree-isospectrality}. In this section, we retain most of the notation (e.g., $\mathscr H_k, \mathscr P_k, \mathscr R_k$, and their tensor analogues) introduced in
Section~\ref{sec:prelim-particle-systems}.
	
For $M\in\mathfrak M$ and $k\ge1$, let $M^{\otimes k}$ be the
Markov operator on $[n]^k$ under which the $k$ labels move independently according to the rows of $M$, and set
\begin{equation}\label{eq:L-k-Gamma}
	\textstyle
	L_k^\tups
	\eqdef
	\int\Upsilon(\dd M)\tonde{M^{\otimes k}-I}\comma
\end{equation}
as an operator acting on the coefficient space $\mathscr H_k^\per$ in \eqref{eq:coeff-space}.  By \eqref{eq:SEM-assumption}, this is a well-defined Markov generator, as, for every
$\psi\in\mathscr H_k^\per$ and $x_1,\ldots,x_k\in [n]$,
\begin{equation}\label{eq:L-k-Upsilon-integrability}
	\textstyle	\big|(M^{\otimes k}\psi)(x_1,\ldots, x_k)-\psi(x_1,\ldots, x_k)\big|
	\le
	2\max_{y_1,\ldots,y_k\in [n]}|\psi(y_1,\ldots, y_k)|
	\sum_{i=1}^k\tonde{1-M_{x_i x_i}}\fstop
\end{equation}
In particular, 	 $L_1$ is the generator of the continuous-time random walk on $[n]$ with rates
\begin{equation}\textstyle
	r_{xy}^\tups\eqdef \int \Upsilon(\dd M)\,M_{xy}\comma\qquad x,y\in [n]\comma x\neq y\fstop
\end{equation}
 More generally, for $k\ge 1$, $L_k^\tups$ is the generator of $k$ labeled particles, whose rate to jump from $(x_1,\ldots, x_k)\in [n]^k$ to another state $(y_1,\ldots, y_k)\in [n]^k$ reads
\begin{equation}
	\textstyle r^\tups(x_1,\ldots, x_k; y_1,\ldots, y_k)\eqdef \int \Upsilon(\dd M)\, M_{x_1y_1}\cdots M_{x_ky_k}\fstop
\end{equation}
As $M_{x_1y_1}\cdots M_{x_ky_k}=M_{x_{\varsigma(1)}y_{\varsigma(1)}}\cdots M_{x_{\varsigma(k)}y_{\varsigma(k)}}$, $\varsigma \in \mathfrak S_k$,  $L_k^\tups$ leaves $\mathscr H_k$ in \eqref{eq:coeff-space-sym} invariant. 

For every $k\ge1$, fix a strictly positive probability measure
$\varpi_k$ on $[n]^k$, invariant under permutations of the labels.  The
choice of $\varpi_k$ is immaterial.  One may take, for instance, the
measure $\mu_k$ from \eqref{eq:mu-k} for given positive site weights $\alpha=(\alpha_x)_{x\in [n]}$, the flat measure
$\varpi_k\equiv n^{-k}$, or, whenever it has full support, the probability measure
\begin{equation}\label{eq:mu-k-Gamma}
	\mu_k^\tups(x_1,\ldots,x_k)
	\eqdef \mu^\tups(\eta_{x_1}\cdots \eta_{x_k})
\comma\qquad x_1,\ldots, x_k\in [n]\comma
\end{equation}
induced by the $\Upsilon$-stochastic exchange model invariant measure $\mu^\tups$ fixed in
Remark~\ref{rem:SEM-invariant-measures}. It is immediate to check that $\mu_k^\tups$ is $L_k^\tups$-invariant. For the purpose of proving Theorem \ref{th:SEM}, no invariance or consistency
assumption on the family $(\varpi_k)_{k\ge1}$ will be needed.

Let $L_k^{\tups,\tvarpi}$ be the adjoint of $L_k^\tups$
in $L^2(\varpi_k)$.  If $D_{\varpi_k}$ denotes the diagonal matrix with
entries $\varpi_k=(\varpi_k(x_1,\ldots, x_k))_{(x_1,\ldots, x_k )\in [n]^k}$, then, viewed as $n^k\times n^k$ matrices,	
\begin{equation}\label{eq:SEM-varpi-adjoint}
	L_k^{\tups,\tvarpi}
	=
	D_{\varpi_k}^{-1}(L_k^\tups)^{\mathsf T}D_{\varpi_k}\fstop
\end{equation}
Thus, $L_k^{\tups,\tvarpi}$ is similar to the transpose of
$L_k^\tups$ and, since every finite matrix is similar to its
transpose, also to $L_k^\tups$ itself.  Its off-diagonal entries are
nonnegative, but it is a
Markov generator if and only if $\varpi_k$ is invariant for
$L_k^\tups$; it is the time-reversal generator only in that case.

The spaces $\mathscr P_k$, $\mathscr H_k$, and $\mathscr R_k$ and the hat maps in \eqref{eq:hat-tensor} and \eqref{eq:hat-tilde-diagonal} are
independent of $\varpi_k$.  In contrast, the tilde maps \eqref{eq:tilde-tensor}--\eqref{eq:hat-tilde-diagonal}
depend on the reference measure.  Accordingly, for
$\psi\in\mathscr H_k$, set
\begin{equation}\label{eq:tilde-psi-varpi-Gamma}
	\widetilde\psi^{\tvarpi}(\theta)
	\eqdef
	\sum_{x_1,\ldots,x_k\in[n]}
	\varpi_k(x_1,\ldots,x_k)\,
	\psi(x_1,\ldots,x_k)\,
	\theta_{x_1}\cdots\theta_{x_k}\comma\qquad \theta \in \R^n\fstop
\end{equation}

The arguments of Propositions~\ref{pr:polynomial-identifications} and
\ref{pr:particle-system-intertwining} apply verbatim, with the sole change that the hidden-model intertwining involves the adjoint in
$L^2(\varpi_k)$. Indeed, strict positivity and label symmetry of $\varpi_k$ imply that
\begin{equation}\label{eq:SEM-basic-isomorphisms}
	\mathscr H_k
	\xrightarrow[\hspace*{7mm}]{\ \psi\longmapsto\widehat\psi\ }
	\mathscr P_k
	\comma\qquad
	\mathscr H_k
	\xrightarrow[\hspace*{7mm}]{\ \psi\longmapsto\widetilde\psi^\tvarpi\ }
	\mathscr R_k
\end{equation}
are linear isomorphisms.  
Furthermore, for every $\psi \in \mathscr H_k$,
\begin{equation}\label{eq:SEM-intertwinings-varpi}
	\cL^\tups\widehat\psi
	=
	\widehat{L_k^\tups\psi}
	\comma\qquad
	\mathscr L^\tups\widetilde\psi^{\,\varpi}
	=
	\widetilde{L_k^{\tups,\tvarpi}\psi}^{\!\tvarpi}
\fstop
\end{equation}
Consequently, all four finite-dimensional operators in
\begin{equation}\label{eq:SEM-full-isospectrality}
	\cL^\tups|_{\mathscr P_k}
	\comma\qquad
	L_k^\tups|_{\mathscr H_k}
	\comma\qquad
	L_k^{\tups,\tvarpi}|_{\mathscr H_k}
	\comma\qquad
	\mathscr L^\tups|_{\mathscr R_k}
\end{equation}
are similar.  In particular, counting algebraic multiplicities,
\begin{equation}\label{eq:SEM-energy-particle-spectrum}
	\spec(\cL^\tups|_{\mathscr P_k})
	=
	\spec(L_k^\tups|_{\mathscr H_k})
	=
	\spec(\mathscr L^\tups|_{\mathscr R_k})\fstop
\end{equation}

We finally record the corresponding modification of the finer decompositions from Section~\ref{sec:polynomials}. Recall \eqref{eq:particle-removal}, \eqref{eq:particle-removal-addition-sym}, and	 \eqref{eq:particle-removal-addition-relations}, and let
\begin{equation}
	\mathfrak a_k(\mathscr H_{k-1})
	\subseteq
	\mathscr H_k
	\comma\qquad
	\mathscr H_{k,\ast}^{\varpi}
	\eqdef
	\mathfrak a_k(\mathscr H_{k-1})^{\perp_{\varpi_k}}\fstop
\end{equation}
The same tensorization argument as in
\eqref{eq:particle-consistency} gives
\begin{equation}
	L_k^\tups\mathfrak a_k
	=
	\mathfrak a_kL_{k-1}^\tups\fstop
\end{equation} Consequently, $\mathfrak a_k(\mathscr H_{k-1})$ is
$L_k^\tups$-invariant and, by adjointness, its orthogonal complement
$\mathscr H_{k,\ast}^{\varpi}$ is
invariant for the adjoint operator $L_k^{\tups,\tvarpi}$:
\begin{equation}\label{eq:SEM-invariance}
	L_k^\tups \mathfrak a_k(\mathscr H_{k-1})\subseteq \mathfrak a_k(\mathscr H_{k-1})\comma\qquad L_k^{\tups,\tvarpi}\mathscr H_{k,\ast}^\tvarpi\subseteq \mathscr H_{k,\ast}^\varpi\fstop
\end{equation} This is the only relevant
change from Section~\ref{sec:polynomials}: unless $L_k^\tups$ is
self-adjoint in $L^2(\varpi_k)=L^2([n]^k,\varpi_k)$, the space
$\mathscr H_{k,\ast}^{\varpi}$ need not be invariant under
$L_k^\tups$ itself.

The image of $\mathscr H_{k,\ast}^{\varpi}$ under the $\varpi$-tilde map in \eqref{eq:tilde-psi-varpi-Gamma} is nevertheless the
same $\varpi$-independent space as in \eqref{eq:R-k-ast}:
\begin{equation}\label{eq:SEM-R-k-star-varpi}
	\ttset{\widetilde\psi^\varpi\in \mathscr R_k:
	\psi\in\mathscr H_{k,\ast}^\varpi
}
	=
	\mathscr R_{k,\ast}
	\fstop
\end{equation}
Indeed, by arguing as in the proof of Proposition \ref{pr:Rk-star}, under the $\varpi$-tilde map in \eqref{eq:tilde-psi-varpi-Gamma}, $\varpi$-orthogonality to
$\mathfrak a_k(\mathscr H_{k-1})$ is equivalent to
$
	\sum_{x\in[n]}\partial_{\theta_x}
	\widetilde\psi^\varpi
	=0$, 
that is, to invariance under translations in the direction $\mathbf1\in \R^n$.  In view of \eqref{eq:SEM-R-k-star-varpi},
 the second intertwining in \eqref{eq:SEM-intertwinings-varpi} and the second inclusion in \eqref{eq:SEM-invariance} together give
\begin{equation}
	\mathscr L^\tups\mathscr R_{k,\ast}
	\subseteq
	\mathscr R_{k,\ast}\fstop
\end{equation}

On the $\eta$-variable side, the hat map in \eqref{eq:hat-tilde-diagonal} sends
$\mathfrak a_k(\mathscr H_{k-1})$ onto $\mathscr P_{k-1}$ and hence
identifies  $\widehat{\mathscr H_{k,\ast}^{\varpi}}$ with a vector-space
complement of $\mathscr P_{k-1}$ in $\mathscr P_k$. This complement need
not be $\cL^\tups$-invariant, since
$\mathscr H_{k,\ast}^{\varpi}$ is invariant under
$L_k^{\tups,\tvarpi}$ rather than under $L_k^\tups$; see \eqref{eq:SEM-intertwinings-varpi} and \eqref{eq:SEM-invariance}. We therefore
consider the quotient $\mathscr P_k/\mathscr P_{k-1}$, whose elements are
the equivalence classes
\begin{equation}
	[f]_k
	\eqdef
	f+\mathscr P_{k-1}
	\comma\qquad
	f\in\mathscr P_k\fstop
\end{equation}
Since both $\mathscr P_k$ and $\mathscr P_{k-1}$ are $\cL^\tups$-invariant,
$\cL^\tups$ induces a well-defined operator
on $\mathscr P_k/\mathscr P_{k-1}$, describing the degree-$k$ action of $\cL^\tups$
independently of any choice of complement. A similar construction was
used for the boundary-driven ${\rm KMP}$ model in
Section~\ref{sec:bd-particles}; see
Proposition~\ref{pr:bd-top-degree}.

The preceding space isomorphisms and operator identifications yield the
following refinement of \eqref{eq:SEM-energy-particle-spectrum}, which
separates the spectral contributions of the successive quotients
$\mathscr P_k/\mathscr P_{k-1}$. Recall that, here,
$\gap(\cL^\tups|_{\mathscr P_k})$ is the minimum real part of the
spectrum of $-\cL^\tups|_{\mathscr P_k}$ after removing one copy of
the zero eigenvalue corresponding to constants, with algebraic
multiplicities counted.

\begin{proposition}
	\label{pr:SEM-pure-degree-isospectrality}
	For every $k\ge1$, counting algebraic multiplicities,
	\begin{equation}\label{eq:SEM-layer-spectrum}
	\spec(
	\cL^\tups\
	\text{on}\
	\mathscr P_k/\mathscr P_{k-1}
	)
	=
	\spec(
	L_k^{\tups,\tvarpi}
	|_{\mathscr H_{k,\ast}^{\varpi}}
	)
	=
	\spec(
	\mathscr L^\tups|_{\mathscr R_{k,\ast}}
	)\fstop
\end{equation}
	Consequently, 
	\begin{equation}\label{eq:SEM-spectrum-filtration}
		\spec\tttonde{
			\cL^\tups|_{\mathscr P_k}
		}
		=
		\{0\}
		\bigcup_{j=1}^k
		\spec\tttonde{
			\mathscr L^\tups
			|_{\mathscr R_{j,\ast}}
		}
	\end{equation}
as multisets, and
\begin{equation}\label{eq:SEM-gap-via-hidden}
	\gap(\cL^\tups|_{\mathscr P_k})
	=
	\min_{1\le j\le k}\gap(\mathscr L^\tups|_{\mathscr R_{j,\ast}})\comma
\end{equation}
where
\begin{equation}\label{eq:SEM-gap-hidden}
	\gap(\mathscr L^\tups|_{\mathscr R_{k,\ast}})
	\eqdef
	\min\ttset{
		{\rm Re}(\lambda):
		\lambda\in
		\spec(-\mathscr L^\tups|_{\mathscr R_{k,\ast}})
	}\fstop
\end{equation}
\end{proposition}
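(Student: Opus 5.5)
The plan is to realize the quotient $\mathscr P_k/\mathscr P_{k-1}$ concretely on the particle side. Then the three spectra in \eqref{eq:SEM-layer-spectrum} are compared through the already established similarities. Afterwards \eqref{eq:SEM-spectrum-filtration} and \eqref{eq:SEM-gap-via-hidden} follow by a block-triangular argument along the filtration $\mathscr P_0\subseteq\cdots\subseteq\mathscr P_k$.

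\textbf{Step 1: the quotient as a dual.} By \eqref{eq:SEM-basic-isomorphisms}, the hat map identifies $\mathscr H_k$ with $\mathscr P_k$ and $\mathfrak a_k(\mathscr H_{k-1})$ with $\mathscr P_{k-1}$, intertwining $L_k^\tups$ with $\cL^\tups$. Hence the operator induced by $\cL^\tups$ on $\mathscr P_k/\mathscr P_{k-1}$ is similar to the one induced by $L_k^\tups$ on $\mathscr H_k/\mathfrak a_k(\mathscr H_{k-1})$, which is well defined by \eqref{eq:SEM-invariance}.

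We identify this quotient with $\mathscr H_{k,\ast}^\varpi=\mathfrak a_k(\mathscr H_{k-1})^{\perp_{\varpi_k}}$ via the pairing $\langle\cdot,\cdot\rangle_{\varpi_k}$. The pairing is nondegenerate between $\mathscr H_k/\mathfrak a_k(\mathscr H_{k-1})$ and $\mathscr H_{k,\ast}^\varpi$, and under it the quotient action of $L_k^\tups$ is transposed to $L_k^{\tups,\tvarpi}|_{\mathscr H_{k,\ast}^\varpi}$, by the definition of the adjoint. Since a finite-dimensional operator and its transpose share the same spectrum with algebraic multiplicities, this gives the first equality in \eqref{eq:SEM-layer-spectrum}.

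\textbf{Step 2: the hidden model side.} By \eqref{eq:SEM-R-k-star-varpi}, the $\varpi$-tilde map is a linear isomorphism $\mathscr H_{k,\ast}^\varpi\to\mathscr R_{k,\ast}$. It is injective as the restriction of the isomorphism in \eqref{eq:SEM-basic-isomorphisms}. By the second intertwining in \eqref{eq:SEM-intertwinings-varpi}, it conjugates $L_k^{\tups,\tvarpi}|_{\mathscr H_{k,\ast}^\varpi}$ to $\mathscr L^\tups|_{\mathscr R_{k,\ast}}$. This gives the second equality in \eqref{eq:SEM-layer-spectrum}.

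\textbf{Step 3: assembling the filtration.} Choose a basis of $\mathscr P_k$ adapted to the filtration. Since each $\mathscr P_j$ is $\cL^\tups$-invariant, the matrix of $\cL^\tups|_{\mathscr P_k}$ is block upper triangular. Its diagonal blocks are the induced operators on $\mathscr P_j/\mathscr P_{j-1}$, for $j=0,\ldots,k$, where $\mathscr P_0$ gives the eigenvalue $0$ since $\cL^\tups$ annihilates constants. The characteristic polynomial factorizes accordingly. Combining this with Step 1 and Step 2 yields \eqref{eq:SEM-spectrum-filtration} as multisets.

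Removing the one copy of $0$ coming from $\mathscr P_0$ and taking the minimal real part over the remaining multiset gives \eqref{eq:SEM-gap-via-hidden}, with the convention \eqref{eq:SEM-gap-hidden}.

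The only delicate point is Step 1: checking carefully that the transpose of the quotient map really corresponds to the restriction of the $\varpi_k$-adjoint to the annihilator, without assuming any self-adjointness. This is a standard annihilator/duality fact, so I expect no real obstacle.
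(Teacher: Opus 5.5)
Your proposal is correct and follows the same route as the paper. The paper obtains the proposition directly from three ingredients: the hat and $\varpi$-tilde identifications, the invariance \eqref{eq:SEM-invariance}, and \eqref{eq:SEM-R-k-star-varpi}; its filtration step is the same block-triangular argument as in Proposition~\ref{pr:bd-polynomial-spectrum}. Your Step~1 usefully makes explicit the quotient/orthogonal-complement duality that the paper leaves implicit. The one point worth stating is that $L_k^{\tups,\tvarpi}$ preserves $\mathscr H_k$, by label-symmetry of $\varpi_k$, so it really is the $\varpi_k$-adjoint of $L_k^\tups|_{\mathscr H_k}$; this is already encoded in the second inclusion of \eqref{eq:SEM-invariance}.
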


In light of Proposition~\ref{pr:SEM-pure-degree-isospectrality},
Theorem~\ref{th:SEM} admits an equivalent interpretation in terms of
 unlabeled particle systems, namely, 
 \begin{equation}\label{eq:SEM-gap-particle-interpretation}
 	\gap(\cL^\tups)
 	=
 	\inf_{k\ge1}
 	\gap\tttonde{
 		L_k^\tups|_{\mathscr H_k}
 	}
 	=
 	\gap\tttonde{
 		L_2^\tups|_{\mathscr H_2}
 	}\comma
 \end{equation}
where, consistently with Theorem~\ref{th:SEM} and the definition in \eqref{eq:SEM-gap-hidden}, all the gaps are understood as the smallest real parts of the eigenvalues of the negatives of the corresponding generators (possibly excluding the eigenvalue associated to constant functions), with $\gap(\cL^\tups)$ defined by restricting $\cL^\tups$ to polynomial functions. A labeled-particle strengthening, bearing the same relation to
Theorem~\ref{th:SEM} as Theorem~\ref{th:disc-ell-KMP} does to
Theorem~\ref{th:main-KMP}, also remains valid:
\begin{equation}\label{eq:SEM-gap-labeled-unlabeled}
	\gap(L_k^\tups)
	=
	\gap\tttonde{
		L_k^\tups|_{\mathscr H_k}
	}
	\comma\qquad k\ge1\fstop
\end{equation}
Indeed, the preceding identifications and the proofs in the next section admit analogous synchronized tensor versions.
To keep the presentation concise and avoid introducing an additional
layer of tensor notation, we leave the details of this refinement to
the reader.

\subsection{Proof of Theorem \ref{th:SEM}}\label{sec:SEM-proof}

In view of Proposition~\ref{pr:SEM-pure-degree-isospectrality} and, in particular, \eqref{eq:SEM-gap-via-hidden}--\eqref{eq:SEM-gap-hidden}, $\gap(\cL^\tups)$ in \eqref{eq:SEM-gap-intro} is determined by its hidden-model counterpart, namely, $\inf_{k\ge 1}\gap(\mathscr L^\tups|_{\mathscr R_{k,\ast}})$. 
The analysis of the latter lies at the core of the proof of the spectral-gap identities in Theorems  \ref{th:main-KMP}--\ref{th:disc-ell-KMP}. To prove the identity in \eqref{eq:SEM-gap-identity}, the core of the argument from
Section~\ref{sec:proof-main-KMP} carries over essentially unchanged in this more general setting, after allowing for possibly complex eigenvalues and
eigenfunctions.
\begin{proof}[Proof of Theorem \ref{th:SEM}]
	Fix any
	strictly positive probability measure $\pi$ on $[n]$; for instance,
	one may take $\pi\eqdef\varpi_1$, where $\varpi_1$ is the reference
	measure fixed in Section~\ref{sec:SEM-particles}. As in
	\eqref{eq:var-pi-intro}, set, for every $\theta \in \R^n$,
	\begin{equation}\label{eq:var-pi-SEM}
		\textstyle
		\pi(\theta)
		\eqdef
		\sum_{x\in[n]}\pi_x\,\theta_x
		\comma\qquad
		\var_\pi(\theta)
		\eqdef
		\sum_{x\in[n]}
		\pi_x\tonde{\theta_x-\pi(\theta)}^2\fstop
	\end{equation}
As in Section \ref{sec:proof-main-KMP}, fix $k\ge2$, an even integer $2\le\ell\le k$, and $-\infty<a<b<\infty$. Then, by Lemmas \ref{lem:var-power}--\ref{lem:G-Var} and \eqref{eq:var-a-b}, one has
\begin{equation}\label{eq:SEM-var-domination}
	\var_\pi^{\ell/2}\in\mathscr R_{\ell,\ast}
	\comma\qquad
	|g(\theta)|
	\le
	C\var_\pi(\theta)^{k/2}
	\le
	C\tonde{b-a}^{k-\ell}\var_\pi(\theta)^{\ell/2}\comma
\end{equation}
for every $g$ in the complexification of $\mathscr R_{k,\ast}$, for every $\theta \in [a,b]^n$, and some $C=C(g,k,\pi)>0$. Further, let $\lambda\in \C$ be
an eigenvalue of
$-\mathscr L^\tups|_{\mathscr R_{k,\ast}}$, and let $g\neq0$ be a
corresponding eigenfunction in the complexification of
$\mathscr R_{k,\ast}$. Choosing $\theta\in[a,b]^n$ with
$g(\theta)\neq0$, positivity of the hidden-model semigroup $(e^{t\mathscr L^\tups})_{t\ge 0}$ and
\eqref{eq:SEM-var-domination} give, for every $t>0$,
\begin{align}\label{eq:SEM-hidden-gap-comparison}
	\begin{aligned}
	e^{-{\rm Re}(\lambda)t}|g(\theta)|
	=
	|(e^{t\mathscr L^\tups}g)(\theta)|
	&\le
	e^{t\mathscr L^\tups}|g|(\theta)\\
	&
	\le
C\tonde{b-a}^{k-\ell}e^{t\mathscr L^\tups}\var_\pi^{\ell/2}(\theta)\\
&	\le
	C\tonde{b-a}^k
	\ttnorm{
		e^{t\mathscr L^\tups|_{\mathscr R_{\ell,\ast}}}
	}_{a,b}
	\comma
	\end{aligned}
\end{align}
where, as in \eqref{eq:tensor-semigroup-norm}, $\norm{\emparg}_{a,b}:\mathscr R_{\ell,\ast}\to [0,\infty)$ denotes the uniform norm over $\theta \in[a,b]^n$.
Hence, 
\begin{equation}
	-{\rm Re}(\lambda)
	\le
	\lim_{t\to\infty}\frac1t
	\log
	\ttnorm{
		e^{t\mathscr L^\tups|_{\mathscr R_{\ell,\ast}}}
	}_{a,b}
=
	\max\ttset{
		{\rm Re}(\sigma):
		\sigma\in
		\spec(
		\mathscr L^\tups|_{\mathscr R_{\ell,\ast}}
		)
	}
	=
	-\gap(
		\mathscr L^\tups|_{\mathscr R_{\ell,\ast}})\comma
\end{equation}
where the first identity comes from the fact that $\mathscr R_{\ell,\ast}$ is finite dimensional and Gelfand's formula, whereas the second identity follows from the definition in \eqref{eq:SEM-gap-hidden}.
Taking the minimum over the eigenvalues $\lambda\in \C$ of $-\mathscr L^\tups|_{\mathscr R_{\ell,\ast}}$ yields, for $k\ge 2$,
\begin{equation}\label{eq:SEM-higher-degree-ordering}
	\gap(\mathscr L^\tups|_{\mathscr R_{\ell,\ast}})\le \gap(\mathscr L^\tups|_{\mathscr R_{k,\ast}})
	\comma\qquad
	\text{for every}\ 2\le\ell\le k\ \text{with $\ell\ge 2$ even}\fstop
\end{equation}
In particular, choosing $\ell=2$ and using
\eqref{eq:SEM-gap-via-hidden} gives
\begin{equation}\label{eq:SEM-degree-two-conclusion}
	\gap(\cL^\tups)
	=
	\min\ttset{
		\gap\tttonde{
			\mathscr L^\tups|_{\mathscr R_{1,\ast}}},\gap\tttonde{
			\mathscr L^\tups|_{\mathscr R_{2,\ast}}
	}}
	=
	\gap(\cL^\tups|_{\mathscr P_2})\fstop
\end{equation}
This proves the identity in \eqref{eq:SEM-gap-identity}.

The existence of an invariant measure for $\cL^\tups$ was established in Section \ref{sec:SEM-basic}, Remark \ref{rem:SEM-invariant-measures}; the claim on its uniqueness is the equivalence of the assertions \ref{it:uniq1} and \ref{it:uniq2} of Proposition~\ref{pr:SEM-uniqueness}.	
\end{proof} 

\begin{remark}\label{rem:SEM-higher-degree}
	The ordering in \eqref{eq:SEM-higher-degree-ordering}, together with
	the isospectrality in \eqref{eq:SEM-layer-spectrum}, yields the
	$\cL^\tups$-analogue of the ${\rm KMP}$
	inequalities \eqref{eq:gap-even-odd} from
	Remark~\ref{rem:new-ordering}.
\end{remark}

\begin{remark} As observed in Remark~\ref{rem:even-degree-cones}, if
	$k\ge2$ is even, the cone $\mathscr C_{k,\ast}$ of nonnegative
	functions in $\mathscr R_{k,\ast}$ is proper, has nonempty interior,
	and is preserved by the hidden-model semigroup. Hence, the
	Perron--Frobenius theorem shows that
	$
		\gap\tttonde{
			\mathscr L^\tups|_{\mathscr R_{k,\ast}}
		}
$
	is a real, nonnegative eigenvalue of
	$-\mathscr L^\tups|_{\mathscr R_{k,\ast}}$, admitting a nonzero 
	nonnegative eigenfunction. By the aforementioned isospectralities, it is
	also an eigenvalue of $-\cL^\tups|_{\mathscr P_k}$ and
	$-L_k^\tups|_{\mathscr H_k}$. In particular, this applies to
	$k=2$.
	Consequently, if
	\begin{equation}\label{eq:SEM-gap-1-2}
		\gap\tttonde{
			\mathscr L^\tups|_{\mathscr R_{2,\ast}}
		}
		\le
		\gap\tttonde{
			\mathscr L^\tups|_{\mathscr R_{1,\ast}}
		}\comma
	\end{equation}
	then the identity in \eqref{eq:SEM-degree-two-conclusion} shows that $\gap(\cL^\tups)$ is a
	real eigenvalue. Without \eqref{eq:SEM-gap-1-2}, however, $\gap(\cL^\tups)$ need not itself be an eigenvalue, as the following elementary example shows.
	
	Let $n=3$ and $\Upsilon=\delta_{M^\star}$, where
\begin{equation}
	M^\star
	\eqdef
	\frac16
	\begin{pmatrix}
		3 & 2 & 1\\
		1 & 3 & 2\\
		2 & 1 & 3
	\end{pmatrix}
\end{equation}
is row-stochastic.
	The eigenvalues of $M^\star$ are $1,z,\bar z\in \C$, where
	\begin{equation}
		z=\frac14+\frac{\sqrt3}{12}\,{\rm i}
		\comma\qquad
		|z|^2=\frac1{12}\fstop
	\end{equation}
	Hence, 
	\begin{equation}
		\spec\tttonde{
			-\mathscr L^\tups|_{\mathscr R_{1,\ast}}
		}
		=
		\ttset{1-z,1-\bar z}\comma
		\qquad
		\gap\tttonde{
			\mathscr L^\tups|_{\mathscr R_{1,\ast}}
		}
		=
		\frac34\comma
	\end{equation}
	although neither eigenvalue equals $3/4$. Moreover, 
	\begin{equation}
		\spec\tttonde{
			-\mathscr L^\tups|_{\mathscr R_{2,\ast}}
		}
		=
		\ttset{
			1-z^2,
			1-|z|^2,
			1-\bar z^2
		}\comma
	\end{equation}
	and, therefore,
	\begin{equation}
		\gap\tttonde{
			\mathscr L^\tups|_{\mathscr R_{2,\ast}}
		}
		=
		\frac{11}{12}
		>
		\frac34
		=
		\gap(\cL^\tups)\fstop
	\end{equation}
	Thus, in this example, \eqref{eq:SEM-gap-1-2} fails and $\gap(\cL^\tups)$ is the real
	part of a nonreal eigenvalue. 
\end{remark}

It remains to prove the uniqueness statement in Theorem~\ref{th:SEM}. 

\begin{proposition}[Uniqueness of the invariant measure]
	\label{pr:SEM-uniqueness}
	Assume that the Borel measure $\Upsilon$ on $\mathfrak M$ satisfies
	\eqref{eq:SEM-assumption}. Then, the following statements are
	equivalent:
	\begin{enumerate}[(i)]
		\item\label{it:uniq1}
		The $\Upsilon$-stochastic exchange model admits a unique invariant
		probability measure.
		
		\item\label{it:uniq2}
		The polynomial gap in \eqref{eq:SEM-gap-intro} satisfies
		\begin{equation}
			\gap(\cL^\tups)>0\fstop
		\end{equation}
		
		\item\label{it:uniq3}
		The two-particle generator $L_2^\tups$ admits a unique invariant
		probability measure.
		
		\item\label{it:uniq4}
		The hidden-model gap in \eqref{eq:SEM-gap-hidden} satisfies
		\begin{equation}
			\gap\tttonde{
				\mathscr L^\tups
				|_{\mathscr R_{2,\ast}}
			}>0\fstop
		\end{equation}
		
		\item\label{it:uniq5}
		For every strictly positive probability measure $\pi$ on $[n]$ and some $t>0$, 
		\begin{equation}\label{eq:SEM-strict-variance-contraction}
			e^{t\mathscr L^\tups}\var_\pi(\theta)
			<
			\var_\pi(\theta)
			\comma\qquad
			\text{for every nonconstant }\theta\in\R^n\comma
		\end{equation}
		where $\var_\pi$ was defined in \eqref{eq:var-pi-SEM}.
	\end{enumerate}
\end{proposition}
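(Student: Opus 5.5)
I will prove the equivalences along the chain (ii)$\Leftrightarrow$(iv)$\Leftrightarrow$(v)$\Leftrightarrow$(iii), and separately (i)$\Leftrightarrow$(iii), choosing the reference measure $\pi$ freely. The first step, (ii)$\Leftrightarrow$(iv), is nearly immediate from \eqref{eq:SEM-degree-two-conclusion}: $\gap(\cL^\tups)=\min\{\gap(\mathscr L^\tups|_{\mathscr R_{1,\ast}}),\gap(\mathscr L^\tups|_{\mathscr R_{2,\ast}})\}$. It remains to see that the degree-one gap is at least the degree-two gap, or at least positive whenever the latter is.

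For this I would use the squaring argument of Proposition~\ref{pr:lambda-infty-upper}. If $g\in\mathscr R_{1,\ast}$ (complexified) satisfies $-\mathscr L^\tups g=\lambda g$, then $|g|^2=g\bar g\in\mathscr R_{2,\ast}$ is nonnegative. Applying positivity of the semigroup gives
\[
e^{-2{\rm Re}(\lambda)t}|g|^2=|e^{t\mathscr L^\tups}g|^2\le e^{t\mathscr L^\tups}|g|^2 .
\]
Taking the growth rate and using Gelfand's formula yields $2\,{\rm Re}(\lambda)\ge\gap(\mathscr L^\tups|_{\mathscr R_{2,\ast}})$. Hence (iv) implies (ii). The converse holds because $\gap(\cL^\tups)\le\gap(\mathscr L^\tups|_{\mathscr R_{2,\ast}})$.

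Next I treat (iv)$\Leftrightarrow$(v). If the gap in (iv) is positive, then $e^{t\mathscr L^\tups}$ on the finite-dimensional space $\mathscr R_{2,\ast}$ decays exponentially. Combined with the two-sided comparison $\var_\pi$ versus any interior element of $\mathscr C$, this should give strict contraction for large $t$; but (v) asks for strict inequality, not decay. The cleaner route is the Perron argument. By Remark~\ref{rem:even-degree-cones}, there is a nonzero left Perron functional $h\in\mathscr C'$ with $h\mathscr L^\tups=-\gamma h$, where $\gamma=\gap(\mathscr L^\tups|_{\mathscr R_{2,\ast}})$.

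For the direction (iv)$\Rightarrow$(v), set $D_t\eqdef\var_\pi-e^{t\mathscr L^\tups}\var_\pi$. If $D_t(\theta)\le0$ at some nonconstant $\theta$ for every $t$, then the decay $e^{t\mathscr L^\tups}\var_\pi\to0$ uniformly on $[a,b]^n$ contradicts $\var_\pi(\theta)>0$ for large $t$. Thus (v) holds for some large $t$, and I can take $t$ uniform in $\theta$ by compactness of the unit sphere of $\{\pi(\theta)=0\}$ and homogeneity. For (v)$\Rightarrow$(iv), strict contraction at nonconstant $\theta$ means $D_t\in\interior(\mathscr C)$, again by homogeneity and compactness. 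Then
\[
0<h(D_t)=(1-e^{-\gamma t})\,h(\var_\pi),
\]
which forces $\gamma>0$.

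Finally, for (iii) and (i), the two-particle chain $L_2^\tups$ has the invariant measure $\mu_2^\tups$ from \eqref{eq:mu-k-Gamma}. Uniqueness of its invariant law is equivalent to a simple zero eigenvalue of $L_2^\tups$ on $\mathscr H_2^\per$, and by symmetrization and the decomposition $\mathscr P_2/\mathscr P_1$, $\mathscr P_1/\mathscr P_0$ this is equivalent to (iv) together with the degree-one statement already established. Here I must also compare labeled and symmetric zero modes; I would handle this by noting that each one-particle marginal is $L_1^\tups$ and the antisymmetric sector is controlled by $L_1$. For (i)$\Rightarrow$(iii), two invariant laws $\mu\neq\mu'$ that agree on moments of order at most $2$ can be excluded by (ii): by the similarity of $\cL^\tups|_{\mathscr P_k}$ and positive gaps at every degree, every polynomial converges to a constant under the semigroup, so all moments are determined, and on compact $\Omega$ the moments determine the measure. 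Thus (ii)$\Rightarrow$(i). Conversely, if (ii) fails, a zero-real-part eigenvalue in degree $1$ or $2$ yields a nonconstant bounded invariant or rotating polynomial. Together with compactness (Cesàro averages of Dirac masses), this produces two distinct invariant measures. I expect this construction to be the main obstacle, especially when the eigenvalue is purely imaginary. In that case I would take real parts of the rotating eigenfunction and average $\delta_\eta e^{t\cL^\tups}$ over time, starting from points that separate the eigenfunction's modulus.
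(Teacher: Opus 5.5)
\textbf{Main gap: (iv)$\Rightarrow$(iii).} Statement (iii) concerns the \emph{labeled} chain $L_2^\tups$ on $[n]^2$. So besides a simple zero eigenvalue on $\mathscr H_2$, you must rule out zero modes on antisymmetric functions. Your mechanism (``each marginal is $L_1^\tups$, so the antisymmetric sector is controlled by $L_1$'') does not do this. Only the $(n-1)$-dimensional invariant subspace $\{\phi\otimes\mathbf 1-\mathbf 1\otimes\phi\}$ evolves by $L_1^\tups$. The remaining $\binom{n-1}{2}$ dimensions of the antisymmetric sector are governed by antisymmetric bilinear forms in $\mathscr R_{2,\ast}^\per$ and are genuinely two-particle. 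The marginal observation shows only that an antisymmetric invariant signed measure $X$, viewed as an $n\times n$ matrix, satisfies $X\mathbf 1=0=X^{\mathsf T}\mathbf 1$ once $L_1^\tups$ has a unique invariant law. It does not force $X=0$.

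The missing ingredient is the synchronized two-copy hidden-model argument behind \eqref{eq:SEM-gap-labeled-unlabeled}, which is exactly what the paper invokes for (ii)$\Leftrightarrow$(iii). The adjoint action of $L_2^\tups$ on $X$ is $X\mapsto\int\Upsilon(\dd M)\,(M^{\mathsf T}XM-X)$. This is precisely the synchronized hidden dynamics of the bilinear form $G_X(\theta,\theta')=\theta^{\mathsf T}X\theta'$, which lies in $\mathscr R_{2,\ast}^\per$ because $X\mathbf 1=0=X^{\mathsf T}\mathbf 1$. Now combine three facts: $G_X$ is invariant; Lemma~\ref{lem:G-Var} gives $|G_X|\le C\,\var_\pi^{1/2}\otimes\var_\pi^{1/2}$; and Cauchy--Schwarz applies under the common random product of update matrices. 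Together they give $|G_X(\theta,\theta')|\le C\,(e^{t\mathscr L^\tups}\var_\pi(\theta))^{1/2}(e^{t\mathscr L^\tups}\var_\pi(\theta'))^{1/2}\to0$ under (iv), hence $X=0$.

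\textbf{Smaller problem: $\neg$(ii)$\Rightarrow\neg$(i).} The purely imaginary case you single out as the main obstacle cannot occur. Your proposed treatment would not work anyway: every invariant law annihilates an eigenfunction with eigenvalue ${\rm i}s$, $s\neq0$, so its real part separates no invariant laws. By \eqref{eq:SEM-energy-particle-spectrum}, the spectrum of $\cL^\tups|_{\mathscr P_2}$ is that of the finite Markov generator $L_2^\tups|_{\mathscr H_2}$, whose only eigenvalue on the imaginary axis is $0$. (Alternatively, the cone Perron--Frobenius theorem you already use makes the spectral bound on $\mathscr R_{2,\ast}$ itself an eigenvalue.) Since $e^{t\cL^\tups}$ is a sup-norm contraction on $\mathscr P_2$, the eigenvalue $0$ is semisimple, so there is a nonconstant real $f\in\mathscr P_2$ with $\cL^\tups f=0$. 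The sets where $f$ attains its maximum and its minimum are then disjoint closed invariant sets carrying distinct invariant laws, as in the paper.

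The remaining steps are sound and partly differ from the paper:
\begin{itemize}
\item your squaring bound $\gap(\mathscr L^\tups|_{\mathscr R_{1,\ast}})\ge\frac12\gap(\mathscr L^\tups|_{\mathscr R_{2,\ast}})$ replaces the paper's count of closed classes;
\item the left Perron functional replaces iteration plus Gelfand's formula in (v)$\Rightarrow$(iv);
\item convergence of $e^{t\cL^\tups}f$ to constants replaces the dual-multiplicity argument in (ii)$\Rightarrow$(i).
\end{itemize}
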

\begin{remark}[A variant of condition \ref{it:uniq5}]
	If $L_1^\tups$ admits a strictly positive invariant probability
	measure $\pi$, then, for this choice of $\pi$, \textquotedblleft some $t>0$\textquotedblright\ in
	\eqref{eq:SEM-strict-variance-contraction} may be replaced by
	\textquotedblleft every $t>0$\textquotedblright. Indeed, $L_1$-invariance of $\pi$ and Jensen's inequality
	give
	\begin{equation}
		e^{t\mathscr L^\tups}\var_\pi
		\le
		\var_\pi
		\comma\qquad t\ge0\fstop
	\end{equation}
	Hence, for every $\theta\in\R^n$, the map
	$t\mapsto
	e^{t\mathscr L^\tups}\var_\pi(\theta)
	$
	is nonincreasing and analytic. Equality at any positive time would
	force equality at all times, contradicting \eqref{eq:SEM-strict-variance-contraction}.
\end{remark}

\begin{proof}[Proof of Proposition \ref{pr:SEM-uniqueness}]
	We first prove the equivalence of
	\ref{it:uniq2} and \ref{it:uniq3}. By
	\eqref{eq:SEM-gap-particle-interpretation} and \eqref{eq:SEM-gap-labeled-unlabeled},
	\begin{equation}\label{eq:SEM-gap-L2-labeled}
		\gap(\cL^\tups)
		=
		\gap\tttonde{
			L_2^\tups|_{\mathscr H_2}
		}
		=
		\gap(L_2^\tups)\fstop
	\end{equation}
Recall that, for a finite-state Markov generator, zero is always an eigenvalue of the corresponding generator, and its
	multiplicity equals the number of closed communicating classes, while
	all the remaining eigenvalues of the generator have strictly
	negative real part. Hence, $L_2^\tups$ admits a unique invariant
	probability measure if and only if $\gap(L_2^\tups)>0$. Together
	with \eqref{eq:SEM-gap-L2-labeled}, this proves
	\ref{it:uniq2} $\Longleftrightarrow$ \ref{it:uniq3}.
	
	To compare \ref{it:uniq3} and \ref{it:uniq4}, let $1\le n_1\le n$ and
	$1\le n_2\le n(n+1)/2$ denote the numbers of closed communicating classes of,
	respectively, the one-particle chain $L_1^\tups = L_1^\tups|_{\mathscr H_1}$ and the unlabeled two-particle chain $L_2^\tups|_{\mathscr H_2}$, respectively.  Proposition~\ref{pr:SEM-pure-degree-isospectrality}
	and the spectral filtration in \eqref{eq:SEM-spectrum-filtration} give
	\begin{equation}\label{eq:SEM-kernel-dimension}
		\dim\ker\tttonde{
			\mathscr L^\tups
			|_{\mathscr R_{2,\ast}}
		}
		=
		n_2-n_1\comma
	\end{equation}
	yielding, together with $n_2\ge n_1(n_1+1)/2$, 
	\begin{equation}\label{eq:ker-L-R2}
		\ker\tttonde{
			\mathscr L^\tups
			|_{\mathscr R_{2,\ast}}
		}
		=
		\{0\}
		\quad\Longleftrightarrow\quad
		n_2=1\fstop
	\end{equation}
	On the one hand, since all nonzero eigenvalues under consideration have strictly
	negative real part, the left-hand side of \eqref{eq:ker-L-R2} is equivalent to
	\ref{it:uniq4}. On the other hand, $n_2=1$ is equivalent to
	positivity of $\gap(L_2^\tups|_{\mathscr H_2})$ in \eqref{eq:SEM-gap-L2-labeled}, thus,
	to \ref{it:uniq3}. This proves
	\ref{it:uniq3} $\Longleftrightarrow$ \ref{it:uniq4}.
	
	We next prove
	\ref{it:uniq4} $\Longleftrightarrow$ \ref{it:uniq5}. Recall from \eqref{eq:cone-C-2-star} the definition of the proper 
	cone $\mathscr C\subset \mathscr R_{2,\ast}$.	
	For any strictly positive probability measure $\pi$,
	$\var_\pi\in\interior(\mathscr C)$, while
	$e^{t\mathscr L^\tups}\mathscr C\subseteq\mathscr C$,
	$t\ge0$.	Assume \ref{it:uniq4}. Then, the hidden-model semigroup on
	$\mathscr R_{2,\ast}$ converges exponentially to zero; in particular,  
	$e^{t\mathscr L^\tups}\var_\pi
		\to 0
		$ in $\mathscr R_{2,\ast}$ as $t\to \infty$.
	Since $\var_\pi\in\interior(\mathscr C)$, for all sufficiently large
	$t$,
	\begin{equation}\label{eq:var-var-int}
		\var_\pi-e^{t\mathscr L^\tups}\var_\pi
		\in\interior(\mathscr C)\comma
	\end{equation}
	which is precisely \eqref{eq:SEM-strict-variance-contraction}. 	Conversely, suppose that
	\eqref{eq:SEM-strict-variance-contraction} (or, equivalently, \eqref{eq:var-var-int}) holds for some strictly
	positive $\pi$ and some $t>0$. 	Since
	$\var_\pi-e^{t\mathscr L^\tups}\var_\pi$ is strictly positive
	on the compact set
	$\ttset{
			\theta\in\R^n:
			\pi(\theta)=0, 
			\var_\pi(\theta)=1
		}$, 
	homogeneity and translation invariance yield some $\varepsilon>0$
	such that
	\begin{equation}\label{eq:SEM-uniform-variance-contraction}
		e^{t\mathscr L^\tups}\var_\pi
		\le
		(1-\varepsilon)\var_\pi\comma
	\end{equation}
	which, by positivity and iteration, further gives
	\begin{equation}\label{eq:SEM-iterated-variance-contraction}
		e^{\ell t\mathscr L^\tups}\var_\pi
		\le
		(1-\varepsilon)^\ell\var_\pi
		\comma\qquad \ell\ge1\fstop
	\end{equation}
	By Gelfand's formula, \eqref{eq:SEM-iterated-variance-contraction} gives $\gap(\mathscr L^\tups|_{\mathscr R_{2,\ast}})\ge -\frac1t\log(1-\eps)>0$, thus, proving \ref{it:uniq4}.
	
		It remains to prove
	\ref{it:uniq1} $\Longleftrightarrow$ \ref{it:uniq2}. Suppose that
	$\gap(\cL^\tups)=0$. By \eqref{eq:SEM-degree-two-conclusion},
	$
		\gap\tttonde{\cL^\tups|_{\mathscr P_2}}=0$.
	Moreover, by \eqref{eq:SEM-basic-isomorphisms}--\eqref{eq:SEM-energy-particle-spectrum},
	there
	exists a nonconstant real $f\in\mathscr P_2$ such that
	$\cL^\tups f=0$. 	Hence $e^{t\cL^\tups}f=f$ for every $t\ge0$. Define
	\begin{equation}
		\Omega_+
		\eqdef
		\ttset{\eta\in\Omega:f(\eta)=\max_\Omega f}
		\comma\qquad
		\Omega_-
		\eqdef
		\ttset{\eta\in\Omega:f(\eta)=\min_\Omega f}\fstop
	\end{equation}
	Since $\Omega$ is compact and $f$ is continuous and nonconstant,
	$\Omega_+$ and $\Omega_-$ are nonempty, compact, and disjoint.
	Moreover, if $\eta\in\Omega_+$, then
	\begin{equation}
		\max_\Omega f
		=
		e^{t\cL^\tups}f(\eta)
		=
		\mathbb E_\eta[f(\eta_t)]
		\le
		\max_\Omega f\comma
	\end{equation}
	so $\eta_t\in\Omega_+$ almost surely; the same estimate for $-f$ shows that also $\Omega_-$ is
	invariant. 	Hence, each set supports an invariant probability measure. Since
	$\Omega_+$ and $\Omega_-$ are disjoint, these measures are distinct.
	This proves 
	\ref{it:uniq1} $\Longrightarrow$ \ref{it:uniq2}.
	
	Conversely, let
$\mu^{\tups,1}\neq\mu^{\tups,2}$ be invariant probability
measures. By density of $\mathscr P$ in $\cC(\Omega)$, their
restrictions differ on some $\mathscr P_k$. Set
\begin{equation}
	\ell_i
	\eqdef
	\mu^{\tups,i}|_{\mathscr P_k}
	\in\mathscr P_k'
	\comma\qquad i\in\{1,2\}\comma
\end{equation}
where $\mathscr P_k'$ is the algebraic dual of $\mathscr P_k$. Then
$\ell_1\neq\ell_2$, while
$\ell_1(\mathbf1)=\ell_2(\mathbf1)=1$. Hence, they cannot be
nontrivial scalar multiples of one another and are therefore linearly
independent. Further, for the algebraic adjoint
	$
		\tttonde{\cL^\tups|_{\mathscr P_k}}'
		:
		\mathscr P_k'
		\to
		\mathscr P_k'$, 
invariance gives
\begin{equation}
	\tttonde{\cL^\tups|_{\mathscr P_k}}'\ell_1=	\tttonde{\cL^\tups|_{\mathscr P_k}}'\ell_2=0\fstop
\end{equation}
	Thus, zero has geometric, and hence algebraic, multiplicity at least
	two for the adjoint. Since a finite-dimensional operator and its
	adjoint have the same characteristic polynomial, zero has algebraic
	multiplicity at least two also for
	$\cL^\tups|_{\mathscr P_k}$. Therefore,
	$
		\gap\tttonde{\cL^\tups|_{\mathscr P_k}}=0$, 
	and consequently $\gap(\cL^\tups)=0$.
	 This proves \ref{it:uniq2} $\Longrightarrow$ \ref{it:uniq1}.
\end{proof}

\subsection{Examples}\label{sec:SEM-examples} As already mentioned, the ${\rm KMP}$ model with site and block weights $\alpha$ and $w$
corresponds to the stochastic exchange model with the update measure $\Upsilon=\Upsilon^\tKMP$ in \eqref{eq:Upsilon-KMP}. We next discuss
three further classes of stochastic exchange models: averaging-like processes,
immediate exchange models, and the harmonic process. Some of these models with all update blocks of size two were recently studied
in \cite{kim_quattropani_sau_spectral_2025,casanova2025partially}. 	Throughout this section,
we fix $n\ge2$.

\subsubsection{Averaging processes and generalized exchanges}
\label{sec:SEM-AVG}

Fix a probability vector $\pi=(\pi_x)_{x\in[n]}$ with $\pi_x>0$ for every
$x$, and nonnegative block weights $w=(w_B)_{B\subseteq[n]}$. Recall from
\eqref{eq:K_B^U2} that $K_B^\pi$ is the row-stochastic matrix which
redistributes the total mass in $B$ deterministically according to the
conditional probability vector $(\pi_x/\pi(B))_{x\in B}$ and leaves all
coordinates outside $B$ unchanged. The corresponding block averaging process
is obtained from
\begin{equation}\label{eq:Upsilon-AVG}
\textstyle \Upsilon=	\Upsilon^{\tavg}
	\eqdef
	\sum_{B\subseteq[n]}
	w_B\,\delta_{K_B^\pi}\fstop
\end{equation}
This is precisely the update measure underlying the limiting hidden generator
$\mathscr L_\infty$ in \eqref{eq:hidden-generator-infty}; its support is the
set $\supp(\Upsilon_\infty)$ introduced in \eqref{eq:supp-Upsilon-infty}. 
For pairwise blocks, the update on the coordinates $\{x,y\}$ reads
\begin{equation}
	(\eta_x,\eta_y)
	\begin{pmatrix}
		\frac{\pi_x}{\pi_x+\pi_y} &\frac{\pi_y}{\pi_x+\pi_y}\\[.2cm]
		\frac{\pi_x}{\pi_x+\pi_y} &\frac{\pi_y}{\pi_x+\pi_y}
	\end{pmatrix}
	=
	(
	\tfrac{\pi_x}{\pi_x+\pi_y}(\eta_x+\eta_y),
	\tfrac{\pi_y}{\pi_x+\pi_y}(\eta_x+\eta_y)
	)\fstop
\end{equation}
This recovers the weighted averaging process considered in, e.g., 
\cite{aldous_lecture_2012,quattropani2021mixing}; the corresponding
mean-field block dynamics was studied in \cite{caputo_repeated_2024}.
The probability measure $\delta_\pi$ is invariant and, under the usual
connectivity assumption \eqref{eq:connected-hypergraph} on the hypergraph induced by $w$, it is the unique
invariant probability measure. 

In fact, averaging processes satisfy the stronger one-particle spectral gap
identity. For pairwise blocks, this was proved in
\cite[Theorem~2.1]{quattropani2021mixing}; in the present block setting, it
also follows by combining \eqref{eq:gap-lower-infty} with
Theorem~\ref{th:SEM}. Indeed, writing
$\cL^{\tavg}\eqdef\cL^{\Upsilon^{\tavg}}$, one has
\begin{equation}\label{eq:AVG-gap-identity}
	\gap(\cL^{\tavg})
	=
	\gap(\cL^{\tavg}|_{\mathscr P_1})\fstop
\end{equation}

The preceding example belongs to a broader class. Suppose that $\Upsilon$ is
a finite measure on $\mathfrak M$ and that there exists a strictly positive
probability vector $\pi\in\Omega$ such that
\begin{equation}\label{eq:SEM-pi-invariance}
	\pi M=\pi
	\comma\qquad
	\text{for $\Upsilon$-a.e.\ $M$}\fstop
\end{equation}
Thus, $\pi$ is invariant for $\Upsilon$-almost every $M$. Consequently,
$\delta_\pi$ is invariant for the stochastic exchange model, while
$\pi^{\otimes k}$ is invariant for $L_k^\tups$ for every $k\ge1$, since
$\pi^{\otimes k}M^{\otimes k}=\pi^{\otimes k}$ for $\Upsilon$-almost every
$M$.
If, more strongly,
\begin{equation}\label{eq:SEM-pi-detailed-balance}
	D_\pi M=M^{\mathsf T}D_\pi
	\comma\qquad
	\text{for $\Upsilon$-a.e.\ $M$}\comma
\end{equation}
where $D_\pi$ is the diagonal matrix with diagonal $\pi$, then $\pi$ is
reversible for $\Upsilon$-almost every $M$. Indeed, this condition is
equivalent to the detailed-balance equation $\pi_xM_{xy}=\pi_yM_{yx}$, $x,y\in[n]$, and makes
$M^{\otimes k}$ and $L_k^\tups$ self-adjoint on $L^2(\pi^{\otimes k})$. In particular, \eqref{eq:SEM-pi-detailed-balance} guarantees that $\spec(\cL^\tups|_{\mathscr P})$ is real.

While \eqref{eq:SEM-pi-invariance} and \eqref{eq:SEM-pi-detailed-balance} are equivalent for size-two block updates, 
for updates involving three or more sites,
\eqref{eq:SEM-pi-invariance} is strictly weaker than
\eqref{eq:SEM-pi-detailed-balance}. For example, the cyclic permutation
matrix
\begin{equation}
	\begin{pmatrix}
		0&1&0\\
		0&0&1\\
		1&0&0
	\end{pmatrix}
\end{equation}
preserves the uniform probability vector $\pi\equiv 1/n$, but is not reversible with respect
to it.

\begin{remark}[Permutations and the interchange process]\label{rem:IP}
	Permutation updates provide an interesting degenerate example of stochastic exchange models. Suppose that
	$\Upsilon$ is supported on $n\times n$ permutation matrices,  elements
	of $\mathfrak M$. Starting from a configuration $\eta$ with pairwise distinct
	coordinates, and regarding the values $(\eta_x)_{x\in[n]}$ as distinct labels,
	the stochastic exchange model projects onto a block-interchange process;
	see \cite{caputo_proof_2010,bristiel_caputo_entropy_2021,alon2026aldous}.
	
	On the full state space $\Omega$, however, permutation updates preserve the
	unordered collection of coordinates of $\eta$. Consequently, the dynamics
	decomposes into infinitely many invariant subsets and admits infinitely many
	invariant probability measures. In particular, by
	Proposition~\ref{pr:SEM-uniqueness}, its spectral gap vanishes. Thus, although
	Theorem~\ref{th:SEM} applies to permutation updates, the ambient state space
	$\Omega$ is too large to capture the nontrivial spectral-gap identities of the
	corresponding interchange-process projections, such as
	\cite[Theorem~1.1]{caputo_proof_2010} in the graph case and its proposed extensions to hypergraphs
	\cite[Conjecture~1.7]{bristiel_caputo_entropy_2021}.
\end{remark}

\subsubsection{Immediate exchange model}\label{sec:IEM}
We next give a block version of the \textit{immediate exchange model}
\cite{heinsalu_patriarca_kinetic_2014,katriel_immediate_2015,redig_generalized_2017}. As for the ${\rm KMP}$ model, fix positive parameters
$\alpha=(\alpha_x)_{x\in[n]}$ and nonnegative block weights $w=(w_B)_{B\subseteq[n]}$. For every block $B\subseteq[n]$ with $w_B>0$, let
$
A^B=(a^B_{xy})_{x,y\in B}
$
be a symmetric matrix with strictly positive entries satisfying
\begin{equation}\label{eq:IEM-row-sums}\textstyle
	\sum_{y\in B}a^B_{xy}
	=
	\alpha_x
	\comma\qquad x\in B\fstop
\end{equation}
For every $x\in B$, independently sample
\begin{equation}
	U^{(x)}
	=
	(U^{(x)}_y)_{y\in B}
	\sim
	{\rm Dir}\big((a^B_{xy})_{y\in B}\big)
\end{equation}
and let $J_B$ be the stochastic matrix whose $x$-th row equals
$U^{(x)}$ on $B$ for $x\in B$, while $J_B$ agrees with the identity
outside $B$. The immediate exchange model corresponds to the choice
\begin{equation}\label{eq:Upsilon-IEM-block}
	\textstyle
	\Upsilon^{\tiem}
	=
	\sum_{B\subseteq[n]}
	w_B\,{\rm Law}(J_B)\fstop
\end{equation}
For $B=\{x,y\}$, $x\neq y$, after writing
\begin{equation}
	A^B
	=
	\begin{pmatrix}
		\alpha_{xx} & \alpha_{xy}\\
		\alpha_{xy} & \alpha_{yy}
	\end{pmatrix}
	\comma\qquad
	\alpha_{xx}+\alpha_{xy}=\alpha_x
	\comma
	\alpha_{yy}+\alpha_{yx}=\alpha_y\comma \alpha_{xy}=\alpha_{yx}\comma
\end{equation}
and sampling independent random variables
$
	U\sim{\rm Beta}(\alpha_{xx},\alpha_{xy})
$,
$	V\sim{\rm Beta}(\alpha_{yy},\alpha_{yx})$,		
the update restricted to the coordinates $\{x,y\}$ takes the matrix form
\begin{equation}\label{eq:IEM-pair-matrix-update}
	(\eta_x,\eta_y)
	\longmapsto
	(\eta_x,\eta_y)
	\begin{pmatrix}
		U & 1-U\\
		1-V & V
	\end{pmatrix}
	=
	\big(
	U\eta_x+(1-V)\eta_y,
	(1-U)\eta_x+V\eta_y
	\big)\fstop
\end{equation}
Thus, in contrast with the ${\rm KMP}$ update in \eqref{eq:KMP-update-segment-matrix}, the rows in
the updated block are independent rather than identical. 

We claim that $\mu={\rm Dir}(\alpha)$ is reversible for every single-block
update and, consequently, for
$\cL^{\tiem}\eqdef\cL^{\Upsilon^{\tiem}}$. This follows once again from the standard
Gamma representation of the Dirichlet distribution, and symmetry of $A^B$. We leave the details to the reader.
The same symmetry also makes the one-particle dynamics reversible. Indeed,
for distinct $x,y\in[n]$, set
\begin{equation}\label{eq:IEM-txy}
	\textstyle
	t_{xy}
	\eqdef
	\sum_{B\supseteq\{x,y\}}w_B\,a^B_{xy}
	=
	t_{yx}\comma
\end{equation}
and $t_{xx}\eqdef 0$.
Then, the one-particle rate from $x$ to $y$ is $t_{xy}/\alpha_x$, and its
reversible probability measure is $\pi=\alpha/\alpha_0$, with
$\alpha_0\eqdef\alpha([n])$.

The following corollary combines Theorem~\ref{th:SEM} with reversibility
with respect to $\mu={\rm Dir}(\alpha)$ and density of polynomials in
$L^2(\mu)$, whereas the comparison estimate \eqref{eq:IEM-gap-comparison} below extends
\cite[Theorem~1.3]{kim_quattropani_sau_spectral_2025} from graphs to
hypergraphs; we omit the details. Here, the polynomial spaces
$\mathscr P_{k,\ast}$ are those defined in \eqref{eq:P-k-star}.

\begin{corollary}[Immediate exchange model]\label{cor:IEM}
	For the block immediate exchange model,
	$\gap(\cL^{\tiem})$ coincides with the ordinary
	$L^2(\mu)$ spectral gap of $\cL^\tiem$, and satisfies
	\begin{equation}\label{eq:IEM-degree-two-gap}
		\gap(\cL^{\tiem})
		=
		\gap(\cL^{\tiem}|_{\mathscr P_2})
		=
		\min\{
		\gap(\cL^{\tiem}|_{\mathscr P_{1,\ast}}),
		\gap(\cL^{\tiem}|_{\mathscr P_{2,\ast}})
		\}\fstop
	\end{equation}
	Further, 
	\begin{equation}\label{eq:IEM-gap-comparison}
		\gap(\cL^{\tiem}|_{\mathscr P_{2,\ast}})
		\ge
		\gamma_{\tiem}\,
		\gap(\cL^{\tiem}|_{\mathscr P_{1,\ast}})\comma
	\end{equation}
	where (recall \eqref{eq:IEM-txy})
	\begin{equation}\label{eq:gamma-IEM-block}
		\gamma_{\tiem}
		\eqdef
		\left(1+\frac1{\alpha_0}\right)
		\min_{x,y\,:\, t_{xy}>0}
		\frac1{t_{xy}}
		\sum_{B\supseteq\{x,y\}}w_B
		\sum_{z\in B}
		\frac{a^B_{xz}a^B_{zy}}{\alpha_z+1}\fstop
	\end{equation}
	If, additionally, for every $B\subseteq [n]$ with $w_B>0$,
	\begin{equation}\label{eq:IEM-condition}
		\sum_{z\in B}\frac{a_{xz}^Ba_{zy}^B}{\alpha_z+1}\ge a_{xy}^B\comma\qquad x, y \in B\comma x\neq y\comma
	\end{equation}
	then $\gamma_\tiem \ge 1$ and, thus, $\gap(\cL^\tiem)=\gap(\cL^\tiem|_{\mathscr P_{1,\ast}})$.
\end{corollary}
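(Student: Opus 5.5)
The corollary has three parts: the degree-two identity \eqref{eq:IEM-degree-two-gap}, the comparison \eqref{eq:IEM-gap-comparison}, and the one-particle conclusion under \eqref{eq:IEM-condition}. I will prove them in that order.

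\emph{Degree-two identity.} First I check that $\mu={\rm Dir}(\alpha)$ is reversible for each block update $J_B$. Use the Gamma representation: let $V_x\sim{\rm Gamma}(\alpha_x,1)$ be independent. Split each $V_x$, $x\in B$, into independent pieces $V_{xy}\sim{\rm Gamma}(a^B_{xy})$, $y\in B$; this is possible by \eqref{eq:IEM-row-sums}. Then $V_xU^{(x)}_y=V_{xy}$ in law, so the updated value at $y$ is $\sum_x V_{xy}$. By the symmetry of $A^B$, the pair $(V_{xy})$ has the same joint law as its transpose, and this exchangeability gives reversibility of $(\eta,\eta J_B)$. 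After normalization by $V(B)$, the same holds on $\Omega$.

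Hence $\cL^\tiem$ is self-adjoint on $L^2(\mu)$ and preserves each $\mathscr P_k$. Since $\mu$ is absolutely continuous, polynomials are dense and injective in $L^2(\mu)$. It follows that the $L^2$ spectrum is the union of the spectra on the layers $\mathscr P_{k,\ast}$, which are invariant by self-adjointness, and the polynomial gap \eqref{eq:SEM-gap-intro} equals the $L^2$ gap. Theorem~\ref{th:SEM} then gives $\gap=\gap(\cL^\tiem|_{\mathscr P_2})$. Since $\mathscr P_2=\mathscr P_0\oplus\mathscr P_{1,\ast}\oplus\mathscr P_{2,\ast}$ orthogonally, the minimum in \eqref{eq:IEM-degree-two-gap} follows.

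\emph{Comparison estimate.} I follow Section~\ref{sec:variance-intro}, i.e.\ \eqref{eq:var-der-intro}--\eqref{eq:gap-k-delta}, for the hidden model $\mathscr L^\tiem g(\theta)=\sum_B w_B\mathbf E[g(J_B\theta)-g(\theta)]$ with $\pi=\alpha/\alpha_0$. The key computation is $\mathscr L^\tiem\var_\pi$. Write $(J_B\theta)_x=\sum_y U^{(x)}_y\theta_y$. Its mean is $\sum_y a^B_{xy}\theta_y/\alpha_x$, and its conditional variance uses the Dirichlet covariance with total parameter $\alpha_x$. Because the rows are independent, the cross terms factor through the means. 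Expanding $\var_\pi(J_B\theta)-\var_\pi(\theta)$ should give
\begin{equation}
-\tonde{1+\tfrac1{\alpha_0}}\frac1{2\alpha_0}\sum_{x,y}\Big(\sum_B w_B\sum_{z\in B}\frac{a^B_{xz}a^B_{zy}}{\alpha_z+1}\Big)(\theta_x-\theta_y)^2
\end{equation}
up to checking the exact prefactor. This is the analogue of \eqref{eq:L-tau-var-exact} and matches the computation of \cite[Theorem~1.3]{kim_quattropani_sau_spectral_2025}.

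Next I compare the bracket with $t_{xy}$ pair by pair, using the definition of $\gamma_\tiem$. Note that the bracket can be positive only where some block contains both $x$ and $y$, which is exactly where $t_{xy}>0$. This yields $\mathscr L^\tiem\var_\pi\le-\gamma_\tiem\cE_1\le-\gamma_\tiem\gap_1\var_\pi$, where $\cE_1=\frac1{2\alpha_0}\sum t_{xy}(\theta_x-\theta_y)^2$ is the one-particle Dirichlet form.

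To turn this into a statement about eigenvalues, I use a left Perron eigenfunction $h$ of $\mathscr L^\tiem|_{\mathscr R_{2,\ast}}$ in $\mathscr C'$. Its existence follows as in Proposition~\ref{pr:PF-positive} and Remark~\ref{rem:even-degree-cones}, since the semigroup is Markov and the spectrum is real by reversibility. Because $\var_\pi$ lies in the interior of $\mathscr C$, we have $h(\var_\pi)>0$. Applying $h$ to the inequality, exactly as in Corollary~\ref{cor:gap-lower-infty}, gives $\gap(\cL^\tiem|_{\mathscr P_{2,\ast}})\ge\gamma_\tiem\gap_1$. The isospectrality \eqref{eq:SEM-layer-spectrum} identifies the hidden-model gap with the $\mathscr P_{2,\ast}$ gap.

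\emph{Final claim.} Under \eqref{eq:IEM-condition}, summing over $B$ gives the bracket $\ge t_{xy}$, so $\gamma_\tiem\ge1+1/\alpha_0>1$. Then the minimum in \eqref{eq:IEM-degree-two-gap} is attained at $\mathscr P_{1,\ast}$.

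The main obstacle is the exact algebra of $\mathscr L^\tiem\var_\pi$ for independent-row Dirichlet updates on blocks of size larger than two: getting the $(1+1/\alpha_0)$ factor and the $(\alpha_z+1)^{-1}$ weights right. I would organize it via the Gamma representation, computing $\mathbf E[(J_B\theta)_x(J_B\theta)_{x'}]$ separately for $x=x'$ and $x\ne x'$.
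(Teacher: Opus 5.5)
Your proposal is correct and follows the route the paper indicates. Reversibility of ${\rm Dir}(\alpha)$ comes from the Gamma representation and the symmetry of $A^B$; Theorem~\ref{th:SEM} together with density of polynomials gives \eqref{eq:IEM-degree-two-gap}; and the variance-dissipation bound $\mathscr L^{\tiem}\var_\pi\le-\gamma_{\tiem}\cE_1$ (the hypergraph version of \cite[Theorem~1.3]{kim_quattropani_sau_spectral_2025}) is transferred to $\mathscr P_{2,\ast}$ through a left Perron eigenfunction, exactly as in Corollary~\ref{cor:gap-lower-infty}. The expansion you flagged as the main obstacle comes out with precisely your prefactor: using $\sum_{x\in B}a^B_{xy}=\alpha_y$ and the independence of the rows, one gets $\mathbf E[\var_\pi(J_B\theta)]-\var_\pi(\theta)=-(1+\frac1{\alpha_0})\sum_{z\in B}\frac{\pi_z\alpha_z}{\alpha_z+1}\var_{p^{(z)}}(\theta)$ with $p^{(z)}_y\eqdef a^B_{zy}/\alpha_z$, which is your displayed formula once each $\var_{p^{(z)}}$ is written in pairwise form.
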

\begin{remark}
	When $B=\{x,y\}$, by \eqref{eq:IEM-row-sums} and the symmetry $a_{xy}^B=a_{yx}^B$, \eqref{eq:IEM-condition} becomes
	\begin{equation}
	a_{xx}^Ba_{yy}^B\ge \tttonde{1+a_{xy}^B}^2\fstop
	\end{equation}
\end{remark}

\subsubsection{Harmonic process}\label{sec:HP}

The \textit{harmonic process}
\cite{frassek2021exact,franceschini_frassek_giardina_integrable_2023} provides an
important example for which the update measure is not finite. Fix symmetric
conductances $b_{xy}=b_{yx}\ge0$, with $b_{xx}=0$, and positive
parameters $\alpha=(\alpha_x)_{x\in[n]}$. For $x\neq y$ and $u\in(0,1)$,
let $H^{xy,u}$ be the stochastic matrix equal to the identity except
for its $x$-th row, which is given by
\begin{equation}
	e_xH^{xy,u}
	=
	u e_x+(1-u)e_y\fstop
\end{equation}
Thus, restricted to the coordinates $\{x,y\}$, the update reads
\begin{equation}
	(\eta_x,\eta_y)
	\begin{pmatrix}
		u & 1-u\\
		0 & 1
	\end{pmatrix}
	=
	\big(u\eta_x,(1-u)\eta_x+\eta_y\big)\fstop
\end{equation}
The harmonic process is built from the update measure
\begin{equation}\label{eq:Upsilon-HP}
	\Upsilon^{\thp}(\dd M)
	=
	\sum_{x,y\in [n]}
	b_{xy}
	\int_0^1
	\frac{u^{\alpha_x-1}}{1-u}
	\,\delta_{H^{xy,u}}(\dd M)\,\dd u\fstop
\end{equation}
The measure $\Upsilon^{\thp}$ has infinite total mass. Nevertheless,
it satisfies \eqref{eq:SEM-assumption}, since the only nonzero
off-diagonal entry of $H^{xy,u}$ is $(H^{xy,u})_{xy}=1-u$ and
\begin{equation}
	\int_0^1
	\frac{u^{\alpha_x-1}}{1-u}
	\tonde{1-u}
	\,\dd u
	=
	\frac1{\alpha_x}
	<\infty\fstop
\end{equation}
The harmonic process is reversible with respect to $\mu={\rm Dir}(\alpha)$;
see \cite{franceschini_frassek_giardina_integrable_2023}.

As similarly done for the immediate exchange model in Corollary \ref{cor:IEM}, we collect the main findings on the harmonic process' spectral gap in the following corollary.
 The bound in \eqref{eq:HP-gap-comparison} follows from 
\cite[Theorem~1.2]{kim_quattropani_sau_spectral_2025}.
 Write
$\cL^{\thp}\eqdef\cL^{\Upsilon^{\thp}}$.

\begin{corollary}[Harmonic process]\label{cor:HP} For the harmonic process, 
	 $\gap(\cL^{\thp})$ coincides with the ordinary
	$L^2(\mu)$ spectral gap of $\cL^{\thp}$ and satisfies
	\begin{equation}\label{eq:HP-degree-two-gap}
		\gap(\cL^{\thp})
		=
		\gap(\cL^{\thp}|_{\mathscr P_2})
		=
		\min\{
		\gap(\cL^{\thp}|_{\mathscr P_{1,\ast}}),
		\gap(\cL^{\thp}|_{\mathscr P_{2,\ast}})
		\}\fstop
	\end{equation}
	Further,
	\begin{equation}\label{eq:HP-gap-comparison}
		\gap(\cL^{\thp}|_{\mathscr P_{2,\ast}})
		\ge
		\gamma_{\thp}\,
		\gap(\cL^{\thp}|_{\mathscr P_{1,\ast}})\comma
	\end{equation}
	where
	\begin{equation}\label{eq:gamma-HP}
		\gamma_{\thp}
		\eqdef
		\left(1+\frac1{\alpha_0}\right)
		\min_{x,y\,:\, b_{xy}>0}
		\left\{
		\frac{\alpha_x}{\alpha_x+1}
		+
		\frac{\alpha_y}{\alpha_y+1}
		\right\}\fstop
	\end{equation}
	In particular, if
	\begin{equation}
		\min_{x,y\,:\,b_{xy}>0}\alpha_x\alpha_y\ge1\comma
	\end{equation}
	then $\gamma_{\thp}\ge1$ and, consequently,
	$
		\gap(\cL^{\thp})
		=
		\gap(\cL^{\thp}|_{\mathscr P_{1,\ast}})$.
\end{corollary}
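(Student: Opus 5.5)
The plan is to treat the three assertions separately: the $L^2$ identification, the degree-two identity, and the comparison bound with its consequence.

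\emph{$L^2$ identification and degree-two reduction.} By the reversibility of $\mu={\rm Dir}(\alpha)$ for $\cL^{\thp}$ \cite{franceschini_frassek_giardina_integrable_2023}, the operator $\cL^{\thp}$ is symmetric on the polynomial core $\mathscr P$. Proposition~\ref{pr:SEM-Feller} gives $\cL^{\thp}\mathscr P_k\subseteq\mathscr P_k$. Symmetry then makes each orthogonal layer $\mathscr P_{k,\ast}=\mathscr P_k\cap\mathscr P_{k-1}^{\perp_\mu}$ invariant, as argued for ${\rm KMP}$ after \eqref{eq:P-k-star}. Since $\mu$ has a strictly positive density on $\Omega$, distinct polynomials define distinct elements of $L^2(\mu)$, and polynomials are dense there. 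Hence diagonalizing $\cL^{\thp}$ layer by layer yields a complete orthonormal basis of polynomial eigenfunctions. The closure is therefore self-adjoint, with pure point spectrum equal to the union of the spectra on the layers. Consequently the polynomial gap \eqref{eq:SEM-gap-intro} equals the ordinary $L^2(\mu)$ gap. By Theorem~\ref{th:SEM} this gap equals $\gap(\cL^{\thp}|_{\mathscr P_2})$. The decomposition $\mathscr P_2=\mathscr P_{1,\ast}\oplus_\mu\mathscr P_{2,\ast}$, together with the fact that constants carry only the removed zero eigenvalue, gives \eqref{eq:HP-degree-two-gap}.

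\emph{Comparison bound \eqref{eq:HP-gap-comparison}.} I would follow Section~\ref{sec:variance-intro} in the hidden model. First I compute $\mathscr L^{\thp}\var_\pi$ with $\pi=\alpha/\alpha_0$. A single update $H^{xy,u}$ replaces $\theta_x$ by $\theta_x-(1-u)(\theta_x-\theta_y)$ and leaves every other coordinate unchanged. Thus $\var_\pi$ changes by an explicit quadratic in $(1-u)(\theta_x-\theta_y)$, with coefficients $\pi_x$ and $\pi_x^2$. Integrating against $b_{xy}u^{\alpha_x-1}(1-u)^{-1}\dd u$ requires only the Beta integrals $\int_0^1u^{\alpha_x-1}\dd u=1/\alpha_x$ and $\int_0^1u^{\alpha_x-1}(1-u)\dd u=1/(\alpha_x(\alpha_x+1))$. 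After symmetrizing in $(x,y)$ using $b_{xy}=b_{yx}$, I expect
\begin{equation}
\mathscr L^{\thp}\var_\pi\le-\gamma_{\thp}\,\cE_1\le-\gamma_{\thp}\gap(\cL^{\thp}|_{\mathscr P_{1,\ast}})\var_\pi\comma
\end{equation}
where $\cE_1$ is the Dirichlet form of the reversible one-particle walk with rates $b_{xy}/\alpha_x$. The factor $\alpha_x/(\alpha_x+1)+\alpha_y/(\alpha_y+1)$ should come from the two Beta integrals, and $(1+1/\alpha_0)$ from the $\pi_x^2$ correction term.

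Next I pass from the variance decay to $\gap_{2,\ast}$. Since $\var_\pi\in\interior(\mathscr C)$, I apply to the display a left Perron eigenfunction $h\in\mathscr C'\setminus\{0\}$ of $\mathscr L^{\thp}|_{\mathscr R_{2,\ast}}$. Such an $h$ exists by the cone-preservation argument of Proposition~\ref{pr:PF-positive}, which uses only Markovianity and $\var_\pi\in\interior(\mathscr C)$. This yields the bound on $\gap(\mathscr L^{\thp}|_{\mathscr R_{2,\ast}})$, which transfers to $\mathscr P_{2,\ast}$ by \eqref{eq:SEM-layer-spectrum}. This step is essentially \cite[Theorem~1.2]{kim_quattropani_sau_spectral_2025}, and I would cite it if my constants match.

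\emph{Consequence.} With $a=\alpha_x$ and $b=\alpha_y$,
\begin{equation}
\frac{a}{a+1}+\frac{b}{b+1}=\frac{2ab+a+b}{ab+a+b+1}\ge1\iff ab\ge1\fstop
\end{equation}
Multiplying by $1+1/\alpha_0>1$ gives $\gamma_{\thp}\ge1$. Then $\gap_{2,\ast}\ge\gap_{1,\ast}$, and \eqref{eq:HP-degree-two-gap} reduces to the degree-one identity.

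\emph{Main obstacle.} I expect the hard step to be the exact constant bookkeeping in the variance computation. The quadratic term, weighted by $\pi_x^2$, must recombine with $\cE_1$ to produce precisely the factor $(1+1/\alpha_0)$. If this bookkeeping gives a different constant, I would defer to the cited theorem instead.
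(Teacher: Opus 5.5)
Your proposal is correct and follows the paper's route. The $L^2$ identification and \eqref{eq:HP-degree-two-gap} come from reversibility of ${\rm Dir}(\alpha)$, density of polynomials and Theorem~\ref{th:SEM}; \eqref{eq:HP-gap-comparison} is the variance-contraction bound of \cite[Theorem~1.2]{kim_quattropani_sau_spectral_2025}, which the paper simply cites and which you close with a left Perron eigenfunction, as in Corollary~\ref{cor:gap-lower-infty}. The bookkeeping you worried about gives exactly $\gamma_{\thp}$. After symmetrization the linear term gives $-\frac{1}{\alpha_0}\sum_{x,y}b_{xy}(\theta_x-\theta_y)^2$, and the quadratic term turns each weight $1$ into $1-\frac{1-\pi_x}{1+\alpha_x}=\bigl(1+\frac1{\alpha_0}\bigr)\frac{\alpha_x}{1+\alpha_x}$. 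Since $\cE_1(\theta)=\frac{1}{2\alpha_0}\sum_{x,y}b_{xy}(\theta_x-\theta_y)^2$, this yields $\mathscr L^{\thp}\var_\pi\le-\gamma_{\thp}\cE_1$ with the stated constant.
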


\appendix

\section{Refining Lemma \ref{lem:full-range}}\label{app:modules-etc}

This appendix proves the refinement of Lemma \ref{lem:full-range} announced in Remark \ref{rem:full-range-refinements}, and crucial in the proof of item \ref{it:hypergraph-dichotomy-mean-field} of Theorems \ref{th:dichotomy-graph} and \ref{th:dichotomy-hypergraph}; see Proposition \ref{pr:mean-field-infty}. The main result of this section is Proposition \ref{pr:full-range-infty}. Its proof requires some technical lemmas, inspired by the works \cite{chein1981partitive,bui2012tree}, and whose complete proofs are postponed to the end of this appendix. The weights $\alpha$ and $w$ are fixed throughout, while $\pi=\alpha/\alpha_0$ is as in \eqref{eq:var-pi-intro}.

Before stating the result, recall
 that $\supp(\Upsilon_\infty)$ in \eqref{eq:supp-Upsilon-infty} contains only update matrices of the form $K_B^\pi$, one for each $B\subseteq[n]$ with $w_B>0$. For such a block, 
\begin{equation}\label{eq:KB-pi-convex-combination}
	K_B^\pi
	=
	\sum_{x\in B}
	\frac{\pi_x}{\pi(B)}
	K_B^{e_x}\comma\quad \text{with}\  \pi(B)= \sum_{y\in B}\pi_y\fstop
\end{equation}
Consequently, every product of matrices from $\supp(\Upsilon_\infty)$ is a linear combination of products of matrices in $\supp(\Upsilon_0)$ from \eqref{eq:supp-Upsilon-0}. Hence, with the notation in \eqref{eq:full-range-Upsilon-0} and \eqref{eq:full-range-Upsilon-infty}, 
\begin{equation}\label{eq:range-inclusion-infty-zero-app}
	\cS^{\tups_\infty}(\theta)
	\subseteq
	\cS^{\tups_0}(\theta)\comma\qquad \theta \in \R^n\fstop
\end{equation}
As observed in Remark \ref{rem:full-range-refinements}, this smaller range already indicates why recovering a full-range property for $\cS^{\tups_\infty}(\theta)$ is a delicate task.

Moreover, the updates preserve the $\pi$-mean and fix the constant configurations, that is, 
\begin{equation}
	\pi(K_B^\pi\theta)
	=
	\pi(\theta)
	\comma
	\qquad
	K_B^\pi\mathbf1
	=
	\mathbf1\fstop
\end{equation}
In particular, 
\begin{equation}\label{eq:H}
\mathscr H_{1,\ast}=
	\big\{
		v\in\R^n:
		\pi(v)=0
		\big\}\comma
\end{equation}
 is invariant under every $K_B^\pi$. Thus, for every $0\neq \theta\in \mathscr H_{1,\ast}$,
$
	\cS^{\tups_\infty}(\theta)
	\subseteq
	\mathscr H_{1,\ast}
	\neq
	\R^n$.
Therefore, $\cS^{\tups_\infty}(\theta)=\R^n$ (cf.\ \eqref{eq:full-range-stronger}) cannot hold uniformly over all nonconstant $\theta$. The appropriate full-range statement is instead the analogue of \eqref{eq:full-range}, that is, \eqref{eq:full-range-Upsilon-infty-appendix} below.

\begin{proposition}\label{pr:full-range-infty}
	Assume that the hypergraph induced by $w$ is minimal; see Section \ref{sec:strong-positivity} and Proposition \ref{pr:strong-positivity-hypergraph}. Then, for every nonconstant $\theta\in\R^n$,
	\begin{equation}\label{eq:full-range-Upsilon-infty-appendix}
		\cS^{\tups_\infty}(\theta)+\R\mathbf1
		=
		\R^n\fstop
	\end{equation}
	Moreover, the hidden model introduced in Section \ref{sec:tau-infty}, whose generator $\mathscr L_\infty$ is given in \eqref{eq:hidden-generator-infty}, is $\mathscr C$-strongly positive on $\mathscr R=\mathscr R_{2,\ast}$: for every $t>0$,
	\begin{equation}\label{eq:strong-positivity-infty}
		e^{t\mathscr L_\infty}
		\tonde{\mathscr C\setminus\{0\}}
		\subseteq
		\interior(\mathscr C)\fstop
	\end{equation}
\end{proposition}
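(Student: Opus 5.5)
The plan is to split the statement into its two claims. The strong positivity \eqref{eq:strong-positivity-infty} will follow from the full-range identity \eqref{eq:full-range-Upsilon-infty-appendix} exactly as in the proof of Propositions \ref{pr:strong-positivity-graph}--\ref{pr:strong-positivity-hypergraph}. Lemma \ref{lem:propagation-zeros} uses only that every finite product of matrices in the support of the update law lies in the support of the law of the evolved configuration. This holds verbatim for $\mathscr L_\infty$, whose update law is the finite measure $\sum_B w_B\delta_{K_B^\pi}$. So if $0\neq g\in\mathscr C$ and $(e^{t\mathscr L_\infty}g)(\theta)=0$ for some nonconstant $\theta$, then $g$ vanishes on $\cS^{\tups_\infty}(\theta)$ and on $\R\mathbf1$. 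Since the zero set of a nonnegative quadratic form is a subspace, $g$ vanishes on $\cS^{\tups_\infty}(\theta)+\R\mathbf1=\R^n$, a contradiction. The real work is therefore \eqref{eq:full-range-Upsilon-infty-appendix}.

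For the full-range claim, I set $V\eqdef\cS^{\tups_\infty}(\theta)+\R\mathbf1$. This is a subspace containing $\mathbf1$ and the nonconstant $\theta$, and it is invariant under every $K_B^\pi$ with $w_B>0$ (because $K_B^\pi\mathbf1=\mathbf1$). The key structural observation is that each $K_B^\pi$ is an orthogonal projection in $L^2(\pi)$: it is idempotent, and it is self-adjoint since $\pi_xK^\pi_{B,xy}=\pi_yK^\pi_{B,yx}$. Hence $U\eqdef V^{\perp_\pi}$ is also invariant under all $K_B^\pi$. Moreover, $U\subseteq\mathscr H_{1,\ast}$ because $\mathbf1\in V$. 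For each block $B$ this gives the splitting
\[
V=K_B^\pi V\oplus(I-K_B^\pi)V,
\]
and likewise for $U$. I would argue by contradiction: assume $U\neq\{0\}$, and extract from $U$ and $V$ a nontrivial $w$-compatible partition, which contradicts minimality.

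The construction proceeds in two stages. First, for a subspace $W$ define the equivalence $x\sim_W y$ if $v_x=v_y$ for all $v\in W$. Invariance under $K_B^\pi$ controls how blocks meet these classes. Indeed, $(I-K_B^\pi)v$ equals $v-\pi_B(v)$ on $B$ and vanishes off $B$. If a block $B$ meets a class $N$ only partially while some $v$ is nonconstant on $B$, then applying $(I-K_B^\pi)$ separates points of $N$, which is impossible. Carrying this out for $W=V$ should show that the level-set partition of $V$ is $w$-compatible; it has at least two atoms since $\theta$ is nonconstant, so minimality forces $V$ to separate all points. The second stage exploits $U\neq\{0\}$. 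Using the splitting above, on each block $B$ the space of $\pi_B$-centred functions supported on $B$ decomposes as $(I-K_B^\pi)V\oplus(I-K_B^\pi)U$. I would then consider the family of \emph{modules}, namely sets $M\subseteq[n]$ such that every positive-weight block is contained in $M$, disjoint from $M$, or contains $M$. In the spirit of \cite{chein1981partitive,bui2012tree}, this family is partitive, and its maximal strong modules form a $w$-compatible partition. The goal is to show that the supports of the vectors in $(I-K_B^\pi)U$, propagated through overlapping blocks, generate a nontrivial strong module. Minimality would then force $U=\{0\}$, completing the contradiction.

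I expect this second stage to be the main obstacle. Unlike in Lemma \ref{lem:full-range}, we cannot overwrite coordinates with $K_B^{e_x}$; we only have averaging, which kills $\mathscr H_{1,\ast}$-type information. One therefore has to show that two complementary invariant subspaces cannot coexist on a minimal hypergraph. My first attempt would be to prove that, for each positive-weight block $B$, $(I-K_B^\pi)V$ is either all $\pi_B$-centred functions on $B$ or zero. I would do this by induction over the overlap structure of blocks, using that two overlapping blocks $B,D$ with $B\not\subseteq D$ and $D\not\subseteq B$ let $K_D^\pi(I-K_B^\pi)$ transport vectors across $B\cap D$. Once this dichotomy is established, the blocks where it is full glue together into a module decomposition, and minimality closes the argument.
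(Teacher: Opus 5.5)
Your reduction of strong positivity to the full-range identity via propagation of zeros matches the paper. So does your observation that each $K_B^\pi$ is an $L^2(\pi)$-orthogonal projection, which makes $V^{\perp_\pi}$ invariant as well. The gap is the core step. You never show that a minimal hypergraph admits no invariant splitting $\R^n=V\oplus U$ with $\mathbf 1\in V$ and $0\neq U\subseteq\mathscr H_{1,\ast}$, and neither of your two stages gets there.

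\textbf{Stage 1.} Take $x\in B\cap N$ and $y\in N\setminus B$. Then $((I-K_B^\pi)v)_x=v_x-\pi_B(v)$ and $((I-K_B^\pi)v)_y=0$, so invariance only forces $v_x=\pi_B(v)$ for all $v\in V$; no points are separated. In fact the claim that the level-set partition of $V$ is $w$-compatible, which you assert before invoking minimality, is false. Take $n=4$, $\pi$ uniform, positive-weight blocks $\{1,2,3\},\{2,3\},\{1,4\}$, and $\theta=(0,1,-1,0)$. Then $V=\{(a,a+b,a-b,a):a,b\in\R\}$ is invariant under all three averagings. Its level-set partition is $\{1,4\},\{2\},\{3\}$, and the block $\{1,2,3\}$ is neither inside one atom nor a union of atoms.

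\textbf{Stage 2.} The same example refutes the dichotomy you want there: $(I-K^\pi_{\{1,2,3\}})V=\spanop\{(0,1,-1,0)\}$ is one-dimensional inside a two-dimensional space. So the dichotomy can only come from minimality, and ``induction over the overlap structure'' does not say where minimality enters. Moreover, your module family (sets overlapping no positive-weight block) depends on $w$ alone and carries no information about $U$. Nothing in the plan links $U\neq\{0\}$ to a nontrivial module.

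\textbf{The missing idea.} The paper encodes the invariant subspace by its $\pi$-orthogonal projection $P$ onto $V$, rather than by supports or level sets.
\begin{itemize}
\item Since $P$ commutes with every $K_B^\pi$, it preserves $\im(I-K_B^\pi)$, the $\pi$-centred vectors supported on $B$.
\item Apply $P$ to $e_x/\pi_x-e_y/\pi_y$ with $x,y\in B$, and read off a coordinate $z\notin B$. This shows every positive-weight block is a module of the symmetric matrix $Q_{xy}\eqdef P_{xy}/\pi_y$, i.e.\ $Q_{zx}=Q_{zy}$ for $z\notin B$ and $x,y\in B$.
\item A partitive-family argument on the modules of $Q$ then gives $Q_{xy}=c$ for all $x\neq y$. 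It uses closure under overlaps, the fact that strong modules form a $w$-compatible partition (hence singletons by minimality), and then that all pairs are modules.
\item Finally, $P\mathbf 1=\mathbf 1$ and $P^2=P$ force $c\in\{0,1\}$. So $P=I$ or $P$ is the projection onto constants, and the latter is excluded because $V$ contains the nonconstant $\theta$.
\end{itemize}
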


Before presenting the proof of Proposition~\ref{pr:full-range-infty}, we introduce some combinatorial tools related to the theory of partitive families and modular decomposition; see \cite{chein1981partitive} and \cite{bui2012tree} for a modern account. In fact, Lemma~\ref{lem:modules} below, which constitutes a key ingredient in the proof of Proposition~\ref{pr:full-range-infty}, could alternatively be obtained as a direct consequence of the decomposition theorem for partitive families; see, e.g., \cite[Theorem~2]{bui2012tree}. Nevertheless, we include an elementary direct proof in order to keep the argument self-contained.

For a symmetric matrix $Q=(Q_{xy})_{x,y\in[n]}$, a set $A\subseteq[n]$ is called a $Q$-module if every index outside $A$ sees all indices in $A$ in the same way, namely,
\begin{equation}\label{eq:Q-module}
	z\notin A\comma x,y\in A
	\qquad\Longrightarrow\qquad
	Q_{zx}=Q_{zy}.
\end{equation}
Then,
\begin{equation}\label{eq:module-family}
	\mathcal M(Q)
	\eqdef
	\set{A\subseteq[n]:A\text{ is a $Q$-module}}
\end{equation}
denotes the family of all $Q$-modules.

We first record the only closure property of modules that will be used. We say that two sets $A,D\subseteq[n]$ \textit{overlap} if $A\cap D$, $A\setminus D$, and $D\setminus A$ are all nonempty.

\begin{lemma}[Closure of modules]\label{lem:module-closure}
	If $A,D\in\cM(Q)$ overlap, then
	\begin{equation}\label{eq:module-closure}
		A\cap D\comma
		A\cup D\comma
		A\setminus D\comma
		D\setminus A\comma
		A\mathbin{\triangle}D
		\in\cM(Q)\fstop
	\end{equation}
\end{lemma}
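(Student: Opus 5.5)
The plan is a direct verification from the definition \eqref{eq:Q-module}, treating each of the five sets in turn and exploiting the overlap hypothesis only to produce a "bridge" element. Throughout, fix $z$ outside the candidate set and $x,y$ inside it. The goal is always $Q_{zx}=Q_{zy}$. We get this by chaining equalities through elements of $A$ or $D$, using the symmetry $Q_{uv}=Q_{vu}$ whenever the outside index has to be switched.

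\emph{Intersection and union.} For $A\cap D$: if $z\notin A\cap D$, then either $z\notin A$ or $z\notin D$. In the first case the module property of $A$ gives $Q_{zx}=Q_{zy}$ directly, and in the second case that of $D$ does; overlap is not needed here.

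For $A\cup D$, take $z\notin A\cup D$. If $x,y$ lie in the same set, we are done. Otherwise, say $x\in A$ and $y\in D$. Pick a bridge $u\in A\cap D$, which is nonempty by overlap. Then $Q_{zx}=Q_{zu}$ because $A$ is a module and $z\notin A$, and $Q_{zu}=Q_{zy}$ because $D$ is a module and $z\notin D$.

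\emph{Differences.} This is the step where symmetry enters. Take $A\setminus D$ and $z\notin A\setminus D$, with $x,y\in A\setminus D$. If $z\notin A$, the module property of $A$ suffices. Otherwise $z\in A\cap D$. In that case $x,y\notin D$ while $z\in D$, so we use the module property of $D$ with the outside indices $x$ and $y$. Choosing $w\in D\setminus A$, which is nonempty by overlap, gives $Q_{xz}=Q_{xw}$ and $Q_{yz}=Q_{yw}$. Since $w\notin A$ and $x,y\in A$, the module property of $A$ gives $Q_{wx}=Q_{wy}$. Chaining these with symmetry yields $Q_{zx}=Q_{xz}=Q_{xw}=Q_{wx}=Q_{wy}=Q_{yw}=Q_{yz}=Q_{zy}$. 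The argument for $D\setminus A$ is the same with $A$ and $D$ swapped.

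\emph{Symmetric difference.} Take $z\notin A\mathbin{\triangle}D$. If $x,y$ lie in the same piece, we reduce to the case just proved, since $z$ is also outside that piece. Otherwise take $x\in A\setminus D$ and $y\in D\setminus A$, and split on the position of $z$.

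If $z\notin A\cup D$, route through $u\in A\cap D$: $Q_{zx}=Q_{zu}=Q_{zy}$.

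If $z\in A\cap D$, we need a more careful chain, and this is the main obstacle: all the indices sit inside $A\cup D$, so no outside witness is available. The plan is to pass through $y$ and then $x$ as outside indices. Since $y\notin A$ and $x,z\in A$, we get $Q_{yx}=Q_{yz}$. Since $x\notin D$ and $y,z\in D$, we get $Q_{xy}=Q_{xz}$. Together these give $Q_{zx}=Q_{xz}=Q_{xy}=Q_{yx}=Q_{yz}=Q_{zy}$, using symmetry at each switch. This closes the case without using overlap beyond what is already in place.
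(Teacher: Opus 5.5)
Your proposal is correct and follows essentially the same argument as the paper: the same case split for each of the five sets, the same bridge elements in $A\cap D$ and $D\setminus A$, and the same symmetric chains $Q_{zx}=Q_{xz}=Q_{xw}=Q_{wx}=Q_{wy}=Q_{yw}=Q_{yz}=Q_{zy}$ and $Q_{zx}=Q_{xz}=Q_{xy}=Q_{yx}=Q_{yz}=Q_{zy}$. Each module property you invoke is applied with its index genuinely outside the relevant set, so every step checks out.
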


\begin{proof}
	Let $A,D\in\cM(Q)$ overlap. If $x,y\in A\cap D$ and $z\notin A\cap D$, then $z\notin A$ or $z\notin D$, and the module property of the corresponding set gives $Q_{zx}=Q_{zy}$. Hence, $A\cap D\in\cM(Q)$.
	
	Next, let $x,y\in A\cup D$ and $z\notin A\cup D$. If $x,y$ both belong to $A$ or both to $D$, the conclusion is immediate. Otherwise, after exchanging $x$ and $y$, we may assume that $x\in A\setminus D$ and $y\in D\setminus A$. Choosing $u\in A\cap D$, we obtain
	\begin{equation}
		Q_{zx}=Q_{zu}=Q_{zy}\fstop
	\end{equation}
	Thus, $A\cup D\in\cM(Q)$.
	
	Consider now $A\setminus D$. Let $x,y\in A\setminus D$ and $z\notin A\setminus D$. If $z\notin A$, the module property of $A$ applies. Otherwise, $z\in A\cap D$. Choose $u\in D\setminus A$. Since $x,y\notin D$, $u\notin A$, and $Q$ is symmetric,
	\begin{equation}
		Q_{zx}
		=
		Q_{xz}
		=
		Q_{xu}
		=
		Q_{ux}
		=
		Q_{uy}
		=
		Q_{yu}
		=
		Q_{yz}
		=
		Q_{zy}\fstop
	\end{equation}
	Hence, $A\setminus D\in\cM(Q)$, and the same argument with $A$ and $D$ exchanged gives $D\setminus A\in\cM(Q)$.
	
	Finally, let $x,y\in A\mathbin{\triangle}D$ and $z\notin A\mathbin{\triangle}D$. The conclusion follows from the difference case if $x,y$ belong to the same one of the two sets $A\setminus D$ and $D\setminus A$. Otherwise, assume that $x\in A\setminus D$ and $y\in D\setminus A$. If $z\notin A\cup D$, choose $u\in A\cap D$ and write $Q_{zx}=Q_{zu}=Q_{zy}$. If $z\in A\cap D$, the module properties of $D$ and $A$ give
	\begin{equation}
		Q_{xz}=Q_{xy}
		\comma
		Q_{yz}=Q_{yx}\fstop
	\end{equation}
	By symmetry, $Q_{zx}=Q_{zy}$. Therefore, $A\mathbin{\triangle}D\in\cM(Q)$.
\end{proof}

\begin{lemma}[Triviality of modules]\label{lem:modules}
	Let $Q=(Q_{xy})_{x,y\in[n]}$ be symmetric, and suppose that every $B\subseteq[n]$ with $w_B>0$ is a $Q$-module, see \eqref{eq:Q-module}.
	If the hypergraph induced by $w$ is minimal, then there exists $c\in\R$ such that
	\begin{equation}\label{eq:Q-offdiag-constant}
		Q_{xy}=c
		\comma
		\qquad x\neq y\fstop
	\end{equation}
\end{lemma}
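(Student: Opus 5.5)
The approach is to show that, under minimality, $Q$ admits no module structure beyond singletons and $[n]$, and then to read off constancy of the off-diagonal entries from that.

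Step 1: build a $w$-compatible partition out of $Q$. Let $\mathcal M^\ast$ be the family of maximal proper modules, i.e.\ maximal elements of $\cM(Q)\setminus\{[n]\}$. Singletons are always modules, so every $x$ lies in some maximal proper module. Two cases arise.

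\begin{enumerate}[(a)]
\item Suppose no two elements of $\mathcal M^\ast$ overlap. Then two distinct maximal proper modules are either disjoint or nested, and nesting is excluded by maximality. Hence $\mathcal M^\ast$ is a partition of $[n]$ with at least two atoms.

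I then check $w$-compatibility. Let $B$ have $w_B>0$. Since $w_{[n]}=0$, $B$ is a proper module, so $B$ is contained in some atom $N$. Hence $B$ lies inside a single atom, and compatibility holds trivially.

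Minimality then forces the partition to be discrete, so every maximal proper module is a singleton. But each block $B$ with $w_B>0$ lies in some maximal proper module. Connectedness gives such a block with $|B|\ge2$, which is a contradiction.
\item Suppose some $A,D\in\mathcal M^\ast$ overlap. By Lemma~\ref{lem:module-closure}, $A\cup D$ is a module strictly larger than $A$, so maximality gives $A\cup D=[n]$. The lemma also gives that $A\setminus D$, $A\cap D$, $D\setminus A$ and $A\triangle D$ are modules.
\end{enumerate}

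Step 2: handle case (b), where $[n]$ is a union of overlapping modules. Here I want to show that every $2$-set is a module, or directly that the off-diagonal entries are constant.

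I would first consider the coarsest partition into modules of the form $\{A\setminus D, A\cap D, D\setminus A\}$, refined via all overlapping pairs. Using the closure properties, the standard partitive-family argument gives the following. The union of all complements of maximal modules yields a partition $[n]=M_1\sqcup\dots\sqcup M_r$, with $r\ge3$, such that every union of atoms is a module. (Every union of atoms is a module because unions and intersections of overlapping modules are modules.)

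Then $Q_{xy}$ for $x\in M_i$, $y\in M_j$, $i\ne j$, equals a constant $c$. Indeed, for $i,j,k$ distinct, $M_j\cup M_k$ is a module not containing $x$, so $Q_{xy}$ does not depend on which $y\in M_j\cup M_k$ is chosen, and chaining these equalities yields a single constant.

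Next I check that this partition is $w$-compatible. A block $B$ is a module. If $B$ overlaps some $M_i\cup M_j$-type module, then the closure properties either make $B$ a union of atoms or place it inside an atom; this is the delicate point. Minimality then makes all $M_i$ singletons, which gives \eqref{eq:Q-offdiag-constant}.

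The main obstacle I expect is exactly this compatibility claim in case (b): showing that every positive-weight block is either inside one atom or a union of atoms. My first attempt would be an induction that repeatedly splits atoms using overlaps with blocks, mirroring the refinement procedure in the proof of Lemma~\ref{lem:full-range}. The splitting step would use that the pieces $N_j\cap B$ and $N_j\setminus B$ are themselves modules, which keeps the "unions of atoms are modules" property intact after each refinement.
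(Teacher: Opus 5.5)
Your case (a) is correct, and the overall route (maximal proper modules, then a degenerate structure in case (b), then minimality) can be made to work. However, case (b) is not yet a proof, and the step you flag as the main obstacle is exactly the one that is missing.

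The structural claim is only attributed to ``the standard partitive-family argument''. That claim says the complements $M_i=[n]\setminus A_i$ of the maximal proper modules $A_1,\ldots,A_r$ partition $[n]$, with $r\ge3$, and every union of atoms is a module. Your parenthetical does not establish it. One needs the following:
\begin{itemize}
\item As soon as two maximal proper modules overlap, all of them pairwise overlap: a maximal $A_3$ disjoint from $A_1$ would lie in $[n]\setminus A_1\subseteq A_2$. Hence the $M_i$ are pairwise disjoint.
\item $M_i=A_j\setminus A_i$ and $M_i\cup M_j=A_i\mathbin{\triangle}A_j$ are modules. This uses differences and symmetric differences, not just unions and intersections.
\item The $M_i$ cover $[n]$. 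Their union is a module by chaining overlapping unions, and if it were proper it would lie in some $A_j$ while containing $M_j$.
\end{itemize}
More seriously, $w$-compatibility of $\{M_i\}$ is left open, and this is what lets you invoke minimality. The inductive splitting you propose is not justified: knowing that $N_j\cap B$ and $N_j\setminus B$ are modules does not show that mixed unions such as $(N_j\cap B)\cup N_k$ are modules. So the ``unions of atoms are modules'' invariant is not shown to survive a split.

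The missing step is short and needs no induction: no module $B$ overlaps an atom $M_i$.
\begin{itemize}
\item Suppose $B$ met $M_i$ without containing it or being contained in it.
\item Then $A_i\not\subseteq B$, since otherwise $B$ would be a proper module strictly containing $A_i$. So $B$ and $A_i$ overlap.
\item Lemma~\ref{lem:module-closure} makes $A_i\cup B$ a module strictly containing $A_i$, hence $A_i\cup B=[n]$, i.e.\ $M_i\subseteq B$. This is a contradiction.
\end{itemize}
Thus every positive-weight block lies in one atom or is a union of atoms, so the partition is $w$-compatible and minimality forces $r=n$. Every co-singleton $A_i=[n]\setminus\{x\}$ is then a module, so $Q_{xy}=c_x$ for $y\neq x$, and symmetry gives $c_x=c_y$; your cross-atom argument also finishes it.

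With these additions your proof is a valid alternative to the paper's. The paper instead works with \emph{strong} modules, which by definition overlap no module, so the compatibility issue never arises. It shows they are singletons, produces a two-element module, and spreads it to all pairs through the equivalence relation $x\sim y$.
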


\begin{proof}
	Fix the family $\cM(Q)$ in \eqref{eq:module-family}. Call a module $A\in\cM(Q)$ \textit{strong} if it overlaps no member of $\cM(Q)$.
	
	\smallskip
	\noindent
	\emph{Step 1: the only nonempty proper strong modules are the singletons.}
	Strong modules form a laminar family: two intersecting strong modules are comparable by inclusion. Every singleton is strong and, since $[n]$ is finite, is contained in a maximal proper strong module. Let
	\begin{equation}
		N_1,\ldots,N_m
	\end{equation}
	be the distinct maximal proper strong modules. They are pairwise disjoint and cover $[n]$, and hence
	\begin{equation}\label{eq:strong-module-partition}
		[n]=N_1\sqcup\cdots\sqcup N_m\fstop
	\end{equation}
	This partition is compatible with every module in $\cM(Q)$. Indeed, if $A\in\cM(Q)$ meets $N_i$, then, since $N_i$ is strong, either $A\subseteq N_i$ or $N_i\subseteq A$. Thus, either $A$ is contained in one atom of \eqref{eq:strong-module-partition}, or it is the union of all atoms that it meets.
	
	In particular, \eqref{eq:strong-module-partition} is $w$-compatible, because every block $B$ with $w_B>0$ belongs to $\cM(Q)$. Minimality excludes $1<m<n$, while $m\neq1$ because every $N_i$ is proper. Therefore, $m=n$, and every $N_i$ is a singleton. Since every proper strong module is contained in a maximal proper one, the claim follows.
	
	\smallskip
	\noindent
	\emph{Step 2: there exists a two-element module.}
	Minimality, together with $w_{[n]}=0$, ensures that some block $B$ with $w_B>0$ satisfies
	\begin{equation}
		2\le \abs{B}\le n-1\fstop
	\end{equation}
	Indeed, if every positive-weight block were a singleton, the two-atom partition $[n]=\{1\}\sqcup([n]\setminus\{1\})$ would be $w$-compatible.
	
	Choose $A\in\cM(Q)$ with $2\le\abs{A}\le n-1$ and minimal cardinality. By Step 1, $A$ is not strong, so there exists $D\in\cM(Q)$ overlapping $A$. Lemma \ref{lem:module-closure} gives
	\begin{equation}
		A\cap D\in\cM(Q)
		\comma
		A\setminus D\in\cM(Q)\fstop
	\end{equation}
	Both sets are nonempty proper subsets of $A$. By the minimality of $\abs{A}$, they are singletons, and hence $\abs{A}=2$.
	
	\smallskip
	\noindent
	\emph{Step 3: all two-element sets are modules.}
	Define a relation on $[n]$ by
	\begin{equation}\label{eq:module-equivalence}
		x\sim y
		\qquad\Longleftrightarrow\qquad
		x=y
		\quad\text{or}\quad
		\{x,y\}\in\cM(Q)\fstop
	\end{equation}
	This is an equivalence relation. Only transitivity requires verification: if $x,y,z$ are pairwise distinct and $\{x,y\},\{y,z\}\in\cM(Q)$, these two modules overlap, and their symmetric difference $\{x,z\}$ belongs to $\cM(Q)$ by Lemma \ref{lem:module-closure}.
	
	Every equivalence class $C$ belongs to $\cM(Q)$. Indeed, this is clear for singletons. Otherwise, fix $x_0\in C$ and enumerate $C=\{x_0,x_1,\ldots,x_r\}$. Starting from $\{x_0,x_1\}\in\cM(Q)$, repeatedly apply the union part of Lemma \ref{lem:module-closure} to the overlapping modules already constructed and $\{x_0,x_k\}$, $k=2,\ldots,r$.
	
	We claim that every equivalence class $C$ is strong. Suppose otherwise, and choose $D\in\cM(Q)$ of minimal cardinality among the modules overlapping $C$. Fix
	\begin{equation}\label{eq:module-overlap-points}
		x\in D\cap C
		\comma
		y\in C\setminus D
		\comma
		z\in D\setminus C\fstop
	\end{equation}
	Since $x\sim y$, one has $\{x,y\}\in\cM(Q)$.
	
	First, $D\cap C=\{x\}$. Otherwise, choosing $x'\in(D\cap C)\setminus\{x\}$, the modules $D$ and $\{x,y\}$ overlap. Lemma \ref{lem:module-closure} then gives
	\begin{equation}
		D\setminus\{x,y\}=D\setminus\{x\}\in\cM(Q)\fstop
	\end{equation}
	This set still overlaps $C$: it contains $x'\in C$, misses $y\in C$, and contains $z\notin C$. Its cardinality is smaller than that of $D$, a contradiction.
	
	Set
	$
		O\eqdef D\setminus\{x\}=D\setminus C$.
	Again by Lemma \ref{lem:module-closure}, $O\in\cM(Q)$. We show that $O$ is a singleton. Suppose that $\abs{O}\ge2$. Then $O$ is a nontrivial proper module and, by Step 1, is not strong. Choose $H\in\cM(Q)$ overlapping $O$.
	
	If $x\notin H$, then $D$ and $H$ overlap, and Lemma \ref{lem:module-closure} gives $D\setminus H\in\cM(Q)$. This set contains $x\in C$, misses $y\in C$, and contains a point of $O\setminus H\subseteq[n]\setminus C$; hence it overlaps $C$. Since $D\cap H\neq\emp$, it is smaller than $D$, a contradiction.
	
	If $x\in H$ and $H\subseteq D$, then $H$ itself overlaps $C$: it contains $x\in C$ and a point of $H\cap O\subseteq[n]\setminus C$, while it misses $y\in C$. Moreover, $O\setminus H\neq\emp$, so $\abs{H}<\abs{D}$, again a contradiction.
	
	Finally, if $x\in H$ and $H\not\subseteq D$, then $D$ and $H$ overlap, and Lemma \ref{lem:module-closure} gives $D\cap H\in\cM(Q)$. This set contains $x\in C$ and a point of $O\cap H\subseteq[n]\setminus C$, while it misses $y\in C$; hence it overlaps $C$. Since $O\setminus H\neq\emp$, it is smaller than $D$, which is impossible.
	
	Consequently, $O=\{z\}$ for some $z\notin C$. But then $D=\{x,z\}\in\cM(Q)$, so $x\sim z$, contradicting $x\in C$ and $z\notin C$. Thus, every equivalence class is strong.
	
	By Step 2, at least one equivalence class contains two distinct points. Since this class is strong, Step 1 forces it to be $[n]$. Therefore,
	\begin{equation}\label{eq:all-pairs-modules}
		\{x,y\}\in\cM(Q)
		\comma
		\qquad x\neq y\fstop
	\end{equation}
	
	We may now conclude. Fix $x\in[n]$. For distinct $y,z\in[n]\setminus\{x\}$, the module property of $\{y,z\}$ gives
	\begin{equation}
		Q_{xy}=Q_{xz}\fstop
	\end{equation}
	Hence, there exists $c_x\in\R$ such that $Q_{xy}=c_x$ for every $y\neq x$. If $x\neq y$, symmetry yields
	\begin{equation}
		c_x=Q_{xy}=Q_{yx}=c_y\fstop
	\end{equation}
	Thus, all the $c_x$ coincide, proving \eqref{eq:Q-offdiag-constant}.
\end{proof}

We are finally in shape to present the proof of Proposition \ref{pr:full-range-infty}.
\begin{proof}[Proof of Proposition \ref{pr:full-range-infty}]
	Decompose $\R^n=\mathscr H_{1,\ast}\oplus \R\mathbf1$, where $\mathscr H_{1,\ast}\subseteq \R^n$ is given in \eqref{eq:H}.
	For every $B\subseteq[n]$ with $w_B>0$, $K_B^\pi$ is the $L^2(\pi)$-orthogonal projection onto the configurations which are constant on $B$. Hence, it is self-adjoint, fixes $\mathbf1$, and leaves $\mathscr H_{1,\ast}$ invariant.
	
	We show that every nonzero subspace $\mathscr W\subseteq \mathscr H_{1,\ast}$ invariant under all $K_B^\pi$, $w_B>0$, equals $\mathscr H_{1,\ast}$. Suppose otherwise, and let $P$ be the $L^2(\pi)$-orthogonal projection onto $\mathscr W\oplus\R\mathbf1$. Since $K_B^\pi$
	is self-adjoint and leaves $\mathscr W\oplus \R\mathbf1$ invariant, it also leaves its orthogonal complement invariant. Hence,
	$
	PK_B^\pi=K_B^\pi P
	$.
	Write $P=(P_{xy})_{x,y\in[n]}$, and set
	\begin{equation}
		Q_{xy}
		\eqdef
		\frac{P_{xy}}{\pi_y}\comma\qquad x,y \in [n]\fstop
	\end{equation}
	The matrix $Q=(Q_{xy})_{x,y\in [n]}$ is symmetric. Moreover, since $P$ commutes with $K_B^\pi$, it also commutes with $I-K_B^\pi$ and therefore leaves $\im(I-K_B^\pi)$ invariant. This range consists precisely of the vectors supported on $B$ with zero $\pi$-mean. Thus, for $x,y\in B$, the vector $
	v
	\eqdef e_x/\pi_x- e_y/\pi_y	$
	belongs to $\im(I-K_B^\pi)$, and so does $Pv$. In particular, $Pv$ vanishes at $z\notin B$, yielding
	$
	0
	=
	(Pv)_z
	=
	{P_{zx}}/{\pi_x}
	-
	{P_{zy}}/{\pi_y}
	=
	Q_{zx}-Q_{zy}$.
	Hence, for every $B$ with $w_B>0$,	\begin{equation}\label{eq:Q-module-app}
		Q_{zx}=Q_{zy}
		\comma
		\qquad
		z\notin B\comma x,y\in B\fstop
	\end{equation}
	By Lemma \ref{lem:modules}, symmetry of $Q$, \eqref{eq:Q-module-app}, and our assumption of hypergraph minimality,
	\begin{equation}\label{eq:Q-offdiag-app}
		Q_{xy}=c\comma\qquad x \neq y\comma 
	\end{equation}
	for some $c\in\mathbb R$.
	Since $P\mathbf1=\mathbf1$, \eqref{eq:Q-offdiag-app} yields
	\begin{equation}\label{eq:Q-diag-app}
		\pi_xQ_{xx}
		=
		1-c+c\pi_x
		\comma
		\qquad x\in[n]\fstop
	\end{equation}
	Using $P^2=P$ in an off-diagonal entry $x\neq y$ and \eqref{eq:Q-offdiag-app}--\eqref{eq:Q-diag-app}, we obtain
	\begin{equation}\textstyle
		c
		=
		Q_{xy}
		=
		\sum_{z\in[n]}\pi_zQ_{xz}Q_{zy}=
		c(1-c+c\pi_x)
		+
		c(1-c+c\pi_y)
		+
		c^2(1-\pi_x-\pi_y)
		=
		2c-c^2\fstop
	\end{equation}
	Thus $c\in\{0,1\}$. If $c=0$, then $P=I$; if $c=1$, then $P_{xy}=\pi_y$ for all $x,y\in [n]$, so $P$ is the projection onto $\R\mathbf1$. Both alternatives contradict $0\neq \mathscr W\subsetneq \mathscr H_{1,\ast}$. Hence $\mathscr W=\mathscr H_{1,\ast}$.
	
	Finally, fix a nonconstant $\theta \in \R^n$, and let $\bar \theta \eqdef \theta-\pi(\theta)\mathbf1$.
	Since $\theta$ is nonconstant, $0\neq \bar\theta\in \mathscr H_{1,\ast}$. The linear span
	\begin{equation}
		\cS^{\tups_\infty}(\bar \theta)\eqdef\spanop\big\{K_\ell\cdots K_1\bar\theta: \ell\ge 0\,,\, K_1,\ldots,K_\ell \in \supp(\Upsilon_\infty)\big\}
	\end{equation}
	is a nonzero common invariant subspace of $\mathscr H_{1,\ast}$, and therefore equals $\mathscr H_{1,\ast}$. As every $K_B^\pi$ fixes $\mathbf1$,
	\begin{equation}
		K_B^\pi\theta
		=
		K_B^\pi\bar \theta+\pi(\theta)\mathbf1\fstop
	\end{equation}
	Hence,  $\cS^{\tups_\infty}(\theta)+\R\mathbf1
	=
	\cS^{\tups_\infty}(\bar\theta)+\R\mathbf1
	=
	\mathscr H_{1,\ast}+\R\mathbf1
	=
	\R^n$, namely, \eqref{eq:full-range-Upsilon-infty-appendix}, as desired.

	It remains to prove \eqref{eq:strong-positivity-infty}. Fix $t>0$ and $0\neq g\in\mathscr C$. By Markovianity,
	$e^{t\mathscr L_\infty}g\in\mathscr C$.
	Suppose that this function does not belong to $\interior(\mathscr C)$. By \eqref{eq:cone-interior}, there exists a nonconstant $\theta\in\R^n$ such that
	$\tttonde{e^{t\mathscr L_\infty}g}(\theta)=0$.
	The propagation-of-zeros argument in Lemma \ref{lem:propagation-zeros}, applied to the update support $\Upsilon_\infty$ from Remark \ref{rem:full-range-refinements}, shows that $g$ vanishes on $\cS^{\tups_\infty}(\theta)$. Moreover, by definition of $\mathscr R=\mathscr R_{2,\ast}$, every $g\in\mathscr R$ vanishes on $\R\mathbf1$, and the zero set of the nonnegative quadratic form $g$ is a linear subspace. Hence, $g$ vanishes on $\cS^{\tups_\infty}(\theta)+\R\mathbf1$. Thus, by \eqref{eq:full-range-Upsilon-infty-appendix},  $g=0$, a contradiction. This proves \eqref{eq:strong-positivity-infty}.
\end{proof}

\section{Feller property of stochastic exchange models}\label{app:SEM-Feller}
In this section we present the proof Proposition \ref{pr:SEM-Feller}, which essentially amounts to verify the standard criteria in \cite{liggett_interacting_2005}.
\begin{proof}[Proof of Proposition \ref{pr:SEM-Feller}]
	For every $\eta\in\Omega$ and $M\in\mathfrak M$, using that the rows of $M$ are probability vectors,
	\begin{equation}\label{eq:SEM-increment-bound}\textstyle
		\norm{\eta M-\eta}_1
		\eqdef
		\sum_{x\in[n]}|(\eta M)_x-\eta_x|
		\le
		\sum_{x\in[n]}\norm{M_{x\emparg}-e_x}_1
		=
		2\sum_{x\in[n]}\tonde{1-M_{xx}}\fstop
	\end{equation}
	Now fix $k\ge0$ and $f\in\mathscr P_k$. Since $f$ is Lipschitz on the compact set $\Omega$, 	\eqref{eq:SEM-increment-bound} and \eqref{eq:SEM-assumption}
	show that the integral in \eqref{eq:gen-SEM} is well defined.
	Moreover, for every $M\in\mathfrak M$, the map
	$\eta\mapsto f(\eta M)-f(\eta)$ belongs to $\mathscr P_k$, with
	$M$-dependent coefficients which, by $0\le M_{xy}\le 1$ and the preceding bound, are all
	$\Upsilon$-integrable under \eqref{eq:SEM-assumption}. Thus, the
	integral in \eqref{eq:gen-SEM} is well defined and belongs to
	$\mathscr P_k$, proving the first inclusion in
	\eqref{eq:SEM-poly-preservation}.

	Similarly, for $\theta\in[a,b]^n$,
	\begin{equation}\label{eq:SEM-hidden-increment-bound}\textstyle	
		\norm{M\theta-\theta}_1 =	 \sum_{x\in [n]}|\sum_{y\neq x}M_{xy}\tonde{\theta_y-\theta_x}|
		\le
		(b-a)\sum_{x\in[n]}\tonde{1-M_{xx}}\fstop
	\end{equation}
	Thus, the same argument yields the well definition of
	$\mathscr L^\tups$ on $\mathscr R|_{[a,b]^n}$, and
	the second inclusion in \eqref{eq:SEM-poly-preservation}.
	
	Given these facts, the construction follows from the standard theory of Markov pregenerators. We treat only $\cL^\tups$, the argument for $\mathscr L^\tups$ being identical.
	
	According to \cite[Definition~I.2.1]{liggett_interacting_2005},
	$(\cL^\tups,\mathscr P)$ is a Markov pregenerator on $\cC(\Omega)$. Indeed, $\mathscr P$ given in \eqref{eq:P-R-SEM} is dense in $\cC(\Omega)$, contains the constants,
	$\cL^\tups\mathbf 1=0$, and $\cL^\tups$ satisfies the positive
	minimum principle: if $f\in\mathscr P$ and $\eta\in\Omega$ satisfy
	$f(\eta)=\min_{\Omega}f$, then
	\begin{equation}
		\textstyle
		\cL^\tups f(\eta)
		=
		\int\Upsilon(\dd M)\,
		\tonde{f(\eta M)-f(\eta)}
		\ge0\comma
	\end{equation}
	since $\eta M\in\Omega$ for every $M\in\mathfrak M$. By \cite[ Proposition I.2.2]{liggett_interacting_2005}, this yields condition (c) in \cite[Definition I.2.1]{liggett_interacting_2005}.
	Moreover, for every $\lambda>0$, 	$(I-\lambda\cL^\tups)\mathscr P=\mathscr P$.
	Indeed, by \eqref{eq:SEM-poly-preservation}, $I-\lambda\cL^\tups$
	leaves each $\mathscr P_k$ invariant, and it is
	injective there by the positive minimum principle; hence, by finite dimensionality of the space $\mathscr P_k$, it is bijective.
	
	Therefore, by \cite[Propositions~I.2.5--6]{liggett_interacting_2005},
	the closure $\bar \cL^\tups$ is a Markov pregenerator and
	$I-\lambda\bar \cL^\tups$ has closed range. Since this range
	contains the dense subspace $\mathscr P$, it equals $\cC(\Omega)$.
	Thus, $\bar \cL^\tups$ is a Markov generator \cite[Definition~I.2.7]{liggett_interacting_2005} and, by
	\cite[Theorem~I.2.9]{liggett_interacting_2005}, generates a unique
	Feller process on $\Omega$.
\end{proof}

\subsection*{Acknowledgments} 
This work was supported in part by the Italian
Ministry of Foreign Affairs and International Cooperation, grant number BR26GR05. While
this work was written, the authors were associated to INdAM (Istituto Nazionale di Alta Matematica
“Francesco Severi”) and the group GNAMPA. MQ and FS acknowledge partial support from the GNAMPA-INdAM project ``\emph{Stochastic exchange models: from kinetic theory to opinion dynamics}''. The authors acknowledge the use of ChatGPT (GPT-5.6 Sol) as an auxiliary tool for exploratory work on proofs, identifying potentially relevant  literature, and improving the typesetting and the presentation of the manuscript; responsibility for all mathematical content rests entirely with the authors.

\subsection*{Notation guide}
For the reader's convenience, we collect the main recurring notation in order of appearance. All objects are defined at the indicated locations.
Superscripts indicate restrictions to a block, as in $\alpha^B$ and
$\eta^B$, whereas parentheses denote the corresponding total masses, as in
$\alpha(B)=\sum_{x\in B}\alpha_x$ and
$\eta(B)=\sum_{x\in B}\eta_x$. Conditional quantities are denoted by
$\mu_B(f)$ and
$\pi_{x|B}=\pi_B(\mathbf1_x)=\pi(\mathbf1_x\mid B)=\pi_x/\pi(B)$.
For a linear operator $A$ defined on a vector space of functions,  $\gap(A)$ always denotes the smallest (real part of an) eigenvalue of $-A$, possibly excluding the eigenvalue associated to constant functions.

\begingroup
\small
\setlength{\tabcolsep}{4pt}
\setlength{\extrarowheight}{1.5pt}
\renewcommand{\arraystretch}{1.06}

\begin{longtable}{@{}
		>{\centering\arraybackslash}m{.20\textwidth}|
		>{\centering\arraybackslash}m{.56\textwidth}|
		>{\centering\arraybackslash}m{.16\textwidth}
		@{}}
	\textbf{Symbol} & \textbf{Brief description} & \textbf{Ref.}\\
	\hline
	\endfirsthead
	\textbf{Symbol} & \textbf{Brief description} & \textbf{Ref.}\\
	\hline
	\endhead
	$[n]$ & Site set & Sec.~\ref{sec:KMP-intro}\\
	$\Omega$ & Energy simplex & Sec.~\ref{sec:KMP-intro}\\
	$\alpha$ & Site weights & \eqref{eq:B_up}\\
	$w$ & Block rates & \eqref{eq:B_up}\\
	$\eta^B$ & Coordinates in $B$ & \eqref{eq:B_up}\\
	$\alpha^B$ & Weights in $B$ & \eqref{eq:B_up}\\
	$\eta(B)$ & Energy in $B$ & \eqref{eq:B_up}\\
	$\mu$ & Dirichlet law & \eqref{eq:gen-KMP}\\
	$\mu_B$ & Block conditional expectation & \eqref{eq:gen-KMP}\\
	$\cL$ & ${\rm KMP}$ generator & \eqref{eq:gen-KMP}\\
	$\gap(\cL)$ & Spectral gap & \eqref{eq:gapac}\\
	$\mathscr P_k$ & Polynomials of degree at most $k$ & \eqref{eq:poly-invariance}\\
	$\widehat\psi$ & Energy polynomial & \eqref{eq:intro-widehat-psi}\\
	$L_k$ & Labeled particle generator & \eqref{eq:intro-intertwining}\\
	$r_{xy}(\alpha,w)$ & One-particle jump rates & \eqref{eq:RW-rates}\\
	$\alpha(B)$ & Total weight of $B$ & \eqref{eq:RW-rates}\\
	$\mu_k$ & $k$-particle law & \eqref{eq:mu-k}\\
	$\alpha_0$ & Total site weight & \eqref{eq:mu-k}\\
	$\mathfrak n_x$ & Particles at site $x$ & \eqref{eq:mu-k}, \eqref{eq:N-x-bd}\\
	$\mathscr H_k$ & Symmetric coefficient space & \eqref{eq:coeff-space-sym}\\
	$L_k^\sym$ & Unlabeled particle generator & \eqref{eq:L-k-sym}\\
	$\gap_k(\alpha,w)$ & Gap on $\mathscr P_k$ & \eqref{eq:def-gap-k-alpha-w}\\
	$\mathscr P_{k,\ast}$ & Pure degree-$k$ component & \eqref{eq:P-k-star}\\
	$\gap_{k,\ast}(\alpha,w)$ & Gap on $\mathscr P_{k,\ast}$ & \eqref{eq:gap-inf-star-intro}\\
	$\alpha_{w,{\rm min}}$ & Minimal updated-block weight & \eqref{eq:gap-1-2-quantitative}\\
	$\tau_c(\alpha,w)$ & Transition threshold & Thm.~\ref{th:phase-transition}\\
	$N_1,\ldots,N_m$ & Compatible/coarsest partition & \eqref{eq:partition}, \eqref{eq:partition-coarsest-compatible}\\
	$\widetilde w$ & Inter-atom block weights & \eqref{eq:weights-tilde}\\
	$p$ & Sphere exponents & \eqref{eq:gen-cone}\\
	$\mathbb S_p$ & Inhomogeneous sphere & \eqref{eq:gen-cone}\\
	$\kappa$ & Cone measure & \eqref{eq:gen-cone}\\
	$\cG$ & Gibbs-sampler generator & \eqref{eq:gen-cone}\\
	$\omega$ & Reservoir rates & \eqref{eq:gen-res}\\
	$\beta$ & Reservoir shapes & \eqref{eq:xi-x-u-x}\\
	$\rho$ & Reservoir scales & \eqref{eq:xi-x-u-x}\\
	$U_x$ & Retained-energy fraction & \eqref{eq:xi-x-u-x}\\
	$\xi_x$ & Injected energy & \eqref{eq:xi-x-u-x}\\
	$\mathbf E_x$ & Reservoir expectation & \eqref{eq:xi-x-u-x}\\
	$\Omega_{\rm bd}$ & Boundary state space & \eqref{eq:gen-res}\\
	$\cL_\partial$ & Reservoir generator & \eqref{eq:gen-res}\\
	$\cL_{\rm bd}$ & Boundary-driven generator & \eqref{eq:gen-bd}\\
	$\nu$ & Boundary invariant law & \eqref{eq:nu-rhoa}\\
	$\mathscr Q_k$ & Boundary polynomials of degree at most $k$ & \eqref{eq:Qk-invariance-bd}\\
	$\mathscr Q$ & All boundary polynomials & \eqref{eq:def-polynomial-gap-bd}\\
	$\Upsilon$ & Update-matrix measure & \eqref{eq:gen-SEM-intro}\\
	$\cL^\tups$ & Exchange-model generator & \eqref{eq:gen-SEM-intro}\\
	$K_B^U$ & ${\rm KMP}$ update matrix & \eqref{eq:K_B^U}\\
	$\theta$ & Hidden profile & \eqref{eq:hidden-update-segment}\\
	$\widetilde\psi$ & Hidden polynomial & \eqref{eq:intro-tilde-psi}\\
	$\mathscr L$ & Hidden generator & \eqref{eq:intro-forward-backward-intertwining}\\
	$\pi$ & Reference probability vector, usually $\alpha/\alpha_0$ & \eqref{eq:var-pi-intro}\\
	$\pi(\theta)$ & Weighted mean & \eqref{eq:var-pi-intro}\\
	$\var_\pi$ & Spatial variance & \eqref{eq:var-pi-intro}\\
	$\delta(\alpha,w)$ & Variance-contraction rate & \eqref{eq:def-delta}\\
	$\mathscr H_k^\per$ & Labeled coefficient space & \eqref{eq:coeff-space}\\
	$\mathbf E_B$ & Block-update expectation & \eqref{eq:gen-KMP2}\\
	$\Pi_{B,k}$ & Particle heat-bath operator & \eqref{eq:Pi-B-k-matrix}\\
	$\cL_k^\per$ & Tensor energy generator & \eqref{eq:gen-KMP-tensor}\\
	$\mathscr L_k^\per$ & Tensor hidden generator & \eqref{eq:gen-hidden-tensor}\\
	$\widehat\psi^\per$ & Tensor energy polynomial & \eqref{eq:hat-tensor}\\
	$\widetilde\psi^\per$ & Tensor hidden polynomial & \eqref{eq:tilde-tensor}\\
	$\mathscr R_k$ & Homogeneous hidden polynomials & \eqref{eq:R-k}\\
	$\mathscr P_k^\per$ & Separately linear polynomials on $\Omega^k$ & \eqref{eq:def-P-k-tensor}\\
	$\mathscr R_k^\per$ & Multilinear hidden polynomials on $(\R^n)^k$ & \eqref{eq:def-R-k-tensor}\\
	$\mathfrak a_{k,i}$ & Particle-removal operator & \eqref{eq:particle-removal}\\
	$\mathfrak b_{k-1,i}$ & Particle-addition operator & \eqref{eq:particle-addition}\\
	$\mathfrak a_k$ & Symmetric removal operator & \eqref{eq:decomp-particle}\\
	$\mathfrak b_{k-1}$ & Symmetric addition operator & \eqref{eq:H-k-star-symmetric}\\
	$\mathscr H_{k,\ast}$ & New symmetric component & \eqref{eq:H-k-star-symmetric}\\
	$\mathscr H_{k,\ast}^\per$ & New labeled component & \eqref{eq:H-k-star-symmetric}\\
	$\mathscr R_{k,\ast}$ & New symmetric hidden component & \eqref{eq:R-k-ast}\\
	$\mathscr R_{k,\ast}^\per$ & New labeled hidden component & \eqref{eq:R-k-ast-per}\\
	$\gap_{k,\ast}^\per(\alpha,w)$ & Labeled pure-degree gap & \eqref{eq:gap-k-ast-per}\\
	$\sigma_J$ & Walsh monomial & \eqref{eq:Walsh-decomposition-Gibbs}\\
	$\mathscr R$ & Quadratic hidden space & \eqref{eq:R=R_2-ast}\\
	$\mathscr C$ & Nonnegative quadratic cone & \eqref{eq:cone-C-2-star}\\
	$\mathscr R'$ & Algebraic dual of $\mathscr R$ & \eqref{eq:dual-cone-C-2-star}\\
	$\mathscr C'$ & Dual cone & \eqref{eq:dual-cone-C-2-star}\\
	$g_\ast$ & Right Perron eigenfunction & \eqref{eq:PF-right-left}\\
	$h_\ast$ & Left Perron eigenfunction & \eqref{eq:PF-right-left}\\
	$\bar\alpha$ & Quotient site weights & \eqref{eq:weights-quotient}\\
	$\bar w$ & Quotient block weights & \eqref{eq:weights-quotient}\\
	$\lambda_\tau$ & Scaled two-particle gap & \eqref{eq:lambda-tau}\\
	$g_\tau$ & Right Perron eigenfunction & \eqref{eq:lambda-tau}\\
	$h_\tau$ & Left Perron eigenfunction & \eqref{eq:lambda-tau}\\
	$\pi_{x|B}$ & Conditional site weights & \eqref{eq:var-pi-conditional-pi}\\
	$\pi_B$ & Conditional weighted mean & \eqref{eq:var-pi-conditional-pi}\\
	$\var_{\pi_B}$ & Conditional variance & \eqref{eq:var-pi-conditional-var}\\
	$\mathscr L_\infty$ & Hidden-model generator at $\tau=\infty$ & \eqref{eq:hidden-generator-infty}\\
	$\lambda_\infty$ & Quadratic gap at $\tau=\infty$ & \eqref{eq:lambda-infty}\\
	$\cE_1$ & One-particle Dirichlet form & \eqref{eq:dirichlet-form-1}\\
	$\mathscr L_0$ & Hidden-model generator at $\tau=0$ & \eqref{eq:hidden-generator-zero}\\
	$\lambda_0$ & Quadratic gap at $\tau=0$ & \eqref{eq:lambda-zero}\\
	$L_2^\dagger$ & Killed two-particle generator & \eqref{eq:L-two-dagger}\\
	$[n]^2_\tneq$ & Off-diagonal two-particle states & \eqref{eq:Hneq}\\
	$L_{\partial,k}$ & Reservoir killing operator & \eqref{eq:L-partial-k-bd}\\
	$L_{{\rm bd},k}$ & Labeled killed generator & \eqref{eq:L-bd-k}\\
	$L_{{\rm bd},k}^\sym$ & Symmetric killed generator & \eqref{eq:L-bd-k}\\
	$T_k$ & Killing time & Lem.~\ref{lem:bd-killing-comparison}\\
	$\mathfrak M$ & Row-stochastic matrices & Sec.~\ref{sec:SEM-basic}\\
	$\mathscr L^\tups$ & Hidden exchange-model generator & \eqref{eq:gen-SEM-hidden}\\
	$\mathscr P$ & Polynomial core on $\Omega$ & \eqref{eq:P-R-SEM}\\
	$\mathscr R|_{[a,b]^n}$ & Hidden polynomial core on $[a,b]^n$ & \eqref{eq:P-R-SEM}\\
	$\mu^\tups$ & Exchange-model invariant law & Rem.~\ref{rem:SEM-invariant-measures}\\
	$M^{\otimes k}$ & Independent $k$-label update & \eqref{eq:L-k-Gamma}\\
	$L_k^\tups$ & Exchange-model particle generator & \eqref{eq:L-k-Gamma}\\
	$r_{xy}^\tups$ & One-particle exchange rates & \eqref{eq:L-k-Gamma}\\
	$\mu_k^\tups$ & Invariant $k$-particle moment law & \eqref{eq:mu-k-Gamma}\\
	$\varpi_k$ & Symmetric reference law on $[n]^k$ & Sec.~\ref{sec:SEM-particles}\\
	$L_k^{\tups,\tvarpi}$ & $\varpi_k$-adjoint particle operator & \eqref{eq:SEM-varpi-adjoint}\\
	$\widetilde\psi^\tvarpi$ & Reference-measure hidden polynomial & \eqref{eq:tilde-psi-varpi-Gamma}\\
	$\mathscr H_{k,\ast}^\varpi$ & $\varpi_k$-orthogonal pure component & \eqref{eq:SEM-invariance}\\
	$\mathscr P_k/\mathscr P_{k-1}$ & Pure degree-$k$ polynomial quotient & \eqref{eq:SEM-layer-spectrum}\\
	$K_B^\pi$ & Deterministic $\pi$-averaging matrix & \eqref{eq:K_B^U2}\\
	$\Upsilon^{\tavg}$ & Averaging-process update measure & \eqref{eq:Upsilon-AVG}\\
	$\cL^{\tavg}$ & Averaging-process generator & \eqref{eq:AVG-gap-identity}\\
	$A^B$ & Block-IEM parameter matrix & \eqref{eq:IEM-row-sums}\\
	$J_B$ & Block-IEM update matrix & \eqref{eq:Upsilon-IEM-block}\\
	$\Upsilon^{\tiem}$ & Block-IEM update measure & \eqref{eq:Upsilon-IEM-block}\\
	$\cL^{\tiem}$ & Block-IEM generator & \eqref{eq:IEM-degree-two-gap}\\
	$t_{xy}$ & Effective IEM conductances & \eqref{eq:IEM-txy}\\
	$\gamma_{\tiem}$ & IEM degree-two comparison factor & \eqref{eq:gamma-IEM-block}\\
	$b_{xy}$ & Harmonic-process conductances & \eqref{eq:Upsilon-HP}\\
	$H^{xy,u}$ & Harmonic-process transfer matrix & \eqref{eq:Upsilon-HP}\\
	$\Upsilon^{\thp}$ & Harmonic-process update measure & \eqref{eq:Upsilon-HP}\\
	$\cL^{\thp}$ & Harmonic-process generator & \eqref{eq:HP-degree-two-gap}\\
	$\gamma_{\thp}$ & Harmonic-process comparison factor & \eqref{eq:gamma-HP}\\
\end{longtable}
\endgroup

%\bibliographystyle{alpha}
%\bibliography{bookshelf}

\end{document}